\newcommand{\aaa}{\mathbf{\mathfrak{a}}}
\newcommand{\bbb}{\mathbf{\mathfrak{b}}}
\newcommand{\bfC}{\textit{\textbf{C}}}  % something to call a resolution
\newcommand{\bfD}{\textit{\textbf{D}}}  % something to call a resolution
\newcommand{\bfR}{\textit{\textbf{R}}}  % something to call a resolution
\newcommand{\bfa}{\textit{\textbf{a}}}
\newcommand{\bfb}{\textit{\textbf{b}}}
\newcommand{\OP}[1]{\textup{(OP$#1$)}}
\newcommand{\omres}[2][]{($\Omega_{#1}$-#2)}
\newcommand{\dn}{{\downarrow}}
\definecolor{darkgreen}{RGB}{0,120,0}
\definecolor{darkred}{RGB}{200,0,0}
\definecolor{bluegreen}{RGB}{0,150,150}
\long\def\new#1{{\color{darkred}#1}}
\newcommand{\mynote}[1]{{\color{blue}\noindent\textbf{\textup{[#1]}}}}
\newcommand{\3}[1]{\underline{#1}}
\newcommand{\4}[1]{\widebar{#1}}
\newcommand{\5}[1]{\widehat{#1}}
\newcommand{\RR}{R}
\newcommand{\kk}{k}
\let\xto=\xrightarrow
\let\too=\longrightarrow
\let\fromm=\longleftarrow
\let\Gamma=\varGamma
\newcommand{\higherlim}[2]{\displaystyle\setbox1=\hbox{\rm lim}
	\setbox2=\hbox to \wd1{\leftarrowfill} \ht2=0pt \dp2=-1pt
	\setbox3=\hbox{$\scriptstyle{#1}$}
	\def\test{#1}\ifx\test\empty
	\mathop{\mathop{\vtop{\baselineskip=5pt\box1\box2}}}\nolimits^{#2}
	\else
	\ifdim\wd1<\wd3
	\mathop{\hphantom{^{#2}}\vtop{\baselineskip=5pt\box1\box2}^{#2}}_{#1}
	\else
	\mathop{\mathop{\vtop{\baselineskip=5pt\box1\box2}}_{#1}}%
	\nolimits^{#2}
	\fi\fi}
\let\oldcirc=\circ
\renewcommand{\circ}{\mathchoice
    {\mathbin{\scriptstyle\oldcirc}}{\mathbin{\scriptstyle\oldcirc}}
    {\mathbin{\scriptscriptstyle\oldcirc}}
    {\mathbin{\scriptscriptstyle\oldcirc}}}
\numberwithin{equation}{section}
\mathchardef\cdot="0201
\renewenvironment{enumerate}[1][]
{\begin{enumerat}[#1]\setlength{\itemsep}{6pt}}{\end{enumerat}}
\newenvironment{enuma}{\begin{enumerate}[{\rm(a) }]}{\end{enumerate}}
\newenvironment{enumi}{\begin{enumerate}[{\rm(i) }]}{\end{enumerate}}
\renewenvironment{itemize}
{\begin{itemiz}\setlength{\itemsep}{6pt}} %\setlength{\itemindent}{-20pt}
{\end{itemiz}}
\def\beq#1\eeq{\begin{equation*}#1\end{equation*}}
\def\beqq#1\eeqq{\begin{equation}#1\end{equation}}
\newcommand{\boldd}[1]{{\mathversion{bold}\textbf{#1}}}
\let\emptyset=\varnothing
\DeclareMathAlphabet\EuR{U}{eur}{m}{n}
\SetMathAlphabet\EuR{bold}{U}{eur}{b}{n}
\newcommand{\curs}{\EuR}
\newcommand{\catdef}[2][]{\expandafter\newcommand\csname#2\endcsname%
{#1\curs{#2}}}
\renewcommand{\mod}{\textup{-}\curs{mod}}
\newcommand{\widebar}[1]{\overset{\mskip3mu\hrulefill\mskip3mu}{#1}
        \vphantom{#1}}
\renewcommand{\:}{\colon}
\newlength{\shorter}\setlength{\shorter}{\textwidth} 
\newlength{\shortest}\setlength{\shortest}{\textwidth} 
\newlength{\upto}\newlength{\dnto}
\newcounter{let} \setcounter{let}{0}
\loop\stepcounter{let}
\edef\csname cal\alph{let}\endcsname%
\newcommand{\scrp}{\mathscr{P}}
\newcommand{\tdef}[2][]{\expandafter\newcommand\csname#2\endcsname%
{#1\textup{#2}}}
\newcommand{\chr}{\textup{char}}
\newcommand{\colim}{\mathop{\textup{colim}}\limits}
\newcommand{\GL}{\textit{GL}}
\newcommand{\fdef}[1]{\expandafter\newcommand\csname#1\endcsname%
{\mathfrak{#1}}}
\newcommand{\bbdef}[1]{\expandafter\newcommand% 
\csname#1\endcsname{\mathbb{#1}}}
\newcommand{\KK}[1][]{_{K#1}}
\renewcommand{\2}{\partial}
\newcommand{\gee}{\varepsilon}
\newcommand{\SFL}[1][]{(S#1,\calf#1,\call#1)}
\newcommand{\gen}[1]{\langle{#1}\rangle}
\newcommand{\Gen}[1]{\bigl\langle{#1}\bigr\rangle}
\let\nsg=\normal
\newcommand{\syl}[2]{\textup{Syl}_{#1}(#2)}
\newcommand{\sylp}[1]{\syl{p}{#1}}
\newcommand{\sminus}{\smallsetminus}
\newcommand{\defeq}{\overset{\textup{def}}{=}}
\renewcommand{\Im}{\textup{Im}}
\newcommand{\pcom}{{}^\wedge_p}
\newcommand{\Rcom}{{}^\wedge_{\RR}}
\newcommand{\sd}[1]{\overset{{#1}}{\rtimes}}
\let\til=\widetilde
\newtheorem{Thm}[equation]{Theorem}
\newtheorem{Prop}[equation]{Proposition}
\newtheorem{Cor}[equation]{Corollary}
\newtheorem{Lem}[equation]{Lemma}
\newtheorem{Defi}[equation]{Definition}
\newtheorem{Th}{Theorem}
\newtheorem{Prp}[Th]{Proposition}
\theoremstyle{definition}
\newtheorem{Rmk}[equation]{Remark}
\newtheorem{Quest}[equation]{Question}
\newtheorem{Ex}[equation]{Example}
\newcommand{\longleft}[1]{\;{\leftarrow%
\count255=0 \loop \mathrel{\mkern-6mu}%
    \relbar\advance\count255 by1\ifnum\count255<#1\repeat}\;}
\newcommand{\longright}[1]{\;{\count255=0 \loop \relbar\mathrel{\mkern-6mu}%
    \advance\count255 by1\ifnum\count255<#1\repeat\rightarrow}\;}
\newcommand{\Right}[2]{\overset{#2}{\longright{#1}}}
\newcommand{\RIGHT}[3]{\mathrel{\mathop{\kern0pt\longright{#1}}
    \limits^{#2}_{#3}}}
\newcommand{\Left}[2]{{\buildrel #2 \over {\longleft{#1}}}}
\newcommand{\LEFT}[3]{\mathrel{\mathop{\kern0pt\longleft{#1}}\limits^{#2}_{#3}}
}
\newcommand{\longleftright}[1]{\;{\leftarrow\mathrel{\mkern-6mu}%
    \count255=0\loop\relbar\mathrel{\mkern-6mu}%
    \advance\count255 by1\ifnum\count255<#1\repeat\rightarrow}\;}
\newcommand{\onto}[1]{\;{\count255=0 \loop \relbar\joinrel
    \advance\count255 by1
    \ifnum\count255<#1 \repeat \twoheadrightarrow}\;}
\newcommand{\RLEFT}[3]{\mathrel{%
   \mathop{\vcenter{\baselineskip=0pt\hbox{$\kern0pt\longright{#1}$}%
   \hbox{$\kern0pt\longleft{#1}$}}}\limits^{#2}_{#3}}}
\newcommand{\xxto}[1]{\mathrel{\mathop{%
  \setbox0\hbox{$\ {\scriptstyle#1}\ $}%
  \hbox to \wd0{\rightarrowfill}}^{#1}}%
}
\newcommand{\xlto}[2][]{%
  \mathrel{\mathop{%
    \setbox0\vbox{%%\mathsurround=0pt
      \hbox{$\scriptstyle\;\;{#1}\;\;$}%
      \hbox{$\scriptstyle\;\;{#2}\;\;$}%
    }%
    \hbox to\wd0{\leftarrowfill}\displaystyle}%
  \limits^{#2}\ifx{#1}{}\else{_{#1}}\fi}%
}
\newcommand{\ctg}{(\calc,\theta)}
\newcommand{\Osystem}[4]{(#1,#2;#3,#4)}
\newcommand{\Osys}[3][\theta]{\Osystem{#2}{#3}{#1_*}{#1^*}}
\newcommand{\abth}[1][]{\Osystem{\cala#1}{\calb#1}{{\theta#1}_*}{{\theta#1}^*}}
\newcommand{\cpth}[1][\calc]{\Osystem{\RR#1\mod}{\RR\pi\mod}{\theta_*}{\theta^*}}
\newcommand{\mxtwo}[4]{\left(\begin{smallmatrix}#1&#2\\#3&#4
\end{smallmatrix}\right)}
\newcommand{\Bobj}[1][]{{\oldcirc}_{#1}}
\author{C. Broto}
\address{Departament de Matem\`atiques, Edifici Cc, Universitat Aut\`onoma de
Barcelona, E--08193 Cerdanyola del Vall\`es (Barcelona), Spain
\newline\indent 
Centre de Recerca Matem\`atica, Edifici Cc, Campus de Bellaterra, 08193
Cerdanyola del Vall\`es (Barcelona), Spain} 
\email{broto@mat.uab.cat}
\thanks{C. Broto  acknowledges financial support from the Spanish Ministry 
of Economy through the ``Mar\'ia de Maeztu'' Programme for Units of 
Excellence in R\&D (MDM-2014-0445) and FEDER-MINECO Grant MTM2016-80439-P 
and from the Generalitat de Catalunya through AGAUR Grant 2017SGR1725.}
\author{R. Levi}
\address{Department of Mathematical Sciences, University of Aberdeen,
Fraser Noble 138, Aberdeen AB24 3UE, U.K.}
\email{r.levi@abdn.ac.uk}
\author{B. Oliver}
\address{Universit\'e Paris 13, Sorbonne Paris Cit\'e, LAGA, UMR 7539 du CNRS, 
99, Av. J.-B. Cl\'ement, 93430 Villetaneuse, France.}
\email{bobol@math.univ-paris13.fr}
\thanks{B. Oliver is partially supported by UMR 7539 of the CNRS}
\thanks{R. Levi and B. Oliver were partly supported by FEDER-MINECO 
Grant MTM2016-80439-P during several visits to the Universitat Aut\`onoma 
de Barcelona.}
\thanks{The authors also thank the University of Aberdeen 
for its support during visits by two of us.}
\subjclass{Primary 55R35. Secondary 55R40, 20D20}
\keywords{Classifying space, $p$-completion, finite groups, fusion.}
\title{Loop space homology of a small category}
\date{}                                % Activate to display a given date or no date
\subjclass{Primary 55R35. Secondary 55R40, 20D20}
\keywords{Classifying space, Loop space, Small category, $p$-completion, 
Finite groups, Fusion.}
\begin{document}

\begin{abstract} 
In a 2009 paper, Dave Benson gave a description in purely algebraic terms 
of the mod $p$ homology of $\Omega(BG\pcom)$, when $G$ is a finite group, 
$BG\pcom$ is the $p$-completion of its classifying space, and 
$\Omega(BG\pcom)$ is the loop space of $BG\pcom$. The main purpose of this 
work is to shed new light on Benson's result by extending it to a more 
general setting. As a special case, we show that if $\calc$ is a small 
category, $|\calc|$ is the geometric realization of its nerve, $R$ is a 
commutative ring, and $|\calc|^+_R$ is a ``plus construction'' for 
$|\calc|$ in the sense of Quillen (taken with respect to $R$-homology), 
then $H_*(\Omega(|\calc|^+_R);R)$ can be described as the homology of a 
chain complex of projective $R\calc$-modules satisfying a certain list of 
algebraic conditions that determine it uniquely up to chain homotopy. 
Benson's theorem is now the case where $\calc$ is the category of a finite 
group $G$, $R=\F_p$ for some prime $p$, and $|\calc|^+_R=BG\pcom$. 
\end{abstract}

\maketitle

%%\tableofcontents

\section*{Introduction}

Let $G$ be a finite group, and let $BG\pcom$ denote its classifying space 
after $p$-completion in the sense of Bousfield and Kan \cite{BK}. In 
general, the higher homotopy groups $\pi_i(BG\pcom)$ for $i\ge2$ can be 
nonvanishing, and hence the loop space $\Omega(BG\pcom)$ is interesting in 
its own right. These spaces are the subject of several papers by the second 
author (e.g., \cite[Theorem 1.1.4]{L-memoirs}). In particular, the homology 
of $\Omega(BG\pcom)$ with $p$-local coefficients is known to have some very 
interesting properties, as described in \cite[\S\,2]{CL}. This helped to 
motivate the question of whether the homology of $\Omega(BG\pcom)$ admits a 
purely algebraic definition (e.g., in  \cite[\S\,2.6]{CL}). 

In \cite{B}, Benson answered this question by showing that 
$H_*(\Omega(BG\pcom);\kk)$, for a field $\kk$ of characteristic $p$, is 
isomorphic to the homology of what he called a ``left $\kk$-squeezed 
resolution for $G$'': a chain complex of projective $\kk G$-modules 
satisfying certain axioms. He also showed that any two such complexes are 
chain homotopy equivalent, and hence have the same homology. The 
$\kk$-homology of $\Omega(BG\pcom)$ is thus determined by the axioms of a 
squeezed resolution.

Our original aim in this paper was to check whether Benson's concept of a 
squeezed resolution can be formulated in a more categorical context. This 
was motivated in part by the problem of identifying $p$-compact groups in 
the sense of Dwyer and Wilkerson: loop spaces with finite mod $p$ homology 
and $p$-complete classifying spaces (see Section 
\ref{s:examples} for more discussion). When doing this, we discovered that 
in fact, squeezed resolutions can be defined in a much more general 
setting, where we call them $\Omega$-resolutions to emphasize the 
connection to loop spaces. In this setting, Benson's result can be 
generalized to a statement about plus constructions (in the sense of 
Quillen) on nerves of small categories.

When $\calc$ is a small category, we let $|\calc|$ denote the geometric 
realization of the nerve of $\calc$. If $\RR$ is a commutative ring, then 
an \emph{$\RR\calc$-module} is a (covariant) functor $\calc\too\RR\mod$, 
and a morphism of $\RR\calc$-modules is a natural transformation of 
functors. When $\pi$ is a group, we let $\calb(\pi)$ be the category with 
one object $\Bobj[\pi]$ and $\End(\Bobj[\pi])=\pi$.

As usual, a group $G$ is called \emph{$\RR$-perfect} if $H_1(G;\RR)=0$. We 
say that $G$ is \emph{strongly $\RR$-perfect} if it is $\RR$-perfect and 
$\Tor(H_1(G;\Z),\RR)=0$. Clearly, all $\RR$-perfect groups are strongly 
$\RR$-perfect whenever $\RR$ is flat as a $\Z$-module. 

If $X$ is a connected CW complex and $H\nsg\pi_1(X)$, then a \emph{plus 
construction} for $X$ with respect to $\RR$ and $H$ means a space $X_\RR^+$ 
together with a map $\kappa\:X\too X_\RR^+$ such that $\pi_1(\kappa)$ is 
surjective with kernel $H$ and $H_*(\kappa;M)$ is an isomorphism for each 
$\RR[\pi_1(X)/H]$-module $M$. In Proposition \ref{plus}, we modify Quillen's 
construction to show that $R$-plus constructions exist if and only if 
$\chr(\RR)\ne0$ and $H$ is $\RR$-perfect, or $\chr(\RR)=0$ and $H$ is 
strongly $\RR$-perfect.

\begin{Th} \label{ThA}
Fix a commutative ring $\RR$, a small 
connected category $\calc$, a group $\pi$, and a functor 
$\theta\:\calc\too\calb(\pi)$ such that 
$\pi_1(|\theta|)\:\pi_1(|\calc|)\too\pi$ is surjective. Set 
$H=\Ker(\pi_1(|\theta|))$. Assume that $\chr(\RR)\ne0$ and $H$ is 
$\RR$-perfect, or $\chr(\RR)=0$ and $H$ is strongly $\RR$-perfect. Then there 
is an $\varOmega$-resolution
	\[ \cdots \Right2{\2_3} C_2 \Right2{\2_2} C_1 \Right2{\2_1} 
	C_0 \Right2{\gee} \theta^*(\RR\pi) \too 0 \]
(a chain complex of $\RR\calc$-modules satisfying conditions 
listed in Definition \ref{d:Omega-resolution} or Lemma \ref{O-res}), 
and $H_*(C_*,\2_*) \cong H_*(\Omega(|\calc|^+_{\RR});\RR)$ for each such 
$(C_*,\2_*)$ and each plus construction $|\calc|_\RR^+$ for $|\calc|$ with 
respect to $\RR$ and $H$.
\end{Th}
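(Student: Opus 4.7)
The plan is to realize $(C_*,\2_*)$ topologically as an $R\calc$-chain model attached to a loop-space fibration, and then verify that its homology is $H_*(\Omega(|\calc|^+_R);R)$ and that every $\Omega$-resolution is chain-homotopy equivalent to this model. Since $H$ is $R$-perfect, Lemma \ref{plus} supplies a plus construction $\kappa\:|\calc|\too|\calc|^+_R$ with $\pi_1(|\calc|^+_R)\iso\pi$ and $\kappa_*$ an $R$-homology isomorphism. Pulling back the path-loop fibration $\Omega(|\calc|^+_R)\to P(|\calc|^+_R)\to|\calc|^+_R$ along $\kappa$ yields a fibration $\Omega(|\calc|^+_R)\to Y\to|\calc|$, and because $P(|\calc|^+_R)$ is contractible, $Y$ is the homotopy fibre of $\kappa$. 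The standard acyclicity property of the plus construction then forces $Y$ to be $R$-acyclic.

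Next I would construct the chain complex. Filter the simplicial chains $C_*(Y;R)$ by the skeleta of $|\calc|$: in each filtration degree $p$, contributions are indexed by $p$-simplices $c_0\to c_1\to\cdots\to c_p$ of the nerve of $\calc$, and using naturality (through $\theta$) each such contribution can be packaged as a representable, hence projective, $R\calc$-module. The horizontal differential is the simplicial boundary of the nerve, while the vertical differential encodes the fibre chains; totalising (after a projective replacement of the fibre chains if needed) produces a chain complex $(C_*,\2_*)$ of projective $R\calc$-modules equipped with an augmentation $\gee\:C_0\too\theta^*(R\pi)$ coming from $\pi_0(\Omega(|\calc|^+_R))\iso\pi$. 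The Serre spectral sequence of $\Omega(|\calc|^+_R)\to Y\to|\calc|$ has $E^2_{p,q}=H_p(|\calc|;\theta^*H_q(\Omega(|\calc|^+_R);R))$ and converges to $H_*(Y;R)=R$ concentrated in degree $0$. Reading the associated double complex in the complementary direction and exploiting this cancellation identifies $H_*(C_*,\2_*)$ with $H_*(\Omega(|\calc|^+_R);R)$. Along the way one checks that $(C_*,\2_*)$ satisfies the axioms listed in Definition \ref{d:Omega-resolution} (equivalently Lemma \ref{O-res}).

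The main obstacle lies in the uniqueness statement that \emph{every} complex satisfying the $\Omega$-resolution axioms has the same homology. I would attack this by imitating the classical comparison theorem for projective resolutions: given two $\Omega$-resolutions, use projectivity to lift the common augmentation inductively to a chain map, and then show that any two such lifts are chain-homotopic. The subtlety is that here the obstruction at each stage does not live in zero but in the higher groups $H_*(\Omega(|\calc|^+_R);R)$ that the complex is supposed to compute, so the $\Omega$-resolution axioms must be designed to be exactly strong enough to force these obstructions to vanish. Once the comparison theorem is in place, applying it to the topological model proves $H_*(C_*,\2_*)\iso H_*(\Omega(|\calc|^+_R);R)$ for every $\Omega$-resolution $(C_*,\2_*)$.
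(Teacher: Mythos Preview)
Your overall strategy matches the paper's: build the topological model by pulling back the path--loop fibration of $|\calc|^+_R$ along $\kappa$, then invoke the comparison theorem (the paper's Proposition~\ref{p2:functoriality}) for uniqueness. But two steps of your sketch do not go through as written.

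First, the passage from the space $Y$ to a chain complex of \emph{projective $R\calc$-modules} needs more than ``filter $C_*(Y;R)$ by skeleta.'' The chains on $Y$ are not functorial in $\calc$; you must first promote $Y$ to a $\calc$-diagram. The paper does this via the overcategory construction: set $E\calc(c)=\caln(\Id_\calc\downarrow c)$ and pull $\nu\:A\calc\to\caln(\calc)$ back along $E\calc(c)\to\caln(\calc)$ to obtain $E_\nu(c)$. Since each $|E\calc(c)|$ is contractible (it has a terminal object), one gets $|E_\nu(c)|\simeq\Omega(|\calc|^+_R)$ immediately, and Lemma~\ref{C*(Ef) projective} shows $C_n(E_\nu;R)$ is a direct sum of representables $F^{R\calc}_{c_n}$, hence projective. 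Your Serre spectral sequence detour is then unnecessary: the identification $H_*(C_*,\2_*)(c)\cong H_*(\Omega(|\calc|^+_R);R)$ is direct from contractibility of the overcategory, not from reading a double complex in two directions. (Indeed, your spectral sequence computes $H_*(Y;R)$, not the $R\calc$-module $H_*(C_*,\2_*)$.)

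Second, and more seriously, you gloss over the verification of axiom~($\Omega$-\ref{sqres-2}), which is the heart of the matter: $\theta_*(C_*)$ must be exact with $H_0\cong R\pi$. This is \emph{not} a formal consequence of $Y$ being $R$-acyclic. The paper's argument is that $\theta_*(C_*(E_\nu;R))$ coincides with the chains on the pullback $\til{A\calc}$ of $A\calc=Y$ over the $\pi$-cover $|\theta\dn\Bobj[\pi]|\to|\calc|$ (Lemma~\ref{isos-over-G}(b) and Lemma~\ref{C*(Ef) projective}), and then that $|\til{A\calc}|\cong\pi\times|A\calc|$ because the image of $\pi_1(|A\calc|)$ in $\pi_1(|\calc|)$ lies in $H=\Ker(\pi_1(|\theta|))$. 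This is where the hypothesis on $\theta$ and the plus construction actually bite; without it you cannot conclude $\theta_*(C_*)$ is acyclic, and the complex is not an $\Omega$-resolution.
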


Theorem \ref{ThA} will be stated in a more precise form as Theorem 
\ref{Omega_C0}. Upon restricting to the case where $\RR=\F_p$ for a prime 
$p$, $\calc=\calb(G)$ for some finite group $G$, and 
$\pi=G/O^p(G)\cong\pi_1(BG\pcom)$ (the largest $p$-group quotient of $G$), 
we recover Benson's theorem, since $BG\pcom$ is a plus construction on 
$BG=|\calb(G)|$ with respect to the ring $\F_p$ and the subgroup $O^p(G)$. 

As another special case of Theorem \ref{ThA}, let $(S,\calf,\call)$ be a 
$p$-local compact group in the sense of \cite[Definition 4.2]{BLO3}. Thus 
$S$ is a discrete $p$-toral group (an extension of $(\Z/p^\infty)^r$ by a 
finite $p$-group), $\calf$ is a saturated fusion system over $S$, and 
$\call$ is a centric linking system associated to $\calf$. Set 
$\pi=\pi_1(|\call|\pcom)$: a finite $p$-group by \cite[Proposition 
4.4]{BLO3}. By Theorem \ref{Omega_C} or \ref{Omega_C0} applied with $\call$ 
in the role of $\calc$, $H_*(\Omega(|\call|\pcom);\F_p)$ can be described 
in terms of $\Omega$-resolutions. As noted above, our original motivation for this work was 
the search for new conditions sufficient to guarantee that 
$\Omega(|\call|\pcom)$ has finite homology, and hence that $|\call|\pcom$ 
is a $p$-compact group in the sense of Dwyer and Wilkerson \cite{DW}. We 
did not succeed in doing this, but our attempt to do so is what led to this 
more general setting. Also, we do construct some 
examples in Propositions \ref{torus-res}, \ref{Ores:Sullivan}, and 
\ref{finite-res} of explicit $\Omega$-resolutions of finite length (in 
fact, of minimal length) for certain $p$-compact groups. 

It turns out that $\Omega$-resolutions can be defined in much greater 
generality than that needed in Theorem \ref{ThA}. Let 
$\bigl(\cala\RLEFT2{\theta_*}{\theta^*}\calb\bigr)$ be an 
\emph{$\varOmega$-system}: a pair of abelian categories and additive 
functors such that $\theta_*$ is left adjoint to $\theta^*$, 
$\theta_*\theta^*\cong\Id_\calb$, and $\theta^*$ is exact (Definition 
\ref{d:Omega-system}). In this situation, for a projective object $X$ in 
$\calb$, an \emph{$\varOmega$-resolution} of $X$ is a chain complex of 
projective objects in $\cala$ augmented by a morphism to $\theta^*(X)$ 
which satisfies certain axioms listed in Definition 
\ref{d:Omega-resolution}. In particular, these axioms are minimal 
conditions needed to ensure the uniqueness of $\Omega$-resolutions up to 
chain homotopy equivalence (Proposition \ref{p2:functoriality}). However, 
while each such $X$ has at most one $\Omega$-resolution up to homotopy, 
we have examples that show that it 
need not have any in this very general situation. The examples in Theorem 
\ref{ThA} are the special case where $\cala=\RR\calc\mod$, 
$\calb=\RR\pi\mod$, and $\theta_*$ is left Kan extension with respect to 
the functor $\theta$. Another large family of examples, where we show 
that $\Omega$-resolutions exist but haven't yet found a geometric 
interpretation of their homology, is described in the following 
proposition (and in more detail in Proposition \ref{ex3}).

\begin{Prp} \label{PrB}
Let $\theta\:\calc\too\cald$ be a functor 
between small categories that is bijective on objects and surjective on 
morphism sets, and which has the following property: 
	\beq \parbox{\shortest}{for each $c,c'\in\Ob(\calc)$ and 
	each $\varphi,\varphi'\in\Mor_\calc(c,c')$ such that 
	$\theta_{c,c'}(\varphi)=\theta_{c,c'}(\varphi')$, there is 
	$\alpha\in\Aut_\calc(c')$ such that 
	$\theta_c(\alpha)=\Id_{\theta(c')}$ and 
	$\varphi=\alpha\varphi'$.} \eeq
Then for each commutative ring $\RR$, 
$\Bigl(\RR\calc\mod\RLEFT3{\theta_*}{\theta^*}\RR\cald\mod\Bigr)$ is an 
$\varOmega$-system, where $\theta_*$ is defined by left Kan extension. 
Furthermore, projective objects in $\RR\cald\mod$ all have 
$\varOmega$-resolutions if and only if 
$\Ker[\theta_c\:\Aut_\calc(c)\too\Aut_\cald(\theta(c))]$ is 
$\RR$-perfect for each $c\in\Ob(\calc)$.
\end{Prp}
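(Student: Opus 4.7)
My plan divides into three parts: showing the $\varOmega$-system axioms hold, establishing necessity of $\RR$-perfectness, and establishing sufficiency.

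\textbf{The $\varOmega$-system axioms.} The adjunction $\theta_*\dashv\theta^*$ is the defining property of left Kan extension; exactness of $\theta^*$ is automatic because kernels and cokernels in $\RR\calc\mod$ and $\RR\cald\mod$ are computed pointwise. The isomorphism $\theta_*\theta^*\cong\Id$ is equivalent, by the triangle identities, to $\theta^*$ being fully faithful. Given $\alpha\colon\theta^*M\to\theta^*N$, I would set $\bar\alpha_d=\alpha_c$ for the unique $c$ with $\theta(c)=d$ (bijection on objects); for any $\bar\varphi\in\Mor_\cald(d,d')$, pick a lift $\varphi$ using surjectivity on morphisms, and naturality of $\alpha$ at $\varphi$ becomes naturality of $\bar\alpha$ at $\bar\varphi$, independently of the chosen lift.

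\textbf{Sufficiency.} Assume each $K_c=\Ker\theta_c$ is $\RR$-perfect. Every projective in $\RR\cald\mod$ is a summand of a sum of representables, and the class of objects admitting $\varOmega$-resolutions is closed under these operations by the general theory set up earlier, so it suffices to resolve each representable $P_d=\RR\Mor_\cald(d,-)$. The plan is to apply Theorem \ref{ThA} to the restricted functor $\theta_d\colon\calb(\Aut_\calc(d))\too\calb(\Aut_\cald(d))$, whose $\pi_1$-kernel is the $\RR$-perfect group $K_d$: this produces a local $\varOmega$-resolution of $\RR\Aut_\cald(d)$ over $\RR[\Aut_\calc(d)]$. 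The hypothesized lifting property then propagates this globally: it makes $\RR\Mor_\calc(c,c')$ a free $\RR[K_{c'}]$-module on any section of $\Mor_\cald(c,c')$, enabling functorial transport of the local data along morphisms to produce a complex of projective $\RR\calc$-modules whose augmentation to $\theta^*P_d$ has kernel controlled object-by-object by the $K$-actions.

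\textbf{Necessity.} Conversely, fix $c\in\Ob\calc$, set $d=\theta(c)$, and let $C_*$ be an $\varOmega$-resolution of $P_d=\RR\Mor_\cald(d,-)$. Evaluation at $c$ gives a complex of projective $\RR[\Aut_\calc(c)]$-modules augmented onto $\RR\Aut_\cald(d)$; by the lifting property each $C_n(c)$ is free over $\RR[K_c]$ with the augmentation the $K_c$-coinvariants map. The $\varOmega$-resolution axiom that $\theta_*C_*\too P_d$ be a projective resolution in $\RR\cald\mod$ translates at $d$ into acyclicity of the coinvariant complex $(C_*(c))_{K_c}$ in positive degrees, which via a change-of-rings computation forces $H_1(K_c;\RR)=0$.

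\textbf{Main obstacle.} The substantive step is the sufficiency direction: Theorem \ref{ThA} only produces the resolution at the single object $d$, whereas one needs a chain complex in the whole functor category $\RR\calc\mod$ that is projective as a complex of functors (not just pointwise) and satisfies the minimality axioms of an $\varOmega$-resolution uniformly. The lifting hypothesis, by making morphism-fibers into principal $K$-sets, is the key input enabling this globalization; verifying that the assembled complex inherits every $\varOmega$-resolution axiom is where the bulk of the work will lie.
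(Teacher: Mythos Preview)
Your $\Omega$-system verification is fine (and slightly slicker than the paper's, which invokes the general colimit computation of Proposition~\ref{AC->AD}). But both the sufficiency and the necessity arguments have real gaps, and in each case the paper takes a shorter route that avoids the obstacle you identify.

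\textbf{Sufficiency.} You are missing the key idea. The paper does not construct $\Omega$-resolutions explicitly at all. Instead it observes that an $\RR\calc$-module lies in $\theta^*(\RR\cald\mod)$ if and only if each $K_c$ acts trivially on $M(c)$ (immediate from bijectivity on objects and surjectivity on morphisms). This makes $\theta^*(\RR\cald\mod)$ closed under subobjects, and the $\RR$-perfectness of each $K_c$ makes it closed under extensions by the elementary Lemma~\ref{l:p-perf}: if $K_c$ is $\RR$-perfect and acts trivially on $M'(c)$ and $M''(c)$, it acts trivially on $M(c)$. Then Proposition~\ref{exists-res} produces $\Omega$-resolutions for all projectives at once. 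Your plan of invoking Theorem~\ref{ThA} at a single object and then ``globalizing'' would require building functorial projective complexes in $\RR\calc\mod$ out of local data over $\RR[\Aut_\calc(d)]$, a genuinely nontrivial induction argument that you do not carry out; and your preliminary reduction to representables via closure of $\Omega$-resolvable objects under retracts is not something the paper establishes.

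\textbf{Necessity.} Your claim that each $C_n(c)$ is free over $\RR[K_c]$ is unjustified: the lifting hypothesis says the $K_{c}$-orbits on $\Mor_\calc(c_0,c)$ coincide with the fibres of $\theta$, but does \emph{not} say the action is free, so a summand of $\bigoplus\RR\Mor_\calc(c_0,c)$ need not be $\RR[K_c]$-projective. Without this, your ``change-of-rings'' step does not go through. The paper sidesteps this by working with one specific projective, $F_{d_0}^{\RR\cald}$, and computing $(L_1\theta_*)(\theta^*F_{d_0}^{\RR\cald})$ directly: using the explicit presentation $0\to Q_0\to F_{c_0}^{\RR\calc}\to\theta^*F_{d_0}^{\RR\cald}\to0$, one finds that $(L_1\theta_*)(\theta^*F_{d_0}^{\RR\cald})(d_0)$ contains $I/I^2\cong H_1(K_{c_0};\RR[\Aut_\cald(d_0)])\ne0$, where $I$ is the relevant augmentation-type ideal and freeness \emph{is} available because $K_{c_0}$ acts freely on $\Aut_\calc(c_0)$ by translation. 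Proposition~\ref{exists-Omega1} then rules out even an $\Omega_1$-resolution.
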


We begin in Section \ref{s:Omega} by defining $\Omega$-resolutions in our 
most general setting and proving their uniqueness. In Section 
\ref{s:existence}, we find some necessary conditions, and some sufficient 
conditions, for their existence. We then restrict in Section \ref{s:kC-mod} 
to the special case of $\RR\calc$-modules, and construct examples where 
$\Omega$-resolutions do or do not exist (Propositions \ref{ex3} and 
\ref{ex4}). Our results connecting the homology of certain 
$\Omega$-resolutions to the homology of loop spaces are shown in Section 
\ref{s:loops}, where Theorem \ref{ThA} is stated and proved in a slightly 
more precise form as Theorem \ref{Omega_C0}. Afterwards, we look in Section 
\ref{s:examples} at some detailed examples of $\Omega$-resolutions arising 
from $p$-local compact groups in which the maximal torus is normal. 

All three authors would like to thank the referee for carefully 
reading the paper and making many helpful suggestions.

\textbf{Notation:} For each small category $\calc$, $|\calc|$ denotes its 
geometric realization. When $\calc$ is a small category and $\RR$ is a 
commutative ring, we let $\RR\calc\mod$ denote the category of 
``$\RR\calc$-modules'': covariant functors from $\calc$ to $\RR\mod$. When 
$\calc$ is an abelian category, we write $\scrp(\calc)$ to denote the class 
of projective objects in $\calc$. For a group $G$, we write $G\ab=G/[G,G]$ 
for the abelianization, and let $\calb(G)$ denote the category with one 
object $\Bobj[G]$ and $\End_{\calb(G)}(\Bobj[G])\cong G$.

%%%%%%%%%%%%%%%%%%%%%%%%%%%%%%%%%%%%%

\section{$\Omega$-systems and $\Omega$-resolutions}
\label{s:Omega}

We begin by defining  $\Omega$-resolutions and proving their uniqueness in 
a very general setting. We do not prove any results about the existence of 
$\Omega$-resolutions in this section, but leave that for Sections 
\ref{s:existence} and \ref{s:loops}. 

\begin{Defi} \label{d:Omega-system}
An \emph{$\Omega$-system} $\abth$ consists of a 
pair of abelian categories $\cala$ and $\calb$, together with additive 
functors 
	\[ \cala \RLEFT5{\theta_*}{\theta^*} \calb \]
such that
\begin{enumerate}[\rm(OP1) ]

\item $\theta_*$ is left adjoint to $\theta^*$; \label{def:op1}

\item $\theta_*$ is a retraction in the sense that the counit of the adjunction 
$\bbb\:\theta_*\circ\theta^*\too\Id_\calb$ is an isomorphism; and 
\label{def:op2}

\item $\theta^*$ sends epimorphisms in $\calb$ to epimorphisms in $\cala$. 
\label{def:op3}

\end{enumerate}
\end{Defi}

It will be important, in the situation of Definition \ref{d:Omega-system}, 
to know that $\theta^*(\calb)$ is a full subcategory of $\cala$. In fact, 
this holds without assuming condition \OP3.

\begin{Lem} \label{l:adj-pair}
Let $\cala$ and $\calb$ be a pair of categories, together with functors
	\[ \cala \RLEFT5{\theta_*}{\theta^*} \calb \]
such that $\theta_*$ is left adjoint to $\theta^*$, and such that the counit 
$\bbb\: \theta_*\circ\theta^*\too\Id_\calb$ of the adjunction is an isomorphism 
of functors. Then the image $\theta^*(\calb)$ is a full subcategory of $\cala$.
\end{Lem}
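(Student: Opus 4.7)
The plan is to reduce the statement to showing that $\theta^*$ is fully faithful, i.e. that for all $B,B'\in\calb$ the map
$\theta^*_{B,B'}\:\Hom_\calb(B,B')\to\Hom_\cala(\theta^*(B),\theta^*(B'))$
is a bijection. Fullness then says that every $\cala$-morphism between objects of the form $\theta^*(B)$ is itself of the form $\theta^*(f)$ for some $f$ in $\calb$, which is exactly the content of $\theta^*(\calb)$ being a full subcategory of $\cala$.

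To establish the bijection, I would factor $\theta^*_{B,B'}$ through two manifestly bijective maps. First, the adjunction $\theta_*\dashv\theta^*$ supplies the natural transpose bijection
\[ \Phi_{B,B'}\:\Hom_\cala(\theta^*(B),\theta^*(B')) \xrightarrow{\;\cong\;} \Hom_\calb(\theta_*\theta^*(B),B'), \qquad g\longmapsto \bbb_{B'}\circ\theta_*(g). \]
Second, since $\bbb_B\:\theta_*\theta^*(B)\to B$ is assumed to be an isomorphism, precomposition with $\bbb_B$ gives a bijection
\[ \Psi_{B,B'}\:\Hom_\calb(B,B') \xrightarrow{\;\cong\;} \Hom_\calb(\theta_*\theta^*(B),B'), \qquad f\longmapsto f\circ\bbb_B. \]

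The only remaining point is to verify the identity $\Phi_{B,B'}\circ\theta^*_{B,B'} = \Psi_{B,B'}$, which unwinds to the assertion that
$\bbb_{B'}\circ\theta_*\theta^*(f) = f\circ\bbb_B$
for every $f\:B\to B'$; this is nothing more than the naturality square for the counit $\bbb$ applied to $f$. It follows that $\theta^*_{B,B'} = \Phi_{B,B'}^{-1}\circ\Psi_{B,B'}$ is a bijection.

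There is no real obstacle in the argument beyond keeping track of the conventions for the adjunction transpose; the lemma is essentially the standard categorical fact that the right adjoint in an adjunction with invertible counit is fully faithful, and the exactness hypothesis \OP3 is not needed for this step.
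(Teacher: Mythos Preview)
Your argument is correct and is essentially the same as the paper's: both exploit the adjunction isomorphism together with the invertibility of $\bbb$ to exhibit every $\alpha\in\Mor_\cala(\theta^*(B),\theta^*(B'))$ as $\theta^*$ of something in $\calb$. The paper writes down the preimage explicitly as $\bbb_{B'}\circ\theta_*(\alpha)\circ\bbb_B^{-1}$ and checks it works via the triangle identity $\theta^*(\bbb)\circ\aaa_{\theta^*}=\Id$, whereas you package the same computation as a factorization of $\theta^*_{B,B'}$ through the adjunction bijection and invoke naturality of $\bbb$; these are equivalent bookkeeping for the same standard fact.
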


\begin{proof} For each $B,B'\in\Ob(\calb)$, 
	\[ \theta^*_{B,B'} \: \Mor_\calb(B,B') \Right4{} 
	\Mor_\cala(\theta^*(B),\theta^*(B'))\cong 
	\Mor_\calb(\theta_*\theta^*(B),B') \]
is a bijection since $\theta_*\theta^*\cong\Id_\calb$. (Our thanks to 
the editor for pointing out this simple argument.)
\end{proof}

\iffalse
\begin{proof} Fix $B,B'\in\Ob(\calb)$ and 
$\alpha\in\Mor_\cala(\theta^*(B),\theta^*(B'))$. Let 
$\aaa\:\Id_\cala\too\theta^*\circ\theta_*$ be the unit of the adjunction. 
Upon applying $\aaa$ to $\alpha$, one obtains the commutative square in the 
following diagram in $\cala$: 
	\[ \xymatrix@C=50pt@R=40pt{
	& \theta^*(B) \ar[r]^{\alpha} \ar[d]^{\aaa_{\theta^*(B)}} 
	\ar[dl]_-{\Id_{\theta^*(B)}}
	& \theta^*(B') \ar[d]_{\aaa_{\theta^*(B')}}
	\ar[dr]^-{\Id_{\theta^*(B')}} \\
	\theta^*(B) & \theta^*\theta_*\theta^*(B) 
	\ar[l]_-{\theta^*(\bbb_B)}^-{\cong} 
	\ar[r]^{\theta^*\theta_*(\alpha)} & \theta^*\theta_*\theta^*(B') 
	\ar[r]^-{\theta^*(\bbb_{B'})}_-{\cong} & \theta^*(B') \rlap{\,.} } \]
The two triangles commute by, e.g., \cite[\S\,V.5]{MacLane-cat}, and 
$\bbb_B$ and $\bbb_{B'}$ are isomorphisms in $\calb$ by assumption. Hence 
$\alpha=\theta^*(\bbb_{B'}\circ\theta_*(\alpha)\circ\bbb_B^{-1})$ is a 
morphism in $\theta^*(\calb)$, and so $\theta^*(\calb)$ is a full subcategory of 
$\cala$.
\end{proof}
\fi

The following is one family of $\Omega$-systems to which we will frequently 
refer. More examples will be given in Section 3.

\begin{Ex} \label{ex1}
Fix a commutative ring $\RR$, a pair of groups $G$ and $\pi$, and a 
surjective homomorphism $\theta\:G\too\pi$. Let $\RR G\mod$ and 
$\RR\pi\mod$ be the categories of (left) $\RR G$- and $\RR\pi$-modules, 
respectively, and let 
	\[ \RR G\mod \RLEFT6{\theta_*}{\theta^*} \RR \pi\mod \]
be the functors defined as follows. For each $\RR G$-module $M$, set 
$\theta_*(M)=\RR\pi\otimes_{\RR G}M$ where $\RR\pi$ is regarded as a right 
$\RR G$-module via $\theta$. For each $\RR\pi$-module $N$, set 
$\theta^*(N)=N$ regarded as an $\RR G$-module via $\theta$. Then $\cpth[G]$ 
is an $\Omega$-system. 
\end{Ex}

\begin{proof} If $M$ and $N$ are $\RR G$- and $\RR\pi$-modules, 
respectively, then there is an obvious natural bijection 
$\Hom_{\RR G}(M,\theta^*(N))\cong\Hom_{\RR\pi}(\theta_*(M),N)$. Thus \OP1 
holds: $\theta_*$ is left adjoint to $\theta^*$. 
Conditions \OP2 and \OP3 are clear.
\end{proof}

The following properties of $\Omega$-systems follow easily from the basic 
properties of abelian categories and adjoint functors (See, e.g., 
\cite[\S\,IV-V]{MacLane-cat}.).

\begin{Lem} \label{l:Omega-props}
The following hold for each $\Omega$-system $\abth$.
\begin{enuma} 

\item The functor $\theta^*$ is exact, and $\theta_*$ is right exact.

\item The functor $\theta_*$ sends projectives to projectives.

\item A sequence in $\calb$ is exact if and only if its image under 
$\theta^*$ is exact in $\cala$. A morphism in $\calb$ is an isomorphism, an 
epimorphism, or a monomorphism if and only if the same is true in $\cala$ 
of its image under $\theta^*$.

\end{enuma}
\end{Lem}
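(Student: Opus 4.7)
The strategy is to exploit the formal consequences of OP1, OP2, and OP3 systematically, so that all three parts reduce to standard abelian-category formalism. For part (a), since $\theta_*$ is a left adjoint by OP1 it preserves colimits, hence is right exact; dually, $\theta^*$ is a right adjoint, hence left exact, and combined with OP3 this upgrades to full exactness. For part (b), I would invoke the standard fact that a left adjoint to an exact functor preserves projectives: for any projective $P\in\cala$, the natural isomorphism $\Hom_\calb(\theta_*(P),-)\cong\Hom_\cala(P,\theta^*(-))$ exhibits the left side as a composition of the exact functor $\theta^*$ (by (a)) with the exact functor $\Hom_\cala(P,-)$, so $\theta_*(P)$ is projective.

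The substance of part (c) lies in proving that $\theta^*$ is fully faithful. Fullness is exactly Lemma \ref{l:adj-pair}. For faithfulness, if $f\colon B\to B'$ in $\calb$ satisfies $\theta^*(f)=0$, then the naturality square for the counit together with the invertibility of $\bbb$ from OP2 forces
\[ f = \bbb_{B'}\circ\theta_*\theta^*(f)\circ\bbb_B^{-1} = 0. \]
The same computation, applied to the inverse of $\theta^*(f)$ when that happens to be an isomorphism, shows that $\theta^*$ reflects isomorphisms; and $\theta^*$ reflects the zero object because $\theta^*(B)=0$ forces $B\cong\theta_*\theta^*(B)=0$ by OP2.

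With these properties in hand, the reflection statements in (c) follow from standard characterizations in an abelian category: a morphism is a monomorphism (resp.\ epimorphism) iff its kernel (resp.\ cokernel) vanishes, an isomorphism iff both vanish, and a sequence is exact iff the canonical morphism from image to kernel is an isomorphism. Since $\theta^*$ is exact by (a), it preserves kernels, cokernels, and this image-to-kernel comparison morphism; combined with the reflection of the zero object and of isomorphisms, each such condition in $\calb$ can be tested after applying $\theta^*$. The only step requiring any real care is the verification of faithfulness of $\theta^*$ via OP2; after that, everything is routine abelian-category bookkeeping.
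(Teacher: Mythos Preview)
Your proposal is correct and follows essentially the same approach as the paper. Both arguments handle (a) and (b) identically via standard adjunction facts, and for (c) both combine the exactness of $\theta^*$ (so that kernels, cokernels, and images are preserved) with the observation that $\theta^*$ reflects isomorphisms, which in each case is extracted from OP2 and Lemma~\ref{l:adj-pair}; your explicit isolation of faithfulness via the counit naturality square is a slightly cleaner packaging of the same computation the paper performs implicitly.
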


\begin{proof} 
\noindent\textbf{(a,b)} Since $\theta_*$ is left adjoint to $\theta^*$, 
$\theta_*$ is right exact and $\theta^*$ is left exact. By \OP3, $\theta^*$ 
also preserves epimorphisms, and hence is exact. Since $\theta_*$ has a 
right adjoint that is exact, it sends projectives to projectives.

\smallskip

\noindent\textbf{(c) } The exactness of $\theta^*$ implies that it sends 
the kernel, cokernel, and image of each morphism $\psi$ in $\calb$ to the 
kernel, cokernel, and image in $\cala$ of $\theta^*(\psi)$. Also, 
if 
$\varphi\in\Iso_\cala(\theta^*(N),\theta^*(N'))$ for $N,N'$ in $\calb$, 
then 
since 
$\theta^*(\calb)$ is a full subcategory of $\cala$ and $\theta_*\theta^*$ 
is naturally isomorphic to the identity, 
$\varphi=\theta^*(\psi)$ for some $\psi\in\Iso_\calb(N,N')$. Hence a 
sequence in $\calb$ whose image under $\theta^*$ is exact in $\cala$ is 
also exact in $\calb$, and if $\theta^*(\varphi)$ is a monomorphism or 
epimorphism in $\cala$, then $\varphi$ is a monomorphism or epimorphism, 
respectively, in $\calb$. This proves the ``if'' statements, and the 
converse in all cases holds by the exactness of $\theta^*$. 
\end{proof}

We are now ready to define $\Omega$-resolutions.

\begin{Defi}\label{d:Omega-resolution}
Let $\abth$ be an $\Omega$-system. For a projective object $X$ in $\calb$, 
an \emph{$\Omega$-resolution} of $X$ with respect to $\abth$ is a 
chain complex 
	\beqq \bfR = \bigl( \cdots \Right3{\2_3} P_2 \Right3{\2_2} P_1 
	\Right3{\2_1} P_0 \Right3{\gee} \theta^*(X) \Right2{} 0 \bigr) 
	\label{e2:sqres} \eeqq
in $\cala$ such that 
\begin{enumerate}[\rm($\Omega$-1) ]
\item $P_n$ is projective in $\cala$ for all $n\geq 0$; 
\label{sqres-1}

\item $\theta_*(\bfR)$ is exact; and 
\label{sqres-2}

\item $H_n(P_*,\2_*)$ is isomorphic to an object in $\theta^*(\calb)$ for 
each $n\ge0$, and $\gee$ induces an isomorphism 
$H_0(P_*,\2_*)\cong\theta^*(X)$. \label{sqres-3}

\end{enumerate} 
If an $\Omega$-resolution $\bfR$ exists as above, then we set 
$H_*^\Omega(\cala,\calb;X)=\theta_*\bigl(H_*(P_*,\2_*)\bigr)$: the image 
under $\theta_*$ of the homology of the complex $(P_*,\2_*)$. 
\end{Defi}

There are, in fact, many $\Omega$-systems for which $\Omega$-resolutions do 
not exist. In the situation of Example \ref{ex1}, when $\RR$ is a 
field and $\theta\:G\too\pi$ is a surjection of groups, we will 
see in Example \ref{ex1a} that a nonzero projective object in $\calb$ has 
an $\Omega$-resolution if and only if $H_1(\Ker(\theta);\RR)=0$. 
However, whenever $X$ does have at least one 
$\Omega$-resolution, the next proposition implies that 
$H_*^\Omega(\cala,\calb;X)$ is unique up to natural isomorphism.

\begin{Prop}\label{p2:functoriality}
Let $\abth$ be an $\Omega$-system. Let $X$ and $Y$ be projective objects in 
$\calb$, and let $f\in\Mor_\calb(X,Y)$ be a morphism. Let 
	\[ \cdots \xto{~\2_2~} P_1 \xto{~\2_1~} P_0 
	\xto{~\gee~} \theta^*(X) \too 0 
	\quad\textup{and}\quad
	\cdots \xto{~\2'_2~} P'_1 \xto{~\2'_1~} P'_0 
	\xto{~\gee'~} \theta^*(Y) \too 0 \]
be chain complexes in $\cala$, where the first satisfies conditions 
\textup{($\Omega$-\ref{sqres-1})} and \textup{($\Omega$-\ref{sqres-2})} in Definition 
\ref{d:Omega-resolution} and the second satisfies condition 
\textup{($\Omega$-\ref{sqres-3})}. Then 
there are morphisms $f_n\in\Mor_\cala(P_n,P'_n)$ which make the following 
diagram commute:
	\[ \vcenter{\xymatrix@C=40pt@R=30pt{
	\cdots \ar[r]^{\2_3} & P_2 \ar[r]^{\2_2} \ar[d]_{f_2} & P_1 \ar[r]^{\2_1} 
	\ar[d]_{f_1} & P_0 \ar[r]^{\gee} \ar[d]_{f_0} & \theta^*(X) \ar[r] 
	\ar[d]_{\theta^*(f)} & 0 \\
	\cdots \ar[r]^{\2'_3} & P'_2 \ar[r]^{\2'_2} & P'_1 \ar[r]^{\2'_1} 
	& P'_0 \ar[r]^{\gee'} & \theta^*(Y) \ar[r] & 0 \\
	}} \]
Furthermore, $\{f_n\}_{n\in\N}$ is unique up to chain homotopy.
\end{Prop}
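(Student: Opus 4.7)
The proof follows the scheme of the standard comparison theorem for projective resolutions, adapted to the asymmetric hypotheses on the two complexes, by induction on $n$. For $n=0$, condition \textup{($\Omega$-\ref{sqres-3})} forces $\epsilon'$ to be an epimorphism (since it induces an isomorphism $H_0(P'_*)\cong\theta^*(Y)$), so projectivity of $P_0$ from \textup{($\Omega$-\ref{sqres-1})} lifts $\theta^*(f)\circ\epsilon$ to $f_0$. For the inductive step, set $g_n=f_{n-1}\circ\partial_n$; a direct chain computation gives $\partial'_{n-1}\circ g_n=f_{n-2}\circ\partial_{n-1}\circ\partial_n=0$ for $n\geq 2$ and $\epsilon'\circ g_1=\theta^*(f)\circ\epsilon\circ\partial_1=0$, so $g_n$ lands in $\ker(\partial'_{n-1})$ for $n\geq2$ and in $\ker(\epsilon')$ for $n=1$. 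When $n=1$, condition \textup{($\Omega$-\ref{sqres-3})} additionally gives $\ker(\epsilon')=\Im(\partial'_1)$, so projectivity of $P_1$ lifts $g_1$ through $\partial'_1$. When $n\geq 2$, projectivity of $P_n$ produces $f_n$ if and only if the induced map $\omega_n\:P_n\to H_{n-1}(P'_*)\cong\theta^*(N_{n-1})$ vanishes, where $N_{n-1}\in\calb$ is supplied by \textup{($\Omega$-\ref{sqres-3})}.

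The main step is to show $\omega_n=0$. Since the target lies in $\theta^*(\calb)$, the adjunction \OP1 together with \OP2 (which gives a bijection via the counit isomorphism $\bbb$) reduces the problem to showing the adjoint morphism $\tilde\omega_n\:\theta_*(P_n)\to N_{n-1}$ vanishes. Here I invoke \textup{($\Omega$-\ref{sqres-2})}: by Lemma \ref{l:Omega-props}(b), $\theta_*(P_n)$ is projective in $\calb$ and $\theta_*(P_*)\to X\to 0$ is an exact complex, so that $\Im(\theta_*(\partial_n))=\ker(\theta_*(\partial_{n-1}))$. Applying the right-exact functor $\theta_*$ to the short exact sequence $0\to\Im(\partial'_n)\to\ker(\partial'_{n-1})\to\theta^*(N_{n-1})\to 0$ identifies $\ker(\theta_*(q))$ with the image of $\theta_*(\Im(\partial'_n))$ inside $\theta_*(\ker(\partial'_{n-1}))$, where $q$ is the projection onto $H_{n-1}(P'_*)$. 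A diagram chase, combining this identification with the commutativity of the partial chain map already constructed and the exactness of $\theta_*(P_*)$, then shows that the image of $\theta_*(g_n)$ lies in $\ker(\theta_*(q))$, yielding $\tilde\omega_n=0$. I expect this diagram chase to be the most delicate part of the proof.

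Uniqueness follows by an analogous induction. Given two lifts $f_*, f'_*$ of $f$, the difference $h_*=f_*-f'_*$ is a chain map lifting $0\:X\to Y$. One constructs a chain homotopy $s_n\:P_n\to P'_{n+1}$ with $\partial'_{n+1}s_n+s_{n-1}\partial_n=h_n$ starting from $s_{-1}=0$; in the base step $s_0$ exists because $\epsilon'\circ h_0=0$ forces $h_0$ into $\ker(\epsilon')=\Im(\partial'_1)$, and the inductive step uses projectivity of $P_n$ to lift $h_n-s_{n-1}\partial_n$ through $\partial'_{n+1}$. At each inductive stage the obstruction lies in $\Hom_\cala(P_n,H_n(P'_*))\cong\Hom_\calb(\theta_*(P_n),N_n)$ and vanishes by the same adjunction-and-diagram-chase argument as in the existence part.
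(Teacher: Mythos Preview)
Your proposal has a genuine gap: the claim that $\omega_n=0$ is false in general. The obstruction depends on the particular choice of $f_{n-1}$ made at the previous step, and for an unlucky choice it need not vanish. This is visible already in the trivial $\Omega$-system $\cala=\calb=k\mod$ (for a field $k$) with $\theta_*=\theta^*=\Id$. Take $X=Y=k$, source the exact complex
\[
0\longrightarrow k\xrightarrow{\;e\mapsto(0,e)\;}k^2\xrightarrow{\;(c,d)\mapsto(0,c)\;}k^2\xrightarrow{\;(a,b)\mapsto a\;}k\longrightarrow 0,
\]
and target the complex $0\to k\xrightarrow{\;0\;}k\xrightarrow{\;\Id\;}k\to 0$ (so $H_1(P'_*)=k$). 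Then $f_0(a,b)=a$ is forced, and with the legitimate choice $f_1(c,d)=d$ one computes $\omega_2(e)=[f_1(0,e)]=e$, so $\omega_2=\Id\ne0$ and no compatible $f_2$ exists for this $f_1$. The diagram chase you allude to cannot succeed because its conclusion is false: passing to adjoints you do get that $\tilde\omega_n$ kills $\Im(\theta_*(\partial_{n+1}))=\Ker(\theta_*(\partial_n))$, but this only says that $\tilde\omega_n$ factors through $\Im(\theta_*(\partial_n))$, not that it is zero.

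The paper's argument uses precisely this factorization. Since $\theta_*(P_*)\to X\to0$ is an exact complex of projectives in $\calb$ (Lemma~\ref{l:Omega-props}(b) together with \omres{\ref{sqres-2}}), it is \emph{split}, so $\Im(\theta_*(\partial_n))$ is a direct summand of $\theta_*(P_{n-1})$ and $\tilde\omega_n$ extends to a morphism $\theta_*(P_{n-1})\to N_{n-1}$. Its adjoint is a map $\varphi\colon P_{n-1}\to H_{n-1}(P'_*)$ with $\omega_n=\varphi\circ\partial_n$. Now lift $\varphi$ to $\tilde\varphi\colon P_{n-1}\to\Ker(\partial'_{n-1})$ using projectivity of $P_{n-1}$, and replace $f_{n-1}$ by $f_{n-1}-\tilde\varphi$; this preserves the relation $\partial'_{n-1}f_{n-1}=f_{n-2}\partial_{n-1}$ (since $\tilde\varphi$ lands in $\Ker(\partial'_{n-1})$) and kills the new $\omega_n$. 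Only after this modification can $f_n$ be constructed. The same modify-the-previous-step manoeuvre is needed in the uniqueness proof when building the homotopy $s_n$; the obstruction there likewise need not vanish outright.
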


\begin{proof} For each $i\ge0$, $\theta_*(P_i)$ is projective in $\calb$ by 
Lemma \ref{l:Omega-props}(b) and since $P_i$ is projective. Also, 
$\theta_*\theta^*(X)\cong X$ by \OP2, and $X$ is projective in $\calb$ by 
assumption. So by \omres{\ref{sqres-2}}, 
$\theta_*(P_*)\too\theta_*\theta^*(X)\to0$ is an exact sequence of 
projective objects in $\calb$, and hence splits in each degree.

\smallskip

\noindent\textbf{Existence of $f_*$: } Since $P_0$ is projective, and the 
augmentation $\gee'\:P'_0\Right2{}\theta^*(Y)$ is onto by 
\omres{\ref{sqres-3}}, $\theta^*(f)\circ\gee$ lifts to a homomorphism 
$f_0\:P_0\Right2{}P'_0$.

Assume, for some $n\ge0$, that $f_i$ has been constructed for all $0\le 
i\le n$, where $f_{n-1}\circ \2_n=\2'_n\circ f_n$.  Then $f_n\circ \2_{n+1}$ 
sends $P_{n+1}$ into $\Ker(\2'_n)$, and hence induces a homomorphism 
$\chi\:P_{n+1}\Right2{}H_n(P'_*,\2'_*)$. By assumption 
\omres{\ref{sqres-3}}, 
$H_n(P'_*,\2'_*)\cong\theta^*(B)$ for some $B$ in $\calb$, and hence there 
are natural bijections 
	\[ \Mor_\cala(P_i,H_n(P'_*,\2'_*))\cong \Mor_\cala(P_i,\theta^*(B)) 
	\cong \Mor_\calb(\theta_*(P_i),B) \]
for $i=n,n+1,n+2$. Since the complex 
$(\theta_*(P_*),\theta_*(\2_*))\too\theta_*(\theta^*X)\to0$ is exact and 
split and $\chi\circ \2_{n+2}=0$ by construction, $\theta_*(\chi)$ factors 
through $\Im(\theta_*(\2_{n+1}))$ and extends to $\theta_*(P_n)$. By 
adjointness, there is $\varphi\:P_n\Right2{}H_n(P'_*,\2'_*)$ such that 
$\chi=\varphi\circ \2_{n+1}$.  

Since $P_n$ is projective, $\varphi$ lifts to a morphism 
$\til\varphi\:P_n\Right2{}\Ker(\2'_n)$.  An easy diagram chase now shows 
that $\Im((f_n-\til\varphi)\circ \2_{n+1})\le\Im(\2'_{n+1})$. Hence, upon 
replacing $f_n$ by $f_n-\til\varphi$, $f_{n-1}\circ \2_n=\2'_n\circ f_n$ 
still holds (where $f_{-1}=f$ if $n=0$) and 
$\Im(f_n\circ\2_{n+1})\le\Im(\2'_{n+1})$. Upon using the projectivity of 
$P_{n+1}$ again, one can lift $f_n\circ \2_{n+1}$ to a homomorphism 
$f_{n+1}\:P_{n+1}\Right2{}P'_{n+1}$ such that $f_n\circ\2_{n+1} = 
\2'_{n+1}\circ f_{n+1}$. We now continue inductively.

\smallskip

\noindent\textbf{Uniqueness of $f_*$: } Let $f'_*$ and $f''_*$ be two 
homomorphisms covering $f$, and set $t_*=f_*'-f''_*$.  Thus 
$t_*\:(P_*,\2_*)\Right2{}(P'_*,\2'_*)$ is a homomorphism covering 
$X\Right2{0}Y$, and we must construct a chain homotopy 
$D\:P_*\Right2{}P'_{*}$ of degree $+1$ such that $D\circ \2 + \2'\circ D = 
t_*$.

Set $D_{-1}=0\:\theta^*(X)\Right2{}P'_0$.  Since $\gee'\circ t_0=0$, and the 
sequence 
	\[ P'_1\Right5{\2'_1}P'_0\Right5{\gee'}
	\theta^*(Y) \Right2{} 0 \] 
is exact by condition \omres{\ref{sqres-3}}, $t_0$ lifts to a homomorphism 
$D_0\:P_0\Right2{}P'_1$. 
The rest of the proof is carried out using arguments similar to those used 
to show existence.
\end{proof}

When $\abth$ is an $\Omega$-system and $X\in\scrp(\calb)$ has an 
$\Omega$-resolution, there is a spectral sequence that links the 
$\Omega$-homology of $X$ to higher derived functors of $\theta_*$.

\begin{Prop} \label{Omega-sp.seq.}
Let $\abth$ be an $\Omega$-system, and assume $\cala$ has enough 
projectives. Let $X$ be a projective object in $\calb$ that has an 
$\Omega$-resolution. Then there is a first quadrant spectral sequence 
$E^r_{*,*}$ in $\calb$ such that 
	\[ E^2_{i,j} \cong 
	(L_i\theta_*)\bigl(\theta^*(H_j^\Omega(\cala,\calb;X))\bigr) 
	\qquad\textup{and}\qquad
	E^\infty_{i,j}\cong \begin{cases} 
	X & \textup{if $(i,j)=(0,0)$} \\
	0 & \textup{if $(i,j)\ne(0,0)$.}
	\end{cases} \]
\end{Prop}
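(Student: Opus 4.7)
The plan is to use the Cartan--Eilenberg hyperhomology machinery for the right exact functor $\theta_*\:\cala\too\calb$ applied to the chain complex $P_*$ coming from a chosen $\Omega$-resolution $\bfR$ of $X$. Since by assumption $\cala$ has enough projectives, one may choose a Cartan--Eilenberg resolution $Q_{*,*}\too P_*$ by a first-quadrant double complex of projectives in $\cala$, and then form the first-quadrant double complex $\theta_*(Q_{*,*})$ in $\calb$. The two standard spectral sequences associated to its total complex both converge to the same abutment, namely the hyper-derived functor $\mathbb{L}_*\theta_*(P_*)$.

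I would then compute each spectral sequence separately. Filtering the double complex by one direction gives a spectral sequence whose $E^2$-term, by the standard identification for Cartan--Eilenberg resolutions, equals $(L_i\theta_*)\bigl(H_j(P_*,\2_*)\bigr)$. By condition ($\Omega$-\ref{sqres-3}), $H_j(P_*,\2_*)\cong\theta^*(H^\Omega_j(\cala,\calb;X))$, which produces exactly the stated $E^2_{i,j}$. Filtering the other way, the derived functor rows compute $(L_i\theta_*)(P_j)$, but since each $P_j$ is projective in $\cala$ these vanish for $i>0$ and equal $\theta_*(P_j)$ in degree zero. Hence the second spectral sequence collapses onto the $q=0$ row and its abutment is simply $H_i(\theta_*(P_*),\theta_*(\2_*))$.

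Next I would identify that abutment. By ($\Omega$-\ref{sqres-2}), the augmented sequence $\theta_*(\bfR)$ is exact, and by \OP2 the term $\theta_*\theta^*(X)$ is naturally isomorphic to $X$; therefore $\theta_*(P_*)\too X\to0$ is a resolution, so $H_0(\theta_*(P_*))\cong X$ and $H_i(\theta_*(P_*))=0$ for $i>0$. Comparing the two spectral sequences, both converge to the same graded object of $\calb$, which is $X$ in total degree $0$ and zero elsewhere. Since the second one collapses, this is the common abutment of both, and the first spectral sequence then has the required $E^\infty$-page.

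The main technical obstacle, as I see it, is not in any single computation but in the bookkeeping that justifies the double-complex spectral sequence argument purely inside the abelian categories $\cala$ and $\calb$ (rather than in module categories). Here one must verify that the $E^2$-identification $(L_i\theta_*)(H_j(P_*))$ is correct when $H_j(P_*)$ is not necessarily projective, which is the content of the standard hyperhomology construction and relies only on $\cala$ having enough projectives; and one must ensure that everything stays in the first quadrant, which is automatic since the $\Omega$-resolution is concentrated in non-negative degrees. No delicate convergence issue arises because the double complex is first-quadrant.
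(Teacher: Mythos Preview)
Your proposal is correct and follows essentially the same route as the paper: both take a Cartan--Eilenberg (``proper projective'') resolution $Q_{*,*}$ of the $\Omega$-resolution $P_*$, compare the two spectral sequences of $\theta_*(Q_{*,*})$, and use projectivity of the $P_j$ together with ($\Omega$-2), ($\Omega$-3), and \OP2 exactly as you describe. The only difference is presentational---the paper spells out explicitly why $H_j(\theta_*(Q_{i,*}))\cong\theta_*(H_j(Q_{i,*}))$ via the splitting argument, whereas you appeal to the standard hyperhomology identification.
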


\begin{proof} Let $(P_*, \2_*)$ be an $\Omega$-resolution of $X$. Let 
$\bigl\{Q_{ij}\bigr\}_{i,j\ge0}$ be a \emph{proper projective resolution} 
of $(P_*,\2_*)$; i.e., a double complex of projective objects in $\cala$, 
where for each $j$, the sequences 
\begin{enumerate}[(i) ]

\item $0\fromm P_j\fromm Q_{0,j}\fromm Q_{1,j}\fromm\cdots$,

\item $0\fromm H_j(P_*)\fromm H_j(Q_{0,*})\fromm H_j(Q_{1,*})
\fromm\cdots$, and 

\item $0\fromm Z_j(P_*)\fromm Z_j(Q_{0,*})\fromm Z_j(Q_{1,*})
\fromm\cdots$

\end{enumerate}
are all projective resolutions (see \cite[Definition 3.6.1]{Benson2}). Proper 
projective resolutions exist by \cite[Proposition XII.11.6]{MacLane} (see 
also, \cite[Lemma 3.6.2]{Benson2}). 

Consider the two spectral sequences associated to the double complex 
$\theta_*(Q_{*,*})$. Since each row $Q_{*,j}$ is a projective 
resolution of the projective object $P_j$, $\theta_*(Q_{*,j})$ is a resolution 
of $\theta_*(P_j)$, and thus $\bar{E}^1_{0,j}\cong\theta_*(P_j)$, while 
$\bar{E}^1_{i,j}=0$ if $i\ge1$. Since $P_*$ is an $\Omega$ resolution of $X$, 
we now obtain $\bar{E}^2_{0,0}\cong X$, while $\bar{E}^2_{i,j}=0$ if 
$(i,j)\ne(0,0)$. 

Now consider the other spectral sequence $E^r_{i,j}$, where we first take 
homology of the columns. By (ii), $H_j(Q_{i,*})$ and $Z_j(Q_{i,*})$ are 
projective for each $i,j\ge0$, so $B_j(Q_{i,*})$ is also projective, and 
all sequences involved in the homology of $Q_{i,*}$ split. In other words, 
	\[ E^1_{i,j} = H_j(\theta_*(Q_{i,*}))\cong \theta_*(H_j(Q_{i,*})) \]
for all $i,j\ge0$. By (ii) again, the $j$-th row in the $E^1$-page is 
obtained by applying $\theta_*$ to a projective resolution of $H_j(P_*)$, and 
so 
	\[ E^2_{i,j}\cong(L_i\theta_*)(H_j(P_*,\2_*))\cong 
	(L_i\theta_*)\bigl(\theta^*(H_j^\Omega(\cala,\calb; X))\bigr). \]

Since $\bar{E}^\infty_{i,j}=0$ for all $(i,j)\ne(0,0)$, the two spectral 
sequences have isomorphic $E^\infty$-pages, and this proves the proposition.
\end{proof}

We finish the section with the following observation.

\begin{Rmk} \label{Q10}
If $\Osys{\cala}{\calb}$ and $\Osys[\eta]{\calb}{\calc}$ are two 
$\Omega$-systems, then their composite 
$\Osystem{\cala}{\calc}{\eta_*\theta_*}{\theta^*\eta^*}$ is easily seen to 
be an $\Omega$-system. In other words, there is a category whose objects 
are the small abelian categories and whose morphisms are isomorphism 
classes of $\Omega$-systems. One obvious question is whether there is a 
natural way to construct $\Omega$-resolutions for the composite 
$\Omega$-system from $\Omega$-resolutions for the two factors, and if so, 
what connection there is (if any) between the homology groups of these three 
complexes.
\end{Rmk}

\section{The existence of $\Omega$-resolutions}
\label{s:existence}

We saw in the last section that $\Omega$-resolutions, when they exist, are 
unique up to chain homotopy. The question of when they do exist is more 
complicated, and in this section, we give some necessary conditions and 
some sufficient conditions for that to happen. When doing this, the 
following more general form of Definition \ref{d:Omega-resolution} 
will be needed. 

\begin{Defi}\label{d:part.res.}
Let $\abth$ be an $\Omega$-system. For $X\in\scrp(\calb)$ and 
$1\le n\le\infty$, an \emph{$\Omega_n$-resolution} of $X$ is a 
chain complex 
	\beqq 
	\bfR_n = \begin{cases} 
	\bigl( P_n\Right3{\2_n} \cdots \Right3{\2_3} P_2 \Right3{\2_2} 
	P_1 \Right3{\2_1} P_0 
	\Right3{\gee} \theta^*(X) \Right2{} 0 \bigr) & \textup{if $n<\infty$} \\
	\bigl( \cdots \Right3{\2_3} P_2 \Right3{\2_2} 
	P_1 \Right3{\2_1} P_0 
	\Right3{\gee} \theta^*(X) \Right2{} 0 \bigr) & \textup{if $n=\infty$} \\
	\end{cases}
	\label{e3:sqres} \eeqq
in $\cala$ such that 
\begin{enumerate}[\rm($\Omega_n$-1) ]
\item $P_i\in\scrp(\cala)$ for all $0\le i\le n$ (for all $i\ge0$ if 
$n=\infty$); 
\label{sqres-1n}

\item $\theta_*(\bfR_n)$ is exact; \label{sqres-2n}

\item $H_i(P_*,\2_*)$ is isomorphic to an object in $\theta^*(\calb)$ for 
each $0\le i<n$, and $\gee$ induces an isomorphism 
$H_0(P_*,\2_*)\cong\theta^*(X)$; and \label{sqres-3n}

\item if $n<\infty$, the inclusion $\Im(\2_n)\le P_{n-1}$ induces a 
monomorphism $\theta_*(\Im(\2_n))\too\theta_*(P_{n-1})$. \label{sqres-4n}

\end{enumerate} 
\end{Defi}

In particular, an $\Omega_\infty$-resolution is the same as an 
$\Omega$-resolution (Definition \ref{d:Omega-resolution}).

\begin{Lem} \label{Omega_n4}
Condition \textup{\omres[n]{\ref{sqres-4n}}} can be replaced by the 
following equivalent condition:
\begin{enumerate}[($\Omega_n$-1) ]
	
\item[$(\Omega_n$-$4')$ ] If $n<\infty$, the sequence 
	$\theta_*(\Ker(\2_n)) \Right4{\theta_*(\incl)} \theta_*(P_n) 
	\Right4{\theta_*(\2_n)} \theta_*(P_{n-1}) $
is exact.
\end{enumerate}
\end{Lem}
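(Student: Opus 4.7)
The plan is to show the equivalence of \omres[n]{\ref{sqres-4n}} and its primed variant by a straightforward diagram-chase based on the factorization of $\2_n$ through its image and the right-exactness of $\theta_*$.

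First I would factor $\2_n$ as the composite $\2_n = \iota \circ \pi$, where $\pi\:P_n \twoheadrightarrow \Im(\2_n)$ is the canonical epimorphism and $\iota\:\Im(\2_n) \hookrightarrow P_{n-1}$ is the canonical monomorphism. Applying $\theta_*$ gives $\theta_*(\2_n) = \theta_*(\iota)\circ\theta_*(\pi)$, and by Lemma \ref{l:Omega-props}(a), the functor $\theta_*$ is right exact, so $\theta_*(\pi)$ remains epic while $\theta_*(\iota)$ need not remain monic.

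Next, I would apply the right exact functor $\theta_*$ to the short exact sequence
	\[ 0 \too \Ker(\2_n) \too P_n \Right3{\pi} \Im(\2_n) \too 0 \]
in $\cala$ to obtain the exact sequence
	\[ \theta_*(\Ker(\2_n)) \Right3{\theta_*(\incl)} \theta_*(P_n)
	\Right3{\theta_*(\pi)} \theta_*(\Im(\2_n)) \too 0. \]
In particular, $\Im\bigl(\theta_*(\Ker(\2_n))\too\theta_*(P_n)\bigr) = \Ker(\theta_*(\pi))$. On the other hand, since $\theta_*(\pi)$ is epic, $\Ker(\theta_*(\2_n)) = \theta_*(\pi)^{-1}\bigl(\Ker(\theta_*(\iota))\bigr)$, and this contains $\Ker(\theta_*(\pi))$ with quotient isomorphic to $\Ker(\theta_*(\iota))$.

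Hence the exactness of the sequence in \omres[n]{$4'$} at $\theta_*(P_n)$ — namely the equality $\Ker(\theta_*(\2_n)) = \Ker(\theta_*(\pi))$ — is equivalent to $\Ker(\theta_*(\iota)) = 0$, i.e. to $\theta_*(\iota)$ being a monomorphism, which is precisely condition \omres[n]{\ref{sqres-4n}}. I do not anticipate any real obstacle here; the only mild subtlety is making sure to use right exactness of $\theta_*$ (not exactness) and to separate the roles of $\pi$ and $\iota$ cleanly, since $\theta_*$ generally fails to preserve monomorphisms.
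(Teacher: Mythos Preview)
Your argument is correct and is essentially the same as the paper's: both use right exactness of $\theta_*$ on the short exact sequence $0\to\Ker(\2_n)\to P_n\to\Im(\2_n)\to0$ to identify the image of $\theta_*(\Ker(\2_n))$ in $\theta_*(P_n)$ with $\Ker(\theta_*(\pi))$, and then observe that exactness of the sequence in \omres[n]{$4'$} holds if and only if $\theta_*(\iota)$ is monic. The paper's proof is simply a terser version of yours, omitting the explicit computation $\Ker(\theta_*(\2_n))=\theta_*(\pi)^{-1}(\Ker(\theta_*(\iota)))$ that you spell out.
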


\begin{proof} The sequence $\theta_*(\Ker(\2_n)) \too \theta_*(P_n) \too 
\theta_*(\Im(\2_n))\to0$ is exact since $\theta_*$ is right exact by Lemma 
\ref{l:Omega-props}(a). Hence the 
sequence in \omres[n]{4$'$} is exact if and only if the inclusion of 
$\Im(\2_n)$ in $P_{n-1}$ induces a monomorphism $\theta_*(\Im(\2_n))\too 
\theta_*(P_{n-1})$.
\end{proof}

\begin{Lem} \label{l:part.res.} 
Fix an $\Omega$-system $\abth$, and a projective object $X\in\scrp(\calb)$. Let 
$P_m\too\cdots\too P_0\too X\to0$ be an $\Omega_m$-resolution of $X$ for 
some $1<m\le\infty$. Then for each $1\le n<m$, the truncation 
$P_n\too\cdots\too P_0\too X\to0$ is an $\Omega_n$-resolution of $X$.
\end{Lem}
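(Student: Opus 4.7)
The plan is to verify each of conditions \omres[n]{1}--\omres[n]{4} for the truncated complex
$\bfR_n = \bigl(P_n \Right3{\2_n} \cdots \Right3{\2_1} P_0 \Right3{\gee} \theta^*(X) \to 0\bigr)$,
using the corresponding properties of $\bfR_m$. Conditions \omres[n]{1}, \omres[n]{2}, and \omres[n]{3} should transfer essentially automatically; only \omres[n]{4} requires a short diagram argument.

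Condition \omres[n]{1} is immediate, since $P_0, \dots, P_n$ are among the projective objects already present in $\bfR_m$. For \omres[n]{2}, I would observe that $\theta_*(\bfR_n)$ coincides, object-by-object and map-by-map, with the truncation of $\theta_*(\bfR_m)$ at position $\theta_*(P_n)$. Exactness of $\theta_*(\bfR_n)$ at $\theta_*(P_i)$ for $0 \le i < n$ and at $\theta_*\theta^*(X)$ is therefore inherited from exactness of $\theta_*(\bfR_m)$ at those same positions, all of which are interior in $\bfR_m$ since $n<m$. For \omres[n]{3}, note that for $0 \le i < n$, $H_i(P_*,\2_*)$ depends only on the three consecutive terms $P_{i+1}, P_i, P_{i-1}$ (with the convention $P_{-1} = \theta^*(X)$ and $\2_0 = \gee$), which appear unchanged in both complexes; hence the hypothesis that these homologies lie in $\theta^*(\calb)$ and that $\gee$ induces $H_0(P_*,\2_*) \cong \theta^*(X)$ passes from $\bfR_m$ to $\bfR_n$ without modification.

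The one step requiring actual work is \omres[n]{4}. My approach is to apply Lemma \ref{Omega_n4} and show instead that the sequence $\theta_*(\Ker(\2_n)) \Right3{\theta_*(\incl)} \theta_*(P_n) \Right3{\theta_*(\2_n)} \theta_*(P_{n-1})$ is exact at $\theta_*(P_n)$. The inclusion $\Im(\theta_*(\incl)) \subseteq \Ker(\theta_*(\2_n))$ is automatic, since $\2_n \circ \incl = 0$. For the reverse inclusion, the plan is to exploit that $n < m$ forces $P_{n+1}$ and $\2_{n+1}$ to be present in $\bfR_m$ with $\Im(\2_{n+1}) \subseteq \Ker(\2_n)$, so that $\2_{n+1}$ factors as $P_{n+1} \Right2{\varphi} \Ker(\2_n) \hookrightarrow P_n$. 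Applying $\theta_*$ expresses $\theta_*(\2_{n+1})$ as a composite through $\theta_*(\Ker(\2_n))$, giving $\Im(\theta_*(\2_{n+1})) \subseteq \Im(\theta_*(\incl))$. Since $\theta_*(P_n)$ sits at an interior position of $\theta_*(\bfR_m)$, condition \omres[m]{2} for $\bfR_m$ yields $\Im(\theta_*(\2_{n+1})) = \Ker(\theta_*(\2_n))$, so $\Ker(\theta_*(\2_n)) \subseteq \Im(\theta_*(\incl))$, closing the argument.

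I do not expect any real obstacle: the lemma is essentially bookkeeping. The one point worth flagging is that the hypothesis $n < m$ does double duty — it places $\theta_*(P_n)$ at an interior position of $\theta_*(\bfR_m)$ (so \omres[m]{2} provides exactness there), and it simultaneously guarantees that the boundary map $\2_{n+1}$ is available in $\bfR_m$ to witness this exactness. The case $m = \infty$ requires no separate treatment, with \omres[m]{4} being vacuous by definition.
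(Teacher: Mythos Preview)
Your proposal is correct and follows essentially the same approach as the paper: both arguments note that \omres[n]{1}--\omres[n]{3} are immediate, and both establish \omres[n]{4} by exploiting the existence of $\2_{n+1}$ (available since $n<m$) together with exactness of $\theta_*(\bfR_m)$ at position $n$. The only cosmetic difference is that you invoke Lemma~\ref{Omega_n4} to verify the equivalent condition \omres[n]{4$'$} via the factorization of $\2_{n+1}$ through $\Ker(\2_n)$, whereas the paper works with \omres[n]{4} directly via the factorization of $\2_n$ through $\Im(\2_n)$.
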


\begin{proof} Conditions 
\omres[n]{\ref{sqres-1n}}--\omres[n]{\ref{sqres-3n}} in Definition 
\ref{d:part.res.} follow immediately from 
\omres[m]{\ref{sqres-1n}}--\omres[m]{\ref{sqres-3n}}, so we need only prove 
that \omres[n]{\ref{sqres-4n}} holds. Consider the following commutative 
diagram:
	\beqq \vcenter{\xymatrix@C=40pt@R=25pt{
	\theta_*(P_{n+1}) \ar[r]^-{\theta_*(\2_{n+1})} \ar@/_1pc/[drr]_{0} 
	& \theta_*(P_n) \ar[rr]^-{\theta_*(\2_n)} 
	\ar@{->>}[dr]_(0.4){\theta_*(\2_n^*)} && \theta_*(P_{n-1}) \\ 
	&& \theta_*(\Im(\2_n)) \ar[ur]_{\theta_*(\incl)}
	}} \label{e:factors} \eeqq
where $\2_n^*\:P_n\too\Im(\2_n)$ is the corestriction of $\2_n$. The row in 
\eqref{e:factors} is exact by \omres[m]{\ref{sqres-2n}} and since $m>n$, and 
$\theta_*(\2_n^*)$ is an epimorphism since $\theta_*$ is right exact. Hence 
$\Ker(\theta_*(\2_n^*))=\Ker(\theta_*(\2_n))$ and 
$\theta_*(\incl)$ is a monomorphism, and \omres[n]{\ref{sqres-4n}} holds.
\end{proof}

\begin{Defi} \label{d:closedunder}
When $\abth$ is an $\Omega$-system, we say that \emph{$\theta^*(\calb)$ is 
closed under subobjects in $\cala$} if for each monomorphism $A_1\too A_2$ 
in $\cala$, $A_1$ is isomorphic to an object of $\theta^*(\calb)$ if $A_2$ 
is. Similarly, we say that \emph{$\theta^*(\calb)$ is closed under 
extensions in $\cala$} if for each short exact sequence $0\to M'\too 
M\too M''\to0$ in $\cala$, $M$ is isomorphic to an object in 
$\theta^*(\calb)$ if $M'$ and $M''$ are isomorphic to objects in 
$\theta^*(\calb)$.
\end{Defi}

We will show that for each $\Omega$-system $\abth$ in which $\theta^*(\calb)$ is 
closed under subobjects and extensions, all projectives in $\calb$ have 
$\Omega$-resolutions (Proposition \ref{exists-res}).

\begin{Lem} \label{l:closed}
Let $\abth$ be an $\Omega$-system, where $\cala$ has enough projectives.
\begin{enuma} 

\item The following are equivalent:\smallskip

\begin{enumerate}[\rm({a}.i) ]

\item $\theta^*(\calb)$ is closed under subobjects in $\cala$.

\item For each $M$ in $\cala$, the unit morphism 
$\aaa_M\:M\too\theta^*\theta_*(M)$ is an epimorphism.

\end{enumerate}

\item If either condition \textup{(a.i)} or \textup{(a.ii)} holds, then the 
following two  conditions are equivalent:\smallskip

\begin{enumerate}[\rm(b.i) ]

\item $\theta^*(\calb)$ is closed under extensions in $\cala$.

\item For each $N$ in $\calb$, $(L_1\theta_*)(\theta^*(N))=0$.
\end{enumerate}

\item If $0\too M'\too M\too M''\too0$ is an extension in $\cala$, 
where $M'$ and $M''$ are in $\theta^*(\calb)$ but $M$ is not isomorphic to 
an object in $\theta^*(\calb)$, then $(L_1\theta_*)(M'')\ne0$. 

\end{enuma}
\end{Lem}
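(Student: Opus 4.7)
The plan rests on two observations drawn from the triangle identities: since the counit $\bbb$ is an isomorphism, $\theta_*(\aaa_M)$ is an isomorphism for every $M\in\cala$, and $\aaa_M$ itself is an isomorphism precisely when $M\in\theta^*(\calb)$. For \textup{(a.ii)}~$\Rightarrow$~\textup{(a.i)}, if $M$ is a subobject of some $N\in\theta^*(\calb)$, then the inclusion $M\hookrightarrow N$ factors through $\aaa_M$ by the universal property of the adjunction, so $\aaa_M$ is mono; together with \textup{(a.ii)} this makes $\aaa_M$ an isomorphism and places $M$ in $\theta^*(\calb)$. For \textup{(a.i)}~$\Rightarrow$~\textup{(a.ii)}, factor $\aaa_M$ as $M\twoheadrightarrow I\hookrightarrow\theta^*\theta_*(M)$ through its image. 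By \textup{(a.i)}, $I\in\theta^*(\calb)$, so Lemma \ref{l:adj-pair} writes the inclusion $I\hookrightarrow\theta^*\theta_*(M)$ as $\theta^*(\gamma)$ for some monomorphism $\gamma$ in $\calb$. Applying $\theta_*$ and using that $\theta_*(\aaa_M)$ is iso while $\theta_*(M\twoheadrightarrow I)$ is epi shows that $\theta_*\theta^*(\gamma)\cong\gamma$ is both split epi and mono, hence an isomorphism; therefore $\gamma$, and in turn $\aaa_M$, is epimorphic.

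For \textup{(b.ii)}~$\Rightarrow$~\textup{(b.i)} and for part \textup{(c)}, the strategy is a single 5-lemma argument. Applying $\theta_*$ to a short exact sequence $0\to M'\to M\to M''\to 0$ yields a short exact sequence in $\calb$ as soon as $(L_1\theta_*)(M'')=0$; then the exact functor $\theta^*$ converts this back into a short exact sequence in $\cala$, which compares naturally with the original via the unit $\aaa$. When $M'$ and $M''$ lie in $\theta^*(\calb)$, the two outer vertical arrows are isomorphisms, so the 5-lemma forces $\aaa_M$ to be an isomorphism, which places $M$ in $\theta^*(\calb)$. This is the conclusion of \textup{(b.ii)}~$\Rightarrow$~\textup{(b.i)} and the desired contradiction in \textup{(c)}.

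The main obstacle is \textup{(b.i)}~$\Rightarrow$~\textup{(b.ii)}. Fix $N\in\calb$ and a short exact sequence $0\to K\to P\to\theta^*(N)\to 0$ with $P$ projective, so that $T:=(L_1\theta_*)(\theta^*(N))=\Ker(\theta_*(K)\to\theta_*(P))$. Set $K'=\Ker(\aaa_K)$ and push out along $\aaa_K$ to obtain the short exact sequence $0\to\theta^*\theta_*(K)\to P/K'\to\theta^*(N)\to 0$ whose outer terms lie in $\theta^*(\calb)$; by \textup{(b.i)}, $P/K'\in\theta^*(\calb)$. Because $\theta_*(\aaa_K)$ is an isomorphism, the composite $\theta_*(K')\to\theta_*(K)$ is zero, and hence the quotient map $q\colon P\to P/K'$ induces an isomorphism $\theta_*(q)\colon\theta_*(P)\to\theta_*(P/K')$. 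Since $P/K'\in\theta^*(\calb)$, adjunction and the naturality of $\aaa$ write $q=\phi\circ\aaa_P$ for some isomorphism $\phi\colon\theta^*\theta_*(P)\to P/K'$, which forces $\Ker(q)=\Ker(\aaa_P)$, i.e.\ $K'=\Ker(\aaa_P)$. A direct check with the units shows that $\aaa_K$ induces an isomorphism $\Ker(\aaa_P)/K'\cong\theta^*(T)$, so $\theta^*(T)=0$ and hence $T=0$ by Lemma \ref{l:Omega-props}(c), as needed.
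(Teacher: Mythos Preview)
Your proof is correct and follows essentially the same route as the paper's. Part (a) is argued identically (image-factor $\aaa_M$, use fullness of $\theta^*(\calb)$ and the triangle identity); your use of the 5-lemma for (c) and for (b.ii)$\Rightarrow$(b.i) is just a repackaging of the paper's direct diagram chase; and for (b.i)$\Rightarrow$(b.ii) both arguments quotient by $K'=\Ker(\aaa_K)$, invoke closure under extensions to put $P/K'$ in $\theta^*(\calb)$, and then deduce that the induced map $P/K'\to\theta^*\theta_*(P)$ is an isomorphism, from which $(L_1\theta_*)(\theta^*(N))=0$ follows. Your final step, computing $\Ker(\aaa_P)/K'\cong\theta^*(T)$ via the naturality square for $\aaa$ along $\alpha\colon K\hookrightarrow P$ together with $\aaa_K$ epi, is a slightly more direct phrasing of what the paper extracts from its diagram \eqref{e:2.5(b)}.
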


\begin{proof} \textbf{(a.i$\implies$a.ii) } Fix an object $M$ in $\cala$, 
and consider the unit morphism $\aaa_M\:M\too\theta^*\theta_*(M)$. Since 
$\theta^*(\calb)$ is closed under subobjects, $\Im(\aaa_M)\cong\theta^*(B)$ 
for some $B$ in $\calb$. Since $\theta^*(\calb)$ is a full subcategory of 
$\cala$ by Lemma \ref{l:adj-pair}, each morphism in 
$\Mor_\cala(\theta^*(B),\theta^*\theta_*(M))$ lies in $\theta^*(\calb)$. 
Thus $\aaa_M$ factors as a composite
	\[ \aaa_M \: M \Right4{g} \theta^*(B) \Right4{\theta^*(\psi)} 
	\theta^*\theta_*(M) \]
for some $\psi\in\Mor_\calb(B,\theta_*(M))$, 
where $g$ is surjective and $\theta^*(\psi)$ is injective.

Let $\gamma\in\Mor_\calb(\theta_*(M),B)$ be the morphism adjoint to $g$. 
Then $\psi\circ\gamma=\Id_{\theta_*(M)}$ since $\aaa_M$ is adjoint to the 
identity, and thus $\psi$ is surjective. So $\theta^*(\psi)$ is also 
surjective by \OP3, and hence $\aaa_M$ is surjective.

\textbf{(a.ii$\implies$a.i) } Now assume that 
$M\Right2{\aaa_M}\theta^*\theta_*(M)$ is an epimorphism for each $M$ in 
$\cala$. Let $M_1\Right2{f}M_2$ be a monomorphism in $\cala$, where 
$M_2$ is in $\theta^*(\calb)$ and hence $\aaa_{M_2}$ is an isomorphism. 
From the commutative square
	\beq \vcenter{\xymatrix@C=40pt@R=25pt{
	M_1 \ar[r]^{f} \ar[d]_{\aaa_{M_1}} & M_2 
	\ar[d]_{\aaa_{M_2}}^{\cong} \\
	\theta^*\theta_*(M_1) \ar[r]^{\theta^*\theta_*(f)} & \theta^*\theta_*(M_2) 
	\rlap{\,,}
	}} \eeq
we see that $\theta^*\theta_*(f)\circ\aaa_{M_1}=\aaa_{M_2}\circ f$ is a 
monomorphism, and hence that $\aaa_{M_1}$ is a monomorphism. Since 
$\aaa_{M_1}$ is also an epimorphism, we have 
$M_1\cong\theta^*\theta_*(M_1)\in\Ob(\theta^*(\calb))$.

\smallskip
	
\textbf{(c) } Let $0\to M'\xto{\alpha}M\xto{\beta}M''\to0$ be a short 
exact sequence in $\cala$, where $M',M''\in\Ob(\theta^*(\calb))$ and 
$M$ is not isomorphic to any object in $\theta^*(\calb)$. Consider the 
following commutative diagram with exact rows:
	\beq \vcenter{\xymatrix@C=40pt@R=25pt{
	0 \ar[r] & M' \ar[r]^{\alpha} \ar[d]_{\aaa_{M'}}^{\cong} 
	& M \ar[r]^{\beta} \ar[d]_{\aaa_{M}} & M'' \ar[r] 
	\ar[d]_{\aaa_{M''}}^{\cong} & 0 \\
	& \theta^*\theta_*(M') \ar[r]^{\theta^*\theta_*(\alpha)} & 
	\theta^*\theta_*(M) \ar[r]^{\theta^*\theta_*(\beta)} 
	& \theta^*\theta_*(M'') \ar[r] & 0
	\rlap{\,.}
	}} \eeq
Here, $\aaa_{M'}$ and $\aaa_{M''}$ are isomorphisms since $M'$ and 
$M''$ are in $\theta^*(\calb)$, while $\aaa_{M}$ is not an isomorphism since 
$M$ is not isomorphic to any object in $\theta^*(\calb)$. Thus 
$\theta^*\theta_*(\alpha)$ is not injective in $\cala$, so 
$\theta_*(\alpha)$ is not injective in $\calb$ (Lemma 
\ref{l:Omega-props}(c)), and $(L_1\theta_*)(M'')\ne0$.

\smallskip

\textbf{(b) } The implication (b.ii$\implies$b.i) follows immediately from 
(c), and it remains to prove the converse. So assume that (a.ii) holds, and 
that $\theta^*(\calb)$ is closed under extensions in $\cala$. Fix $M$ in 
$\theta^*(\calb)$, and let 
	\[ 0 \Right2{} K \Right4{\alpha} P \Right4{\beta} M \Right2{} 0 \]
be a short exact sequence in $\cala$ where $P$ is projective. Set 
$K_0=\Ker(\aaa_K)$, and consider the following commutative diagram:
	\beqq \vcenter{\xymatrix@C=35pt@R=25pt{
	& \qquad\qquad\quad0 \ar[r] & K/K_0 \ar[r]^{\5\alpha} 
	\ar[d]_{\5\aaa_K}^{\cong} & P/\alpha(K_0) \ar[r]^{\5\beta} 
	\ar@{->>}[d]_{\5\aaa_P} & M \ar[r] \ar[d]_{\aaa_M}^{\cong} & 0 \\
	0 \ar[r] & \theta^*\bigl((L_1\theta_*)(M)\bigr) \ar[r]
	& \theta^*\theta_*(K) \ar[r]^{\theta^*\theta_*(\alpha)} & 
	\theta^*\theta_*(P) \ar[r]^{\theta^*\theta_*(\beta)} 
	& \theta^*\theta_*(M) \ar[r] & 0
	\rlap{\,.}
	}} \label{e:2.5(b)} \eeqq
Here, $\aaa_M$ is an isomorphism since $M$ is in $\theta^*(\calb)$, 
$\5\aaa_K$ and $\5\aaa_P$ are epimorphisms since $\aaa_K$ and $\aaa_P$ are 
surjective by (a.ii), and $\5\aaa_K$ is injective by construction. The top 
row is exact by construction, and the bottom row since $(L_1\theta_*)(P)=0$ 
($P$ is projective) and $\theta^*$ is exact.

Now, $K/K_0\cong\theta^*\theta_*(K)$ and $M$ are both isomorphic to 
objects of $\theta^*(\calb)$, and the same holds for $P/\alpha(K_0)$ since 
$\theta^*(\calb)$ is closed under extensions. Thus there is an object $N$ 
in $\calb$, a surjective morphism $f\:P\too\theta^*(N)$ with kernel 
$\alpha(K_0)$, and a morphism $\nu\:\theta^*(N)\too\theta^*\theta_*(P)$ 
such that $\nu\circ f=\aaa_P$. Since $\theta^*(\calb)$ is a full 
subcategory of $\cala$ (Lemma \ref{l:adj-pair}), $\nu=\theta^*(\chi)$ for 
some $\chi\in\Mor_\calb(N,\theta_*(P))$. 

Let $\varphi\in\Mor_\calb(\theta_*(P),N)$ be adjoint to $f$. Then 
	\[ \vcenter{\xymatrix@C=25pt@R=25pt{
	& P \ar[dl]_{\aaa_P} \ar[dr]^{f} \\
	\theta^*\theta_*(P) \ar@/_/[rr]_{\theta^*(\varphi)} && \theta^*(N) 
	\ar@/_/[ll]_{\nu=\theta^*(\chi)}
	}}
	\qquad\textup{is adjoint to}\qquad
	\vcenter{\xymatrix@C=25pt@R=25pt{
	& \theta_*(P) \ar[dl]_{\Id_{\theta_*(P)}} \ar[dr]^{\varphi} \\
	\theta_*(P) \ar@/_/[rr]_{\varphi} && N \ar@/_/[ll]_{\chi}
	\rlap{\,,}
	}} \]
so $\theta^*(\varphi)\circ\aaa_P=f$ and $\nu\circ f=\aaa_P$. Since $f$ 
and $\aaa_P$ are both surjective, $\nu$ and $\theta^*(\varphi)$ are 
isomorphisms (and inverses to each other). So in diagram \eqref{e:2.5(b)}, 
$\5\aaa_P$ is an isomorphism, $\theta^*\theta_*(\alpha)$ is injective, and 
thus $(L_1\theta_*)(M)=0$. 
\end{proof}

The next proposition provides one tool for showing that 
$\Omega$-resolutions do \emph{not} exist in certain cases. Recall that 
by Lemma \ref{l:part.res.}, if a projective object has no 
$\Omega_1$-resolution, then it has no $\Omega$-resolution.

\begin{Prop} \label{exists-Omega1}
For each $\Omega$-system $\abth$ for which $\cala$ has enough projectives, 
and each $X\in\scrp(\calb)$, there is an $\Omega_1$-resolution of $X$ if 
and only if $(L_1\theta_*)(\theta^*(X))=0$.
\end{Prop}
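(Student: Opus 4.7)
The plan is to treat the two directions in parallel, since both boil down to analyzing the long exact sequence obtained by applying $\theta_*$ and its left derived functors to a single short exact sequence $0\to K\to P_0\to\theta^*(X)\to0$ in $\cala$, where $P_0$ is projective.

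For the ``if'' direction, assume $(L_1\theta_*)(\theta^*(X))=0$. Using that $\cala$ has enough projectives, I would first choose an epimorphism $\gee\:P_0\onto2\theta^*(X)$ with $P_0\in\scrp(\cala)$, set $K=\Ker(\gee)$, and then choose an epimorphism $P_1\onto2K$ with $P_1\in\scrp(\cala)$, letting $\2_1\:P_1\too P_0$ be the composite $P_1\onto2 K\hookrightarrow P_0$. Conditions \omres[1]{\ref{sqres-1n}} and the $H_0$-part of \omres[1]{\ref{sqres-3n}} are immediate, since by construction $\Im(\2_1)=K=\Ker(\gee)$. To get \omres[1]{\ref{sqres-4n}} I would apply the long exact sequence of left derived functors of $\theta_*$ to $0\to K\to P_0\to\theta^*(X)\to0$; since $P_0$ is projective and $(L_1\theta_*)(\theta^*(X))=0$ by hypothesis, this yields the short exact sequence
\[ 0 \too \theta_*(K) \too \theta_*(P_0) \too \theta_*(\theta^*(X)) \too 0, \]
so $\theta_*(\Im(\2_1))=\theta_*(K)\too\theta_*(P_0)$ is a monomorphism. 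This same short exact sequence, combined with the surjectivity of $\theta_*(P_1)\too\theta_*(K)$ (which holds by right exactness of $\theta_*$, Lemma \ref{l:Omega-props}(a)), gives exactness of $\theta_*(P_1)\too\theta_*(P_0)\too\theta_*(\theta^*(X))\too0$, which is \omres[1]{\ref{sqres-2n}}.

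For the ``only if'' direction, suppose an $\Omega_1$-resolution $P_1\Right2{\2_1}P_0\Right2{\gee}\theta^*(X)\to0$ exists. Condition \omres[1]{\ref{sqres-3n}} forces $\Im(\2_1)=\Ker(\gee)$; call this common subobject $K$. The short exact sequence $0\to K\to P_0\to\theta^*(X)\to0$ together with the projectivity of $P_0$ yields, via the long exact sequence of $L_*\theta_*$, the identification
\[ (L_1\theta_*)(\theta^*(X)) \;\iso\; \Ker\bigl(\theta_*(K)\too\theta_*(P_0)\bigr). \]
But condition \omres[1]{\ref{sqres-4n}} says exactly that $\theta_*(K)=\theta_*(\Im(\2_1))\too\theta_*(P_0)$ is a monomorphism, so this kernel vanishes.

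The main technical point to watch is the interplay between $\Im(\2_1)$ (as a subobject of $P_0$) and the image of $\theta_*(\2_1)$ in $\theta_*(P_0)$: one must be careful that \omres[1]{\ref{sqres-4n}} is being used as a statement about $\theta_*$ applied to a subobject of $P_0$, not about the image of the induced map of $\theta_*$, and Lemma \ref{Omega_n4} provides the bridge. Beyond this, the argument is essentially the definition of $L_1\theta_*$ unwound against the definition of $\Omega_1$-resolution, so I do not expect any serious obstacle.
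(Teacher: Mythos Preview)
Your proposal is correct and follows essentially the same approach as the paper: both directions reduce to the long exact sequence of $L_*\theta_*$ applied to $0\to\Im(\2_1)\to P_0\to\theta^*(X)\to0$, using projectivity of $P_0$ to identify $(L_1\theta_*)(\theta^*(X))$ with the kernel of $\theta_*(\Im(\2_1))\to\theta_*(P_0)$. The paper is slightly more economical in the ``if'' direction, invoking right exactness of $\theta_*$ directly on the exact sequence $P_1\to P_0\to\theta^*(X)\to0$ to get \omres[1]{\ref{sqres-2n}}, rather than routing through $\theta_*(K)$ as you do, but the content is the same.
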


\begin{proof} Assume that $(L_1\theta_*)(\theta^*(X))=0$. Let 
$\bfR_1=\bigl(P_1\xto{\2_1}P_0\xto{\gee}\theta^*(X)\to0\bigr)$ be an exact 
sequence in $\cala$, where $P_0, P_1\in \scrp(\cala)$. Then the sequence 
	\[0\too\Im(\2_1)\too P_0\too \theta^*(X)\too 0\]
is short exact, and since $(L_1\theta_*)(\theta^*(X))=0$, the induced 
morphism $\theta_*(\Im(\2_1))\too\theta_*(P_0)$ is a monomorphism. So 
$\bfR_1$ is an $\Omega_1$-resolution of $X$, where \omres[1]{\ref{sqres-2n}} 
holds since $\theta_*$ is right exact. 

Conversely, if $\bfR_1=\bigl(P_1\xto{\2_1}P_0\xto{\gee}\theta^*(X)\to0\bigr)$ 
is an $\Omega_1$-resolution of $X$, then $\bfR_1$ is exact 
by \omres[n]{\ref{sqres-3n}}. Since $P_0\in\scrp(\cala)$, the
sequence 
	\[ 0 \Right2{} (L_1\theta_*)(\theta^*(X)) \Right4{} \theta_*(\Im(\2_1)) 
	\Right4{\theta_*(\incl)} \theta_*(P_0) \Right4{\theta_*(\gee)} 
	\theta_*(\theta^*(X)) \Right2{} 0 \]
is exact. Since $\theta_*(\incl)$ is a monomorphism by condition 
\omres[1]{\ref{sqres-4n}}, $(L_1\theta_*)(\theta^*(X))=0$.
\end{proof}

The following lemma gives conditions for extending an $\Omega_n$-resolution 
to an $\Omega_{n+1}$-resolution.

\begin{Lem} \label{extend-res}
Fix an $\Omega$-system $\abth$, where $\cala$ has enough projectives, and 
let $X$ be a projective object in $\calb$. Let  
$\bfR_n=\bigl(P_n\xto{\2_n}P_{n-1}\too\cdots\too 
P_0\xto{\gee}\theta^*(X)\to0\bigr)$ be an $\Omega_n$-resolution of $X$, for 
some $1\le n<\infty$.
\begin{enuma} 

\item The natural morphism 
$f_0\:\theta_*(\Ker(\2_n))\Right2{}\Ker(\theta_*(\2_n))$ is a split 
epimorphism. 

\item If $P_{n+1}\in\scrp(\cala)$, and 
$\2_{n+1}\in\Mor_{\cala}(P_{n+1},P_n)$ are such that $\2_n\circ\2_{n+1}=0$, 
then the complex $\bfR_{n+1}=\bigl(P_{n+1}\xto{\2_{n+1}}P_{n}\too\cdots 
\xto{\gee}\theta^*(X)\to0\bigr)$ is an $\Omega_{n+1}$-resolution of $X$ if 
and only if \smallskip

\begin{enumerate}[\rm({b}.i) ]
\item $\Ker(\2_n)/\Im(\2_{n+1})$ is in $\calb$; and

\item the composite $\theta_*(\Im(\2_{n+1}))\Right4{\theta_*(\incl)} 
\theta_*(\Ker(\2_n)) \Right2{f_0}\Ker(\theta_*(\2_n))$ is an isomorphism.

\end{enumerate}

\item The resolution $\bfR_n$ extends to an $\Omega_{n+1}$-resolution 
if and only if for some splitting $s$ of $f_0$, the composite 
	\[ \aaa_{\Ker(\2_n)}^{[s]} \: 
	\Ker(\2_n)\Right6{\aaa_{\Ker(\2_n)}}\theta^*\theta_*(\Ker(\2_n))
	\Right4{\theta^*(\chi^{[s]})} 
	\theta^*\bigl(\theta_*(\Ker(\2_n))/\Im(s)\bigr) \] 
(where $\chi^{[s]}$ is the natural projection) and the induced map 
	\[ (L_1\theta_*)(\aaa_{\Ker(\2_n)}^{[s]}) \: 
	(L_1\theta_*)\bigl(\Ker(\2_n)\bigr) \Right5{} 
	(L_1\theta_*)\bigl(\theta^*(\theta_*(\Ker(\2_n))/\Im(s))\bigr) \]
are both epimorphisms. 

\item If $\bfR_n$ does extend to $\bfR_{n+1}$ as in (b), 
then for some splitting $s$ of $f_0$, 
$\Im(\2_{n+1})=\Ker(\aaa_{\Ker(\2_n)}^{[s]})$, and 
	\beqq \theta_*\bigl(H_n(P_*,\2_*)\bigr) \cong 
	\theta_*(\Ker(\2_n))/\Im(s)\cong\Ker(f_0) \cong 
	(L_1\theta_*)(\Im(\2_n)) \,. \label{e:diag2a} \eeqq

\end{enuma}
\end{Lem}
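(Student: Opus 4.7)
For part (a), I would combine Lemma~\ref{Omega_n4} with the splitting structure of $\theta_*(\bfR_n)$. Writing $K=\Ker(\2_n)$, condition \omres[n]{\ref{sqres-4n}} via Lemma~\ref{Omega_n4} asserts exactness of $\theta_*(K)\xrightarrow{\theta_*(\incl)}\theta_*(P_n)\xrightarrow{\theta_*(\2_n)}\theta_*(P_{n-1})$, and factoring $\theta_*(\incl)=j\circ f_0$ with $j\colon\Ker(\theta_*(\2_n))\hookrightarrow\theta_*(P_n)$ the inclusion forces $f_0$ to be an epimorphism. Using \omres[n]{\ref{sqres-2n}} and \OP2, the complex $\theta_*(\bfR_n)$ resolves the projective object $X\cong\theta_*\theta^*(X)$ by objects projective in $\calb$ (Lemma~\ref{l:Omega-props}(b)), so it splits degreewise; then $\Ker(\theta_*(\2_n))$ is a direct summand of $\theta_*(P_n)$, hence projective, and $f_0$ splits.

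For part (b), the inherited data from $\bfR_n$ provides all of Definition~\ref{d:part.res.} for $\bfR_{n+1}$ in degrees below $n$, so the new content reduces to: (i) $K/\Im(\2_{n+1})\in\theta^*(\calb)$, which is exactly (b.i); (ii) the extra exactness $\Im(\theta_*(\2_{n+1}))=\Ker(\theta_*(\2_n))$ inside $\theta_*(P_n)$, coming from \omres[n+1]{\ref{sqres-2n}}; and (iii) monicity of $\theta_*(\Im(\2_{n+1}))\to\theta_*(P_n)$, i.e.\ \omres[n+1]{\ref{sqres-4n}}. Using the right-exact factorization $\theta_*(\2_{n+1})\colon\theta_*(P_{n+1})\twoheadrightarrow\theta_*(\Im(\2_{n+1}))\to\theta_*(P_n)$ and decomposing the composite in (b.ii) as $\theta_*(\Im(\2_{n+1}))\to\theta_*(K)\xrightarrow{f_0}\Ker(\theta_*(\2_n))$ (afterwards included via $j$ into $\theta_*(P_n)$), one sees that (ii) is equivalent to surjectivity and (iii) to injectivity of that composite. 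Hence (b.i)+(b.ii) is equivalent to $\bfR_{n+1}$ being an $\Omega_{n+1}$-resolution.

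For part (c), my key tool is the triangle identity $\bbb_{\theta_*(K)}\circ\theta_*(\aaa_K)=\Id$, which combined with naturality of $\bbb$ gives $\bbb_M\circ\theta_*(\aaa_K^{[s]})=\chi^{[s]}$, where $M=\theta_*(K)/\Im(s)$; so $\theta_*(\aaa_K^{[s]})$ corresponds under the counit isomorphism to the quotient $\chi^{[s]}$, and via adjunction $\aaa_K^{[s]}$ corresponds to this same quotient in $\calb$. For the ``only if'' direction, given an extension $\bfR_{n+1}$ with $I=\Im(\2_{n+1})$ and $H=K/I$, define $s$ as the composite of the inverse of the iso in (b.ii) with $\theta_*(\incl_{I\to K})$; this is a splitting of $f_0$ with $\Im(s)=\Im(\theta_*(\incl_{I\to K}))$ and $M\cong\theta_*(H)\cong N$ (where $H\cong\theta^*(N)$). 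Under the adjunction identification, $\aaa_K^{[s]}$ is then the quotient $K\twoheadrightarrow H$ up to isomorphism, hence epi with kernel $I$; the long exact sequence of $0\to I\to K\to H\to 0$ together with injectivity of $\theta_*(I)\to\theta_*(K)$ (from (b.ii)) makes $(L_1\theta_*)(\aaa_K^{[s]})$ surjective. For the converse, with $s$ satisfying both epi hypotheses, set $K_1=\Ker(\aaa_K^{[s]})$; the short exact sequence $0\to K_1\to K\to\theta^*(M)\to 0$ and its long exact sequence, combined with $\theta_*(\aaa_K^{[s]})\sim\chi^{[s]}$ and the epi on $(L_1\theta_*)$, force $\theta_*(K_1)\xrightarrow{\cong}\Im(s)$, and composing with $f_0$ using $f_0\circ s=\Id$ yields $\theta_*(K_1)\cong\Ker(\theta_*(\2_n))$. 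Since $\cala$ has enough projectives, choose $P_{n+1}$ projective surjecting onto $K_1$ and set $\2_{n+1}\colon P_{n+1}\twoheadrightarrow K_1\hookrightarrow K\hookrightarrow P_n$; then $\Im(\2_{n+1})=K_1$ satisfies (b.i) (since $K/K_1\cong\theta^*(M)$) and (b.ii), so (b) yields the extension. The three isomorphisms in \eqref{e:diag2a} then follow: $\theta_*\theta^*(M)\cong M$ identifies $\theta_*(H_n)$ with $M$; the splitting gives $\theta_*(K)\cong\Ker(f_0)\oplus\Im(s)$, whence $M\cong\Ker(f_0)$; and the long exact sequence of $0\to K\to P_n\to\Im(\2_n)\to 0$ with $P_n$ projective identifies $(L_1\theta_*)(\Im(\2_n))$ with $\Ker(\theta_*(\incl))=\Ker(f_0)$.

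The main obstacle is the adjunction bookkeeping in part (c): recognising, via the triangle identity, that $\theta_*(\aaa_K^{[s]})$ coincides with the plain quotient $\chi^{[s]}$, and then transferring this back through the adjunction to identify $\aaa_K^{[s]}$ itself with the quotient $K\twoheadrightarrow K/\Im(\2_{n+1})$ when an extension exists. Once these identifications are in place, both implications in (c) become routine long-exact-sequence chases, exploiting vanishing of $L_1\theta_*$ on projectives, injectivity of $j$, and the splitting identity $f_0\circ s=\Id$.
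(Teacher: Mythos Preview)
Your proposal is correct and follows essentially the same approach as the paper. The only stylistic difference is in part (c): where the paper organises the adjunction bookkeeping via explicit naturality squares for $\aaa$ (diagrams \eqref{e:diag4a} and \eqref{e:diag5a}), you instead invoke the triangle identity $\bbb_{\theta_*(K)}\circ\theta_*(\aaa_K)=\Id$ together with naturality of $\bbb$ to identify $\theta_*(\aaa_K^{[s]})$ with $\chi^{[s]}$, and then run long exact sequences; these are equivalent packagings of the same underlying computation, and your candidate subobject $K_1=\Ker(\aaa_K^{[s]})$ coincides with the paper's $J=\aaa_K^{-1}(\theta^*(\Im(s)))$ since $\theta^*$ is exact.
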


\begin{proof} \textbf{(a) } Since $\theta_*$ is right exact, we have the following 
commutative diagram in $\calb$ 
	\beqq \vcenter{\xymatrix@C=40pt@R=25pt{
	& \theta_*(\Ker(\2_n)) \ar[r] \ar[d]_{f_0} & \theta_*(P_n) \ar[r] \ar@{=}[d] 
	& \theta_*(\Im(\2_n)) \ar[r] \ar[d]_{f_1}^{\cong} & 0 \\
	0 \ar[r] & \Ker(\theta_*(\2_n)) \ar[r] & \theta_*(P_n) \ar[r] & 
	\Im(\theta_*(\2_n)) \ar[r] & 0 
	}} \label{e:diag3a} \eeqq
with exact rows. By condition \omres[n]{\ref{sqres-4n}}, $f_1$ is a 
monomorphism (hence an isomorphism), and so $f_0$ is an epimorphism. Also, 
$\Ker(\theta_*(\2_n))\in\scrp(\calb)$ since the sequence $\theta_*(P_*)\too 
\theta^*(X)\to0$ is an exact sequence of projective objects, and hence 
$f_0$ splits. 

\smallskip

\noindent\textbf{(b) } Assume that 
$\bfR_{n+1}=\bigl(P_{n+1}\xto{\2_{n+1}}P_n\too\cdots\bigr)$ is an 
$\Omega_{n+1}$-resolution of $X$, and set $J=\Im(\2_{n+1})\le\Ker(\2_n)$. 
Consider the following commutative diagram
	\beq \vcenter{\xymatrix@C=30pt@R=25pt{
	\theta_*(P_{n+1}) \ar[r]^-{\theta_*(\2^*_{n+1})} \ar@{->>}[dr]^{f_3} & 
	\theta_*(J) \ar[d]_(0.45){f_4} \ar[r]^-{\theta_*(\incl)} 
	& \theta_*(\Ker(\2_n)) \ar[d]_{f_0} \\
	& \Im(\theta_*(\2_{n+1})) \ar@{=}[r] & \Ker(\theta_*(\2_n))
	\rlap{\,,}
	}} \eeq %%\label{e:diag3b} 
where $\2_{n+1}^*\:P_{n+1}\too J$ is the corestriction of $\2_{n+1}$ and is 
surjective, and $f_3$ is the corestriction of 
$\theta_*(\2_{n+1})$. By condition \omres[n+1]{\ref{sqres-4n}} on $\bfR_{n+1}$, 
the morphism $\theta_*(J)\too\theta_*(P_n)$ is a monomorphism. Hence $f_4$ is a 
monomorphism, and is an isomorphism since $f_3$ is an epimorphism. This 
proves (b.ii), and (b.i) follows from condition 
\omres[n+1]{\ref{sqres-3n}}.

Conversely, assume that (b.i) and (b.ii) hold. In particular, $\bfR_{n+1}$ 
satisfies \omres[n+1]{\ref{sqres-3n}}, and it satisfies 
\omres[n+1]{\ref{sqres-1n}} ($P_{n+1}$ is projective) by assumption. 
Condition \omres[n+1]{\ref{sqres-4n}} (that $\theta_*(\Im(\2_{n+1}))$ injects 
into $\theta_*(P_n)$) follows from (b.ii). 

It remains to prove \omres[n+1]{\ref{sqres-2n}}; i.e., the exactness of 
$\theta_*(\bfR_{n+1})$. Since $\theta_*(\bfR_n)$ is exact, we need only show that 
$\Im(\theta_*(\2_{n+1}))=\Ker(\theta_*(\2_n))$. Consider the following diagram:
	\beq \vcenter{\xymatrix@C=55pt@R=30pt{ 
	P_{n+1} \ar@{->>}[r]^-{\2^*_{n+1}} \ar[d]^{\aaa_{P_{n+1}}} & \Im(\2_{n+1}) 
	\ar[r]^-{i} \ar[d]^{\aaa_{\Im(\2_{n+1})}} & P_n 
	\ar[d]^{\aaa_{P_{n}}} \\ 
	\theta^*\theta_*(P_{n+1}) \ar@{->>}[r]^-{\theta^*\theta_*(\2^*_{n+1})} 
	\ar@/_1.8pc/[rr]^{\theta^*\theta_*(\2_{n+1})} & 
	\theta^*\theta_*(\Im(\2_{n+1})) \ar[r]^-{\theta^*\theta_*(i)} 
	& \theta^*\theta_*(P_n) \\
	}} \eeq
where $\2^*_{n+1}$ is surjective by definition and 
$\theta^*\theta_*(\2^*_{n+1})$ is surjective since $\theta^*\theta_*$ is 
right exact. Hence 
$\Im(\theta^*\theta_*(\2_{n+1}))=\Im(\theta^*\theta_*(i))$, and so 
$\Im(\theta_*(\2_{n+1}))=\Im(\theta_*(i))$ by Lemma \ref{l:Omega-props}(c). 
Finally, $\Im(\theta_*(i))=\Ker(\theta_*(\2_n))$ by the following diagram
	\beq \vcenter{\xymatrix@C=45pt@R=30pt{ 
	\theta_*(\Im(\2_{n+1})) \ar[r]^-{\theta_*(i')} \ar[drr]_{\theta_*(i)} & 
	\theta_*(\Ker(\2_n)) \ar[r]^{f_0} \ar[dr]^{\theta_*(i'')} & 
	\Ker(\theta_*(\2_n)) \ar[d]^{\incl} \\
	& & \theta_*(P_n) 
	}} \eeq
and since $f_0\circ\theta_*(i')$ is an isomorphism by (b.ii).

\smallskip

\noindent\textbf{(c,d) } Assume first that $\bfR_n$ does extend to an 
$\Omega_{n+1}$-resolution 
	\[ \bfR_{n+1}=\bigl(P_{n+1} \Right3{\2_{n+1}} P_n \Right2{} 
	\cdots\bigr)\,. \] 
Set $J=\Im(\2_{n+1})$. By (b.i) and (b.ii), $\Ker(\2_n)/J$ is 
isomorphic to an object in $\theta^*(\calb)$, and the composite 
	\[ \xymatrix@C=50pt{
	\theta_*(J) \ar[r]_-{\theta_*(i_2)} \ar@/^1.6pc/[rr]^-{\cong} 
	& \theta_*(\Ker(\2_n)) \ar[r]^{f_0} & \Ker(\theta_*(\2_n)) 
	\ar@{-->}@/^0.7pc/[l]^{s}
	} \] 
is an isomorphism. Set $s=\theta_*(i_2)\circ(f_0\circ\theta_*(i_2))^{-1}$: 
a splitting for $f_0$.

Consider the following commutative diagram:
	\beqq \vcenter{\xymatrix@C=40pt@R=25pt{
	0 \ar[r] & J \ar[r]^-{i_2} \ar[d]_{\aaa_J} & \Ker(\2_n) 
	\ar[r]^-{\pr_2} \ar[d]_{\aaa_{\Ker(\2_n)}} \ar[dr]^{\omega} & 
	\Ker(\2_n)/J \ar[r] \ar[d]_{f_2}^{\cong} & 0 \\
	0 \ar[r] & \theta^*\theta_*(J) \ar[r]^-{\theta^*\theta_*(i_2)} & 
	\theta^*\theta_*(\Ker(\2_n)) \ar[r]^-{\theta^*\theta_*(\pr_2)} & 
	\theta^*\theta_*(\Ker(\2_n)/J) \ar[r] & 0 \rlap{\,.}
	}} \label{e:diag4a} \eeqq
Here, $f_2=\aaa_{\Ker(\2_n)/J}$ is an isomorphism since $\Ker(\2_n)/J$ is 
isomorphic to an object in $\theta^*(\calb)$ by (b.i). The bottom row of 
\eqref{e:diag4a} is exact since $\theta^*\theta_*$ is right exact and 
$\theta_*(i_2)$ is a monomorphism by (b.ii) (and $\theta^*$ is left exact). 
Also, $\Im(\theta_*(i_2))=\Im(s)$, and hence 
	\begin{align*} 
	\Bigl( \Ker(\2_n) \Right6{\aaa_{\Ker(\2_n)}^{[s]}} 
	\theta^*\bigl(\theta_*(\Ker(\2_n))/\Im(s)\bigr) \Bigr) &\cong 
	\Bigl( \Ker(\2_n) \Right3{\omega} \theta^*\theta_*(\Ker(\2_n)/J) \Bigr) \\
	&\cong \Bigl( \Ker(\2_n) \Right3{\pr_2} \Ker(\2_n)/J \Bigr) \,.
	\end{align*}
Thus $\aaa_{\Ker(\2_n)}^{[s]}$ is an epimorphism, and 
$(L_1\theta_*)(\aaa_{\Ker(\2_n)}^{[s]})\cong(L_1\theta_*)(\pr_2)$ is also an 
epimorphism since $\theta_*(i_2)$ is injective. This also proves that 
$\Im(\2_{n+1})=J=\Ker(\aaa_{\Ker(\2_n)}^{[s]})$, and proves 
\eqref{e:diag2a} except for the isomorphism 
$\Ker(f_0)\cong(L_1\theta_*)(\Im(\2_n))$ which follows from \eqref{e:diag3a}. 
This finishes the proof of (d), and the proof of the ``only if'' part 
of (c).

Conversely, assume, for some splitting $s$ of $f_0$, that 
$\aaa_{\Ker(\2_n)}^{[s]}$ and $(L_1\theta_*)(\aaa_{\Ker(\2_n)}^{[s]})$ are 
both epimorphisms. Set $J=\aaa_{\Ker(\2_n)}^{-1}(\theta^*(\Im(s))) 
\le\Ker(\2_n)$, and  consider the following commutative diagram:
	\beqq \vcenter{\xymatrix@C=30pt@R=25pt{
	0 \ar[r] & J \ar[r]^-{i_2} \ar[d] & \Ker(\2_n) \ar[r]^-{\pr_2} 
	\ar[d]_{\aaa_{\Ker(\2_n)}} & 
	\Ker(\2_n)/J \ar[r] \ar[d]_{f_3}^{\cong} & 0 \\
	0 \ar[r] & \theta^*(\Im(s)) \ar[r] & \theta^*\theta_*(\Ker(\2_n)) 
	\ar[r]^-{\chi^{[s]}} & 
	\theta^*\bigl(\theta_*(\Ker(\2_n))/\Im(s)\bigr) \ar[r] & 0 \rlap{\,.}
	}} \label{e:diag5a} \eeqq
The left square in \eqref{e:diag5a} is a pullback square by definition of 
$J$, so $f_3$ is a monomorphism, and $f_3$ is an epimorphism since 
$\chi^{[s]}\circ\aaa_{\Ker(\2_n)}=\aaa_{\Ker(\2_n)}^{[s]}$ is an epimorphism. 

In particular, $\Ker(\2_n)/J$ is isomorphic to an object in 
$\theta^*(\calb)$, and (b.i) holds. 
Also, $f_2$ is an isomorphism in \eqref{e:diag4a}, and the bottom row in 
\eqref{e:diag4a} is exact since $(L_1\theta_*)(\aaa_{\Ker(\2_n)}^{[s]})$ is 
an epimorphism. Upon comparing \eqref{e:diag4a} and \eqref{e:diag5a}, we 
see that $\Im(s)=\theta_*(i_2)(\theta_*(J))$, and (b.ii) now follows since $\Im(s)$ 
is the image of a splitting of $f_0$. So $\bfR_n$ extends to an 
$\Omega_{n+1}$-resolution by (b).
\end{proof}

The next proposition is our most general result on the existence of 
$\Omega$-resolutions.

\begin{Prop} \label{exists-res}
Fix an $\Omega$-system $\abth$, where $\cala$ has enough projectives. 
Assume that $\theta^*(\calb)$ is closed under subobjects and extensions in 
$\cala$. Then each $X\in\scrp(\calb)$ admits an $\Omega$-resolution. 
Furthermore, for $n\ge0$, each $\Omega_n$-resolution of $X$ extends to an 
$\Omega$-resolution of $X$.
\end{Prop}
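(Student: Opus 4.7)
The strategy is induction on $n$, using Lemma \ref{extend-res}(c) as the engine. Both assertions of the proposition will be handled by a single mechanism: a base case supplies an $\Omega_1$-resolution of $X$ (subsuming the first assertion), and then the inductive step converts any $\Omega_n$-resolution into an $\Omega_{n+1}$-resolution (subsuming the second). Iterating the extension step produces a full $\Omega$-resolution. The principal observation is that the two closure hypotheses line up exactly with the two bullets of Lemma \ref{l:closed}, and therefore with the conditions needed to apply Lemma \ref{extend-res}(c).

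For the base case, I would first invoke Lemma \ref{l:closed}(a): closure of $\theta^*(\calb)$ under subobjects is equivalent to the unit $\aaa_M\:M\too\theta^*\theta_*(M)$ being an epimorphism for every $M\in\Ob(\cala)$. Combined with closure under extensions, Lemma \ref{l:closed}(b) then yields $(L_1\theta_*)(\theta^*(N))=0$ for every $N\in\Ob(\calb)$. In particular $(L_1\theta_*)(\theta^*(X))=0$, so Proposition \ref{exists-Omega1} provides an $\Omega_1$-resolution of $X$. If one is instead handed an $\Omega_n$-resolution to begin with, this step is simply skipped.

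For the inductive step, given an $\Omega_n$-resolution $\bfR_n$ (with $n\ge1$), Lemma \ref{extend-res}(a) produces a splitting $s$ of the epimorphism $f_0\: \theta_*(\Ker(\2_n))\too\Ker(\theta_*(\2_n))$. By Lemma \ref{extend-res}(c), extending $\bfR_n$ to an $\Omega_{n+1}$-resolution reduces to verifying that both $\aaa_{\Ker(\2_n)}^{[s]}$ and $(L_1\theta_*)(\aaa_{\Ker(\2_n)}^{[s]})$ are epimorphisms. The former factors as $\theta^*(\chi^{[s]})\circ\aaa_{\Ker(\2_n)}$: the right factor is epi by Lemma \ref{l:closed}(a), and the left factor is $\theta^*$ applied to the natural projection onto a quotient, hence epi by \OP3. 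The latter is automatic, since its codomain $(L_1\theta_*)\bigl(\theta^*(\theta_*(\Ker(\2_n))/\Im(s))\bigr)$ vanishes by Lemma \ref{l:closed}(b) applied to $N=\theta_*(\Ker(\2_n))/\Im(s)$, so any morphism landing there is trivially surjective. Iterating the extension step indefinitely assembles the desired $\Omega$-resolution. I do not anticipate any serious obstacle: all the delicate bookkeeping has been absorbed into Lemmas \ref{l:closed} and \ref{extend-res}, and the only conceptual content is the recognition that closure under subobjects kills the unit's cokernel while closure under extensions forces $L_1\theta_*$ to vanish on $\theta^*(\calb)$ — exactly the two conditions needed to drive the induction.
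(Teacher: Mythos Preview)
Your proposal is correct and follows exactly the route the paper takes: the paper's proof is the single sentence ``This follows immediately from Proposition \ref{exists-Omega1}, Lemma \ref{extend-res}(c), and Lemma \ref{l:closed},'' and you have simply unpacked that sentence, using Lemma \ref{l:closed}(a,b) to feed Proposition \ref{exists-Omega1} the vanishing of $(L_1\theta_*)(\theta^*X)$ for the base case and to verify the two surjectivity hypotheses of Lemma \ref{extend-res}(c) for the inductive step.
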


\begin{proof} By Lemma \ref{l:closed}(a,b) and since $\theta^*(\calb)$ is 
closed under subobjects and extensions in $\cala$, $\aaa_M$ is an epimorphism 
for each $M$ in $\cala$ and $(L_1\theta_*)(\theta^*(N))=0$ for each $N$ in 
$\calb$. In particular, $X\in\scrp(\calb)$ has an $\Omega_1$-resolution by 
Proposition \ref{exists-Omega1}.

Assume, for some $n\ge1$, that $\bfR_n=(P_i,\2_i)_{i\le n}$ is an 
$\Omega_n$-resolution of $\theta^*(X)$. By Lemma \ref{extend-res}(a), the morphism 
$f_0\:\theta_*(\Ker(\2_n))\too\Ker(\theta_*(\2_n))$ is a split epimorphism. 
Let $s$ be a splitting of $f_0$, and let $\chi^{[s]}$ be the natural 
epimorphism from $\theta_*(\Ker(\2_n))$ to $\theta_*(\Ker(\2_n))/\Im(s)$. 
Then the morphisms 
	\[ \aaa_{\Ker(\2_n)}^{[s]} \: \Ker(\2_n) \Right6{\aaa_{\Ker(\2_n)}} 
	\theta^*\theta_*(\Ker(\2_n)) \Right6{\theta^*(\chi^{[s]})} 
	\theta^*\bigl(\theta_*(\Ker(\2_n))/\Im(s)\bigr) \]
are epimorphisms: the first since $\aaa_M$ is an epimorphism for all $M$ 
and the second since $\theta^*$ is exact. Also, 
$(L_1\theta_*)(\aaa_{\Ker(\2_n)}^{[s]})$ is an epimorphism since 
$(L_1\theta_*)(\theta^*(N))=0$ for all $N$. So by Lemma 
\ref{extend-res}(c), $\bfR_n$ extends to an $\Omega_{n+1}$-resolution 
$\bfR_{n+1}$. Since this argument applies for all $n\ge1$, it follows that 
$\bfR_n$ extends to an $\Omega$-resolution of $\theta^*(X)$. In particular, 
$\theta^*(X)$ has $\Omega$-resolutions since it has $\Omega_1$-resolutions.
\end{proof}

\iffalse
\begin{proof} This follows immediately from Proposition 
\ref{exists-Omega1}, Lemma \ref{extend-res}(c), and Lemma \ref{l:closed}.
\end{proof}
\fi

Under the assumptions of Example \ref{ex1}, we can now describe exactly 
under what conditions there are $\Omega$-resolutions. Recall, for a 
commutative ring $\RR$, that a group $G$ is $\RR$-perfect if 
$H_1(G;\RR)=0$. 

\begin{Ex} \label{ex1a}
Fix a commutative ring $\RR$ and a surjective homomorphism 
$\theta\:G\too\pi$ of groups. Let $\cpth[G]$ be the $\Omega$-system of 
Example \ref{ex1}. Then $\theta^*(\RR\pi\mod)$ is closed under subobjects. 
\begin{enuma} 

\item If $\Ker(\theta)$ is $\RR$-perfect, then $\theta^*(\RR\pi\mod)$ is 
closed under extensions in $\RR G\mod$. So by Proposition \ref{exists-res}, 
$\Omega$-resolutions exist of all projective objects in $\RR\pi\mod$.

\item If $\Ker(\theta)$ is not $\RR$-perfect, then $\theta^*(\RR\pi\mod)$ is not 
closed under extensions, and for each nonzero object $X$ in $\RR\pi\mod$ 
that is free as an $\RR$-module, $(L_1\theta_*)(\theta^*(X))\ne0$. So by 
Proposition \ref{exists-Omega1}, no nonzero projective object in 
$\RR\pi\mod$ that is free as an $\RR$-module has an $\Omega$-resolution.

\end{enuma}
\end{Ex}

\begin{proof} Set $K=\Ker(\theta)$, and note that an $\RR G$-module $M$ is 
isomorphic to an object in $\theta^*(\RR\pi\mod)$ if and only if $K$ acts 
trivially on $M$. Thus $\theta^*(\RR\pi\mod)$ is closed under subobjects. 
If $H_1(K;\RR)=0$, then $\theta^*(\RR\pi\mod)$ is closed under extensions 
by Lemma \ref{l:p-perf}, and the existence of $\Omega$-resolutions follows 
from Proposition \ref{exists-res}.

If $H_1(K;\RR)\ne0$, then for each nonzero object $X$ in $\RR\pi\mod$ that 
is free as an $\RR$-module, $(L_1\theta_*)(\theta^*(X))\cong 
H_1(K;\theta^*(X))\ne0$ since $H_1(K;\RR)\ne0$ and $K$ acts trivially on 
$\theta^*(X)$. Thus $\theta^*(\RR\pi\mod)$ is not closed under extensions 
in $\RR G\mod$ by Lemma \ref{l:closed}(b). If in addition, $X$ is 
projective in $\RR\pi\mod$, Proposition \ref{exists-Omega1} implies that it 
has no $\Omega$-resolution. 
\end{proof}

Note that the ``squeezed resolutions'' defined and studied by Benson 
\cite{B} are $\Omega$-resolutions in the context of Example 
\ref{ex1a}(a), when $G$ is a finite group and $K=O^p(G)$. 

\begin{Rmk} \label{Q1-2}
Proposition \ref{exists-res} gives some general conditions for the 
existence of $\Omega$-resolu\-tions: conditions which are satisfied by the 
$\Omega$-systems of Example \ref{ex1} (as just seen), and also by the much 
larger family of examples to be described in Proposition \ref{ex3}(b). 
However, they do not hold for the family of examples constructed in 
Proposition \ref{ex4}(a), even though $\Omega$-resolutions are shown to exist 
in those cases (at least for certain projective objects) in Proposition 
\ref{C*Enu}. This suggests that there should 
be a more general existence result that covers all of these cases.

In fact, there are two questions of this type that one can ask. First, of 
course, we want to find conditions as general as possible on an 
$\Omega$-system $\Osys\cala\calb$ that imply the existence of 
$\Omega$-resolutions of all projectives in $\calb$. But we will see in 
Example \ref{ex3a} that there are $\Omega$-systems for 
which some nonzero projectives have $\Omega$-resolutions and others do not, 
and so we would also like to find more general conditions on a pair 
$\bigl(\Osys\cala\calb,X\bigr)$, for $X\in\scrp(\calb)$, that imply the 
existence of an $\Omega$-resolution of $X$. 

\end{Rmk}

\section{$\Omega$-systems of functor categories} 
\label{s:kC-mod}

We next look at a large family of examples of $\Omega$-systems and 
$\Omega$-resolutions involving functor categories; especially 
categories of $\RR\calc$-modules for a small category $\calc$ and a 
commutative ring $\RR$. At the end of the section, in Propositions 
\ref{ex3} and \ref{ex4}, we give two large families of examples of 
$\Omega$-systems where we can say fairly precisely in which cases 
$\Omega$-resolutions exist. 

We refer to \cite[\S\,II.6 and \S\,X.3]{MacLane-cat} for the definitions and 
properties of overcategories and left Kan extensions. As usual, when 
$\cala$ and $\calc$ are categories and $\calc$ is small, $\cala^\calc$ 
denotes the functor category whose objects are the functors 
$\calc\too\cala$, and whose morphisms are the natural transformations of 
functors.

To simplify the statement of the next proposition, we define a 
functor $\theta\:\calc\too\cald$ between small categories to be 
\emph{quasisurjective} if it is surjective on objects and 
$\cald$ is generated 
as a category by the image of $\theta$ together with inverses of 
isomorphisms in the image of $\theta$. As far as we know, this concept 
has not been defined earlier, and does seem to be designed for this 
very specialized situation.

\iffalse
each morphism in 
$\cald$ is a composite of some sequence (of arbitrary finite length) 
of morphisms of the form $\theta(\varphi)$ (for $\varphi\in\Mor(\calc)$) or 
$\theta(\psi)^{-1}$ (for $\psi\in\Mor(\calc)$ with 
$\theta(\psi)\in\Iso(\cald)$). In other words, 
\fi

\begin{Prop} \label{AC->AD}
Let $\cala$ be an abelian category with colimits, and let 
$\theta\:\calc\too\cald$ be a quasisurjective functor between small 
categories. For each $d$ in $\cald$, let $\cali(\theta\dn d)$ be the full 
subcategory of $\theta\dn d$ with objects $(c,\varphi)$ for 
$\varphi\in\Iso_\cald(\theta(c),d)$, and assume that all objects 
$(c,\Id_d)$ for $c\in\theta^{-1}(d)$ lie in the same connected component of 
$\cali(\theta\dn d)$. Let $\theta^*\:\cala^\cald\too\cala^\calc$ be 
composition with $\theta$, and let $\theta_*\:\cala^\calc\too\cala^\cald$ 
be left Kan extension along $\theta$. Then 
$\Osys{\cala^\calc}{\cala^\cald}$ is an $\Omega$-system. 
\end{Prop}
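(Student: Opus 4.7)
The plan is to verify in turn the three axioms \OP1, \OP2, \OP3 of Definition \ref{d:Omega-system} for the pair $(\theta_*,\theta^*)$. Axioms \OP1 and \OP3 will be quick: \OP1 holds because the left Kan extension $\theta_*$ exists (since $\cala$ has colimits) and is by construction left adjoint to the precomposition functor $\theta^*$; \OP3 holds because $\cala^\cald$ is abelian with all exactness computed pointwise, so epimorphisms are exactly pointwise epimorphisms, and $\theta^*(\eta)_c=\eta_{\theta(c)}$ inherits this pointwise property.

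For \OP2 the argument is more involved. By the pointwise colimit formula for left Kan extensions, $\theta_*\theta^*(F)(d)=\colim_{(c,\varphi)\in\theta\dn d}F(\theta(c))$, with canonical maps $\iota^d_{(c,\varphi)}\: F(\theta(c))\too\theta_*\theta^*(F)(d)$ satisfying the cocone relation $\iota^d_{(c',\varphi')}\circ F(\theta(f))=\iota^d_{(c,\varphi)}$ for each morphism $f\: (c,\varphi)\too(c',\varphi')$ in $\theta\dn d$; the counit is characterised by $\bbb_F(d)\circ\iota^d_{(c,\varphi)}=F(\varphi)$. The plan is to construct an explicit two-sided inverse $\sigma_d$ to $\bbb_F(d)$ as follows: for each $d$, choose $c_d\in\theta^{-1}(d)$ (possible by surjectivity of $\theta$ on objects) and set $\sigma_d=\iota^d_{(c_d,\Id_d)}$. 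Then $\bbb_F(d)\circ\sigma_d=F(\Id_d)=\Id_{F(d)}$ is immediate, and the remaining work is to show (a) that $\sigma=\{\sigma_d\}_d$ is independent of the choices of $c_d$ and natural in $d$, and (b) that $\sigma_d\circ\bbb_F(d)=\Id$.

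Independence of choice in (a) is where the connectedness hypothesis enters: for $c_d,c_d'\in\theta^{-1}(d)$, a zigzag in $\cali(\theta\dn d)$ connecting $(c_d,\Id_d)$ and $(c_d',\Id_d)$ runs through objects $(c_i,\varphi_i)$ with each $\varphi_i$ an isomorphism, so each connecting $\theta(f)=\varphi_{i+1}^{-1}\varphi_i$ is also an isomorphism, and the cocone relations combined with invertibility of the $F(\theta(f))$ force $\iota^d_{(c_d,\Id_d)}=\iota^d_{(c_d',\Id_d)}$. For naturality, quasisurjectivity reduces the verification to two kinds of generating morphisms in $\cald$: $\theta(\alpha)\: d_1\too d_2$ with $\alpha\: c'\too c''$ in $\calc$, and $\theta(\psi)^{-1}\: d_1\too d_2$ with $\psi\: c'\too c''$ in $\calc$ and $\theta(\psi)$ an isomorphism. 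In the first case $\alpha$ yields a morphism $(c',\theta(\alpha))\too(c'',\Id_{d_2})$ in $\theta\dn d_2$, whose cocone relation, after using independence of choice to replace $c_{d_1},c_{d_2}$ by $c',c''$, gives the required naturality square; the second case uses instead the morphism $(c',\Id_{d_2})\too(c'',\theta(\psi)^{-1})$ in $\theta\dn d_2$. Claim (b) then follows from the structural identity $\iota^d_{(c,\varphi)}=\theta_*\theta^*(F)(\varphi)\circ\iota^{\theta(c)}_{(c,\Id_{\theta(c)})}$ combined with the naturality of $\sigma$ established in (a), which together yield $\sigma_d\circ\bbb_F(d)\circ\iota^d_{(c,\varphi)}=\iota^d_{(c,\varphi)}$ for every $(c,\varphi)$, so $\sigma_d\circ\bbb_F(d)=\Id$ by the universal property of the colimit. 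The main obstacle is the interplay of quasisurjectivity and the connectedness hypothesis in proving naturality of $\sigma$; both hypotheses are essential at this step, and one expects the statement of \OP2 to fail without either.
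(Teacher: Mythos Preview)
Your proof is correct and follows essentially the same approach as the paper: both dispose of \OP1 and \OP3 quickly, then for \OP2 choose a lift $c_d\in\theta^{-1}(d)$, take $\iota^d_{(c_d,\Id_d)}$ as the candidate inverse to the counit, use the connectedness hypothesis on $\cali(\theta\dn d)$ for well-definedness, and use quasisurjectivity to reduce the remaining identity to generating morphisms. The only organizational difference is that the paper works at one fixed $d$ and proves $\iota^d_{(\hat d,\Id_d)}\circ F(\varphi)=\iota^d_{(c,\varphi)}$ directly by induction on a factorization of $\varphi$, rather than first establishing naturality of the section $\sigma$ in $d$ and then deducing $\sigma_d\circ\bbb_F(d)=\Id$ from it as you do.
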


\begin{proof} Conditions \OP1 and \OP3 are clear. So the only difficulty is 
to show that condition \OP2 holds: that the counit 
$\bbb\:\theta_*\theta^*\too\Id_{\cala^\cald}$ associated to the adjunction 
is an isomorphism. 

Fix a functor $\alpha\:\cald\too\cala$ and an object $d$ in $\cald$, and 
let $\3{\alpha}(\3{d})\:\theta\dn d\too\cala$ be the 
constant functor sending all objects to $\alpha(d)$. Let 
$\alpha_d\:\theta\dn d\too\cala$ be the functor that sends an object 
$(c,\varphi)$, where $\varphi\in\Mor_\cald(\theta(c),d)$, to 
$(\theta^*\alpha)(c)=\alpha(\theta(c))$. Let 
$\alpha_*\:\alpha_d\too\3{\alpha}(\3{d})$ be the natural 
transformation of functors that sends $(c,\varphi)$ to 
$\alpha(\varphi)\in\Mor_\cala(\alpha_d(c,\varphi),\alpha(d))$. Then 
	\[ \bbb(\alpha)(d) = \colim_{\theta\dn d}(\alpha_*) \:
	(\theta_*\theta^*(\alpha))(d) =
	\colim_{\theta\dn d}(\alpha_d) \Right5{} \alpha(d) \,, \]
and we must show that this is an isomorphism (for all $\alpha$ and $d$). 

To see this, choose $\5d\in\Ob(\calc)$ such that $\theta(\5d)=d$. For each 
$(c,\varphi)$ in $\theta\dn d$, let $\iota_{(c,\varphi)}$ be the natural 
morphism from $\alpha_d(c,\varphi)=\alpha(\theta(c))$ to the colimit. Set 
	\[ \beta=\iota_{(\5d,\Id_d)}\:\alpha(d)=\alpha_d(\5d,\Id_d)
	\Right5{} \colim(\alpha_d) = (\theta_*\theta^*(\alpha))(d). \]
Then $\bbb(\alpha)(d)\circ\beta=\Id_{\alpha(d)}$, and it remains to show that 
$\beta\circ\bbb(\alpha)(d)$ is also the identity. This means showing, for each 
object $(c,\varphi)$ in $\theta\dn d$, that 
	$ \beta\circ\bbb(\alpha)(d)\circ\iota_{(c,\varphi)}=\iota_{(c,\varphi)} 
	\,. $ 
Since 
	\[ \bbb(\alpha)(d)\circ\iota_{(c,\varphi)} = \alpha(\varphi) \: 
	\alpha_d(c,\varphi) = \alpha(\theta(c)) \Right5{} 
	\alpha(d) = \alpha_d(\5d,\Id_d), \]
we are reduced to showing, for each $(c,\varphi)$, that 
	\beqq \iota_{(\5d,\Id_d)}\circ\alpha(\varphi)=\iota_{(c,\varphi)} 
	\,. \label{e:bai=i} \eeqq

We now claim the following:
\begin{enumi} 

\item Equation \eqref{e:bai=i} holds for $(\5d,\Id_d)$.

\item If there is $\chi\in\Mor_{\theta\dn d}((c,\varphi),(c',\varphi'))$, then 

\begin{enumerate}[({ii.}1)] \medskip
\item if \eqref{e:bai=i} holds for $(c',\varphi')$, then it also holds for 
$(c,\varphi)$; and 

\item if $\theta(\chi)$ is an isomorphism, then \eqref{e:bai=i} holds for 
$(c,\varphi)$ if and only if it holds for $(c',\varphi')$. 

\end{enumerate} \medskip

\item If $\theta(c)=\theta(c')$ and $\varphi\in\Mor_\cald(\theta(c),d)$, 
then \eqref{e:bai=i} holds for $(c,\varphi)$ if and only if it holds for 
$(c',\varphi)$.

\end{enumi}
Point (i) is clear. If $\chi\in\Mor_{\theta\dn 
d}((c,\varphi),(c',\varphi'))$, then 
$\iota_{(c',\varphi')}\circ\alpha(\theta(\chi))=\iota_{(c,\varphi)}$ by 
definition of colimits, and (ii) follows immediately from this. Point (iii) 
follows from (ii.2) and the assumption that $(c,\Id_{\theta(c)})$ and 
$(c',\Id_{\theta(c)})$ are in the same connected component of 
$\cali(\theta\dn\theta(c))$.

Now let $(c,\varphi)$ be arbitrary. Since $\theta$ is quasisurjective, for 
some $m\ge1$, there are objects $\theta(c)=d_0,d_1,\dots,d_m=d$ in $\cald$ 
and morphisms $\varphi_i\in\Mor_\cald(d_{i-1},d_i)$ for $1\le i\le m$ such 
that $\varphi=\varphi_m\circ\cdots\circ\varphi_2\circ\varphi_1$, and for 
each $i$, either $\varphi_i\in\theta(\Mor(\calc))$ or 
$\varphi_i\in\Iso(\cald)$ and $\varphi_i^{-1}\in\theta(\Mor(\calc))$. Since 
\eqref{e:bai=i} holds for $(\5d,\Id_d)$ by (i), it also holds for 
$(c,\Id_d)$ for all $c\in\theta^{-1}(d)=\theta^{-1}(d_m)$ by (iii). If 
$\varphi_m\in\theta(\Mor(\calc))$, then \eqref{e:bai=i} holds for 
$(c,\varphi_m)$ for some $c\in\theta^{-1}(d_{m-1})$ by (ii.1), while if 
$\varphi_m\in\Iso(\cald)$ and $\varphi_m^{-1}\in\theta(\Mor(\calc))$, then 
\eqref{e:bai=i} holds for $(c,\varphi_m)$ for some 
$c\in\theta^{-1}(d_{m-1})$ by (ii.2). In either case, \eqref{e:bai=i} holds 
for $(c,\varphi_m)$ for all $c\in\theta^{-1}(d_{m-1})$ by (iii). Upon 
continuing this argument, we see by downward induction that for each $1\le 
i\le m$, \eqref{e:bai=i} holds for $(c,\varphi_m\circ\cdots\circ\varphi_i)$ 
for all $c\in\theta^{-1}(d_{i-1})$. In particular, 
\eqref{e:bai=i} holds for $(c,\varphi)$ (the case $i=1$). 
\end{proof}

We now specialize to the case where $\cala=\RR\mod$: the category 
of modules over a commutative ring $\RR$.

\begin{Defi}\label{d:loctriv}
Let $\calc$ be a small category, and let $\RR$ be a commutative ring.
\begin{enuma} 

\item An $\RR\calc$-module is a \emph{covariant} functor $M\colon 
\calc\Right1{}\RR\mod$, and a morphism of $\RR\calc$-modules is a natural 
transformation of functors. Let $\RR\calc\mod$ denote the category of 
$\RR\calc$-modules.

\item Let $\theta\: \calc\to \cald$ be a functor between small 
categories. When $M$ is an $\RR\calc$-module, let $\theta_*(M)$ 
denote the left Kan extension of $M$ along $\theta$. When $N$ 
is an $\RR\cald$-module, let $\theta^*(N)=N\circ\theta$ be the 
$\RR\calc$-module induced by composition with $\theta$.

\item An $\RR\calc$-module $M$ is \emph{locally constant} on $\calc$ if it 
sends all morphisms in $\calc$ to isomorphisms of $\RR$-modules. 

\item An $\RR\calc$-module $M$ is \emph{essentially constant} if $M$ is 
isomorphic to a constant $\RR\calc$-module; i.e., isomorphic to a functor 
$\calc\too\RR\mod$ that sends each object to the same $\RR$-module $V$ 
and each morphism to $\Id_V$.

\end{enuma}
\end{Defi}

The next lemma characterizes essentially constant modules in terms of an 
action of $\pi_1(|\calc|)$. 

\begin{Lem} \label{ess.const.}
Assume $\calc$ is a small category, and let $\RR$ be a 
commutative ring. 
\begin{enuma} 

\item If $M$ is a locally constant $\RR\calc$-module, then 
for each object $c_0$ in $\calc$, there is a unique homomorphism
	\[ M_\#\: \pi_1(|\calc|,c_0) \Right5{} \Aut_{\RR}(M(c_0)) \]
satisfying the following condition: for each sequence
	\[ \sigma = \Bigl( c_0 \Right2{f_1} c_1 \Left2{f_2} c_2 
	\Right2{f_3} \cdots \Left2{f_{2m}} c_{2m}=c_0 \Bigr) \]
of morphisms in $\calc$ ($m\ge1$), beginning and ending at $c_0$, regarded 
as a loop in $|\calc|$, 
	\[ M_\#([\sigma]) = M(f_{2m})^{-1}\circ M(f_{2m-1})\circ \cdots
	\circ M(f_2)^{-1}\circ M(f_1) \in \Aut_{\RR}(M(c_0)) \,. \]

\smallskip

\item If $\calc$ is connected, then a locally constant $\RR\calc$-module 
$M$ is essentially constant if and only if $M_\#$ (as defined in (a)) is 
the trivial homomorphism for some object $c_0$ in $\calc$.

\end{enuma}
\end{Lem}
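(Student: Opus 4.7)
\medskip
\textbf{Proof plan.} For part (a), I would use the classical edge-path presentation of $\pi_1(|\calc|, c_0)$: its elements are represented by zigzag loops
\[ \sigma = \bigl( c_0 \Right2{f_1} c_1 \Left2{f_2} c_2 \Right2{f_3} \cdots \Left2{f_{2m}} c_{2m}=c_0 \bigr), \]
(any edge-loop at $c_0$ can be brought to this alternating form by inserting identity morphisms), modulo three kinds of relations coming from the $1$- and $2$-skeleta of the nerve: (i) cancellation of an adjacent pair $f\cdot f^{-1}$ or $f^{-1}\cdot f$, (ii) deletion or insertion of an identity morphism, and (iii) for each composable pair $c \Right2{f} c' \Right2{g} c''$ with composite $h = g \circ f$, the $2$-simplex relation identifying the two-edge path $(f,g)$ with the single edge $h$. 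Since $M$ is locally constant, every $M(f_i)$ is invertible, so the stated formula defines a function from zigzag loops at $c_0$ into $\Aut_\RR(M(c_0))$. I would then verify that each of the three relations is respected: cancellation gives $M(f)^{\pm 1} M(f)^{\mp 1} = \Id$; the identity relation gives $M(\Id_c) = \Id$; and the $2$-simplex relation (after inserting an identity to make the boundary zigzag alternate) yields $M(h)^{-1} M(g) M(f) = \Id$ since $M(h) = M(g)\circ M(f)$. Hence the formula descends to a well-defined homomorphism $M_\#\:\pi_1(|\calc|, c_0) \too \Aut_\RR(M(c_0))$, and uniqueness is forced since the formula already prescribes $M_\#$ on a generating set.

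For part (b), the ``only if'' direction is immediate: an isomorphism $M \cong \3V$ with $V = M(c_0)$ identifies every $M(f_i)$ with $\Id_V$, so each factor in the formula collapses and $M_\#$ is the trivial homomorphism. For the converse, assuming $M_\#$ is trivial at some chosen $c_0$, I would construct a natural isomorphism $\Phi\:\3{M(c_0)}\too M$. Since $\calc$ is connected, for each object $c$ of $\calc$ pick a zigzag path $\gamma_c$ from $c_0$ to $c$ in $|\calc|$ (in alternating form, inserting identities if necessary), and define $\Phi_c\:M(c_0)\too M(c)$ to be the alternating composite of the $M(f_i)^{\pm 1}$ along $\gamma_c$. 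The triviality of $M_\#$ at $c_0$ guarantees that any two choices $\gamma_c$ and $\gamma'_c$ yield the same morphism, because the concatenation $\gamma'_c\cdot\gamma_c^{-1}$ is a zigzag loop at $c_0$ whose image under $M_\#$ is the identity; each $\Phi_c$ is then a well-defined isomorphism, invertibility coming from local constancy. For naturality under a morphism $f\:c\too c'$, compare $\gamma_{c'}$ with the path obtained by appending the forward edge $f$ to $\gamma_c$: by path-independence both yield the same $\Phi_{c'}$, which gives the identity $\Phi_{c'} = M(f)\circ\Phi_c$ --- exactly the required naturality square.

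The main obstacle is the appeal in part (a) to the presentation of $\pi_1(|\calc|, c_0)$ by zigzag edge-loops modulo the three relations above. This is a classical fact about fundamental groups of simplicial complexes (the ``edge-path group''), which specializes to the nerve of a small category, so I would invoke it without reproof rather than rederive it from scratch. A subsidiary point in (b) is basepoint dependence of $M_\#$: for a connected $\calc$ its triviality at one basepoint implies triviality at every other (the change-of-basepoint isomorphism on $\pi_1$ intertwines the $M_\#$ maps), but since the statement only requires the hypothesis at a single chosen $c_0$, this is incidental to the argument.
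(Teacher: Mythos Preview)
Your proposal is correct. In part (a) you take a genuinely different route from the paper: you invoke the edge-path presentation of $\pi_1(|\calc|,c_0)$ and check the three relations by hand, whereas the paper observes that a locally constant $M$ factors as a functor $\calc\to\Is(\RR\mod)$, applies $|\cdot|$ and then $\pi_1$, and uses the identification $\pi_1(|\Is(\RR\mod)|,M(c_0))\cong\Aut_\RR(M(c_0))$. The paper's approach gets well-definedness and the homomorphism property for free from functoriality, without needing to cite the edge-path presentation; your approach is more explicit and self-contained once that presentation is granted.

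In part (b) the two arguments are close cousins. The paper sets $M_\calc=\colim_\calc(M)$ and shows that each structure map $\iota_c\:M(c)\to M_\calc$ is an isomorphism: surjectivity comes from connectedness, and injectivity is proved by building path-composite maps $\eta_d\:M(d)\to M(c_0)$ (well-defined since $M_\#=1$) which assemble into a one-sided inverse $M_\calc\to M(c_0)$. Your $\Phi_c$ is exactly $\eta_c^{-1}$, so you are constructing the same natural isomorphism directly rather than routing through the colimit; this is slightly more economical.
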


\begin{proof} \textbf{(a) } Let $\Is(\RR\mod)$ be the category of 
$\RR$-modules with only isomorphisms as morphisms, and regard $M$ as a 
functor $M\:\calc\too\Is(\RR\mod)$. This induces a map between the 
geometric realizations, and hence a homomorphism of fundamental groups 
	\[ M_\#\: \pi_1(|\calc|,c_0) \Right5{} 
	\pi_1(|\Is(\RR\mod)|,M(c_0)) \cong \Aut_{\RR}(M(c_0))\,. \]
For each sequence $\sigma$ as described above, $M_\#$ sends the class 
$[\sigma]\in\pi_1(|\calc|,c_0)$ to 
	\[ M(c_0) \Right3{M(f_1)} M(c_1) \Left3{M(f_2)} M(c_2) 
	\Right3{M(f_3)} \cdots \Left4{M(f_{2m)}} M(c_{2m})=M(c_0)\,, \]
regarded as a loop in $|\Is(\RR\mod)|$, and this is homotopic to the 
composite
	\[ M(f_{2m})^{-1}\circ M(f_{2m-1})\circ \cdots\circ M(f_1)^{-1}\circ 
	M(f_0) \in \Aut_{\RR}(M(c_0)) \]
when also regarded as a loop in $|\Is(\RR\mod)|$. 

\smallskip

\noindent\textbf{(b) } If $M$ is isomorphic to a constant functor, then 
it clearly sends all morphisms to isomorphisms, and sends a loop $\sigma$ 
as above to a sequence whose composite is the identity. Thus for each $c_0$ 
in $\calc$, the homomorphism $M_\#$ defined in (a) is trivial. It remains 
to prove the converse. 

Assume that $M$ is locally constant, and that for some object $c_0$ in 
$\calc$, the homomorphism $M_\#$ defined in (a) is trivial. Set 
$M_\calc=\colim_\calc(M)$. We claim that the natural morphism 
$\iota_c\:M(c)\too M_\calc$ is an isomorphism for each object $c$. Once 
this has been shown, the $\iota_c$ define an isomorphism of functors from 
$M$ to the constant functor with value $M_\calc$. 

For each pair of objects $c,d$ and each $\varphi\in\Mor_{\calc}(c,d)$, we 
have $\iota_c=\iota_d\circ M(\varphi)$, where $M(\varphi)$ is an isomorphism 
since $M$ is locally constant. Thus $\Im(\iota_c)=\Im(\iota_d)$ whenever 
$\Mor_{\calc}(c,d)\ne\emptyset$, and so $\Im(\iota_c)=\Im(\iota_d)$ for each 
pair of objects $c,d$ since $\calc$ is connected. So $\iota_c$ is 
surjective for each $c$ in $\calc$.

For each object $d$ in $\calc$, since $\calc$ is connected, there is a 
sequence 
	\[ c_0 \Right2{f_1} c_1 \Left2{f_2} c_2 \Right2{f_3} \cdots 
	\Left2{f_{2m}} c_{2m}=d \]
($m\ge1$) of morphisms in $\calc$ connecting $c_0$ to $d$. Set 
	\[ \eta_d = M(f_1)^{-1}\circ M(f_2) \circ \cdots\circ 
	M(f_{2m-1})^{-1}\circ M(f_{2m}) 
	\: M(d) \Right6{\cong} M(c_0) \,. \]
Then $\eta_d$ is independent of the choice of the $f_i$ since $M_\#=1$. 
This independence of the choice of sequence of morphisms also implies that 
for each pair of objects $d$ and $d'$ and each morphism 
$\varphi\in\Mor_{\calc}(d,d')$, we have $\eta_{d'}\circ M(\varphi)=\eta_d$. 
We thus get a natural morphism $\eta\:M_\calc\too M(c_0)$ such that 
$\eta\circ\iota_d=\eta_d$ for each $d$, and $\iota_d$ is injective for 
each $d$ in $\calc$ since $\eta_d$ is. We already showed that $\iota_d$ is 
surjective for each $d$, so $\iota$ is a natural isomorphism of functors 
from $M$ to the constant functor $\3{M_\calc}$.
\end{proof}

The following description of certain projective $\RR\calc$-modules will be 
needed later.

\begin{Lem} \label{kC-proj}
Let $\RR$ be a commutative ring, and let $\calc$ be a small category. For 
each object $c$ in $\calc$, let $F^{\RR\calc}_c$ be the $\RR\calc$-module that 
sends an object $d$ to $\RR(\Mor_\calc(c,d))$ (the free 
$\RR$-module with basis $\Mor_\calc(c,d)$); and sends a morphism 
$\varphi\in\Mor_\calc(d,d')$ to composition with $\varphi$. Then 
$F_c^{\RR\calc}$ is projective, and for each $\RR\calc$-module $M$, evaluation at 
$\Id_c\in F^{\RR\calc}_c(c)$ defines a bijection 
$\Mor_{\RR\calc}(F^{\RR\calc}_c,M)\cong M(c)$.
\end{Lem}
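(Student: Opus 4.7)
The statement is the Yoneda lemma for the category of $\RR\calc$-modules, with projectivity as a direct consequence. I will handle the two assertions in order: first the Yoneda-type bijection, then projectivity.

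For the bijection, given a natural transformation $\eta\in\Mor_{\RR\calc}(F^{\RR\calc}_c,M)$, send it to $\Phi(\eta)=\eta_c(\Id_c)\in M(c)$. In the other direction, given $m\in M(c)$, define $\Psi(m)=\eta^m$ where, for each object $d$, $\eta^m_d\:\RR(\Mor_\calc(c,d))\too M(d)$ is the $\RR$-linear extension of $\varphi\mapsto M(\varphi)(m)$. I would then check naturality of $\eta^m$: for $\psi\in\Mor_\calc(d,d')$ and a basis element $\varphi\in\Mor_\calc(c,d)$,
\[
\eta^m_{d'}\bigl(F^{\RR\calc}_c(\psi)(\varphi)\bigr) = \eta^m_{d'}(\psi\circ\varphi) = M(\psi\circ\varphi)(m) = M(\psi)\bigl(\eta^m_d(\varphi)\bigr),
\]
which gives $\eta^m_{d'}\circ F^{\RR\calc}_c(\psi)=M(\psi)\circ\eta^m_d$ by $\RR$-linearity. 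Next I would verify $\Phi\circ\Psi=\Id$ via $\Phi(\eta^m)=\eta^m_c(\Id_c)=M(\Id_c)(m)=m$, and $\Psi\circ\Phi=\Id$ by observing that for an arbitrary $\eta$, the basis element $\varphi\in\Mor_\calc(c,d)\subset F^{\RR\calc}_c(d)$ satisfies $\varphi=F^{\RR\calc}_c(\varphi)(\Id_c)$, so naturality of $\eta$ gives $\eta_d(\varphi)=M(\varphi)(\eta_c(\Id_c))=\eta^{\Phi(\eta)}_d(\varphi)$, whence $\eta=\Psi(\Phi(\eta))$.

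For projectivity, the bijection just established is natural in $M$, so as functors $\RR\calc\mod\too\RR\mod$,
\[
\Mor_{\RR\calc}(F^{\RR\calc}_c,-) \;\cong\; \ev_c,
\]
where $\ev_c(M)=M(c)$. Since kernels, cokernels, and images in the functor category $\RR\calc\mod$ are computed pointwise, $\ev_c$ is exact; in particular it preserves epimorphisms. Therefore $\Mor_{\RR\calc}(F^{\RR\calc}_c,-)$ preserves epimorphisms, which is exactly the definition of $F^{\RR\calc}_c$ being projective.

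There is no real obstacle here; the only point requiring a moment of care is the verification that $\eta^m$ is a natural transformation, which is an immediate consequence of the functoriality of $M$ together with the definition of $F^{\RR\calc}_c$ on morphisms as post-composition. The whole argument is a routine unwinding of definitions.
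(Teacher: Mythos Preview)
Your proof is correct and follows exactly the same approach as the paper's: establish the Yoneda-type bijection and deduce projectivity from the resulting natural isomorphism $\Mor_{\RR\calc}(F^{\RR\calc}_c,-)\cong\ev_c$ with an exact functor. The paper simply declares the bijection ``clear'' and states that the Hom functor is therefore exact; you have unwound the standard Yoneda argument in full, but the substance is identical.
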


\begin{proof} The bijection $\Mor_{\RR\calc}(F^{\RR\calc}_c,M)\cong M(c)$ 
holds by Yoneda's lemma. In particular, 
$\Mor_{\RR\calc}(F^{\RR\calc}_c,-)$ is an exact functor, and so 
$F_c^{\RR\calc}$ is projective. 
\end{proof}

We now restrict further to two different cases: one where 
$\theta\:\calc\too\cald$ is bijective on objects, and the second where 
$\cald$ is the category of a group. In each of these cases, we are able 
to get much more precise results about the existence of 
$\Omega$-resolutions. 

\iffalse
In both cases, for a commutative ring $\RR$, we 
call a group $G$ \emph{$\RR$-perfect} if $H_1(G;\RR)=0$; i.e., if 
$(G/[G,G])\otimes_{\Z}\RR=0$. 
\fi

\subsection{Functors bijective on objects}
\leavevmode

We begin with the case where $\theta$ is bijective on objects. When $\RR$ 
is a commutative ring and one additional technical assumption holds, we can 
say quite precisely in which cases there always exist $\Omega$-resolutions.

\begin{Prop} \label{ex3}
Fix a commutative ring $\RR$. Let $\theta\:\calc\too\cald$ be a functor 
between small categories that is bijective on objects and surjective on 
morphism sets. Then 
\begin{enuma} 

\item $\Osys{\RR\calc\mod}{\RR\cald\mod}$ is an $\Omega$-system, and the 
subcategory $\theta^*(\RR\cald\mod)$ is closed under subobjects in 
$\RR\calc\mod$.

\end{enuma}
For each object $c$ in $\calc$, set 
	\[ K_c = \Ker[\theta_c\:\Aut_\calc(c)\Right2{}
	\Aut_\cald(\theta(c)) \bigr], \]
and assume that $\theta$ has the following property: 
	\beqq \parbox{\shorter}{for each pair of objects $c,c'$ in $\calc$, and 
	each pair of morphisms $\varphi,\varphi'\in\Mor_\calc(c,c')$ such that 
	$\theta_{c,c'}(\varphi)=\theta_{c,c'}(\varphi')$, there is some 
	$\alpha\in K_{c'}$ such that 
	$\varphi=\alpha\varphi'$.} \label{e:mor-quot} \eeqq
Then the following hold.
\begin{enuma} \stepcounter{enumi}

\item If $K_c$ is $\RR$-perfect for each $c\in\Ob(\calc)$, then 
$\theta^*(\RR\cald\mod)$ is closed under extensions, and hence all 
projectives in $\RR\cald\mod$ have $\Omega$-resolutions.

\item If $K_c$ is not $\RR$-perfect for some $c\in\Ob(\calc)$, then 
$\theta^*(\RR\cald\mod)$ is not closed under extensions, and there is a 
projective object $X$ in $\RR\cald\mod$ that does not have an 
$\Omega$-resolution. 

\end{enuma}
\end{Prop}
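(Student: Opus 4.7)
The plan is to handle the three statements in sequence, with part (c) being by far the most delicate.

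For part (a), I would apply Proposition \ref{AC->AD} with $\cala = \RR\mod$. Quasisurjectivity of $\theta$ is immediate from surjectivity on morphisms, and the connectedness condition on $\cali(\theta\dn d)$ holds vacuously: since $\theta$ is bijective on objects, $\cali(\theta\dn d)$ contains a unique object of the form $(c,\Id_d)$ with $\theta(c) = d$. This furnishes the $\Omega$-system. For closure under subobjects, observe that because $\theta$ is bijective on objects and surjective on morphisms, an $\RR\calc$-module $M$ is isomorphic to one in $\theta^*(\RR\cald\mod)$ exactly when $M(\varphi) = M(\varphi')$ for all $\varphi,\varphi' \in \Mor_\calc(c,c')$ with $\theta(\varphi)=\theta(\varphi')$ — a criterion manifestly inherited by sub-functors.

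For part (b), under assumption \eqref{e:mor-quot} the criterion above simplifies to ``$K_c$ acts trivially on $M(c)$ for every $c \in \Ob(\calc)$''. Given $0 \to M' \to M \to M'' \to 0$ in $\RR\calc\mod$ with outer terms in $\theta^*(\RR\cald\mod)$ and $\alpha \in K_c$, the map $m \mapsto \alpha m - m$ descends to a well-defined $\RR$-linear map $d_\alpha \: M''(c) \to M'(c)$. The assignment $\alpha \mapsto d_\alpha$ is in fact a \emph{group homomorphism} $K_c \to \Hom_\RR(M''(c), M'(c))$ (the cocycle identity collapses because the target has trivial $K_c$-action), and since the codomain is an $\RR$-module it factors through $K_c\ab \otimes_\Z \RR \cong H_1(K_c;\RR) = 0$. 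Thus $d_\alpha = 0$ for all $\alpha$, so $K_c$ acts trivially on $M(c)$, and $\theta^*(\RR\cald\mod)$ is closed under extensions. Combined with (a), Proposition \ref{exists-res} yields $\Omega$-resolutions of all projectives in $\RR\cald\mod$.

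For part (c), fix $c_0 \in \Ob(\calc)$ with $K_{c_0}$ not $\RR$-perfect and choose a nonzero homomorphism $\chi\: K_{c_0} \to A$ for some $\RR$-module $A$ (for instance $A = H_1(K_{c_0};\RR)$ with $\chi$ the natural surjection). The plan is to exhibit a short exact sequence
\[ 0 \too \underline{A} \too M \too \theta^*(X) \too 0 \]
in $\RR\calc\mod$, with $X = F^{\RR\cald}_{\theta(c_0)}$ (projective by Lemma \ref{kC-proj}), $\underline{A}$ the constant functor with value $A$, and $M$ obtained from the direct sum $\underline A \oplus \theta^*(X)$ by twisting its $\RR\calc$-module structure via a 1-cocycle $\{f_\varphi\: \theta^*(X)(c) \to A\}$ satisfying $f_{\psi\varphi} = f_\varphi + f_\psi \circ \theta^*(X)(\varphi)$ and chosen so that $f_\alpha(\Id_{\theta(c_0)}) = \chi(\alpha)$ for $\alpha \in K_{c_0}$. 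The outer terms manifestly lie in $\theta^*(\RR\cald\mod)$, whereas $K_{c_0}$ acts nontrivially on $M(c_0)$ by the choice of $f_\alpha$, so $M$ does not. This proves closure under extensions fails; Lemma \ref{l:closed}(c) then delivers $(L_1\theta_*)(\theta^*(X)) \neq 0$, and Proposition \ref{exists-Omega1} rules out any $\Omega_1$-resolution of $X$, hence any $\Omega$-resolution.

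The main obstacle is producing the cocycle $\{f_\varphi\}$ as a genuine natural transformation on $\calc$. The cleanest route I would take is to recognize $\chi$ as a nontrivial class in $\Ext^1_{\RR K_{c_0}}(\RR, A) \cong H^1(K_{c_0}; A)$, lift it to $\Ext^1_{\RR\Aut_\calc(c_0)}(\RR, A)$ using normality of $K_{c_0}$ in $\Aut_\calc(c_0)$ (via an induction that preserves the triviality of the $K_{c_0}$-action on the outer pieces but not on the middle, by the computation in the proof of Example \ref{ex1a}(b)), and then propagate the class to all of $\calc$ by left Kan extension along the canonical functor $\calb(\Aut_\calc(c_0)) \to \calc$ sending $\Bobj$ to $c_0$. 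Verifying that this extension restricts nontrivially at $c_0$ then reduces the essential content of (c) to the purely group-theoretic calculation already established in Example \ref{ex1a}(b).
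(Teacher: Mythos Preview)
Parts (a) and (b) are fine and match the paper's argument; your treatment of (b) simply inlines the content of Lemma~\ref{l:p-perf}.

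Part (c) has a genuine gap. Your overall logic (build an extension with outer terms in $\theta^*(\RR\cald\mod)$ and middle term not, then invoke Lemma~\ref{l:closed}(c) and Proposition~\ref{exists-Omega1}) is sound in principle, but the construction of the extension is not carried out. The proposed route --- lift the class from $K_{c_0}$ to $\Aut_\calc(c_0)$ and then left-Kan-extend along $\calb(\Aut_\calc(c_0))\to\calc$ --- runs into two unaddressed problems. First, left Kan extension $\iota_*$ is only right exact, so there is no reason your short exact sequence survives; you would need the kernel term to be $\iota_*$-acyclic, which you have not arranged. Second, even granting exactness, you have not checked that the outer terms of the Kan-extended sequence lie in $\theta^*(\RR\cald\mod)$: this requires \emph{every} $K_c$ to act trivially on them, not just $K_{c_0}$, and $\iota_*$ of a module with trivial $K_{c_0}$-action is $\RR\Mor_\calc(c_0,c)\otimes_{\RR\Aut_\calc(c_0)}(-)$, on which the action of $K_c$ through postcomposition need not be trivial. (Also, Example~\ref{ex1a}(b) does not perform the construction you attribute to it; it computes $(L_1\theta_*)$ directly.)

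The paper bypasses all of this by reversing the order of the argument: it computes $(L_1\theta_*)(\theta^*F_{d_0}^{\RR\cald})$ directly and only then deduces failure of closure under extensions via Lemma~\ref{l:closed}(b). Concretely, the natural surjection $F_{c_0}^{\RR\calc}\twoheadrightarrow\theta^*(F_{d_0}^{\RR\cald})$ has kernel $Q_0$, and since $F_{c_0}^{\RR\calc}$ is projective with $\theta_*(F_{c_0}^{\RR\calc})\cong F_{d_0}^{\RR\cald}$ (using \eqref{e:mor-quot}), one gets $(L_1\theta_*)(\theta^*F_{d_0}^{\RR\cald})\cong\theta_*(Q_0)$. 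Evaluating at $d_0$ and splitting $\End_\calc(c_0)$ into automorphisms and non-automorphisms, the automorphism piece of $\theta_*(Q_0)(d_0)$ is identified with $I/I^2$ for $I$ the relative augmentation ideal in $\RR[\Aut_\calc(c_0)]$, and $I/I^2\cong H_1(K_{c_0};\RR[\Aut_\cald(d_0)])\ne0$. This avoids having to globalize any cocycle over $\calc$.
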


\begin{proof} \textbf{(a) } Since $\theta$ is surjective on objects and 
morphisms, it is quasisurjective. Since it is bijective on objects, the 
condition on $\cali(\theta\dn d)$ in Proposition \ref{AC->AD} holds for all 
objects $d$ in $\cald$, and so $\Osys{\RR\calc\mod}{\RR\cald\mod}$ is an 
$\Omega$-system by that proposition. 

Since $\theta$ is bijective on objects and surjective on morphisms, an 
$\RR\calc$-module $M$ is isomorphic to an object in 
$\theta^*(\RR\cald\mod)$ if and only if it has the following property: if 
$\varphi,\psi\in\Mor_\calc(c,c')$ are such that 
$\theta(\varphi)=\theta(\psi)$ (some $c,c'\in\Ob(\calc)$), then 
$M_{c,c'}(\varphi)=M_{c,c'}(\psi)$. In particular, $\theta^*(\RR\cald\mod)$ 
is closed under subobjects.

\smallskip

\noindent\textbf{(b,c) } Now assume that \eqref{e:mor-quot} holds. 
For each $\RR\calc$-module $M$, let $M\KK$ be the $\RR\cald$-module 
defined by setting, for each $d\in\Ob(\cald)$ and $c\in\theta^{-1}(d)$, 
	\[ (M\KK)(d) \cong M(c)\KK[_c] \defeq M(c)\big/ \Gen{\alpha(x)-x 
	\,\big|\, x\in M(c),~ \alpha\in K_c }. \]
For each morphism $\varphi\in\Mor_\calc(c,c')$ and each $\alpha\in K_c$, 
$\theta_{c,c'}(\varphi\circ\alpha)=\theta_{c,c'}(\varphi)$, so by 
\eqref{e:mor-quot}, there is $\beta\in K_{c'}$ such that 
$\varphi\circ\alpha=\beta\circ\varphi$. Hence for each $x\in M(c)$, 
$\varphi_*(x)$ and $\varphi_*(\alpha(x))$ are in the same orbit of 
$K_{c'}$. It follows that $\varphi_*\in\Mor_\RR(M(c),M(c'))$ induces a 
homomorphism between the quotient modules $M\KK(c)$ and $M\KK(c')$. So by 
\eqref{e:mor-quot} and since $\theta$ is surjective on morphisms, there is 
a unique functor $M\KK$ on $\cald$ such that the natural surjections 
$M(c)\too M\KK(\theta(c))$ define a morphism of $\RR\calc$-modules 
$M\too\theta^*(M\KK)$, and hence a morphism of $\RR\cald$-modules 
$\theta_*(M)\too M\KK$. 

By \eqref{e:mor-quot} and the surjectivity of $\theta$ again, we have a 
natural bijection 
$\Mor_{\RR\calc}(M,\theta^*N)\cong\Mor_{\RR\cald}(M\KK,N)$ for each 
$\RR\calc$-module $M$ and each $\RR\cald$-module $N$, and thus 
$M\KK\cong\theta_*(M)$. We have now shown that 
	\beqq 
	\parbox{\shorter}{for each $\RR\calc$-module $M$ and each 
	$c\in\Ob(\calc)$, the natural morphism 
	\[\aaa_M\:M(c)\Right5{}(\theta^*\theta_*M)(c)=(\theta_*M)(\theta(c))\] 
	induces an isomorphism $M(c)_{K_c}\cong(\theta_*M)(\theta(c))$.}
	\label{e:iso-theta*} \eeqq

\smallskip

\noindent\textbf{(b) } Assume $K_c$ is $\RR$-perfect for each $c\in\Ob(\calc)$. Let 
$0\too M'\too M\too M''\too0$ be an extension of $\RR\calc$-modules such 
that $M'$ and $M''$ are in $\theta^*(\RR\cald\mod)$. For each 
$c\in\Ob(\calc)$, $K_c$ acts trivially on $M'(c)$ and on 
$M''(c)$, and hence also acts trivially on $M(c)$ by Lemma \ref{l:p-perf}. 
So $M\cong\theta^*(\theta_*(M))$ by \eqref{e:iso-theta*}. 

This proves that $\theta^*(\RR\cald\mod)$ is closed under extensions in 
$\RR\calc\mod$, and hence by Proposition \ref{exists-res} that 
$\Omega$-resolutions exist of all projectives in $\RR\cald\mod$.

\smallskip

\noindent\textbf{(c) } Assume, for some object $c_0$ in $\calc$, that $K_{c_0}$ is 
not $\RR$-perfect, and set $d_0=\theta(c_0)$. Let $F_{c_0}^{\RR\calc}$ and 
$F_{d_0}^{\RR\cald}$ be the projective $\RR\calc$- and $\RR\cald$-modules 
defined in Lemma \ref{kC-proj}; thus 
	\[ F_{c_0}^{\RR\calc}(c)=\RR(\Mor_\calc(c_0,c)) \qquad\textup{and}\qquad
	F_{d_0}^{\RR\cald}(d)=\RR(\Mor_\cald(d_0,d)) \]
for each $c\in\Ob(\calc)$ and $d\in\Ob(\cald)$. 
Since $\theta$ is surjective on morphisms, there is a natural surjection of 
$\RR\calc$-modules $\chi\:F_{c_0}^{\RR\calc}\too\theta^*(F_{d_0}^{\RR\cald})$ 
that sends $\varphi\in F_{c_0}^{\RR\calc}(c)$ to $\theta(\varphi)\in 
\theta^*(F_{d_0}^{\RR\cald})(c)=F_{d_0}^{\RR\cald}(\theta(c))$. 

Set $Q_0=\Ker(\chi)$, and consider the exact sequence 
	\[ 0 \too (L_1\theta_*)(\theta^*(F_{d_0}^{\RR\cald})) \Right3{} 
	\theta_*(Q_0) \Right3{} \theta_*(F_{c_0}^{\RR\calc}) \Right3{} 
	F_{d_0}^{\RR\cald} \too 0.  \]
Here, $\theta_*(F_{c_0}^{\RR\calc})\cong F_{d_0}^{\RR\cald}$ by 
\eqref{e:iso-theta*} and since $\Mor_\calc(c_0,c)/K_c\cong 
\Mor_\cald(d_0,\theta(c))$ for each $c$ in $\calc$ by \eqref{e:mor-quot}. 
Thus $(L_1\theta_*)(\theta^*F_{d_0}^{\RR\cald})\cong \theta_*(Q_0)$. We 
will show that $\theta_*(Q_0)(d_0)\ne0$; then 
$(L_1\theta_*)(\theta^*F_{d_0}^{\RR\cald})\ne0$, so $F_{d_0}^{\RR\cald}$ 
has no $\Omega$-resolution by Proposition \ref{exists-res}, and 
$\theta^*(\RR\cald\mod)$ is not closed under extensions by Lemma 
\ref{l:closed}(b).

Set $\End_\calc^{(1)}(c_0)=\Aut_\calc(c_0)$, and let 
$\End_\calc^{(2)}(c_0)$ be its complement (as a set) in $\End_\calc(c_0)$. 
Set $\End_\cald^{(i)}(d_0)=\theta_{c_0}(\End_\calc^{(i)}(c_0))$ for 
$i=1,2$. Thus 
	\[ \End_\calc(c_0) = \End_\calc^{(1)}(c_0) \amalg \End_\calc^{(2)}(c_0) 
	\qquad\textup{and}\qquad
	\End_\cald(d_0) = \End_\cald^{(1)}(d_0) \amalg \End_\cald^{(2)}(d_0) \,: \]
the first by definition, and the second by \eqref{e:mor-quot} and since 
$\theta_{c_0}$ is surjective. Set 
$U_\calc^{(i)}=\RR(\End_\calc^{(i)}(c_0))$ and 
$U_\cald^{(i)}=\RR(\End_\cald^{(i)}(d_0))$ ($i=1,2$), so that 
$F_{c_0}^{\RR\calc}(c_0)= U_\calc^{(1)}\oplus U_\calc^{(2)}$ and 
$F_{d_0}^{\RR\cald}(d_0)= U_\cald^{(1)}\oplus U_\cald^{(2)}$. Thus 
$Q_0(c_0)=Q_0^{(1)}\oplus Q_0^{(2)}$ where $Q_0^{(i)}$ is the kernel of the 
surjection $U_\calc^{(i)}\too U_\cald^{(i)}$. 

By \eqref{e:iso-theta*}, we must show that 
$Q_0(c_0)_{K_{c_0}}\cong\theta_*(Q_0)(d_0)\ne0$, and to do this, it suffices 
to show that $(Q_0^{(1)})_{K_{c_0}}\ne0$. Set $A=\RR[\Aut_\calc(c_0)]\cong 
U_\calc^{(1)}$, and identify it with the group ring. We can also identify 
$Q_0^{(1)}=I$: the 2-sided ideal in $A$ generated as an $\RR$-module by the 
elements $g-h$ for $g,h\in\Aut_\calc(c_0)$ such that $gh^{-1}\in K_{c_0}$. 
Then $X_{K_{c_0}}=X/IX$ for each $A$-module $X$. In particular, 
$(Q_0^{(1)})_{K_{c_0}}\cong I/I^2$.

Consider the short exact sequence $0\to I\to A\to A/I\to0$ of 
$\RR[K_{c_0}]$-modules. Since $A$ is projective, this induces an 
isomorphism $I/I^2\cong H_1(K_{c_0};A/I)$. Since $K_{c_0}$ is not 
$\RR$-perfect and acts trivially on the free $\RR$-module 
$A/I\cong\RR[\Aut_\cald(d_0)]$, we now conclude that $I/I^2\ne0$. 
\end{proof}

\begin{Ex} \label{ex3a}
In the situation of Proposition \ref{ex3}(c), there can also be nonzero 
$\RR\calc$-modules that do have $\Omega$-resolutions. For example, fix a  
prime $p$, set $\RR=\F_p$, and assume that $\Ob(\calc)=\Ob(\cald)=\{x,y\}$, 
where $\End_\cald(x)=\End_\calc(y)=\End_\cald(y)=\{\Id\}$ and 
$\End_\calc(x)\cong C_p$, and each category has a unique morphism from $x$ 
to $y$ and none from $y$ to $x$. Then the unique functor 
$\theta\:\calc\too\cald$ satisfies the hypotheses of Proposition \ref{ex3}, 
and $K_x\cong C_p$ is not $\RR$-perfect while $K_y=1$ is. Set 
$X=F_y^{\RR\cald}$; then $\theta^*(X)\cong F_y^{\RR\calc}$ is projective as 
an $\RR\calc$-module, so $X$ has as $\Omega$-resolution the sequence $0\too 
F_y^{\RR\calc}\xto{\quad\gee\quad} \theta^*(F_y^{\RR\cald})\too0$.
\end{Ex}

\subsection{Categories over a group}
\leavevmode

The other large family of examples we consider are those where 
$\cald=\calb(\pi)$ (as defined in the introduction) for a 
group $\pi$. 

\begin{Defi} \label{d:overG}
A \emph{category over a group $\pi$} consists of a pair 
$(\calc,\theta)$, where $\calc$ is a nonempty small connected category, and 
$\theta\:\calc\Right2{}\calb(\pi)$ is a functor such that the homomorphism 
$\pi_1(|\calc|)\too\pi$ induced by $\theta$ is surjective.
\end{Defi}

For example, if $G$ and $\pi$ are groups, and 
$\theta\:\calb(G)\too\calb(\pi)$ is the functor induced by a surjective 
homomorphism $G\too\pi$, then $(\calb(G),\theta)$ is a category over 
$\pi$. 

As another example, one that helped motivate this work, let $\SFL$ be a 
$p$-local compact group as defined in \cite{BLO3}. Set 
$\pi=\pi_1(|\call|\pcom)$. Then $\pi$ is a finite $p$-group, and there is a 
natural functor $\theta\:\call \Right2{}\calb(\pi)$ whose restriction to 
$\calb(S)$ is surjective. It follows from properties of linking systems 
that $(\call, \theta)$ is a category over $\pi$. We refer to the 
introduction to Section \ref{s:examples} for more details.

\begin{Lem}\label{isos-over-G}
Let $\RR$ be a commutative ring, and let $(\calc,\theta)$ be a category 
over $\pi$. Then 
\begin{enuma} 
\item the overcategory $\theta\dn\Bobj[\pi]$ is connected, $\cpth$ is an 
$\Omega$-system, and the 
projection $|\theta\dn\Bobj[\pi]|\too|\calc|$ is a covering space with 
covering group $\pi$.

\end{enuma}
For an $\RR\calc$-module $M$, 
\begin{enuma} \setcounter{enumi}{1}

\item $\theta_*(M)\cong \colim_{\theta\dn\Bobj[\pi]}(M)$; and

\item $M\cong\theta^*(N)$ for some $\RR\pi$-module $N$ if and only if $M$ 
is locally constant on $\calc$ and essentially constant on 
$\theta\dn\Bobj[\pi]$. 

\end{enuma}
\end{Lem}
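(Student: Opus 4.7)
The plan for (a) is to describe $\theta\dn\Bobj[\pi]$ concretely: its objects are pairs $(c,g)$ with $c\in\Ob(\calc)$ and $g\in\Mor_{\calb(\pi)}(\theta(c),\Bobj[\pi])=\pi$, and a morphism $(c,g)\to(c',g')$ is a $\psi\in\Mor_\calc(c,c')$ satisfying $g'\theta(\psi)=g$. Left translation by $\pi$ on the second coordinate defines a free $\pi$-action whose orbit category is $\calc$; passing to nerves yields a principal $\pi$-cover $|\theta\dn\Bobj[\pi]|\to|\calc|$, classified by the kernel of $\pi_1(|\theta|)\:\pi_1(|\calc|)\to\pi$, so the cover is connected precisely because $\pi_1(|\theta|)$ is surjective by hypothesis. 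To then invoke Proposition \ref{AC->AD}, I would verify that $\theta$ is quasisurjective (every morphism of $\calb(\pi)$ is an isomorphism, and $\pi$ is generated as a group by $\theta(\Mor(\calc))$ thanks to the surjectivity of $\pi_1(|\theta|)$), and that $\cali(\theta\dn\Bobj[\pi])$ coincides with the whole (connected) overcategory because $\calb(\pi)$ is a groupoid; the auxiliary hypothesis of that proposition is then automatic.

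For (b), I would invoke the pointwise colimit formula for a left Kan extension at the unique object $\Bobj[\pi]$, namely $\theta_*(M)(\Bobj[\pi])\cong\colim_{\theta\dn\Bobj[\pi]}(M\circ\pr)$ where $\pr\:\theta\dn\Bobj[\pi]\to\calc$ forgets the second coordinate; after identifying $\theta_*(M)$ with an $\RR\pi$-module, the $\pi$-action is induced by the free $\pi$-action on $\theta\dn\Bobj[\pi]$ from part (a) via the functoriality of the colimit.

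For (c), if $M=\theta^*(N)$ then $M$ sends every morphism to an automorphism (coming from $N$), so $M$ is locally constant on $\calc$; setting $V=N(\Bobj[\pi])$ and $\mu_{(c,g)}=N(g)$, the relation $g'\theta(\psi)=g$ combined with functoriality of $N$ gives $\mu_{(c',g')}\circ M(\psi)=\mu_{(c,g)}$, exhibiting a natural isomorphism from $M\circ\pr$ to the constant functor $\3{V}$, so $M\circ\pr$ is essentially constant on $\theta\dn\Bobj[\pi]$. Conversely, if $M$ is locally constant on $\calc$ and $M\circ\pr$ is essentially constant on the connected overcategory, then the argument in Lemma \ref{ess.const.}(b) shows each colimit map $M(c)=(M\circ\pr)(c,g)\to\colim(M\circ\pr)$ is an isomorphism; combined with part (b) this produces a natural isomorphism $M\xto{\cong}\theta^*(\theta_*M)$, so $N:=\theta_*(M)$ works. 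The main obstacle is really the identification in (a) of $|\theta\dn\Bobj[\pi]|$ as a principal $\pi$-cover of $|\calc|$ together with its connectedness; once that is in hand, (b) is an invocation of the Kan extension formula and (c) is bookkeeping with natural transformations.
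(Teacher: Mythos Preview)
Your proposal is correct and follows essentially the same approach as the paper's proof: part (a) uses the free $\pi$-action on $\theta\dn\Bobj[\pi]$ and connectedness from surjectivity of $\pi_1(|\theta|)$, then invokes Proposition \ref{AC->AD}; part (b) is the pointwise Kan extension formula; and part (c) argues in both directions via the unit map $M\to\theta^*\theta_*(M)$ and the fact that essentially constant implies the colimit maps are isomorphisms. You are in fact slightly more explicit than the paper in verifying the auxiliary hypothesis of Proposition \ref{AC->AD} (noting that $\cali(\theta\dn\Bobj[\pi])$ equals the whole connected overcategory since $\calb(\pi)$ is a groupoid) and in writing down the isomorphism $\mu_{(c,g)}=N(g)$ in the forward direction of (c), but the underlying strategy is identical.
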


\begin{proof} \textbf{(a) } Since $\pi$ acts freely on 
$|\theta\dn\Bobj[\pi]|$ with orbit space $|\calc|$, the projection to 
$|\calc|$ is a covering space with covering group $\pi$. In particular, 
$|\theta\dn\Bobj[\pi]|$ is connected since $\pi_1(|\calc|)$ surjects onto 
$\pi$. Also, $\theta$ is quasisurjective since $\pi_1(|\calc|)$ surjects 
onto $\pi$, and so $\cpth$ is an $\Omega$-system by Proposition \ref{AC->AD}. 

\smallskip

\noindent\textbf{(b) } By definition of left Kan extension, 
$\theta_*(M)=\colim_{\theta\dn\Bobj[\pi]}(M)$.

\smallskip

\noindent\textbf{(c) } Assume $M$ is locally constant on $\calc$ and 
essentially constant on $\theta\dn\Bobj[\pi]$. We claim that the 
natural morphism $\aaa_M\:M\too\theta^*\theta_*(M)$ is an isomorphism. 
This means showing, for each $c$ in $\calc$, that the natural morphism 
from $M(c)$ to $\theta_*(M)$ is an isomorphism. By (b), this is 
equivalent to showing that the natural morphism 
$\xi_{c}\:M(c)\too \colim_{\theta\dn\Bobj[\pi]}(M)$ is an 
isomorphism for each $c$. But this holds since by assumption, the 
composite of $M$ with the forgetful functor 
$\theta\dn\Bobj[\pi]\too\calc$ is isomorphic to a constant functor.

Conversely, if $M\cong\theta^*(N)$, then $M$ is locally constant on 
$\calc$, and isomorphic to a constant functor on $\theta\dn\Bobj[\pi]$. 
\end{proof}

\begin{Lem} \label{H1(|C0|)}
Let $\RR$ be a commutative ring, let $(\calc,\theta)$ be a category 
over a group $\pi$, and set 
$H=\Ker\bigl[\pi_1(|\theta|)\:\pi_1(|\calc|)\too\pi\bigr]$.
\begin{enuma} 
\item If $H$ is $\RR$-perfect, then $(L_1\theta_*)(\theta^*X)=0$ 
for each $\RR\pi$-module $X$; and 
\item if $H$ is not $\RR$-perfect, then 
$(L_1\theta_*)(\theta^*X)\ne0$ for each $\RR\pi$-module $X\ne0$ that (as an 
$\RR$-module) contains $\RR$ as a direct summand.
\end{enuma}
\end{Lem}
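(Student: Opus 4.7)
The plan is to compute $(L_1\theta_*)(\theta^*X)$ explicitly via the standard formula expressing derived left Kan extensions as overcategory homology, and then read off both claims from a single tensor calculation.

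The first step will be to invoke the classical identification
\[
	(L_i\theta_*)(M)(\Bobj[\pi]) \;\cong\; H_i(|\theta\dn\Bobj[\pi]|;\, M|_{\theta\dn\Bobj[\pi]})
\]
for an $\RR\calc$-module $M$, where the right-hand side is nerve homology with local coefficients; when the coefficient system is essentially constant this collapses to ordinary singular homology with constant coefficients. This formula can either be cited or derived by resolving $\theta^*X$ by the projectives $F_c^{\RR\calc}$ of Lemma \ref{kC-proj} and identifying the resulting double complex with the normalized bar complex of the nerve of $\theta\dn\Bobj[\pi]$.

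Next, specializing to $M=\theta^*X$, Lemma \ref{isos-over-G}(c) guarantees that $(\theta^*X)|_{\theta\dn\Bobj[\pi]}$ is essentially constant with value $X$: explicitly, a morphism $(\varphi,g)\:(c,g)\to(c',g')$ with $g'\theta(\varphi)=g$ acts as multiplication by $(g')^{-1}g$, and this is intertwined with the identity by the isomorphisms $\xi_{(c,g)}\:X\to X$ sending $x\mapsto gx$. Combined with Lemma \ref{isos-over-G}(a), which says $|\theta\dn\Bobj[\pi]|$ is a connected covering of $|\calc|$ with fundamental group $H$, Hurewicz gives $H_1(|\theta\dn\Bobj[\pi]|;\Z)\cong H\ab$, and the universal coefficient theorem (the Tor term vanishes because $H_0(|\theta\dn\Bobj[\pi]|;\Z)=\Z$) yields
\[
	(L_1\theta_*)(\theta^*X) \;\cong\; H\ab \otimes_\Z X.
\]

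Both claims now reduce to an elementary calculation. For (a), if $H\ab\otimes_\Z\RR=0$ and $X$ is an $\RR$-module, then $H\ab\otimes_\Z X\cong(H\ab\otimes_\Z\RR)\otimes_\RR X=0$. For (b), if $H\ab\otimes_\Z\RR\neq 0$ and $X$ contains $\RR$ as a direct $\RR$-summand, then $H\ab\otimes_\Z\RR$ appears as a nonzero direct summand of $H\ab\otimes_\Z X$. The main technical obstacle is securing the overcategory-homology formula and the reduction to constant coefficients at the stated level of generality; once that step is in place, the rest of the argument is purely formal.
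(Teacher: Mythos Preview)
Your proposal is correct and follows essentially the same route as the paper: identify $(L_1\theta_*)(\theta^*X)$ with $H_1(|\theta\dn\Bobj[\pi]|;X)$ for untwisted coefficients $X$, then relate this to $H_1(H;\RR)$. The paper cites Lemma~\ref{isos-over-G}(b) and \cite[Appendix~II, Proposition~3.3]{GZ} for the overcategory-homology identification and argues the coefficients are untwisted directly from the fact that $H=\pi_1(|\theta\dn\Bobj[\pi]|)$ acts trivially on any $\RR\pi$-module; your explicit trivialization via $\xi_{(c,g)}(x)=gx$ and your Hurewicz--UCT computation $(L_1\theta_*)(\theta^*X)\cong H\ab\otimes_\Z X$ make the final step more explicit than the paper does, but the substance is the same.
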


\begin{proof} For each $\RR\pi$-module $X$, $(L_1\theta_*)(\theta^*X)\cong 
(L_1(\colim_{\theta\dn\Bobj[\pi]}))(\theta^*X)$ as $\RR$-modules by Lemma 
\ref{isos-over-G}(b), and 
$(L_1(\colim_{\theta\dn\Bobj[\pi]}))(\theta^*X)\cong 
H_1(|\theta\dn\Bobj[\pi]|;X)$ since the two sides are homology groups of 
the same chain complex by \cite[Appendix II, Proposition 3.3]{GZ}. Here, 
the homology is with untwisted coefficients since 
$H\cong\pi_1(|\theta\dn\Bobj[\pi]|)$ (Lemma \ref{isos-over-G}(a)) acts 
trivially on the $\RR\pi$-module $X$. Thus $(L_1\theta_*)(\theta^*X)=0$ if 
and only if $H_1(|\theta\dn\Bobj[\pi]|;X)=0$. Points (a) and (b) now follow 
since $H\cong\pi_1(|\theta\dn\Bobj[\pi]|)$ is $\RR$-perfect if and only if 
$H_1(|\theta\dn\Bobj[\pi]|;\RR)\cong H_1(H;\RR)=0$. 
\end{proof}

Since a category over a group $\pi$ gives rise to an $\Omega$-system, 
we can now work with $\Omega$-resolutions in this situation. 

\begin{Lem} \label{O-res}
Let $\ctg$ be a category over a group $\pi$, and let $\RR$ be a commutative 
ring. A complex of $\RR\calc$-modules 
	$$  \dots \Right2{} P_2 \Right2{\2_2} P_1 \Right2{\2_1} P_0 
	\Right2{\varepsilon} \theta^*(\RR \pi) \Right2{} 0 $$
is an $\Omega$-resolution of $\theta^*(\RR\pi)$ with respect to the 
$\Omega$-system $\cpth$ of Proposition \ref{AC->AD} if and only if 
\begin{enumerate}[\rm(1) ]

\item $P_n$ is a projective $\RR\calc$-module for each $n\geq 0$; 
\label{O-res-1}

\item the complex $\theta_*(P_*)$ is acyclic, and $\gee$ induces an 
isomorphism $H_0(P_*)(c)\cong\RR\pi$ for each $c\in\Ob(\calc)$; and 
\label{O-res-2}

\item for each $n\ge0$, $H_n(P_*)$ is locally constant on $\calc$ and 
essentially constant on $\theta\dn\Bobj[\pi]$. \label{O-res-3}

\end{enumerate} 
\end{Lem}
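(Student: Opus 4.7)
The plan is to verify, one-by-one, that the three numbered conditions in the lemma correspond to the three clauses of Definition~\ref{d:Omega-resolution} applied to the $\Omega$-system $\cpth$ of Proposition~\ref{AC->AD} with $X=\RR\pi$. Condition~(\ref{O-res-1}) is literally \omres{\ref{sqres-1}}. For condition~(\ref{O-res-3}), I would apply Lemma~\ref{isos-over-G}(c) degreewise to the modules $H_n(P_*)$: an $\RR\calc$-module is isomorphic to an object of $\theta^*(\RR\pi\mod)$ if and only if it is locally constant on $\calc$ and essentially constant on $\theta\dn\Bobj[\pi]$. This matches (\ref{O-res-3}) with the first part of \omres{\ref{sqres-3}}.

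The remaining work is to match condition~(\ref{O-res-2}) against \omres{\ref{sqres-2}} together with the ``$\gee$ induces an isomorphism on $H_0$'' clause of \omres{\ref{sqres-3}}. The clause of (\ref{O-res-2}) asserting that $\gee$ induces an isomorphism $H_0(P_*)(c)\cong\RR\pi$ for each $c$ translates directly: since $\theta^*(\RR\pi)(c)=\RR\pi$ for every $c$, and the map induced by $\gee$ on $H_0(P_*)$ is a morphism of $\RR\calc$-modules, being an isomorphism objectwise is the same as being an isomorphism of $\RR\calc$-modules $H_0(P_*)\cong\theta^*(\RR\pi)$, which is the iso clause of \omres{\ref{sqres-3}}. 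The other half of (\ref{O-res-2}), namely acyclicity of $\theta_*(P_*)$ (i.e.\ $H_n=0$ for $n\ge1$), together with the $H_0$-isomorphism just discussed, is equivalent to \omres{\ref{sqres-2}}: by right exactness of $\theta_*$ (Lemma~\ref{l:Omega-props}(a)) we have $H_0(\theta_*(P_*))\cong\theta_*(H_0(P_*))$, so applying $\theta_*$ to $H_0(P_*)\cong\theta^*(\RR\pi)$ and using \OP2 yields $H_0(\theta_*(P_*))\cong\RR\pi$ via $\theta_*(\gee)$, which combined with vanishing in positive degrees is exactness of the augmented complex $\theta_*(\bfR)$.

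There is no substantive obstacle here; the proof is a translation exercise using Lemma~\ref{isos-over-G}(c) and the right exactness of $\theta_*$. The one point meriting mild care is that the two halves of (\ref{O-res-2}) cannot be paired independently with two distinct clauses of the general definition: the $H_0$-isomorphism serves double duty, supplying both the iso clause of \omres{\ref{sqres-3}} and the right-hand cokernel piece of \omres{\ref{sqres-2}}.
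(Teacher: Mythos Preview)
Your proposal is correct and follows essentially the same approach as the paper's proof: match \eqref{O-res-1} with \omres{\ref{sqres-1}}, use Lemma~\ref{isos-over-G}(c) to match \eqref{O-res-3} with the first clause of \omres{\ref{sqres-3}}, and identify \eqref{O-res-2} with \omres{\ref{sqres-2}} together with the $H_0$-isomorphism clause of \omres{\ref{sqres-3}}. The paper simply declares the last of these equivalences ``clear,'' whereas you spell out the right-exactness-of-$\theta_*$ and \OP2 argument needed for the forward direction; your observation that the $H_0$-isomorphism in \eqref{O-res-2} serves double duty is a helpful clarification of why the packaging of conditions differs between the two formulations.
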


\begin{proof} By Lemma \ref{isos-over-G}(c), \eqref{O-res-3} is equivalent 
to the first statement in ($\Omega$-\ref{sqres-3}) (that $H_n(P_*,\2_*)$ is 
isomorphic to an object in $\theta^*(\RR\pi\mod)$). The equivalence of 
\eqref{O-res-1} with ($\Omega$-\ref{sqres-1}), and of \eqref{O-res-2} with 
($\Omega$-\ref{sqres-2}) and the second part of ($\Omega$-\ref{sqres-3}) 
(that $H_0(P_*,\2_*)\cong X$), is clear. 
\end{proof}

By Proposition \ref{p2:functoriality}, if $\RR$ is a commutative ring and 
$(\calc,\theta)$ is a category over a group $\pi$, and there is at 
least one $\Omega$-resolution of $\RR\pi$, then all $\Omega$-resolutions 
are chain homotopy equivalent to each other. This allows us to define 
``$\Omega$-homology'' in this situation.

\begin{Defi}\label{homology}
Let $\ctg$ be a category over a group $\pi$. For a commutative ring 
$\RR$, if there is an $\Omega$-resolution $(P_*,\2_*)$ of $\theta^*(\RR\pi)$ 
with respect to $\ctg$, then we define 
	\[ H^\Omega_*(\calc,\theta; \RR) = \theta_*\bigl(H_*(P_*,\2_*)\bigr). \] 
\end{Defi}

The following proposition is a first step towards determining for which 
categories over $\pi$ the free module $\RR\pi$ has an $\Omega$-resolution. 
In the next section, we will show that $\Omega$-resolutions of $\RR\pi$ do 
exist in many of the cases not excluded here. Recall that $\calc$ is an 
\emph{EI-category} if all endomorphisms of objects in $\calc$ are 
automorphisms.

\begin{Prop} \label{ex4}
Fix a commutative ring $\RR$. Let $(\calc,\theta)$ be a category over a 
group $\pi$, and set 
$H=\Ker\bigl[\pi_1(|\theta|)\:\pi_1(|\calc|)\too\pi\bigr]$. Thus 
$\cpth$ is an $\Omega$-system by Lemma \ref{isos-over-G}(a).
\begin{enuma} 

\item Assume $H$ is $\RR$-perfect. Then 
$(L_1\theta_*)(\theta^*(X))=0$ for each $X$ in $\RR\pi\mod$, and 
$\theta^*(\RR\pi\mod)$ is closed under extensions in $\RR\calc\mod$. If 
$\calc=\calb(G)$ for a group $G$, then $\theta^*(\RR\pi\mod)$ is closed 
under subobjects in $\RR\calc\mod$, and each projective $\RR\pi$-module 
has an $\Omega$-resolution. If $\calc$ is an EI-category with more than 
one isomorphism class, then $\theta^*(\RR\pi\mod)$ is not closed under 
subobjects in $\RR\calc\mod$. 

\item Assume $H$ is not $\RR$-perfect. Then 
$\theta^*(\RR\pi\mod)$ is not closed under extensions in $\RR\calc\mod$, 
and the projective $\RR\pi$-module $\RR\pi$ does not have an 
$\Omega$-resolution. More generally, if $X$ is a nonzero projective 
$\RR\pi$-module that is free as an $\RR$-module, then $X$ has no 
$\Omega$-resolution.

\end{enuma}
\end{Prop}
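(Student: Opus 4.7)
For (a), assume $H$ is $\RR$-perfect. Lemma \ref{H1(|C0|)}(a) yields $(L_1\theta_*)(\theta^*X)=0$ for every $X\in\RR\pi\mod$, and closure of $\theta^*(\RR\pi\mod)$ under extensions is then immediate by the contrapositive of Lemma \ref{l:closed}(c): any short exact sequence $0\to M'\to M\to M''\to 0$ with $M',M''\in\theta^*(\RR\pi\mod)$ but $M\notin\theta^*(\RR\pi\mod)$ would force $(L_1\theta_*)(M'')\ne 0$. When $\calc=\calb(G)$, closure under subobjects is trivial (an $\RR G$-module lies in $\theta^*(\RR\pi\mod)$ iff $\Ker(G\to\pi)$ acts trivially, a property inherited by submodules), so combining both closures with the existence of enough projectives, Proposition \ref{exists-res} yields $\Omega$-resolutions of all projective $\RR\pi$-modules. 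For the EI case with more than one iso class, connectedness produces a non-iso morphism $\varphi\:a\to b$ between non-isomorphic objects; the EI axiom then forces no reverse arrow $b\to a$ to exist (otherwise both composites, being endomorphisms in an EI category, are automorphisms, forcing $\varphi$ to be iso). I define $M\subseteq\theta^*(\RR\pi)$ by setting $M(d)=\RR\pi$ when $\Mor_\calc(b,d)\ne\emptyset$ and $M(d)=0$ otherwise; the sieve condition makes $M$ a subfunctor, while $M(a)=0$ and $M(b)=\RR\pi\ne 0$ show $M(\varphi)$ is not an iso, so $M$ is not locally constant and hence not in $\theta^*(\RR\pi\mod)$.

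For (b), assume $H$ is not $\RR$-perfect. The non-existence part is quick: for any nonzero projective $X\in\RR\pi\mod$ that is free over $\RR$, Lemma \ref{H1(|C0|)}(b) gives $(L_1\theta_*)(\theta^*X)\ne 0$, so Proposition \ref{exists-Omega1} rules out an $\Omega_1$-resolution and a fortiori any $\Omega$-resolution. The module $\RR\pi$ itself is such an $X$, so it has no $\Omega$-resolution.

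The subtler point is non-closure under extensions. My approach is to invoke the hyperderived (Grothendieck) spectral sequence for the composition $\Hom_{\RR\pi}(-,Y)\circ\theta_*=\Hom_{\RR\calc}(-,\theta^*Y)$, which is valid because $\theta_*$ sends projectives to projectives (Lemma \ref{l:Omega-props}(b)). For $M=\theta^*X$ this yields the five-term exact sequence
\[ 0 \to \Ext^1_{\RR\pi}(X,Y) \to \Ext^1_{\RR\calc}(\theta^*X,\theta^*Y) \to \Hom_{\RR\pi}((L_1\theta_*)(\theta^*X),Y) \to \Ext^2_{\RR\pi}(X,Y). \]
Taking $X=\RR$, so that $V=(L_1\theta_*)(\theta^*\RR)\ne 0$, and choosing $Y$ an injective $\RR\pi$-module containing $V$, the rightmost term vanishes and the cokernel of the first map is nonzero. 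By full faithfulness of $\theta^*$ (Lemma \ref{l:adj-pair}), any class in this cokernel is represented by an extension $0\to\theta^*Y\to M\to\theta^*\RR\to 0$ in $\RR\calc\mod$ with $M\notin\theta^*(\RR\pi\mod)$, completing the proof. The main obstacle is setting up this spectral sequence correctly; a more elementary construction from the long exact sequence applied to a projective cover of $\theta^*\RR$ also seems feasible but is more delicate, since in the general (non-$\calb(G)$) setting one cannot rely on subobject closure.
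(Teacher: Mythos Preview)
Your proof of part~(a) and the non-existence claims in part~(b) match the paper's argument essentially verbatim (with harmless cosmetic differences: you embed your EI counterexample in $\theta^*(\RR\pi)$ rather than in the constant functor $\3\RR$, and you invoke Proposition~\ref{exists-res} directly for $\calc=\calb(G)$ rather than routing through Proposition~\ref{ex3}(a)).

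The genuine divergence is in showing that $\theta^*(\RR\pi\mod)$ is not closed under extensions when $H$ is not $\RR$-perfect. The paper builds an explicit counterexample by hand: it factors $\theta$ through $\calb(G)$ with $G=\pi_1(|\calc|)$, constructs a ``Heisenberg-type'' $\RR G$-module $M_0=N_0\times N_0$ (where $N_0=H\ab\otimes_\Z\RR\ne0$) on which $H$ acts by $\mxtwo1{\chi(h)}01$, induces up to $M=\RR G\otimes_{\RR H}M_0$, and then pulls back along a functor $\til\theta\:\calc\to\calb(G)$ to obtain an $\RR\calc$-module sandwiched between two copies of $\theta^*(\RR\pi)$ but not itself in $\theta^*(\RR\pi\mod)$. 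Your approach is instead cohomological: you use the hyper-Ext (Grothendieck) spectral sequence for $\Hom_{\RR\pi}(-,Y)\circ\theta_*$ and the resulting five-term sequence to produce a class in $\Ext^1_{\RR\calc}(\theta^*\RR,\theta^*Y)$ outside the image of $\theta^*$, and then translate this via full faithfulness and Lemma~\ref{l:Omega-props}(c) into an extension whose middle term is not in $\theta^*(\RR\pi\mod)$. Both routes are correct. The paper's argument is entirely elementary and yields a concrete witness; yours is slicker and more conceptual, but you should make explicit (as you hint) that the edge map $\Ext^1_{\RR\pi}(X,Y)\to\Ext^1_{\RR\calc}(\theta^*X,\theta^*Y)$ really is the map induced by the exact functor $\theta^*$---this is standard, but it is exactly what makes your final step (identifying ``not in the image of $\theta^*$'' with ``middle term not in $\theta^*(\RR\pi\mod)$'') go through.
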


\begin{proof} \noindent\textbf{(a) } Assume that $H$ is $\RR$-perfect. Then 
$(L_1\theta_*)(\theta^*(X))=0$ for each $\RR\pi$-module $X$ by Lemma 
\ref{H1(|C0|)}(a). Hence by Lemma \ref{l:closed}(c), $\theta^*(\RR\pi\mod)$ 
is closed under extensions in $\RR\calc\mod$. If $\calc\cong\calb(G)$ for 
some $G$, then $\theta$ is surjective on morphisms, $H=\Ker[G\too\pi]$, and 
$\theta^*(\RR\pi\mod)$ is closed under subobjects in $\RR\calc\mod$ by 
Proposition \ref{ex3}(a). 

Assume $\calc$ is an EI-category with more than one isomorphism class, and 
let $x,y\in\Ob(\calc)$ be a pair of nonisomorphic objects. At least 
one of the sets $\Mor_\calc(x,y)$ and $\Mor_\calc(y,x)$ must be empty; we 
can assume that $\Mor_\calc(x,y)=\emptyset$. Let $\3{\RR}$ be the 
constant $\RR\calc$-module with value $\RR$, and let $M\le\3{\RR}$ 
be the submodule where $M(c)=0$ if $\Mor_\calc(x,c)=\emptyset$ and 
$M(c)=\RR$ otherwise. Then $M(c)=\RR$ and $M(c')=0$ imply that 
$\Mor_\calc(c,c')=\emptyset$; thus $M$ is well defined as a submodule of 
$\3{\RR}$. Also, $M(x)=\RR$ so $M\ne0$, and $M(y)=0$ so $M$ is 
properly contained in $\3{\RR}$. Since $\calc$ is connected, $M$ is 
not locally constant, and hence not isomorphic to an object in 
$\theta^*(\RR\pi\mod)$. So $\theta^*(\RR\pi\mod)$ is not closed under 
subobjects in $\RR\calc\mod$.

\smallskip

\noindent\textbf{(b) } Fix an object $c_0$ in $\calc$, and set 
$G=\pi_1(|\calc|,c_0)$ for short. Let $\eta\:G\too\pi$ be the homomorphism 
induced by $|\theta|\:|\calc|\too|\calb(\pi)|=B\pi$. Thus $\eta$ is 
surjective and $H=\Ker(\eta)$. 

Assume $H$ is not $\RR$-perfect. By Lemma \ref{H1(|C0|)}(b), for each 
nonzero $\RR\pi$-module $X$ that is free as an $\RR$-module, 
$(L_1\theta_*)(\theta^*X)\ne0$. So $X$ has no $\Omega$-resolution by 
Proposition \ref{exists-Omega1}, and it remains to show that 
$\theta^*(\RR\pi\mod)$ is not closed under extensions in $\RR\calc\mod$. 

Set $N_0=H\ab\otimes_{\Z}\RR\cong H_1(H;\RR)$, regarded as an $\RR$-module, 
and let $\chi\:H\too N_0$ be the homomorphism $\chi(h)=[h]\otimes1$. Since 
$H$ is not $\RR$-perfect, $N_0\ne0$, and $\chi$ is not the trivial 
homomorphism. Let $M_0$ be the $\RR H$-module with underlying $\RR$-module 
$N_0\times N_0$, where $h\in H$ acts via the matrix $\mxtwo1{\chi(h)}01$. 
Thus there is a submodule $M'_0=\{(x,0)\,|\,x\in\RR\}\le M_0$ such that $H$ 
acts trivially on $M'_0$ and on $M_0/M'_0$.

Now set $M=\RR G\otimes_{\RR H}M_0$. Thus $M$ is an $\RR G$-module, and 
contains a submodule $M'$ such that $M'$ and $M/M'$ are both isomorphic to 
$\eta^*(\RR\pi)$.

We now use this to construct a counterexample to $\theta^*(\RR\pi\mod)$ 
being closed under extensions. For each $c\in\Ob(\calc)$, choose a 
path $\til\phi_c$ in $|\theta\dn\Bobj[\pi]|$ from $(c_0,\Id)$ to $(c,\Id)$ 
($|\theta\dn\Bobj[\pi]|$ is connected by Lemma \ref{isos-over-G}(a)), and 
let $\phi_c$ be its image in $|\calc|$. In particular, let $\til\phi_{c_0}$ 
and $\phi_{c_0}$ be the constant paths at $(c_0,\Id)$ and $c_0$, 
respectively. Define a functor 
$\til\theta\:\calc\too\calb(G)$ by sending each object in $\calc$ to the 
unique object $\Bobj[G]$, and by sending each morphism 
$\omega\in\Mor_\calc(c,c')$ to the class of the loop 
$\phi_c\cdot\omega\cdot\phi_{c'}^{-1}$ (where we compose paths from left to 
right). We claim that 
\begin{enumi} 
\item $\pi_1(|\til\theta|)\:\pi_1(|\calc|,c_0) \too 
\pi_1(\calb(G),\Bobj[G])=G$ is the identity on $G$; and 
\item $\theta=\calb(\eta)\circ\til\theta$. 
\end{enumi}
Point (i) is immediate from the definition of $\til\theta$ (and since 
$\phi_{c_0}$ is the constant path). Point (ii) holds since the paths 
$\phi_c$ all lift to $|\theta\dn\Bobj[\pi]|$ and hence are sent to trivial 
loops in $\calb(\pi)$, and since $\eta\:G\too\pi$ is induced by $\theta$. 

Now, $\til\theta^*(M)$ is an $\RR\calc$-module with submodule 
$\til\theta^*(M')$, such that by (ii), 
	\[ \til\theta^*(M')\cong \til\theta^*(\eta^*(\RR\pi)) \cong 
	\theta^*(\RR\pi)
	\qquad\textup{and}\qquad
	\til\theta^*(M)\big/\til\theta^*(M')\cong
	\til\theta^*(M/M')\cong\theta^*(\RR\pi). \] 
Thus $\til\theta^*(M')$ and $\til\theta^*(M)\big/\til\theta^*(M')$ are 
both isomorphic to objects in $\theta^*(\RR\pi\mod)$. As for 
$\til\theta^*(M)$, by (i), the homomorphism 
	\[ (\til\theta^*(M))_\#\: G = \pi_1(|\calc|,c_0) \Right5{} 
	\Aut_{\RR}(\til\theta^*(M)(c_0)) = \Aut_{\RR}(M) \]
of Lemma \ref{ess.const.}(a) is just the given action of $G$ on the $\RR 
G$-module $M$. So its restriction to $H=\pi_1(|\theta\dn\Bobj[\pi]|)$ is 
nontrivial, and by Lemma \ref{ess.const.}(b), 
$\til\theta^*(M)$ is not essentially constant on $\theta\dn\Bobj[\pi]$.  
By Lemma \ref{isos-over-G}(c), it is not isomorphic to an object in 
$\theta^*(\RR\pi\mod)$, and thus $\theta^*(\RR\pi\mod)$ is not closed under 
extensions in $\RR\calc\mod$. 
\end{proof}

Note that Proposition \ref{exists-res} need not apply under 
the hypotheses of Proposition \ref{ex4} when $H$ is 
$\RR$-perfect, although $\Omega_1$-resolutions (at least) 
exist by Proposition \ref{exists-Omega1}. For example, if 
$(\calc,\theta)$ is a category over $\pi$ where $\calc$ is an 
EI-category with more than one isomorphism class of objects, then 
$\theta^*(\RR\pi\mod)$ is not closed under subobjects in $\RR\calc\mod$ 
by Proposition \ref{ex4}(a), and so Proposition \ref{exists-res} cannot 
be applied. In contrast, if $\calc$ is the category of a group, then 
$\Omega$-resolutions always exist by Proposition \ref{ex3}(b). We will 
show in Theorem \ref{Omega_C0} that at least with one extra condition 
on $\RR$ and $H$, $\Omega$-resolutions of $\RR\pi$ always exist when 
the hypotheses of Proposition \ref{ex4} hold and $H$ is 
$\RR$-perfect.

\begin{Ex} \label{ex4a}
In the situation of Proposition \ref{ex4}(a), if $\calc$ is 
not an EI-category, then $\theta^*(\RR\pi\mod)$ can fail to be closed under 
subobjects even when $\calc$ has only one object, and can be closed under 
subobjects even when $\calc$ has more than one isomorphism class of 
object: 
\begin{enuma} 

\item Set $\RR=\Z$, $\pi=\Z$, and $\calc=\calb(\N)$, and let 
$\theta\:\calb(\N)\too\calb(\pi)$ be the inclusion. Then $(\calc,\theta)$ 
is a category over $\pi$. Let $N$ be the $\RR\pi$-module with underlying 
group $\Q$, where $\pi=\Z$ acts via $n(x)=2^nx$. Let $M$ be the 
$\RR\N$-module with underlying group $\Z$, where $n\in\N$ acts in the same 
way. Thus $M$ is a submodule of $\theta^*(N)$, but is not isomorphic 
to an object in $\theta^*(\RR\pi\mod)$.

\item Let $\calc$ be a category with two objects $x$ and $y$, where 
$\End_\calc(x)=\{0_x,1_x\}$, $\End_\calc(y)=\{0_y,1_y\}$, and there are 
unique morphisms $0_{xy}\in\Mor_\calc(x,y)$ and $0_{yx}\in\Mor_\calc(y,x)$. 
Composition is defined by multiplication of the labels $0$ or $1$. Set 
$\pi=\Z$, and let $\theta\:\calc\too\calb(\Z)$ be the functor that sends 
all endomorphisms to $0$ and the other two morphisms to $1$ and $-1$, 
respectively. Via generators and relations, one checks that $\theta$ 
induces an isomorphism $\pi_1(|\calc|)\cong\Z$. We are thus in the 
situation of Proposition \ref{ex4}(a) with $H=1$. An $\RR\calc$-module $M$ 
is isomorphic to an object in $\theta^*(\RR\pi\mod)$ if and only if all 
endomorphisms induce the identity, in which case the other two morphisms 
induce inverse isomorphisms between $M(x)$ and $M(y)$. So 
$\theta^*(\RR\pi\mod)$ is closed under subobjects in this case.

\end{enuma}
\end{Ex}

\section{Homology of loop spaces of categories over groups}
\label{s:loops}

We next show, in the situation of Proposition \ref{ex4}(a), that 
$\Omega$-resolutions of $\RR\pi$ with respect to $\ctg$ and 
$H\nsg\pi_1(|\calc|)$ do exist, at least whenever $\RR$-plus 
constructions exist for $(|\calc|,H)$, and that the homology of an 
$\Omega$-resolution is the $\RR$-homology of the loop space of that 
$\RR$-plus construction (Theorem \ref{Omega_C0}). For example, when $\kk$ 
is a field of characteristic $p$ for some prime $p$ and $\pi$ is a finite 
$p$-group, the homology of the $\Omega$-resolution is isomorphic to 
$H_*(\Omega(|\calc|\pcom);\kk)$ (Theorem \ref{Omega_C}).

Throughout this section, we work mostly with simplicial sets and their 
realizations, referring to \cite[Chapter I]{GJ} and \cite{Curtis} for the 
definitions and basic properties that we use. In particular, Kan fibrations 
of simplicial sets (called ``fibre maps'' by Curtis) play an important role 
here, and we refer to \cite[\S\,I.3]{GJ} and \cite[Definition 2.5]{Curtis} 
for their definitions. We let $|K|$ denote the geometric realization of a 
simplicial set $K$, let $C_*(K)$ denote its simplicial chain complex, and 
write $H_*(K)=H_*(C_*(K))$ ($\cong H_*(|K|)$). Thus 
$|\calc|=|\caln(\calc)|$ when $\calc$ is a small category and 
$\caln(\calc)$ is its nerve. Note that if $f\:E\too K$ is a Kan fibration 
and $\mu\:L\too K$ is a simplicial map, then the pullback of $f$ along 
$\mu$ is also a Kan fibration. 

For a small category $\calc$, a \emph{$\calc$-diagram of simplicial sets} 
is a functor from $\calc$ to simplicial sets, and a 
morphism of $\calc$-diagrams is a natural transformation of such 
functors. Let $\3K$ denote the constant $\calc$-diagram that 
sends each object to the simplicial set $K$, and let $\3f\:\3K\too\3L$ 
denote the morphism induced by a map $f\:K\too L$ of simplicial sets.

Let $E\calc$  denote the $\calc$-diagram of simplicial sets where 
$E\calc(c)=\caln(\Id_\calc\dn c)$, and where a morphism 
$\varphi$ in $\calc$ induces a map between spaces $E\calc(-)$ by 
composition with $\varphi$. Then $|E\calc|$ is a free 
$\calc$-CW complex (see \cite[Definition 3.2]{DL}) and 
$|E\calc(c)|$ is contractible for each $c$ in $\calc$, so $|E\calc|$ is 
the ``$\calc$-CW-approximation'' of the trivial (point) $\calc$-space 
in the sense of \cite[Definitions 3.6 and 3.8]{DL}. The forgetful 
functors $\Id_\calc\dn c\to\calc$ induce a natural transformation 
$\eta\:E\calc\to\3{\caln(\calc)}$.

For each Kan fibration $f\: K\too \caln(\calc)$, let $\mu\:E_f\too E\calc$ 
denote the pullback of $K$ along $\eta$. Thus $E_f$ is the 
$\calc$-diagram of simplicial sets that sends an object $c$ in $\calc$ to the pullback 
$E_f(c)$ of the system
	\[ K \Right4{f} \caln(\calc) \Left4{\eta_c} E\calc(c). \]

\begin{Lem}\label{C*(Ef) projective}
Fix a commutative ring $\RR$ and a small category $\calc$, and let 
$f\colon K\to \caln(\calc)$ be a Kan fibration. Then for each $n\geq 0$, 
the $\RR\calc$-module $C_n(E_f;\RR)$ is  projective, and the morphism 
$\omega\:E_f\too \3K$ induces an isomorphism 
$\colim_{\calc}\bigl(C_*(E_f;\RR)\bigr)\cong C_*(K;\RR)$.
\end{Lem}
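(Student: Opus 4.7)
The plan is to identify $C_n(E_f;\RR)$ explicitly as a direct sum of the representable projective $\RR\calc$-modules $F_c^{\RR\calc}$ from Lemma \ref{kC-proj}, from which both assertions of the lemma follow at once.

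The first step is to unwind $E_f(c)_n$. An $n$-simplex of $E\calc(c) = \caln(\Id_\calc\dn c)$ is a chain $(x_0,\psi_0)\to\cdots\to(x_n,\psi_n)$ in the overcategory, and the compatibility of the $\psi_i\:x_i\to c$ with the arrows in the chain means that such a simplex is determined by its underlying $n$-simplex $\tau=(x_0\to\cdots\to x_n)$ of $\caln(\calc)$ together with the final morphism $\psi_n\in\Mor_\calc(x_n,c)$. Pulling back along $\eta_c$ and $f$ therefore gives a natural bijection
$$ E_f(c)_n \;\cong\; \coprod_{\sigma\in K_n}\Mor_\calc(x_n(f\sigma),c), $$
where $x_n(\tau)$ denotes the last vertex of $\tau$, and the $\calc$-functoriality in $c$ is post-composition on the second factor. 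Taking free $\RR$-modules gives an isomorphism of $\RR\calc$-modules
$$ C_n(E_f;\RR) \;\cong\; \bigoplus_{\sigma\in K_n} F_{x_n(f\sigma)}^{\RR\calc}, $$
and each summand is projective by Lemma \ref{kC-proj}, so the sum is projective as well. This proves the first assertion.

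For the colimit statement I would use the representability in Lemma \ref{kC-proj}: applied to a constant module $\3{A}$, it gives $\Hom_{\RR\calc}(F_c^{\RR\calc},\3{A})\cong A$ naturally in the $\RR$-module $A$, and by the Yoneda lemma this forces $\colim_\calc F_c^{\RR\calc}\cong\RR$ for every $c$, with the canonical map $F_c^{\RR\calc}(c)\too\RR$ sending $\Id_c$ to $1$. Taking the colimit summand by summand in the identification above yields
$$ \colim_\calc C_n(E_f;\RR) \;\cong\; \bigoplus_{\sigma\in K_n}\RR \;=\; \RR[K_n] \;=\; C_n(K;\RR). $$
To see that this isomorphism is the one induced by $\omega$, note that under the bijection for $E_f(c)_n$ the map $\omega_c$ sends $(\sigma,\varphi)\mapsto\sigma$, which is precisely the composite of the canonical projection to the colimit with the identification of each summand with $\RR$. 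Since these identifications are natural in $n$, they automatically respect the simplicial boundary maps, so the resulting isomorphism is one of chain complexes.

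The only real obstacle is the bookkeeping in the first step: correctly identifying $n$-simplices of $\caln(\Id_\calc\dn c)$ as pairs (underlying chain, final morphism) and verifying that the resulting $\calc$-action on $E_f(c)_n$ is post-composition on the morphism factor. Once this is done, both claims reduce immediately to Lemma \ref{kC-proj} and the representability of $F_c^{\RR\calc}$.
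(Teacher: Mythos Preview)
Your proof is correct and is essentially the same as the paper's: both identify $n$-simplices of $E_f(c)$ as pairs consisting of $\sigma\in K_n$ and a morphism from the last vertex of $f(\sigma)$ to $c$, decompose $C_n(E_f;\RR)$ as $\bigoplus_{\sigma\in K_n}F_{x_n(f\sigma)}^{\RR\calc}$, and use $\colim_\calc F_c^{\RR\calc}\cong\RR$ to obtain the colimit statement. You are slightly more explicit than the paper in verifying that the isomorphism is induced by $\omega$ and is compatible with the simplicial structure.
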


\begin{proof} For each $n\ge0$ and each object $c\in\calc$, 
$C_n(E\calc;\RR)(c)$ has as basis the set of all chains $(c_0\to c_1\to\cdots\to 
c_n\to c)$.  So in the notation of Lemma \ref{kC-proj}, the 
$\RR\calc$-module $C_n(E\calc;\RR)$ is the direct sum of one copy of 
$F_{c_n}^{\RR\calc}$ for each $n$-simplex $(c_0\to c_1\to\cdots\to c_n)$ in 
$\caln(\calc)$. In particular, it is projective, and since 
$\colim_\calc(F_{c_n}^{\RR\calc})\cong\RR$, the natural transformation 
$\eta\:E\calc\too\3{\caln(\calc)}$ induces an isomorphism 
$\colim_\calc(C_n(E\calc;\RR))\cong C_n(\caln(\calc);\RR)$.

This proves the lemma when $f$ is the identity fibration, and the general 
case is similar. An  $n$-simplex in the pullback $E_f(c)$ 
is a pair $(\sigma, c_0\to\cdots\to c_n\to c)$ where $\sigma\in K_n$ is 
such that $f(\sigma) = (c_0\to\cdots\to c_n)$. Hence the $\RR\calc$-module 
$C_n(E_f;\RR)$ is the direct sum of copies of $F_{c_n}^{\RR\calc}$, one for 
each  pair $(\sigma, c_0\to\cdots\to c_n)$ as above, hence is projective, 
and $\colim_\calc(C_n(E_f;\RR))\cong C_n(K;\RR)$. 
\end{proof}

We next define a generalized version of Quillen's plus construction, 
which plays a central role in this section. 

\iffalse
It seems likely that similar generalizations of this construction have 
already appeared in the literature, but we haven't been able to find any. 
\fi

\begin{Defi} \label{d:plus}
Fix a commutative ring $\RR$, a connected CW complex $X$, and a normal 
subgroup $H\nsg\pi_1(X)$. An \emph{$\RR$-plus construction for $(X,H)$} 
consists of a CW complex $X^+_{\RR}$ together with a map $\kappa\:X\too 
X^+_{\RR}$, such that $\pi_1(\kappa)$ is surjective with kernel $H$, and 
$H_*(\kappa;N)$ is an isomorphism for each $\RR[\pi_1(X)/H]$-module $N$.
\end{Defi}

A different generalization of Quillen's plus construction, based on 
Bousfield localization with respect to a homology theory $h_*$, has been studied 
by Mislin and Peschke \cite{MP}, Jin-Yen Tai \cite{Tai}, and others. In the 
special case when $h_*=H_*(-;R)$ for a commutative ring $R$, Bousfield 
localization seems to be an example of a plus construction in our sense, 
although we have been unable to find references that prove this.

%%results of Farjoun \cite[Chapter 1]{Dror} show that the 
%%$h_*$-localization of a connected space $X$ is an example of an $R$-plus 
%%construction in our sense (but there can be many others).

\iffalse
Our definition restricts to the case $h=H_*(-;R)$ of ordinary homology with 
coefficients in a commutative ring $R$, but within this context it is more 
general in that our construction is not unique: there can be many different 
plus constructions for the same simplicial set (or space) $X$, and even 
with respect to the same subgroup $H\nsg\pi_1(X)$. \mynote{Say more to 
justify why the \cite{MP} construction is a special case of ours when 
$h=H_*(-;R)$?} 
\fi

A few results about $\RR$-plus constructions are collected in the appendix. 
For example, we show there that $(X,H)$ has an $\RR$-plus construction if 
and only if $\chr(\RR)\ne0$ and $H$ is $\RR$-perfect, or $\chr(\RR)=0$ and 
$H$ is strongly $\RR$-perfect. 
(Recall that $H$ is strongly $\RR$-perfect if it is 
$\RR$-perfect and $\Tor(H_1(H;\Z),\RR)=0$.) 
Also, the $\RR$-completion of a 
space in the sense of Bousfield and Kan is an $\RR$-plus construction under 
certain hypotheses.

For $n\ge0$, let $\Delta^n$ denote the $n$-simplex as a simplicial 
set, and let $v_0,\dots,v_n$ be its vertices. For $0\le k\le n$, let 
$\Lambda^n_k\subseteq\Delta^n$ be the simplicial subset whose realization 
is the union of all proper (closed) faces in $\Delta^n$ containing $v_k$.  
Thus a Kan fibration is a simplicial map $f\:K\too L$ with the following 
lifting property: for each $0\le k\le n$, each $\sigma\:\Delta^n\too L$, and 
each $\tau\:\Lambda^n_k\too K$ such that 
$f\circ\tau=\sigma|_{\Lambda^n_k}$, there is a simplicial map 
$\til\sigma\:\Delta^n\too K$ such that $\til\sigma|_{\Lambda^n_k}=\tau$ and 
$f\circ\til\sigma=\sigma$. A \emph{Kan complex} is a simplicial set $K$ for 
which the (unique) map to $\Delta^0$ is a Kan fibration; equivalently, a 
simplicial set for which each simplicial map $\Lambda^n_k\too K$ extends to 
$\Delta^n$ (see \cite[\S\,I.3]{GJ}, \cite[Definition 1.12]{Curtis}, or 
\cite[\S\,IV.3]{GZ}). For 
example, for each space $X$, the singular simplicial set $S.(X)$ is a Kan 
complex \cite[Lemma I.3.3]{GJ}.

For any connected simplicial set $K$ with basepoint $x_0\in K_0$, let 
$\calp(K)=\calp(K,x_0)$ be the simplicial set of paths in $K$ based at 
$x_0$. Thus an $n$-simplex in $\calp(K)$ is a map of simplicial sets 
$\Delta^1\times\Delta^n\too K$ that sends $\{v_0\}\times\Delta^n$ to $x_0$ 
(more precisely, to the image of $x_0$ under the degeneracy map $K_0\too 
K_n$). Let $e=e_K\:\calp(K)\too K$ denote the path-loop fibration over $K$: 
the simplicial map that sends an $n$-simplex $\Delta^1\times\Delta^n\too K$ 
to the image of $\{v_1\}\times\Delta^n$. If $K$ is a Kan complex, then 
$e_K\:\calp(K)\too K$ is a Kan fibration and $|\calp(K)|$ is weakly 
contractible (see \cite[Lemma I.7.5]{GJ}). Thus the fibre of $e_K$ over 
$x_0$ is the loop simplicial set $\Omega(K,x_0)$ based at $x_0$ \cite[p. 
31]{GJ}. Using the fact that the realization of a Kan fibration is a Serre 
fibration (see \cite[Theorem I.10.10]{GJ}), one can show that 
$|\Omega(K,x_0)|$ is weakly equivalent to $\Omega(|K|,x_0)$.

If $f\:K\too L$ is a Kan fibration, and $\chi\:\5L\too L$ is an arbitrary 
simplicial map, then the pullback $\5f\:\5K\too \5L$ is defined levelwise: 
$\5K_n$ is the pullback (as a set) of $f_n\:K_n\too L_n$ along 
$\chi_n\:\5L_n\too L_n$. It is immediate from the definitions that $\5f$ is 
also a Kan fibration. By \cite[Theorem III.3.1]{GZ}, pullbacks commute with 
geometric realization; i.e., $|\5K|$ is the pullback of $|K|\too|L|$ along 
$|\5L|$. Note, however, that this requires that the pullbacks of 
realizations be taken in the category of compactly generated 
Hausdorff spaces (called ``Kelley spaces'' in \cite{GZ}).

\begin{Prop} \label{C*Enu}
Fix a group $\pi$ and a commutative ring $\RR$. Let $\ctg$ be a category 
over $\pi$, and set $H=\Ker[\pi_1(|\calc|)\xto{\pi_1(|\theta|)}\pi]$. 
Assume that $\kappa\:|\calc|\too|\calc|^+_{\RR}$ 
is an $\RR$-plus construction for $(|\calc|,H)$, and let 
$\5\kappa\:\caln(\calc)\too S.(|\calc|^+_{\RR})$ be the simplicial map 
adjoint to $\kappa$. Fix an object $c_0$ in $\calc$, regarded as a vertex 
in $\caln(\calc)$, set $x_0=\5\kappa(c_0)$, and let 
$e=e_{S.(|\calc|^+_{\RR})}$ be the path-loop fibration over 
$S.(|\calc|^+_{\RR})$ based at $x_0$. Let $\nu\:A\calc\too\caln(\calc)$ be 
the pullback of $e$ along $\5\kappa$, and let $\mu\:E_\nu\too E\calc$ 
denote the fibration of $\calc$-diagrams of simplicial sets obtained as the 
pullback of $\nu$ along $\eta$. We thus have, for each object $c$ in $\calc$, 
the following diagram of simplicial sets with pullback squares 
	\beqq \vcenter{\xymatrix@C=40pt@R=25pt{
	E_\nu(c) \ar[r] \ar[d]^{\mu_c} & A\calc \ar[d]^{\nu} \ar[r] & 
	\calp(S.(|\calc|^+_{\RR}),x_0) \ar[d]^{e} \\
	E\calc(c) \ar[r]^{\eta_c} & \caln(\calc) \ar[r]^{\5\kappa} 
	& S.(|\calc|^+_{\RR}) \rlap{\,.}
	}} \label{e:4.4pb}
	\eeqq
Then the following hold, where we regard the $\calc$-diagram $E_\nu$ as a 
$\theta\dn\Bobj[\pi]$-diagram via the forgetful functor 
$\theta\dn\Bobj[\pi]\too\calc$.
\begin{enumerate}[\rm(a) ]

\item For each $n\geq 0$, $C_n(E_\nu;\RR)$ is a projective $\RR\calc$-module. 
\label{C*Enu-1}

\item The complex $\theta_*(C_*(E_\nu;\RR))\cong 
\colim_{\theta\dn\Bobj[\pi]}(C_*(E_\nu;\RR))$ is acyclic, and 
$\gee$ induces an isomorphism $H_0\bigl(\theta_*(C_*(E_\nu;\RR))\bigr)\cong\RR\pi$. 
\label{C*Enu-2}

\item For each $n\ge0$, $H_n(E_\nu;\RR)$ is locally constant on $\calc$ and 
essentially constant on $\theta\dn\Bobj[\pi]$, and hence 
$H_n(E_\nu;\RR)\cong\theta^*(\5H_n)$ for some $\RR\pi$-module $N_n$. 
\label{C*Enu-3}

\item For each object $c$ in $\calc$, 
$|E_\nu(c)|$ is weakly equivalent to $\Omega(|\calc|^+_{\RR})$. 
\label{C*Enu-4}

\end{enumerate}
In particular, by \eqref{C*Enu-1}--\eqref{C*Enu-3}, $C_*(E_\nu;\RR)$ is an 
$\Omega$-resolution of $\RR\pi$ with respect to $\ctg$.
\end{Prop}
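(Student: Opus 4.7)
The plan is to verify \textup{(a)}--\textup{(d)} in turn, exploiting throughout that $\nu\:A\calc\too\caln(\calc)$ is a Kan fibration (being the pullback of the Kan fibration $e$ along $\5\kappa$). Part \textup{(a)} is immediate from Lemma \ref{C*(Ef) projective} applied to $f=\nu$. Part \textup{(d)} is a direct consequence of the pullback description \eqref{e:4.4pb}: since $|E\calc(c)|$ is contractible and $|\nu|$ is a Serre fibration (by \cite[Theorem I.10.10]{GJ}), the pullback $|E_\nu(c)|$ is weakly equivalent to the fibre of $|\nu|$ over the vertex $c$. That fibre is in turn the fibre of $|e|$ at $\5\kappa(c)$, hence weakly equivalent to $\Omega(|\calc|^+_{\RR})$.

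For part \textup{(c)}, local constancy of $H_n(E_\nu;\RR)$ on $\calc$ follows from the naturality of the pullback: a morphism $\varphi\:c\too c'$ in $\calc$ induces a map $E_\nu(c)\too E_\nu(c')$ covering $E\calc(\varphi)\:E\calc(c)\too E\calc(c')$, and since both $|E\calc(c)|$ and $|E\calc(c')|$ are contractible, this realises to a weak equivalence by the argument for \textup{(d)}. For essential constancy on $\theta\dn\Bobj[\pi]$, Lemma \ref{ess.const.}(b) reduces the problem to checking that the monodromy of the local system $H_n(E_\nu;\RR)$ restricted to $|\theta\dn\Bobj[\pi]|$ is trivial, i.e.\ that $\pi_1(|\theta\dn\Bobj[\pi]|)\cong H$ (Lemma \ref{isos-over-G}(a)) acts trivially on $H_n(\Omega(|\calc|^+_{\RR});\RR)$. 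But the fibration $|A\calc|\too|\calc|$ is pulled back from the path-loop fibration over $|\calc|^+_{\RR}$, so its $\pi_1(|\calc|)$-monodromy on fibre homology factors through $\pi_1(|\calc|^+_{\RR})=\pi$; since $H=\Ker(\pi_1(|\calc|)\too\pi)$, the restriction to $H$ is trivial.

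Part \textup{(b)} is the main obstacle. By Lemma \ref{isos-over-G}(b), $\theta_*(C_n(E_\nu;\RR))\cong\colim_{\theta\dn\Bobj[\pi]}C_n(E_\nu;\RR)$. Since colimits commute with pullback over a fixed object in simplicial sets, a bookkeeping computation analogous to the one in Lemma \ref{C*(Ef) projective} (namely $\colim_{\theta\dn\Bobj[\pi]}(E\calc\circ\textup{forget})\cong\caln(\theta\dn\Bobj[\pi])$, proved by using a simplex of $E\calc(c)$ of the form $(x_0\to\cdots\to x_n\to c)$ together with $\psi\:\theta(c)\to\Bobj[\pi]$ to produce the morphism $\psi\cdot\theta(x_n\to c)\:\theta(x_n)\to\Bobj[\pi]$) identifies $\colim_{\theta\dn\Bobj[\pi]}E_\nu\cong A\calc\times_{\caln(\calc)}\caln(\theta\dn\Bobj[\pi])=:A'$, so $\theta_*(C_n(E_\nu;\RR))\cong C_n(A';\RR)$. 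Geometrically $|A'|$ is the homotopy fibre of the composite $|\theta\dn\Bobj[\pi]|\too|\calc|\xto{\kappa}|\calc|^+_{\RR}$. Because $\pi_1(|\theta\dn\Bobj[\pi]|)\cong H$ lies in $\Ker(\pi_1(|\calc|)\too\pi)$, this composite factors up to homotopy through the universal cover $\widetilde{|\calc|^+_{\RR}}\too|\calc|^+_{\RR}$, and the fibre sequence of homotopy fibres yields $|A'|\simeq\pi\times F_1$ with $F_1:=\hofib\bigl(|\theta\dn\Bobj[\pi]|\too\widetilde{|\calc|^+_{\RR}}\bigr)$, the $\pi$-action arising from the deck translations on $|\theta\dn\Bobj[\pi]|$. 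The $\RR$-plus-construction property of $\kappa$ (iso on $H_*(-;\RR\pi)$) translates by covering space theory into $|\theta\dn\Bobj[\pi]|\too\widetilde{|\calc|^+_{\RR}}$ being an $H_*(-;\RR)$-equivalence; since the target is simply connected, the standard $\RR$-acyclicity of the homotopy fibre of an $\RR$-plus construction (see Lemma \ref{plus}) forces $F_1$ to be $\RR$-acyclic. Hence $H_n(A';\RR)=0$ for $n\geq 1$ and $\gee$ induces an isomorphism $H_0(A';\RR)\cong\RR\pi$ of $\RR\pi$-modules, completing \textup{(b)}.
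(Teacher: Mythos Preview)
Your proof is correct and follows essentially the same route as the paper's. Parts (a), (c), and (d) match the paper's arguments almost verbatim. For part (b), the paper organizes the covering-space step slightly differently: it observes directly that $|\til{A\calc}|$ (your $|A'|$) is the pullback of the $\pi$-cover $|\theta\dn\Bobj[\pi]|\to|\calc|$ along $|A\calc|\to|\calc|$, and since $\pi_1(|A\calc|)$ maps into $H$, this cover is trivial, giving $|\til{A\calc}|\cong\pi\times|A\calc|$; it then uses that $|A\calc|$ itself is $\RR$-acyclic. Your argument instead factors $|\theta\dn\Bobj[\pi]|\to|\calc|^+_\RR$ through the universal cover to reach the same decomposition $\pi\times F_1$. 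Since $F_1$ and $|A\calc|$ are both the homotopy fibre of (lifts of) $\kappa$, the two arguments are really the same. One small correction: your citation of Lemma~\ref{plus} for the $\RR$-acyclicity of $F_1$ is misplaced, as that lemma does not state this fact; what you need is the standard Serre spectral sequence (Zeeman comparison) argument that the homotopy fibre of an $\RR$-homology equivalence with simply connected target is $\RR$-acyclic, which the paper also invokes implicitly for $|A\calc|$.
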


\begin{proof} We write $C_*(-)=C_*(-;\RR)$ and $H_*(-)=H_*(-;\RR)$ for 
short, and refer to diagram \eqref{e:4.4pb}, where by construction, 
$\mu_c$, $\nu$, and $e$ are all Kan fibrations with fibre 
$\Omega(S.(|\calc|^+_{\RR})) \cong S.(\Omega(|\calc|^+_{\RR}))$. Then 
$A\calc$ is $\RR$-acyclic since $\calp(S.(|\calc|^+_{\RR}),x_0)$ is 
contractible and $H_*(\5\kappa;N)$ is an isomorphism for each 
$\RR\pi$-module $N$. Point (\ref{C*Enu-1}) follows from Lemma \ref{C*(Ef) 
projective}, applied with $A\calc$ and $\nu$ in the roles of $K$ and $f$, 
and point (\ref{C*Enu-4}) holds since each $E\calc(c)$ is the nerve of 
a category with final object and hence contractible. 

Let $\sigma\:\theta\dn\Bobj[\pi]\too\calc$ be the forgetful functor, and 
consider the following cubical diagram (for each object $(c,g)$ in 
$\theta\dn\Bobj[\pi]$):
	\[ \xymatrix@C=20pt@R=8pt{
	E_{\til\nu}(c,g) \ar[rrr]_(.58){\cong} \ar[dr] \ar[ddd] &&& E_\nu(c) 
	\ar[dr] \ar[ddd]^(.55){\mu_c}|!{[dll];[dr]}\hole \\
	& \til{A\calc} \ar[rrr] \ar[ddd]^(.35){\til\nu} &&& A\calc \ar[ddd]^{\nu} \\ 
	\\
	E(\theta\dn\Bobj[\pi])(c,g) \ar[dr] 
	\ar[rrr]^(.65){E\sigma_{(c,g)}}_(.6){\cong} |!{[uur];[dr]}\hole 
	&&& E\calc(c) \ar[dr]^{\eta_c} \\
	& \caln(\theta\dn\Bobj[\pi]) \ar[rrr]^-{\caln(\sigma)} &&& \caln(\calc) 
	} \]
Here, $\til{A\calc}$, $\til\nu$, and $E_{\til\nu}$ are defined so that all 
of the ``vertical'' squares in this diagram are pullbacks. (Note that the 
bottom square, and hence also the top square, need not be pullbacks.)
Also, $E\sigma_{(c,g)}$ is an isomorphism of simplicial 
sets since for each morphism $\varphi\in\Mor_\calc(c',c)$ and each 
$g\in\pi$, there is a unique $g'\in\pi$ such that 
$\varphi\in\Mor_{\theta\dn\Bobj[\pi]}((c',g'),(c,g))$. 
Hence $E_{\til\nu}(c,g)\cong E_\nu(c)$. 
So by Lemma \ref{isos-over-G}(b), and Lemma \ref{C*(Ef) projective}
applied with $\theta\dn\Bobj[\pi]$ in the role of $\calc$, 
	\[ H_*\bigl(\theta_*(C_*(E_\nu))\bigr) \cong 
	H_*\bigl(\colim_{\theta\dn\Bobj[\pi]}\sigma^*(C_*(E_{\nu}))\bigr)
	\cong H_*\bigl(\colim_{\theta\dn\Bobj[\pi]}C_*(E_{\til\nu})\bigr)
	\cong H_*\bigl(C_*(\til{A\calc})\bigr) 
	\cong H_*(\til{A\calc})\,. \]
But $|\theta\dn\Bobj[\pi]|$ is the covering space of $|\calc|$ with 
fundamental group $H$ and covering group $\pi$ (Lemma 
\ref{isos-over-G}(a)), the image of $\pi_1(|A\calc|)$ in $\pi_1(|\calc|)$ 
is contained in $H=\Ker(\pi_1(|\kappa|))$ since it vanishes in 
$\pi_1(|\calc|^+_{\RR})$, and hence $|\til{A\calc}|\cong\pi\times|A\calc|$. 
Since $|A\calc|$ is $\RR$-acyclic, this proves \eqref{C*Enu-2}: 
$\theta_*(C_*(E_\nu))$ is acyclic and 
$H_0(\theta_*(C_*(E_\nu)))\cong\RR\pi$.

For each object $c$ in $\calc$, let $F(c)=\nu^{-1}(c)$ be the fibre of 
$\nu$ over the vertex $c$ in $\caln(\calc)$. Via homotopy lifting, this is 
extended to a homotopy functor $F$ from $\calc$ to simplicial sets, and 
this in turn defines a locally constant graded $\RR\calc$-module 
$M_*=H_*(F)$. For each $c$ in $\calc$, the action of $\pi_1(|\calc|,c)$ on 
$M_*(c)=H_*(F(c))$ described in Lemma \ref{ess.const.}(a) is the usual 
action of the fundamental group of the base on the homology of a fibre, and 
since $\nu$ is a pullback of $e$, this action factors through 
$\pi_1(|\calc|^+_{\RR})\cong\pi$. So $M_*$ is essentially constant on 
$\theta\dn\Bobj[\pi]$ by Lemma \ref{ess.const.}(b). Also, since each 
$E\calc(c)$ contracts to the vertex $(c,\Id_c)$ in a natural way, where 
$\eta_c(c,\Id_c)=c$, we have homotopy equivalences $E_\nu(c)\simeq F(c)$ 
natural in $\calc$ up to homotopy. So $H_*(E_\nu)\cong M_*$ as 
$\RR\calc$-modules. Hence by Lemma \ref{isos-over-G}(c), 
$H_*(E_\nu)\cong\theta^*(N_*)$ for some graded $\RR\pi$-module $N_*$, 
finishing the proof of \eqref{C*Enu-3}.

%%, and \new{together with Lemma \ref{isos-over-G}(c),} 
%%this proves \eqref{C*Enu-3}. 

Since $\theta_*$ is right exact, 
	\[ N_0 \cong \theta_*\theta^*(N_0) \cong \theta_*(H_0(E_\nu)) 
	\cong H_0\bigl(\theta_*(C_*(E_\nu))\bigr) \cong \RR\pi, \]
and so $H_0(E_\nu)\cong\theta^*(\RR\pi)$. This defines a surjective 
homomorphism $\gee\:C_0(E_\nu)\too\theta^*(\RR\pi)$, and finishes the proof 
that $(C_*(E_\nu),\2_*)\too\theta^*(\RR\pi)\too0$ is an 
$\Omega$-resolution of $\RR\pi$. 
\end{proof}

Upon combining Proposition \ref{C*Enu} with Proposition \ref{plus}, we get the 
following theorem.

\begin{Thm} \label{Omega_C0}
Fix a group $\pi$ and a commutative ring $\RR$. Let $(\calc,\theta)$ be a category 
over $\pi$, set $H=\Ker[\pi_1(|\calc|)\xto{\pi_1(|\theta|)}\pi]$, and 
assume that $\chr(\RR)\ne0$ and $H$ is $\RR$-perfect, or $\chr(\RR)=0$ and 
$H$ is strongly $\RR$-perfect. Then 
\begin{enuma} 

\item $(|\calc|,H)$ admits an $\RR$-plus construction; 

\item the free $\RR\pi$-module $\RR\pi$ has an $\Omega$-resolution with 
respect to $(\calc,\theta)$; and 

\item for each $\RR$-plus construction $|\calc|^+_{\RR}$ for $(|\calc|,H)$, 
	$ H^\Omega_*(\calc,\theta;\RR) \cong 
	H_*(\Omega(|\calc|^+_{\RR});\RR)$.

\end{enuma}
\end{Thm}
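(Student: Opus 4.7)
The plan is to assemble the three parts from machinery already built in the paper; only modest bookkeeping remains. Part (a) is immediate: the hypothesis that $H$ is $\RR$-perfect is exactly what Lemma~\ref{plus} needs in order to produce an $\RR$-plus construction $\kappa\colon|\calc|\to|\calc|_\RR^+$. For part (b), I fix any such $\kappa$ and invoke Proposition~\ref{C*Enu}; its conclusions (\ref{C*Enu-1})--(\ref{C*Enu-3}) are precisely the conditions (\ref{O-res-1})--(\ref{O-res-3}) of Lemma~\ref{O-res}, so the complex $C_*(E_\nu;\RR)\to\theta^*(\RR\pi)\to 0$ is an $\Omega$-resolution of $\theta^*(\RR\pi)$ with respect to $\cpth$, and in particular one exists.

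For part (c), I compute the $\Omega$-homology using this specific resolution. By Definition~\ref{homology},
\[
H^\Omega_n(\calc,\theta;\RR)\;=\;\theta_*\bigl(H_n(C_*(E_\nu;\RR))\bigr)\;=\;\theta_*\bigl(H_n(E_\nu;\RR)\bigr).
\]
Proposition~\ref{C*Enu}(\ref{C*Enu-3}) says that $H_n(E_\nu;\RR)$ is locally constant on $\calc$ and essentially constant on $\theta\dn\Bobj[\pi]$, so Lemma~\ref{isos-over-G}(c) produces an $\RR\pi$-module $N_n$ with $H_n(E_\nu;\RR)\cong\theta^*(N_n)$. Property \OP2 then gives $H^\Omega_n(\calc,\theta;\RR)\cong\theta_*\theta^*(N_n)\cong N_n$. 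To identify $N_n$, I evaluate $\theta^*(N_n)\cong H_n(E_\nu;\RR)$ at any object $c$ of $\calc$, which yields $N_n\cong H_n(E_\nu(c);\RR)$; by Proposition~\ref{C*Enu}(\ref{C*Enu-4}) this is isomorphic to $H_n(\Omega(|\calc|_\RR^+);\RR)$, giving the desired identification. Independence of the particular choice of $\RR$-plus construction is then automatic from Proposition~\ref{p2:functoriality}, which supplies a chain homotopy equivalence between any two $\Omega$-resolutions of $\theta^*(\RR\pi)$ and hence a canonical isomorphism between the associated values of $\theta_*(H_*(-))$.

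The principal difficulty has already been dispatched inside Proposition~\ref{C*Enu}: one had to build the $\calc$-diagram of pullbacks against the path--loop fibration over the singular simplicial set of $|\calc|_\RR^+$, verify projectivity of each $C_n(E_\nu;\RR)$ using Lemma~\ref{C*(Ef) projective}, deduce acyclicity of $\theta_*(C_*(E_\nu;\RR))$ by recognizing $|\theta\dn\Bobj[\pi]|$ as the covering of $|\calc|$ classifying $H$ (over which the pullback of $\nu$ becomes a trivialized $\RR$-acyclic bundle), and show that the monodromy action of $\pi_1(|\calc|)$ on the homology of a fibre factors through $\pi$. Once those ingredients are in hand, the proof of Theorem~\ref{Omega_C0} reduces to the identifications made above.
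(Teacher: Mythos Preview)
Your proposal is correct and follows essentially the same approach as the paper: the paper's proof of Theorem~\ref{Omega_C0} is simply the one-line remark that it follows by combining Proposition~\ref{C*Enu} with Lemma~\ref{plus}, and you have spelled out exactly how those pieces fit together, including the identification $\theta_*(H_n(E_\nu;\RR))\cong H_n(E_\nu(c);\RR)\cong H_n(\Omega(|\calc|^+_\RR);\RR)$ via \OP2 and Proposition~\ref{C*Enu}(\ref{C*Enu-4}).
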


\begin{proof} By Proposition \ref{plus}, $(|\calc|,H)$ admits an $\RR$-plus 
construction. Fix such a space $|\calc|^+_{\RR}$, and let $E_\nu$ be the 
functor from $\calc$ to simplicial sets constructed as a pullback in diagram 
\eqref{e:4.4pb} of Proposition \ref{C*Enu}. 

By Proposition \ref{C*Enu}, 
$C_*(E_\nu;\RR)$ is an $\Omega$-resolution of $\RR\pi$ with respect to 
$(\calc,\theta)$, and also $H_*(E_\nu;\RR)\cong\theta^*(N_*)$ for some graded 
$\RR\pi$-module $N_*$. By point (d) in the same proposition, for each $c$ in 
$\calc$, $|E_\nu(c)|$ is weakly equivalent to $\Omega(|\calc|^+_{\RR})$ 
and hence 
	\beq \begin{split} 
	H^\Omega_*(\calc,\theta;\RR) \defeq \theta_*(H_*(E_\nu;\RR)) 
	&\cong \theta_*\theta^*(N_*) \cong N_* \\ & \cong \theta^*(N_*)(c) 
	\cong H_*(|E_\nu(c)|;\RR)  \cong 
	H_*(\Omega(|\calc|^+_{\RR});\RR). \qedhere
	\end{split} \eeq
\end{proof}

%%where the fourth isomorphism holds since 
%%$H_*(|E_\nu(c)|;\RR)\cong\theta^*(N_*)(c)\cong N_*$ for each $c$ in $\calc$. 

%%	\[ H_*(\Omega(|\calc|^+_{\RR});\RR) \cong H_*(|E_\nu(c)|;\RR)  \cong 
%%	N_* \cong \theta_*\theta^*(N_*) \cong \theta_*(H_*(E_\nu;\RR)) \cong 
%%	H^\Omega_*(\calc,\theta;\RR), \]

In the special case where $\pi=\pi_1(|\calc|)$, this takes the 
form:

\begin{Cor} \label{Omega_no+}
Let $\calc$ be a small, connected category, and set $\pi=\pi_1(|\calc|)$. 
Then there is a functor $\theta\:\calc\too\calb(\pi)$ such that 
$\pi_1(|\theta|)$ is an isomorphism. For such $\theta$, and for any 
commutative ring $\RR$, the $4$-tuple 
$\cpth$ is an $\Omega$-system, the free module $\RR\pi$ has an 
$\Omega$-resolution with respect to $\cpth$, and 
	\[ H^\Omega_*(\calc,\theta;\RR) \cong H_*(\Omega(|\calc|);\RR). \]
\end{Cor}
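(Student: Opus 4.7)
The plan is to construct $\theta$ explicitly and then reduce to Theorem \ref{Omega_C0} applied with the trivial kernel.

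First, I would construct the functor $\theta\:\calc\too\calb(\pi)$ using the same path-construction that appears in the proof of Proposition \ref{ex4}(b). Fix a basepoint $c_0\in\Ob(\calc)$, so $\pi=\pi_1(|\calc|,c_0)$, and for every object $c$ choose a path $\phi_c$ in $|\calc|$ from $c_0$ to $c$, with $\phi_{c_0}$ the constant path. Then define $\theta$ to send each object to $\Bobj[\pi]$ and each $\omega\in\Mor_\calc(c,c')$ to the class of the loop $\phi_c\cdot\omega\cdot\phi_{c'}^{-1}$ in $\pi$. Functoriality is immediate from cancellation of $\phi_{c'}^{-1}\cdot\phi_{c'}$, and the verification that $\pi_1(|\theta|)$ is the identity on $\pi$ is exactly what is checked in the argument for Proposition \ref{ex4}(b)(i). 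In particular $\pi_1(|\theta|)$ is surjective, so $(\calc,\theta)$ is a category over $\pi$ in the sense of Definition \ref{d:overG}, and by Lemma \ref{isos-over-G}(a), $\cpth$ is an $\Omega$-system.

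Next, I would invoke Theorem \ref{Omega_C0}. Since $\pi_1(|\theta|)$ is an isomorphism, its kernel is $H=1$, which is trivially $\Z$-perfect (indeed $H_1(1;\Z)=0$). Therefore Theorem \ref{Omega_C0}(a,b) applies and gives both the existence of an $\RR$-plus construction for $(|\calc|,1)$ and of an $\Omega$-resolution of the free $\Z\pi$-module $\Z\pi$ with respect to $\cpth$. This already produces the second and third assertions of the corollary except for the identification of the homology.

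Finally, to identify $H^\Omega_*(\calc,\theta;\Z)$ with $H_*(\Omega(|\calc|);\Z)$, I would observe that the identity map $\kappa=\Id_{|\calc|}\:|\calc|\too|\calc|$ is itself a $\Z$-plus construction for the pair $(|\calc|,1)$: its induced map on $\pi_1$ is an isomorphism (in particular surjective with trivial kernel), and it induces an isomorphism on homology with arbitrary coefficients. Applying Theorem \ref{Omega_C0}(c) with this choice of plus construction yields
\[ H^\Omega_*(\calc,\theta;\Z) \cong H_*(\Omega(|\calc|^+_\Z);\Z) = H_*(\Omega(|\calc|);\Z), \]
which completes the proof. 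There is no substantive obstacle here; the main points are the explicit construction of $\theta$, which is a standard fundamental group recipe, and the observation that $|\calc|$ already serves as its own plus construction when $H$ is trivial, so that Theorem \ref{Omega_C0} specializes cleanly.
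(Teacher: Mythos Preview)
Your proposal is correct and is exactly the intended specialization: the paper states this corollary without proof immediately after Theorem \ref{Omega_C0}, and your argument---constructing $\theta$ via the path recipe from Proposition \ref{ex4}(b), noting that $H=1$ is trivially $\Z$-perfect, and observing that $\Id_{|\calc|}$ is a $\Z$-plus construction for $(|\calc|,1)$---is precisely how one unwinds the special case $\pi_1(|\theta|)\cong\pi$.
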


\iffalse
It is not hard to find examples of small categories that are not groupoids, 
but whose geometric realizations are aspherical. This leads to the 
following curious situation. 

\begin{Rmk} \label{Q12a}
Let $\theta\:\calc\too\calb(\pi)$ be as in Corollary \ref{Omega_no+}, set 
$\RR=\Z$, and assume in addition that $\theta$ induces a homotopy equivalence 
$|\calc|\simeq B\pi$ but is not an equivalence of categories. Then 
$H^\Omega_i(\calc,\theta;\Z)=0$ for $i>0$ by the corollary, and hence each 
$\Omega$-resolution of $\theta^*(\Z\pi)$ is a projective resolution in the 
usual sense in the category $\Z\calc$. Equivalently, by condition 
($\Omega$-2) in Definition \ref{d:Omega-resolution}, 
$(L_i\theta_*)(\Z\pi)=0$ for each $i>0$. 
\end{Rmk}
\fi

The $\RR$-plus construction of $(\caln(\calc),H)$ as defined in Definition 
\ref{d:plus} is not in general unique, not even up to homotopy. 
However, in certain cases, we can choose it to be a completion or a 
fibrewise completion of $|\calc|$ in the sense of Bousfield and Kan.
Recall \cite[III.5.1]{BK} that for $\RR\subseteq\Q$, a group $\pi$ is 
$\RR$-nilpotent if it has a central series for which each quotient is an 
$\RR$-module.

\begin{Thm} \label{Omega_C}
Let $(\calc,\theta)$ be a category over a group $\pi$, and set 
$H=\Ker[\pi_1(|\calc|)\xto{\pi_1(|\theta|)}\pi]$.
\begin{enuma} 

\item Assume that $\RR$ is a subring of $\Q$ or $\RR=\F_p$ for some 
prime $p$, and that $H$ is $\RR$-perfect. Let $|\calc|^{\wedge}$ be 
the fibrewise $\RR$-completion of $|\calc|$ over $B\pi$. Then 
	\[ H^\Omega_*(\calc,\theta;\RR) \cong H_*(\Omega(|\calc|^{\wedge});\RR). \]

\item If $\RR\subseteq\Q$ is such that $H$ is $\RR$-perfect, and $\pi$ is 
$\RR$-nilpotent with nilpotent action on $H_i(|\theta\dn\Bobj[\pi]|;\RR)$ 
for each $i$, then 
	\[ H^\Omega_*(\calc,\theta;\RR) \cong 
	H_*(\Omega(|\calc|\Rcom);\RR), \]
where $|\calc|\Rcom$ is the $\RR$-completion of $|\calc|$.

\item If for some prime $p$, $\kk$ is a field of characteristic $p$, 
$\pi$ is a finite $p$-group, and $H$ is $p$-perfect, then 
	\[ H^\Omega_*(\calc,\theta;\kk) \cong H_*(\Omega(|\calc|\pcom);\kk) \]
where $|\calc|\pcom$ is the $p$-completion of $|\calc|$.

\end{enuma}
\end{Thm}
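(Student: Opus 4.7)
The plan is to apply Theorem~\ref{Omega_C0}(c), which identifies $H^\Omega_*(\calc,\theta;\RR)$ with $H_*(\Omega(|\calc|^+_{\RR});\RR)$ for every $\RR$-plus construction $|\calc|^+_{\RR}$ of $(|\calc|,H)$. Thus in each of (a), (b), (c) it will suffice to exhibit the specific completion appearing in the statement as an $\RR$-plus construction of $(|\calc|,H)$ in the sense of Definition~\ref{d:plus}.

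For (a), I would start from the observation in Lemma~\ref{isos-over-G}(a) that $|\theta\dn\Bobj[\pi]|$ is the covering of $|\calc|$ with covering group $\pi$, so that up to homotopy the map $|\calc|\too B\pi$ induced by $\theta$ is a fibration with fibre $|\theta\dn\Bobj[\pi]|$, itself a connected space with fundamental group $H$. Its fibrewise $\RR$-completion $|\calc|^\wedge$ then sits in a homotopy fibration
\[
|\theta\dn\Bobj[\pi]|\Rcom \too |\calc|^\wedge \too B\pi.
\]
Under the hypotheses that $\RR$ is a subring of $\Q$ or $\F_p$, and that $H$ is $\RR$-perfect, Lemma~\ref{plus} (combined with standard Bousfield--Kan facts about $\RR$-good spaces with $\RR$-perfect fundamental group) ensures that $|\theta\dn\Bobj[\pi]|\Rcom$ is simply connected and that $|\theta\dn\Bobj[\pi]|\too|\theta\dn\Bobj[\pi]|\Rcom$ is an isomorphism on $H_*(-;N)$ for every $\RR\pi$-module $N$ (with the trivial $H$-action, which is automatic). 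The Serre spectral sequence together with the long exact sequence of homotopy groups then forces $|\calc|\too|\calc|^\wedge$ to be surjective on $\pi_1$ with kernel $H$ and to induce isomorphisms on $H_*(-;N)$ for every $\RR\pi$-module $N$, so $|\calc|^\wedge$ is an $\RR$-plus construction of $(|\calc|,H)$ and (a) follows from Theorem~\ref{Omega_C0}(c).

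Parts (b) and (c) I would then reduce to (a) via the Bousfield--Kan fibre lemma. In (b), the hypotheses that $\pi$ is $\RR$-nilpotent and acts nilpotently on each $H_i(|\theta\dn\Bobj[\pi]|;\RR)$ are exactly those required for the fibre lemma to identify the $\RR$-completion $|\calc|\Rcom$ with the fibrewise $\RR$-completion $|\calc|^\wedge$ over $B\pi$, so (b) is immediate. In (c), $\pi$ is a finite $p$-group, so $B\pi$ is $p$-complete, $\pi$ is trivially $p$-nilpotent, and every action of the $p$-group $\pi$ on an $\F_p$-vector space is unipotent and therefore nilpotent; the mod-$p$ fibre lemma then gives $|\calc|\pcom\simeq|\calc|^\wedge$, reducing (c) to the $\RR=\kk=\F_p$ case of (a).

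The principal obstacle I expect is the content of (a), namely the identification of the fibrewise completion $|\calc|^\wedge$ as an $\RR$-plus construction. This depends on controlling both $\pi_1$ and $H_*(-;N)$ of the fibrewise completion, which in turn requires the Bousfield--Kan results about $\RR$-good spaces with $\RR$-perfect fundamental group that are collected in Lemma~\ref{plus}; once those are in hand, the remaining assembly via the Serre spectral sequence and the fibre lemma is routine.
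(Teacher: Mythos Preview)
Your proposal is correct and takes essentially the same approach as the paper: reduce everything to Theorem~\ref{Omega_C0}(c) by showing that each of the three completions is an $\RR$-plus construction for $(|\calc|,H)$, which is exactly the content the paper packages as Lemma~\ref{|X|pcom->Bpi}. One small correction: the Bousfield--Kan input you invoke (that the fibre becomes simply connected and $\RR$-good after completion when its fundamental group is $\RR$-perfect) is not contained in Lemma~\ref{plus} but is precisely what Lemma~\ref{|X|pcom->Bpi}(a) establishes; the paper also passes from the fibre-level $\RR$-homology equivalence to the twisted-coefficient isomorphism on the total space via the elementary covering-space argument of Lemma~\ref{cover->twisted} rather than the Serre spectral sequence, though your route works equally well.
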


\begin{proof} By Lemma \ref{|X|pcom->Bpi}, the natural map from $|\calc|$ 
to $|\calc|^\wedge$, $|\calc|\Rcom$, or $|\calc|\pcom$ is an $\RR$- or 
$\kk$-plus construction for $(|\calc|,H)$ under the hypotheses of (a), (b), 
or (c), respectively. So this theorem follows as a special case of Theorem 
\ref{Omega_C0}(c).
\end{proof}

The following corollary includes the case proven by Benson in \cite{B}. 
Note that when $G$ is a finite group, its quotient by the maximal normal 
$p$-perfect subgroup is always a $p$-group.

\begin{Cor}\label{Omega_G}
Fix a prime $p$. 
Let $G$ be a (possibly infinite) discrete group, and let $O^p(G)$ be the 
maximal normal $p$-perfect subgroup of $G$. Set $\pi=G/O^p(G)$, let 
$\chi\:G\too\pi$ be the natural surjection, and assume 
that $\pi$ is a finite $p$-group. Then for each field $\kk$ of 
characteristic $p$, $H_*(\Omega(BG\pcom);\kk)\cong 
H^\Omega_*(\calb(G),\calb(\chi);\kk)$: the homology of an $\Omega$-resolution of 
$\kk\pi$ with respect to $(\calb(G),\calb(\chi))$ as a category over $\pi$.
\end{Cor}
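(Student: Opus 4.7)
The plan is to recognize this corollary as an immediate specialization of Theorem \ref{Omega_C}(c) to the case where the small category is the one-object category of a group. To set this up, I would take $\calc=\calb(G)$ and $\theta=\calb(\chi)\:\calb(G)\too\calb(\pi)$. Since $|\calb(G)|=BG$ has fundamental group $G$, the induced homomorphism $\pi_1(|\theta|)\:G\too\pi$ is just $\chi$, which is surjective by construction. Hence $(\calb(G),\calb(\chi))$ is a category over $\pi$ in the sense of Definition \ref{d:overG}, and the associated kernel appearing in Theorem \ref{Omega_C} is precisely $H=\Ker(\chi)=O^p(G)$.

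Next I would verify the two hypotheses of Theorem \ref{Omega_C}(c), namely that $\pi$ is a finite $p$-group and that $H$ is $p$-perfect. The first holds by assumption. For the second, by the defining property of $O^p(G)$ as a $p$-perfect subgroup, $H_1(O^p(G);\F_p)=0$. For an arbitrary field $\kk$ of characteristic $p$, universal coefficients give
	\[ H_1(O^p(G);\kk)\cong H_1(O^p(G);\F_p)\otimes_{\F_p}\kk=0, \]
so $H$ is $\kk$-perfect in the sense required by the hypotheses of Theorem \ref{Omega_C}(c) (cf.\ the hypothesis of Theorem \ref{Omega_C0}).

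With these hypotheses in place, Theorem \ref{Omega_C}(c) applied to $(\calb(G),\calb(\chi))$ yields
	\[ H^\Omega_*(\calb(G),\calb(\chi);\kk)\cong H_*(\Omega(|\calb(G)|\pcom);\kk). \]
Since $|\calb(G)|\pcom=BG\pcom$ by definition, this gives the desired conclusion. The argument is essentially formal; the only step requiring a brief check is the passage from $H_1(O^p(G);\F_p)=0$ to $\kk$-perfectness over arbitrary fields of characteristic $p$, which is handled by the universal coefficient argument above. No new geometric or algebraic input beyond Theorem \ref{Omega_C} is needed, and Benson's original theorem \cite{B} is recovered as the further specialization to the case where $G$ is finite.
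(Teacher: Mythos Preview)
Your proposal is correct and follows exactly the same route as the paper, which simply states that this is Theorem \ref{Omega_C}(c) in the special case $\calc=\calb(G)$. Your extra verification that $H$ is $\kk$-perfect is harmless but unnecessary: the hypothesis in Theorem \ref{Omega_C}(c) is that $H$ be $p$-perfect, which holds by the very definition of $O^p(G)$.
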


\begin{proof} This is just Theorem \ref{Omega_C}(c) when $\calc=\calb(G)$.
\end{proof}

The results in this section lead in a natural way to the following 
question.

\begin{Quest} 
Are there more general conditions on an $\Omega$-system $\Osys\cala\calb$ 
and $X\in\scrp(\calb)$ under which $H^\Omega_*(\cala,\calb;X)$, or a 
functorial image, describes the homology of a space (e.g., of a loop 
space)? In particular, can the homology of the $\Omega$-resolutions of 
Proposition \ref{ex3}(b) be realized as the homology of some space 
determined by the $\Omega$-systems?
\end{Quest}

%%%%%%%%%%%%%%%%%%%%%%%%%%%%%%

\section{Examples: $\Omega$-resolutions for some $p$-local compact groups}
\label{s:examples}

One problem that motivated this work was that of finding a way to 
characterize the $p$-compact groups among the more general $p$-local 
compact groups. As already noted in the introduction, we did not succeed in 
doing so. The aim of this section is to give some very simple examples that 
demonstrate how complicated this problem can be, for example, by analyzing 
some $p$-local compact groups that are not $p$-compact. We also give some 
results, and one explicit computation, that follow from knowing that 
$\Omega$-resolutions determine the homology of loop spaces without 
having to explicitly construct the resolutions themselves.

Throughout this section, we fix a prime $p$ and a field $k$ of 
characteristic $p$. We first recall some definitions. A \emph{$p$-compact 
group} consists of a loop space $X$ and its classifying space $BX$, such 
that $X\simeq\Omega(BX)$, $H_*(X;\F_p)$ is finite (in particular, 
$H_n(X;\F_p)=0$ for $n$ large enough), and $BX$ is $p$-complete. This 
concept was first introduced by Dwyer and Wilkerson \cite{DW}, and 
developed by them and others in several papers. If $G$ is a compact Lie 
group whose group of components $\pi_0(G)$ is a $p$-group, then 
$\Omega(BG\pcom)$ is a $p$-compact group, but this need not be the case if 
$\pi_0(G)$ is not a $p$-group. Every $p$-compact group contains a maximal 
torus with properties very similar to those of maximal tori in compact Lie 
groups. 

A \emph{$p$-local compact group} consists of a discrete $p$-toral group 
$S$ (i.e., an extension of a discrete $p$-torus $(\Z/p^\infty)^r$ for some 
$r\ge0$ by a finite $p$-group), together with a fusion system $\calf$ over 
$S$ and a linking system $\call$ associated to $\calf$. We refer to 
\cite[Definitions 2.2 and 4.1]{BLO3} for the precise definitions of fusion 
and linking systems in this context; here, we just note that $\calf$ and 
$\call$ are categories, $\Ob(\calf)$ is the set of subgroups of $S$, each 
morphism in $\calf$ is a homomorphism between subgroups, and there is a 
functor $\call\too\calf$ that is an inclusion on objects and surjective on 
each morphism set. The classifying space of such a triple $\SFL$ is the 
$p$-completed space $|\call|\pcom$. By \cite[\S\S\,9--10]{BLO3}, each 
compact Lie group $G$ or $p$-compact group $X$ has a maximal discrete 
$p$-toral subgroup $S$ (unique up to conjugacy), together with a fusion 
system $\calf$ and a linking system $\call$ such that $|\call|\pcom$ is 
homotopy equivalent to $BG\pcom$ or $BX$, respectively.

By \cite[Proposition 4.4]{BLO3}, for each $p$-local compact group $\SFL$, 
the fundamental group of the classifying space $|\call|\pcom$ is a finite 
$p$-group. So as a special case of Theorem \ref{Omega_C}(c), we get:

\begin{Thm} \label{Omega_L}
Let $\SFL$ be a $p$-local compact group, and set $\pi=\pi_1(|\call|\pcom)$. 
Then there is $\theta\:\call\too\calb(\pi)$ such that $(\call,\theta)$ is a 
category over $\pi$ and 
	\[ H_*(\Omega(|\call|\pcom);\F_p) \cong H_*^\Omega(\call,\theta;\F_p). 
	\]
\end{Thm}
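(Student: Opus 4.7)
The plan is to derive Theorem \ref{Omega_L} as a direct application of Theorem \ref{Omega_C}(c). To do so, I need to produce a suitable functor $\theta\:\call\too\calb(\pi)$ making $(\call,\theta)$ a category over $\pi$, and to check that $H=\Ker(\pi_1(|\theta|))$ is $p$-perfect; then $\pi$ being a finite $p$-group (by \cite[Proposition 4.4]{BLO3}) together with Theorem \ref{Omega_C}(c) will give the conclusion.

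First I would invoke the construction of $\theta$ outlined in the discussion preceding Lemma \ref{isos-over-G}: from the structure theory of linking systems for $p$-local compact groups, there is a natural functor $\theta\:\call\too\calb(\pi)$ whose restriction to $\calb(S)\subseteq\call$ is surjective on morphisms. Since every morphism in $\call$ can be written in terms of morphisms coming from $\calb(S)$ together with ``inclusions'' that become trivial in $B\pi$, this already implies that $\pi_1(|\theta|)\:\pi_1(|\call|)\too\pi$ is surjective, so $(\call,\theta)$ is a category over $\pi$ in the sense of Definition \ref{d:overG}.

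Next I would identify $H$ with the kernel of the map on fundamental groups induced by $p$-completion. Because $\pi$ is a finite $p$-group, $B\pi$ is already $p$-complete, so the map $|\theta|\:|\call|\too B\pi$ factors (up to homotopy) as $|\call|\too|\call|\pcom\too B\pi$. The factored map $|\call|\pcom\too B\pi$ induces an isomorphism on $\pi_1$, both groups being canonically identified with $\pi$. Consequently $\pi_1(|\theta|)$ coincides with the canonical surjection $\pi_1(|\call|)\onto2\pi_1(|\call|\pcom)$, and $H$ is its kernel. By Lemma \ref{|X|pcom->Bpi} (the same lemma invoked in the proof of Theorem \ref{Omega_C}(c)), applied to $X=|\call|$ and the finite $p$-group $\pi$, the map $|\call|\too|\call|\pcom$ is an $\F_p$-plus construction for $(|\call|,H)$. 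By the very definition of a plus construction (Definition \ref{d:plus}), this forces $H_1(H;\F_p)=0$, i.e., $H$ is $p$-perfect.

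Once these two points are in place, Theorem \ref{Omega_C}(c) applies verbatim with $\calc=\call$, $\RR=\F_p$, and the finite $p$-group $\pi$, producing the isomorphism
\[ H_*(\Omega(|\call|\pcom);\F_p) \cong H_*^\Omega(\call,\theta;\F_p). \]
The only non-formal step is the verification that $\theta$ exists and that $\pi_1(|\theta|)$ is the canonical surjection onto $\pi_1(|\call|\pcom)$, which is really a statement about linking systems and $p$-completion; I expect this to be the main technical obstacle, but it is essentially established material from \cite{BLO3} and from the theory of Bousfield--Kan completions once one knows that $\pi$ is finite.
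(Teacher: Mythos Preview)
Your overall approach coincides with the paper's: the theorem is obtained as an immediate application of Theorem \ref{Omega_C}(c), using that $\pi=\pi_1(|\call|\pcom)$ is a finite $p$-group by \cite[Proposition 4.4]{BLO3} and that there is a functor $\theta\:\call\to\calb(\pi)$ making $(\call,\theta)$ a category over $\pi$. The paper's proof is literally the one-sentence remark preceding the statement.

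There is, however, a circularity in your verification that $H=\Ker(\pi_1(|\theta|))$ is $p$-perfect. You invoke Lemma \ref{|X|pcom->Bpi}(c) to conclude that $|\call|\to|\call|\pcom$ is an $\F_p$-plus construction, and then appeal to Definition \ref{d:plus} (equivalently Lemma \ref{plus}) to deduce that $H$ is $p$-perfect. But Lemma \ref{|X|pcom->Bpi}(c) has ``$H$ is $p$-perfect'' among its \emph{hypotheses}, so this reasoning assumes what it is trying to prove. The fix is short: \cite[Proposition 4.4]{BLO3} also gives that $|\call|$ is $p$-good, so $\kappa\:|\call|\to|\call|\pcom$ induces an isomorphism on $H_*(-;\F_p)$. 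Since $\pi$ is a finite $p$-group, every $\F_p\pi$-module admits a finite filtration with trivial successive quotients, and a five-lemma induction then shows $H_*(\kappa;N)$ is an isomorphism for every $\F_p\pi$-module $N$. Thus $\kappa$ is an $\F_p$-plus construction directly from Definition \ref{d:plus}, and Lemma \ref{plus} now yields that $H$ is $p$-perfect. With this correction your argument goes through and matches the paper's.
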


If $\Gamma$ is an extension of a discrete $p$-torus by a finite $p$-group, 
then $B\Gamma\pcom$ is the classifying space of a $p$-compact group. In 
contrast, if $\Gamma$ is an extension of a discrete $p$-torus by an 
arbitrary finite group, then $B\Gamma\pcom$ need not be the classifying 
space of a $p$-compact group (nor the $p$-completion of $BG$ for a compact 
Lie group $G$), but it is always the classifying space of a $p$-local 
compact group. For example, if $p$ is an odd prime and $r\ge2$, and 
$\Gamma=(\Z/p^\infty)^r\rtimes C_2$ where $C_2$ acts by inverting 
all elements of $(\Z/p^\infty)^r$, 
then $\Omega(B\Gamma\pcom)$ is not a $p$-compact group since its mod $p$ 
homology is nonvanishing in arbitrarily large degrees (see Example 
\ref{ex:T2:2} for the case $r=2$). 

What we want to do now is to give some explicit examples of such 
$\Omega$-resolutions. We focus on $p$-local compact groups associated to 
extensions of discrete $p$-tori by finite groups, especially by those of 
order prime to $p$.

\begin{Prop} \label{Osys-T.H}
Let $T\nsg\Gamma$ be a pair of groups such that $T\cong(\Z/p^\infty)^r$ 
for some $r\ge1$ and $\Gamma/T$ is finite. Let 
$O^p(\Gamma)\nsg\Gamma$ be the smallest normal subgroup containing $T$ 
and of $p$-power index in $\Gamma$, and set $\pi=\Gamma/O^p(\Gamma)$. 
Then the following hold.
\begin{enuma} 

\item The subgroup $O^p(\Gamma)$ is $p$-perfect, the 
spaces $BT\pcom$, $BO^p(\Gamma)\pcom$, and 
$B\Gamma\pcom$ are all $p$-complete, and $BT\pcom\simeq K(\Z_p,2)^r$ 
and $BO^p(\Gamma)\pcom$ are simply connected. The sequence 
$BO^p(\Gamma)\pcom\too B\Gamma\pcom \too B\pi$ is a homotopy fibration 
sequence, and so $\pi_1(B\Gamma\pcom)\cong\pi$.

\item There is a $p$-local compact group $\SFL$ associated to $\Gamma$, 
where $T\le S\le\Gamma$ and $S/T\in\sylp{\Gamma/T}$, and where 
$|\call|\pcom\simeq B\Gamma\pcom$. 

\item If $\Omega(B\Gamma\pcom)$ is a $p$-compact group, then 
$O^p(\Gamma/T)$ has order prime to $p$.

\end{enuma}
\end{Prop}

\begin{proof} \textbf{(a) } A subgroup $H\le\Gamma$ containing $T$ 
has $p$-power index in $\Gamma$ if and only if $H/T$ has $p$-power 
index in the finite group $\Gamma/T$. So $O^p(\Gamma)/T=O^p(\Gamma/T)$, 
and in particular, $O^p(\Gamma)/T$ is $p$-perfect. Since $T$ is also 
$p$-perfect (being $p$-divisible), $O^p(\Gamma)$ is $p$-perfect.

\iffalse
The group $T\cong(\Z/p^\infty)^r$ is 
$p$-perfect since it is $p$-divisible. So a subgroup $H\le\Gamma$ 
containing $T$ is $p$-perfect if and only if $H/T$ is $p$-perfect, and 
it has $p$-power index in $\Gamma$ if and only if $H/T$ has $p$-power 
index in $\Gamma/T$. Thus $O^p(\Gamma)$ is the subgroup such that $T\le 
O^p(\Gamma)\le\Gamma$ and $O^p(\Gamma)/T=O^p(\Gamma/T)$, and is 
$p$-perfect.}
\fi

Since $T$ and $O^p(\Gamma)$ are $p$-perfect, $BT\pcom$ and 
$BO^p(\Gamma)\pcom$ are $p$-complete and simply connected by 
\cite[Proposition VII.3.2]{BK}. Also, $B(\Z/p^\infty)\pcom\simeq 
K(\Z_p,2)$ 
by \cite[VI.2.1--2.2]{BK}. 

Now, $BO^p(\Gamma)\pcom\too B\Gamma\pcom \too B\pi$ is a homotopy 
fibration sequence by \cite[Example II.5.2(iv)]{BK} (applied with 
$R=\F_p$) and since $\pi$ is a finite $p$-group, and so 
$\pi_1(B\Gamma\pcom)\cong\pi$. By the same argument applied to the 
completed sequence, $(B\Gamma\pcom)\pcom\simeq B\Gamma\pcom$, and so 
$B\Gamma\pcom$ is $p$-complete.

\smallskip

\noindent\textbf{(b) } Embed $T$ in $\GL_r(\C)$ as the subgroup of 
diagonal matrices of $p$-power order. Then via induction, $\Gamma$ 
embeds as a subgroup of $\GL_{r\cdot|\Gamma/T|}(\C)$, and hence is a 
linear torsion group in the sense of \cite[\S\,8]{BLO3}. So by 
\cite[Theorem 8.10]{BLO3}, it has an associated $p$-local compact group 
$(S,\calf,\call)$, where $T\le S\le\Gamma$, $S/T\in\sylp{\Gamma/T}$, 
and $B\Gamma\pcom\simeq|\call|\pcom$. Also, $\calf=\calf_S(\Gamma)$: 
the fusion system over $S$ whose morphisms are those homomorphisms 
between subgroups of $S$ induced by conjugation in $\Gamma$.

\smallskip

\noindent\textbf{(c) } Now assume that $\Omega(B\Gamma\pcom)$ is a 
$p$-compact group; i.e., that $H_*(\Omega(B\Gamma\pcom);\F_p)$ is 
finite. Let $O^p(\Gamma)\nsg\Gamma$ and $\pi=\Gamma/O^p(\Gamma)$ be 
as in (a). Then $BO^p(\Gamma)\pcom$ is the homotopy fibre 
of the natural map $B\Gamma\pcom\too B\pi$, and hence 
is equivalent to the covering space of $B\Gamma\pcom$ with covering 
group $\pi$. So $\Omega(BO^p(\Gamma)\pcom)$ also has finite mod $p$ 
homology. We can thus assume that $\Gamma=O^p(\Gamma)$, and hence is $p$-perfect 
by (a). In particular, $\Omega(B\Gamma\pcom)$ is connected. If we now show 
that $S=T$, then $\Gamma/T$ has order prime to $p$ since 
$S/T\in\sylp{\Gamma/T}$.

For a finite $p$-group $Q$, set $\Rep(Q,\call)=\Hom(Q,S)/{\sim}$, where 
$\rho_1\sim\rho_2$ if $\rho_1=\alpha\rho_2$ for some 
$\alpha\in\Iso_\calf(\rho_2(Q),\rho_1(Q))$. In other words, it is the set 
of $\Gamma$-conjugacy classes in $\Hom(Q,S)$. Let $[BQ,B\Gamma\pcom]$ 
be the set of homotopy classes of unpointed maps $BQ\too 
B\Gamma\pcom$. By \cite[Theorem 6.3(a)]{BLO3}, there is a bijection 
$\Rep(Q,\call)\too[BQ,B\Gamma\pcom]$ that sends the conjugacy class of a 
homomorphism $\rho$ to the homotopy class of $B\rho$. 

By \cite[Proposition 5.6]{DW} and since $\Omega(B\Gamma\pcom)$ is 
connected, for each $n\ge1$ and each $f\:BC_{p^n}\too B\Gamma\pcom$, $f$ 
extends (up to homotopy) to a map from $BC_{p^{n+1}}$ to $B\Gamma\pcom$. 
Hence each $\rho\in\Hom(C_{p^n},S)$ extends, up to 
$\Gamma$-conjugacy, to some $\4\rho\in\Hom(C_{p^{n+1}},S)$. Since 
no element of $S\sminus T$ is infinitely $p$-divisible (and they all 
have $p$-power order), this shows that $S=T$, and thus that 
$\Gamma/T$ has order prime to $p$.
\end{proof}

In fact, whenever $T\nsg\Gamma$ are such that $T$ is a discrete $p$-torus 
and $\Gamma/T$ is finite, $\Omega(B\Gamma\pcom)$ is a $p$-compact group if 
and only if $O^p(\Gamma/T)$ has order prime to $p$ and 
$\Aut_{O^p(\Gamma/T)}(T)$ is generated by pseudoreflections on $T$. The 
necessity of this last condition was shown by Dwyer and Wilkerson 
\cite[Theorem 9.7.ii]{DW}. Conversely, Clark and Ewing \cite[Corollary, p. 
426]{CE} showed that if $O^p(\Gamma/T)$ has order prime to $p$ and its 
action is generated by pseudoreflections, then $H^*(B\Gamma\pcom;\F_p)$ is 
a polynomial algebra over $\F_p$, and hence the (co)homology of its loop 
space is finite. 

\begin{Rmk} \label{Osys-T.H2}
Assume $T\nsg\Gamma$ are as in Proposition \ref{Osys-T.H}, where in 
addition, $p\nmid|\Gamma/T|$ and the conjugation action of $\Gamma/T$ on 
$T$ is faithful (i.e., $C_\Gamma(T)=T$). Then $S=T$, $\Ob(\call)=\{T\}$, 
and $\Aut_\call(T)=\Gamma$, so that $\call\cong\calb(\Gamma)$, 
$\pi_1(|\call|)\cong\Gamma$, and $\pi_1(|\call|\pcom)=1$ since $\Gamma$ is 
$p$-perfect. Thus by Proposition \ref{ex4} and Theorem \ref{Omega_C}(b), the 
$\Omega$-system associated to $\SFL$ is $\Osys{k\Gamma\mod}{k\mod}$, where 
$\theta_*(M)=\colim_{\Gamma}(M)$ for a $k\Gamma$-module $M$ and $\theta^*$ 
sends a $k$-module $N$ to the corresponding $k\Gamma$-module with trivial 
action; and $H_*(\Omega(|\call|\pcom);k)$ is the homology of an 
$\Omega$-resolution of $k$.
\end{Rmk}

Note also, in the situation of Remark \ref{Osys-T.H2}, that since 
$|\Gamma/T|$ has order prime to $p$, the group $T$ is uniquely 
$|\Gamma/T|$-divisible. Hence $H^i(\Gamma/T;T)=0$ for all $i>0$, and 
$\Gamma$ must be a semidirect product: $\Gamma\cong T\rtimes H$ where 
$H\cong\Gamma/T$.

We also note the following:

\begin{Rmk} \label{Q12b}
Let $\Gamma$ be a linear torsion group: a subgroup of $GL_n(K)$, for some 
field $K$ of characteristic different from $p$, all of whose elements have 
finite order. By \cite[Theorem 8.10]{BLO3}, there is a $p$-local compact 
group $(S,\calf,\call)$, where $S\le\Gamma$ is a maximal discrete $p$-toral 
subgroup and $|\call|\pcom\simeq B\Gamma\pcom$. Set 
$\pi=\pi_1(B\Gamma\pcom)\cong\pi_1(|\call|\pcom)$ (a finite $p$-group by 
\cite[Proposition 4.4]{BLO3}), and let 
$\theta\:\calb(\Gamma)\too\calb(\pi)$ and $\eta\:\call\too\calb(\pi)$ be 
functors that induce these isomorphisms. By Theorem \ref{Omega_C}(c), 
$H^\Omega_*(\calb(\Gamma),\theta;k)\cong H^\Omega_*(\call,\eta;k)$; i.e., 
$\Omega$-resolutions with respect to these two different $\Omega$-systems 
have the same homology. 
\end{Rmk}

Throughout the rest of the section, whenever $\Gamma$ and 
$\pi=\Gamma/O^p(\Gamma)$ are as in Proposition \ref{Osys-T.H}, we write 
``$\Omega$-resolution of $k\pi$ with respect to $\Gamma$'' to mean an 
$\Omega$-resolution of $k\pi$ with respect to the category 
$(\calb(\Gamma),\theta)$ over $\pi$ or the $\Omega$-system 
$\Osys{k\Gamma\mod}{k\pi\mod}$, where $\theta\:\calb(\Gamma)\too\calb(\pi)$ 
is the natural projection. 

\iffalse
whenever $\Gamma$ is a group that contains a normal subgroup 
$T\cong(\Z/p^\infty)^r$ of finite index, and $\pi=\Gamma/O^p(\Gamma)$ is as 
in Proposition \ref{Osys-T.H}, 
\\\mynote{Say ``whenever $\Gamma$ and $\pi=\Gamma/O^p(\Gamma)$ are as 
in Proposition \ref{Osys-T.H}'' instead?}
\fi

\subsection{$\Omega$-resolutions with respect to discrete $p$-tori}
\label{s:tori}
\leavevmode

Let $T\nsg\Gamma$ be a pair of groups, where $T\cong(\Z/p^\infty)^r$ is a 
discrete $p$-torus of rank $r\ge1$ and $\Gamma/T$ is finite of order prime 
to $p$. Thus $\Gamma=T\rtimes H$ for some finite subgroup $H\le\Gamma$ of 
order prime to $p$ (see the paragraph after Remark \ref{Osys-T.H2}). We regard the group ring $kT$ as a left 
$k\Gamma$-module, where for $t\in T$, $h\in H$, and $x\in kT$, $t(x)=tx$ 
and $h(x)=hxh^{-1}$. We will construct complexes of projective 
$k\Gamma$-modules which, as complexes of $kT$-modules, are 
$\Omega$-resolutions of $k$ with respect to $T$. The $k\Gamma$-module 
structure on these complexes will be used in the next two subsections.

Set $V=\Omega_1(T)\cong(C_p)^r$. For each $n\ge1$, set 
$T_n=\Omega_n(T)\cong(C_{p^n})^r$ (thus $V=T_1$), and regard $kT_n$ as a 
subring of $kT$. Let $I(kT)\le kT$ and $I(kT_n)\le kT_n$ be the 
augmentation ideals. 

For each $n\ge1$, and each $kT_n$-module $M$ and proper submodule $M_0<M$, 
$M/M_0$ has a nontrivial quotient module with trivial $T_n$-action. 
Hence $I(kT_n)\cdot(M/M_0)<M/M_0$, and so $M_0+I(kT_n)\cdot M<M$. 

For each $n\ge0$, let $\4\varphi_n\:V\too I(kT_n)/I(kT_n)^2$ be 
the map $\4\varphi_n(t)=[t-1]$. This is a homomorphism of groups, and extends 
to a $kH$-linear isomorphism $k\otimes_{\F_p}V\cong 
I(kT_n)/I(kT_n)^2$. Lift $\4\varphi_n$ to an $\F_pH$-linear homomorphism 
$\til\varphi_n\:V\too I(kT_n)$ (the ring $\F_pH$ is semisimple since 
$p\nmid|H|$), and extend that to a $k\Gamma$-linear homomorphism 
	\[ \varphi_n\: kT\cdot V \defeq kT \otimes_{\F_p} V \Right5{} kT \]
by setting $\varphi_n(\xi\otimes v)=\xi\cdot\til\varphi_n(v)$ for $\xi\in 
kT$ and $v\in V$. Since $\varphi_n$ induces an isomorphism $k\otimes V\cong 
I(kT_n)/I(kT_n)^2$ by assumption, $\varphi_n(kT_n\cdot 
V)+I(kT_n)^2=I(kT_n)$, and so 
	\beqq \varphi_n(kT_n\cdot V)=I(kT_n) \label{e:H<Gamma-1} \eeqq
by the last paragraph (applied with $\varphi_n(kT_n\cdot V)$ and 
$I(kT_n)$ in the roles of $M_0$ and $M$). Hence $\Im(\varphi_n)=kT\cdot 
I(kT_n)$. 

In particular, for each $n\ge1$, $\Im(\varphi_n)\le 
I(kT)\cdot\Im(\varphi_{n+1})$. Since $kT\cdot V$ is projective as a 
$k\Gamma$-module, there is a $k\Gamma$-linear homomorphism 
$\psi_n\:kT\cdot V\too kT\cdot V$ such that 
	\beqq \varphi_{n+1}\circ\psi_n=\varphi_n. 
	\label{e:H<Gamma-2} \eeqq
Note that 
	\beqq \psi_n(kT\cdot V)\le I(kT)\cdot V. \label{e:Im(psi)<IV} \eeqq

Let $\Lambda^m_R(M)$ denote the $m$-th exterior power over a commutative 
ring $R$ of an $R$-module $M$. Define, for each $0\le m\le r$ and each $n\ge1$, 
	\[ D_m = \Lambda^m_{kT}\bigl( kT\cdot V\bigr)
	\cong kT \otimes_{\F_p} \Lambda^m_{\F_p}(V) \,, \]
regarded as a $k\Gamma$-module. In particular, $D_0=kT$. 
For each $n\ge1$ and each $1\le m\le r$, define a boundary map 
$\2_m^{(n)}\:D_m\too D_{m-1}$ by setting 
	\[ \2_m^{(n)}\bigl(v_1\wedge v_2\wedge\cdots\wedge v_m\bigr) = 
	\sum_{i=1}^m(-1)^{i-1}\varphi_n(v_i)\cdot v_1\wedge\cdots 
	\5{v_i}\cdots\wedge v_m \]
for $v_1,\dots,v_m\in kT\cdot V$. Then 
	\[ \bfD^{(n)} \defeq (D_*,\2_*^{(n)}) = 
	\Bigl( 0 \too D_r \Right2{\2_r^{(n)}} D_{r-1} 
	\Right2{} \cdots \Right2{} D_1 \Right2{\2_1^{(n)}} kT 
	\too 0 \Bigr) \]
is a chain complex of projective $k\Gamma$-modules, and 
$\bigl\{\Lambda^m(\psi_n)\bigr\}_{m=0}^r$ defines a morphism of chain 
complexes $\Psi^{(n)}\:\bfD^{(n)}\too\bfD^{(n+1)}$ where 
$\Lambda^0(\psi_n)=\Id_{kT}$. 

\iffalse
We claim that 
	\beqq \textup{for each $n\ge1$, $T_n$ acts trivially on 
	the homology of $\bfD^{(n)}$.} 
	\label{e:I.H*=0} \eeqq
To see this, fix $n\ge1$, $0\le m\le r$, and $x\in 
D_m=\Lambda^m_{kT}(kT\cdot V)$ such that $\2_m^{(n)}(x)=0$. 
For each $v\in V$, 
	\[ \2_{m+1}^{(n)}(x\wedge v) = \2_m^{(n)}(x)\wedge v + (-1)^m 
	\varphi_n(v)\cdot x = (-1)^m \varphi_n(v)\cdot x. \]
Thus $\varphi_n(v)\cdot[x]=0$ in $H_m(\bfD^{(n)})$ for each $v\in V$, so 
$\Im(\varphi_n)$ annihilates $H_*(\bfD^{(n)})$. Since $\Im(\varphi_n)\ge 
I(kT_n)$, we now conclude that $T_n$ acts trivially. 
\fi

Fix $n\ge1$, $0\le m\le r$, and $x\in 
D_m=\Lambda^m_{kT}(kT\cdot V)$ such that $\2_m^{(n)}(x)=0$. 
For each $v\in V$, 
	\[ \2_{m+1}^{(n)}(x\wedge v) = \2_m^{(n)}(x)\wedge v + (-1)^m 
	\varphi_n(v)\cdot x = (-1)^m \varphi_n(v)\cdot x. \]
Thus $\varphi_n(v)\cdot[x]=0$ in $H_m(\bfD^{(n)})$ for each $v\in V$, so 
$\Im(\varphi_n)$ annihilates $H_*(\bfD^{(n)})$. Since $\Im(\varphi_n)\ge 
I(kT_n)$, we now conclude that 
	\beqq \textup{for each $n\ge1$, $T_n$ acts trivially on 
	the homology of $\bfD^{(n)}$.} 
	\label{e:I.H*=0} \eeqq

As usual, whenever $\psi_*\:(C_*,\2_*)\too(C'_*,\2'_*)$ is a morphism of 
chain complexes, the \emph{mapping cone} of $\psi$ is the chain complex 
	\[ C_\psi = \Bigl( \cdots 
	\Right6{\mxtwo{\2'_4}{-\psi_3}0{\2_3}} C'_3\oplus C_2 
	\Right5{\mxtwo{\2'_3}{\psi_2}0{\2_2}} C'_2\oplus C_1 
	\Right6{\mxtwo{\2'_2}{-\psi_1}0{\2_1}} C'_1\oplus C_0 
	\Right4{(\2'_1,\psi_0)} C'_0 \Bigr). \]
The signs are chosen so that $C'_*\le C_\psi$ as chain complexes, and 
$C_\psi/C'_*\cong\Sigma C_*$. See \cite[\S\,1.5]{Weibel} for more details. 

Let $\bfD$ be the mapping cone of the chain map
	\[ \Psi = \bigl(\Id-\oplus\Psi^{(n)}\bigr) \: 
	\bigoplus_{n=1}^\infty \bfD^{(n)} \Right7{} 
	\bigoplus_{n=1}^\infty \bfD^{(n)}  \]
(see \cite[\S\,1.5]{Weibel}). More explicitly, 
	\[ \Psi(x_1,x_2,x_3,\dots)=(x_1,x_2-\Psi^{(1)}(x_1), 
	x_3-\Psi^{(2)}(x_2),\dots). \]
Since $\Psi$ is injective, $H_*(\bfD)\cong H_*(\Coker(\Psi))$, where 
$\Coker(\Psi)\cong\colim(\bfD^{(n)},\Psi^{(n)})$. So 
	\[ H_*(\bfD) \cong \colim \bigl( H_*(\bfD^{(1)}) 
	\Right4{H_*(\Psi^{(1)})} H_*(\bfD^{(2)}) 
	\Right4{H_*(\Psi^{(2)})} 
	H_*(\bfD^{(3)}) \Right3{} \cdots \bigr) \,, \]
since colimits are exact. Also, $T$ acts trivially on $H_*(\bfD)$ by 
\eqref{e:I.H*=0}.

Now, $H_*(k\otimes_{kT}\bfD)$ is isomorphic to the homology of the cokernel 
of the chain map 
	\[ \4\Psi = \bigl(\Id-\oplus\4\Psi^{(n)}\bigr) \: 
	\bigoplus_{n=1}^\infty k\otimes_{kT}\bfD^{(n)} \Right7{} 
	\bigoplus_{n=1}^\infty k\otimes_{kT}\bfD^{(n)} \,. \]
Recall that $\bfD^{(n)}=\bigl(\Lambda^m_{kT}(kT\cdot 
V),\2^{(n)}_m\bigr)_{m=0}^r$ and 
$\Psi^{(n)}=\bigl\{\Lambda^m(\psi_n)\bigr\}_{m=0}^r$. Here, 
$\Lambda^0(\psi_n)=\Id$, while by \eqref{e:Im(psi)<IV},  
$(\Lambda^m(\psi_n))(\Lambda^m_{kT}(kT\cdot V))\le 
I(kT)\cdot\Lambda^m_{kT}(kT\cdot V)$ for $m>0$. So each 
$\4\Psi^{(n)}$ is zero in positive degrees and the identity in degree 0, 
and hence
the quotient complex $\Coker(\4\Psi)$ is 
zero in positive degrees and isomorphic to $k$ in degree $0$. Thus 
$k\otimes_{kT}\bfD$ is acyclic with $H_0(k\otimes_{kT}\bfD)\cong k$. Also, 
	\beqq H_0(\bfD) \cong k\otimes_{kT}H_0(\bfD) \cong 
	H_0(k\otimes_{kT}\bfD) \cong k \,: \label{e:H0} \eeqq
the first isomorphism since $T$ acts trivially on $H_0(\bfD)$ and the 
second since $(k\otimes_{kT}-)$ is right exact.

We have now proven:

\begin{Prop} \label{torus-res}
For $k$, $\Gamma$, and $\bfD$ as above, $\bfD$ is a chain complex of length 
$r+1$ of projective $k\Gamma$-modules, and $\bfD\xto{~\gee~}k\too0$ is an 
$\Omega$-resolution of $k$ with respect to $T$. 
\end{Prop}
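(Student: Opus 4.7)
The plan is to verify the three axioms of Definition \ref{d:Omega-resolution} for $\bfD\to k\to 0$ with respect to the $\Omega$-system attached to the surjection $T\to 1$, which by Example \ref{ex1} is $\Osys{kT\mod}{k\mod}$ with $\theta_*(M)=k\otimes_{kT}M$ and $\theta^*$ giving trivial $T$-action. Most of the computational content is already laid out in the construction preceding the statement, so the proof is largely a matter of collating what has been established.

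First I would address the length assertion and projectivity over $k\Gamma$. The length statement is immediate: the mapping cone of a degree-preserving map between complexes of top degree $r$ has top degree $r+1$. For projectivity I would use that $kH$ is semisimple (since $p\nmid|H|$) to decompose $\Lambda^m_{\F_p}(V)$ as a direct sum of simple $kH$-modules $W_i$, and then identify each corresponding summand $kT\otimes_{\F_p}W_i$ of $D_m$ with the induced module $k\Gamma\otimes_{kH}W_i$; induction of a $kH$-projective module is $k\Gamma$-projective, and a fortiori $kT$-projective, so \omres{\ref{sqres-1}} is satisfied.

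Condition \omres{\ref{sqres-2}} follows from the analysis already done en route to \eqref{e:H0}: because $\4\Psi^{(n)}$ is the identity in degree $0$ and zero in positive degrees (the latter via \eqref{e:Im(psi)<IV}), $\Coker(\4\Psi)$ is $k$ concentrated in degree $0$, and so $k\otimes_{kT}\bfD$ is acyclic with $H_0\cong k$. For \omres{\ref{sqres-3}}, injectivity of $\Psi$ gives $H_*(\bfD)\cong\colim_n H_*(\bfD^{(n)})$; by \eqref{e:I.H*=0} each $T_n$ acts trivially on $H_*(\bfD^{(n)})$, hence $T=\bigcup_n T_n$ acts trivially on the colimit, placing each $H_n(\bfD)$ in $\theta^*(k\mod)$, and the isomorphism $H_0(\bfD)\cong k$ is then \eqref{e:H0}.

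The one residual delicacy is verifying that the augmentation $\gee$ as defined in the construction (arising from the degree-$0$ part of the mapping cone followed by the canonical identification $H_0(\bfD)\cong k$) realizes the abstract isomorphism used above in condition \omres{\ref{sqres-3}}. This is the only place where I expect any bookkeeping care is required, but it should be essentially formal once one traces the maps through the colimit $\colim_n \bfD^{(n)} \cong \Coker(\Psi)$ and matches the degree-$0$ component with the standard augmentation $kT\to k$.
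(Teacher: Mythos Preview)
Your proposal is correct and follows essentially the same approach as the paper: the paper's ``proof'' is simply the phrase ``We have now proven,'' with all the content residing in the preceding construction, and you have correctly collated those established facts (\eqref{e:I.H*=0}, the colimit description of $H_*(\bfD)$, the acyclicity of $k\otimes_{kT}\bfD$, and \eqref{e:H0}) against the axioms of Definition~\ref{d:Omega-resolution}. Your explicit argument for $k\Gamma$-projectivity via induction from $kH$ is a welcome clarification the paper leaves implicit, and the ``residual delicacy'' about $\gee$ you flag is indeed purely formal.
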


\subsection{$\Omega$-resolutions with respect to the Sullivan spheres}
\label{s:rank1}
\leavevmode

We now restrict to the special case of Remark \ref{Osys-T.H2} where $p$ is 
odd and $r=1$. Thus $T\nsg\Gamma$ where $T\cong\Z/p^\infty$, 
$p\nmid|\Gamma/T|$, and $C_\Gamma(T)=T$. Then 
$\Aut(T)\cong(\Z_p)^\times\cong C_{p-1}\times\Z_p$, and since $\Gamma/T$ is 
finite and acts faithfully on $T$, it must be cyclic of order dividing 
$p-1$. Again, we will construct explicit $\Omega$-resolutions of $k$ with 
respect to $\Gamma$.

Spaces $B\Gamma\pcom$ for $\Gamma$ of this form are the simplest and oldest 
examples constructed of $p$-compact groups (other than compact Lie groups). 
They were originally constructed from the space $K(\Z_p, 2)$ ($\simeq 
B(\Z/p^\infty)\pcom$), by taking the Borel construction $B(p,m)$ of the 
faithful action of a cyclic group $C_m$ (for $m|(p-1)$) on $K(\Z_p,2)$. The 
$p$-completion of $B(p,m)$ is a classifying space for the $p$-completed 
sphere $(S^{2m-1})\pcom$, and hence $B(p,m)\pcom$ is the classifying space 
of a $p$-compact group that is often referred to as a ``Sullivan sphere''. 
What we will show is that not only do these spaces have finite 
dimensional homology, but also that the associated $\Omega$-systems have 
$\Omega$-resolutions of finite length.

Write $\Gamma=T\rtimes H\cong\Z/p^\infty\rtimes H$, where 
$H$ is cyclic of order $m|(p-1)$. Let $\chi\:H\too\F_p^\times$ be the 
injective homomorphism such 
that $hth^{-1}=t^{\chi(h)}$ for all $h\in H$ and all $t\in V=\Omega_1(T)$. 
Set $kT_{(1)}=kT\cdot V=kT\otimes_kV$ as a $k\Gamma$-module, which we 
identify with $kT$ but with $H$-action $h(x)=\chi(h)\cdot hxh^{-1}$ for 
$h\in H$ and $x\in kT$. More generally, for arbitrary $j\ge0$, we write 
	\[ k_{(j)} = V^{\otimes j} 
	\qquad\textup{and}\qquad
	kT_{(j)}=kT\otimes_kk_{(j)} \] 
as $k\Gamma$-modules. Thus $k_{(j)}\cong k$ and $kT_{(j)}\cong kT$ as 
$kT$-modules, but $h\in H$ acts on the first via multiplication by 
$\chi(h)^j$ and on the second via that and conjugation. 
We also write $kT=kT_{(0)}$ and $k=k_{(0)}$ for short.

Let $\varphi_n\in\Hom_{k\Gamma}(kT_{(1)},kT)$ and 
$\psi_n\in\Hom_{k\Gamma}(kT_{(1)},kT_{(1)})$ be as in Section 
\ref{s:tori}, and set $\mu_n=\varphi_n(1)$ and $\nu_n=\psi_n(1)$. Then 
for all $n\ge1$, 
	\beqq \mu_{n+1}\nu_n=\mu_n \qquad\textup{and}\qquad
	kT_n\cdot\mu_n=I(kT_n), \label{e:mu.nu=mu} \eeqq
the first since $\varphi_{n+1}\circ\psi_n=\varphi_n$ by 
\eqref{e:H<Gamma-2} and the second since 
$\varphi_n(kT_n\cdot V)=I(kT_n)$ by \eqref{e:H<Gamma-1}. Also, 
	\beqq h(\mu_n)=\chi(h)\cdot\mu_n \label{e:mu.nu=mu2} \eeqq
for all $n\ge1$ and $h\in H$ since $\varphi_n$ is $k\Gamma$-linear.

The complex $\bfD$ of Proposition \ref{torus-res} has the form
	\[ \bfD = \Bigl( 0 \Right2{} \bigoplus_{n=1}^\infty kT_{(1)}\cdot \bfa_n 
	\Right3{\2_2} \bigoplus_{n=1}^\infty kT_{(1)}\cdot \bfa_n 
	\oplus\bigoplus_{n=1}^\infty kT\cdot \bfb_n 
	\Right3{\2_1} \bigoplus_{n=1}^\infty kT\cdot \bfb_n \Right2{} 0 \Bigr), \]
where 
	\[ \2_2(\bfa_n)=-(\bfa_n-\nu_n\bfa_{n+1}) + \mu_n\bfb_n \,,
	\qquad \2_1(\bfa_n)=\mu_n\bfb_n \,, \qquad\textup{and}\qquad 
	\2_1(\bfb_n)=\bfb_n-\bfb_{n+1}. \]
Since 
	$\bigl(\bigoplus_{n=1}^\infty kT\cdot \bfb_n 
	\Right{10}{(\bfb_n\mapsto \bfb_n-\bfb_{n+1})} \bigoplus_{n=1}^\infty 
	kT\cdot \bfb_n \bigr)$ 
is injective with cokernel $kT$, the complex $\bfD$ is equivalent to 
	\[ \4\bfD = \Bigl( 0 \Right2{} \bigoplus_{n=1}^\infty kT_{(1)}\cdot 
	\bfa_n \Right5{\2_2} \bigoplus_{n=1}^\infty kT_{(1)}\cdot \bfa_n 
	\Right5{\2_1} kT_{(0)}\cdot \bfa_0 \Right2{} 0 \Bigr), \]
where this time 
	\[ \2_2(\bfa_n)=\bfa_n-\nu_n\bfa_{n+1} 
	\qquad\textup{and}\qquad \2_1(\bfa_n)=\mu_n\bfa_0. \]
By \eqref{e:mu.nu=mu} and \eqref{e:mu.nu=mu2}, $\2_1$ and $\2_2$ are 
$k\Gamma$-linear and $\2_1\circ\2_2=0$. 

Recall that $\bfD$ is an $\Omega$-resolution with respect to $T$. 
We now want to identify $H_1(\bfD)$ more precisely, and use this complex to 
construct an $\Omega$-resolution with respect to $\Gamma$. To do this, 
define elements $\sigma_n$ (all $n\ge1$) and $\nu_0$ in $kT_n$ by setting 
	\beqq \sigma_n=\sum_{t\in T_n}t \quad
	\textup{(all $n\ge1$)}\qquad\textup{and}\qquad
	\nu_0=\sigma_1. \label{e:eta-n} \eeqq
To better understand the relation between the $\mu_n$, $\nu_n$, and 
$\sigma_n$, fix $n\ge2$ and 
a generator $t_{n}\in T_{n}$, and set $X=t_{n}-1\in I(kT_{n})$. 
Then $X^{p^{n}}=t_n^{p^n}-1=0$ and $\{1,X,X^2,\dots,X^{p^{n}-1}\}$ is a basis for 
$kT_{n}$, so $kT_{n}\cong k[X]/(X^{p^n})$ as rings, and each ideal in 
$kT_{n}$ is a power of $I(kT_{n})=(X)$. Thus $(\mu_{n})=(X)$, 
$(\mu_{n-1})=(t_n^p-1)=(X)^p$, and hence $(\nu_{n-1})=(X)^{p-1}$ 
by \eqref{e:mu.nu=mu}. Also, $(\sigma_n)=(X)^{p^n-1}$ and 
$(\sigma_{n-1})=(X)^{p(p^{n-1}-1)}$ (see \eqref{e:eta-n}), so 
$(\nu_{n-1}\sigma_{n-1})=(X)^{p^n-1}=k\cdot\sigma_n$. Thus 
for each $n\ge2$, $\nu_{n-1}\sigma_{n-1}=a_n\cdot\sigma_n$ for some 
$0\ne a_n\in k$. To simplify notation, we can replace the $\mu_n$ (all 
$n\ge2$) and $\nu_n$ (all $n\ge1$) by appropriate scalar multiples, and 
arrange that 
	\beqq \textup{for each $n\ge1$,} \quad 
	\nu_{n-1}\sigma_{n-1}=\sigma_n\,. \label{e:sigma} \eeqq

Each element in $\Coker(\2_2)$ is the class of $\xi\cdot\bfa_n$ for some 
$n$ and some $\xi\in kT_m$, and we can always arrange (modulo $\Im(\2_2)$) 
that $m=n$. If in addition, $\2_1(\xi\cdot \bfa_n)=0$, then $\mu_n\xi=0$, 
so $I(kT_n)\cdot\xi=0$ by \eqref{e:mu.nu=mu}, and hence 
$\xi=a\cdot\sigma_n$ for some $a\in k$. Thus $H_1(\bfD)$ is generated by 
the classes $[\sigma_n\bfa_n]$ for $n\ge1$, where for each $n$, 
$[\sigma_n\bfa_n]=[\sigma_n\nu_n\bfa_{n+1}]=
[\sigma_{n+1}\bfa_{n+1}]$ by the definition of $\2_2$ and \eqref{e:sigma}. Also, 
$h(\sigma_n)=\chi(h)\sigma_n$ in $kT_{(1)}$, so $H_1(\bfD)\cong k_{(1)}$ as 
$k\Gamma$-modules. To summarize, 
	\beqq H_0(\bfD)=\Gen{[\bfa_0]}\cong k \qquad\textup{and}\qquad 
	H_1(\bfD)=\Gen{[\sigma_1\bfa_1]}\cong k_{(1)} \,. 
	\label{e:H*(D)} \eeqq

Thus $\bfD$ is an $\Omega$-resolution of $k$ with respect to $\Gamma$ 
if $H=1$, but is not an $\Omega$-resolution if $H\ne1$ since $\Gamma$ acts 
nontrivially on $H_1(\bfD)$ (i.e., condition ($\Omega$-3) fails). In this 
case, we construct an $\Omega$-resolution by ``pasting together'' several 
copies of the above sequence.

Define a complex $\bfC_\infty$ of projective $k\Gamma$-modules of 
infinite length 
	\begin{multline*} 
	\bfC_\infty = \Bigl( \cdots \Right2{} 
	\bigoplus_{n=0}^\infty kT_{(3)}\cdot \bfa_n \Right2{\2_6} 
	\bigoplus_{n=1}^\infty kT_{(3)}\cdot \bfa_n \Right2{\2_5} 
	\bigoplus_{n=0}^\infty kT_{(2)}\cdot \bfa_n \Right2{\2_4} \\
	\bigoplus_{n=1}^\infty kT_{(2)}\cdot \bfa_n \Right2{\2_3}
	\bigoplus_{n=0}^\infty kT_{(1)}\cdot \bfa_n \Right2{\2_2}
	\bigoplus_{n=1}^\infty kT_{(1)}\cdot \bfa_n \Right2{\2_1}
	kT_{(0)}\cdot \bfa_0 \Right2{} 0 \Bigr) ,
	\end{multline*}
where $\2_1(\bfa_n)=\mu_n\bfa_0$ (as in $\bfD$), and for $i\ge2$, 
	\[ \2_i(\bfa_n) = \begin{cases} 
	\bfa_n - \nu_n\bfa_{n+1} & \textup{if $i$ is even}\\
	\mu_n(\bfa_0+\sigma_1\bfa_1+\sigma_2\bfa_2
	+\dots+\sigma_{n-1}\bfa_{n-1}) & \textup{if $i$ is odd.}
	\end{cases} \]
Here, it is understood that $\bfa_0=0$ in the terms of odd degree. By 
\eqref{e:mu.nu=mu}, \eqref{e:mu.nu=mu2}, \eqref{e:eta-n}, and 
\eqref{e:sigma}, all boundary maps are $k\Gamma$-linear 
and $\2_{i-1}\circ\2_{i}=0$ for all $i\ge2$. 

For each $j\ge1$, let $\bfC_j\subseteq\bfC_\infty$ be the subcomplex 
consisting of all terms in $\bfC_\infty$ of degree at most $2j-1$ together 
with the summands $\bigoplus_{n=1}^\infty kT_{(j)}\cdot \bfa_n$ in degree 
$2j$ (thus omitting only the summand $kT_{(j)}\cdot \bfa_0$). Thus 
$\bfC_1\cong\4\bfD$. More generally, if we set $\bfC_0=0$, then for each 
$j\ge0$, $\bfC_{j+1}/\bfC_j$ is isomorphic to the $2j$-fold suspension 
of $\4\bfD$ tensored by $k_{(j)}$, and hence by \eqref{e:H*(D)} has 
homology isomorphic to $k_{(j+1)}$ in degree $2j+1$ and $k_{(j)}$ 
in degree $2j$. If $j\ge1$, then the homology of $\bfC_{j+1}/\bfC_j$ in 
degree $2j$ is represented by the class of $\bfa_0$ in that degree, 
$\2_{2j}(\bfa_0)=-\nu_0\bfa_1=-\sigma_1\bfa_1$, and by \eqref{e:H*(D)} again, this 
represents the homology class in $\bfC_j/\bfC_{j-1}$ of degree $2j-1$. 
Together, these observations imply that $\bfC_\infty$ is acyclic, and 
that for each $j\ge1$, 
	\[ H_0(\bfC_j)\cong k, \qquad H_{2j-1}(\bfC_j)\cong k_{(j)}, 
	\qquad\textup{and}\qquad H_i(\bfC_j)=0 
	\textup{ for $i\ne0,2j-1$.} \]

Set $\bfR=\bfC_m$ (recall $m=|H|$). We claim that $\bfR\too k\too0$ is an 
$\Omega$-resolution with respect to $\Gamma$. Condition ($\Omega$-1) clearly holds 
(each of the terms is projective), and $\Gamma$ acts trivially on 
$H_*(\bfR)$ since $\chi^m=1$. It remains to show ($\Omega$-2): that 
$k\otimes_{k\Gamma}\bfR$ is acyclic. Since $k\otimes_{k\Gamma}kT_{(i)}\cong 
k$ whenever $m|i$ and is zero otherwise, 
	\[ k\otimes_{k\Gamma}\bfR \cong \Bigl( 0 \too 
	\bigoplus_{n=1}^\infty k\cdot \bfa_n \xto{\4\2_{2m}=\Id} 
	\bigoplus_{n=1}^\infty k\cdot \bfa_n \too 0 \too \dots \too 0 \too 
	k \too 0 \Bigr) \]
and is acyclic.

We have now shown:

\begin{Prop} \label{Ores:Sullivan}
Let $\Gamma=T\rtimes H$, where $T\cong\Z/{p^\infty}$ and $H$ acts on 
$\Omega_1(T)$ via an 
injective character $\chi\:H\too\F_p^\times$. Set $m=|H|$. Then the complex 
$\bfR\too k\too0$ defined above is an $\Omega$-resolution of $k$ with 
respect to the $\Omega$-system $\Osys{k\Gamma\mod}{k\mod}$, and 
$H_*(\Omega(B\Gamma\pcom);k)\cong H_*(S^{2m-1};k)$.
\end{Prop}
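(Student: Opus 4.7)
The proof plan is to (i) harvest the $\Omega$-resolution verification already carried out in the exposition preceding the statement, (ii) read off $H_*^\Omega$, and (iii) identify it with $H_*(\Omega(B\Gamma\pcom);k)$ via Theorem \ref{Omega_C}(c).

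For (i) and (ii), condition \omres{1} is clear since each summand $kT_{(j)}$ of $\bfR = \bfC_m$ is a free $kT$-module and hence a projective $k\Gamma$-module. Condition \omres{2}, the acyclicity of $k\otimes_{k\Gamma}\bfR$, was verified directly above using that $k\otimes_{k\Gamma}kT_{(j)}$ equals $k$ when $m\mid j$ and vanishes otherwise. Condition \omres{3} follows from the homology computation giving $H_0(\bfR)\cong k$, $H_{2m-1}(\bfR)\cong k_{(m)}$, and zero in all intermediate degrees, together with the observation that $\chi^m=1$ forces $k_{(m)}\cong k$ as $k\Gamma$-modules. Applying $\theta_*$, which on a $k\Gamma$-module is the coinvariant functor $k\otimes_{k\Gamma}(-)$, then yields $H_*^\Omega(\calb(\Gamma),\theta;k)$ equal to $k$ in degrees $0$ and $2m-1$ and zero elsewhere, i.e., $H_*(S^{2m-1};k)$ as graded $k$-modules.

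For (iii), I plan to invoke Theorem \ref{Omega_C}(c) with $\pi=1$, which requires $\Gamma$ itself to be $p$-perfect. The injective character $\chi\:H\too\F_p^\times$ of order $m$ coprime to $p$ lifts uniquely through $\F_p^\times\subset\Z_p^\times=\Aut(T)$ to an action of $H$ on the $p$-divisible group $T$. For any nontrivial $h\in H$, the scalar $\chi(h)-1\in\F_p^\times$ acts on $T$ as an automorphism (since $T$ is $p$-divisible), so $[h,T]=T$; hence $T\le[\Gamma,\Gamma]$, the abelianization $\Gamma\ab$ is a quotient of the prime-to-$p$ group $H$, and $H_1(\Gamma;\F_p)=0$. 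With $\pi=1$ (trivially a finite $p$-group), Theorem \ref{Omega_C}(c)---equivalently Theorem \ref{Omega_L} via Remark \ref{Osys-T.H2}---gives $H_*(\Omega(B\Gamma\pcom);k)\cong H_*^\Omega(\calb(\Gamma),\theta;k)$, completing the argument. The hardest work, already absorbed into the construction of $\bfC_\infty$, was the scalar-normalization of $\mu_n$, $\nu_n$, $\sigma_n$ ensuring that the boundaries satisfy $\2^2=0$ and that the connecting homomorphisms between successive filtration quotients $\bfC_{j+1}/\bfC_j$ splice the twisted homologies correctly to concentrate $H_*(\bfC_m)$ at the two endpoints.
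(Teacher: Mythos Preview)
Your proposal is correct and follows essentially the same approach as the paper: the preceding text already verifies \omres{1}--\omres{3} for $\bfR=\bfC_m$, and then Theorem \ref{Omega_C}(c) (as packaged in Remark \ref{Osys-T.H2}) identifies $H_*^\Omega(\calb(\Gamma),\theta;k)$ with $H_*(\Omega(B\Gamma\pcom);k)$. The paper also records, as an alternative, the space-level equivalence $\Omega(B\Gamma\pcom)\simeq(S^{2m-1})\pcom$ due to Sullivan, but your purely algebraic route through the $\Omega$-resolution suffices for the homology statement.
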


Note that $\bfC_j$ is not an $\Omega$-resolution when $j>m$ since 
$k\otimes_{k\Gamma}\bfC_j$ has nonzero homology in degree $2m$ (the class 
of $\bfa_0$), and is not an $\Omega$-resolution when $1\le j<m$ since 
$\Gamma$ acts nontrivially on $H_{2j-1}(\bfC_j)$. Note also that since 
$\Omega(B\Gamma\pcom)$ has nonzero homology in degree $2m-1$, $\bfR$ has 
the shortest possible length of any $\Omega$-resolution of $k$ with respect 
to $\Osys{k\Gamma\mod}{k\mod}$. The equivalence 
$\Omega(B\Gamma\pcom)\simeq(S^{2m-1})\pcom$ follows from \cite[pp. 
103--105]{Sullivan} (from the proof of the proposition), and since 
$BT\pcom\simeq K(\Z_p,2)$ by Proposition \ref{Osys-T.H}(a).

We will see later (Proposition \ref{finite-res} and Remark 
\ref{filtered-res}) that there are similar constructions of 
$\Omega$-resolutions when $T\nsg\Gamma$ are such that 
$T\cong(\Z/p^\infty)^r$ for $r>1$ and $p\nmid|\Gamma/T|$.

\begin{Rmk} 
When $H\ne1$, the parameters $\mu_n$ and $\nu_n$ can be defined 
more explicitly as follows. Fix generators $t_n\in T_n$ for each $n\ge1$, 
chosen so that $(t_n)^p=t_{n-1}$ when $n\ge2$, and set $\mu_n=\sum_{h\in 
H}\chi(h)^{-1}t_n^{\chi(h)}\in kT_n$ for all $n\ge1$. It is straightforward 
to check that $h(\mu_n)=\chi(h)\mu_n$ for $h\in H$, that 
$kT_n\cdot\mu_n=I(kT_n)$, and that $(\mu_1)^p=0$ while 
$(\mu_n)^p=\mu_{n-1}$ for $n\ge2$. Also, 
$(\mu_n)^{p-1}\sigma_{n-1}=\sigma_n$, so we can set 
$\nu_{n-1}=(\mu_{n})^{p-1}$ for each $n$, and use these parameters to 
define the $\Omega$-resolution $\bfR$.
\end{Rmk}

\subsection{Groups with discrete $p$-tori of index prime to $p$}
\leavevmode

We now make some more computations of $\Omega$-homology in the situation of 
Proposition \ref{Osys-T.H}: this time by using the existence of 
$\Omega$-resolutions without constructing them explicitly. The key to doing 
this is the following spectral sequence.

\begin{Lem} \label{H_*(C_T)-sp.seq.} Let $T\nsg \Gamma$ be a pair of groups 
such that $T$ is $p$-perfect. Let $(C_*,\2_*)$ be a positively graded  
chain complex of $k\Gamma$-modules that are projective as $kT$-modules, and 
assume that $T$ acts trivially on $H_*(C_*,\2_*)$. Then there is a first 
quadrant spectral sequence of $k[\Gamma/T]$-modules of the form 
	\[ E^2_{ij} = H_i(k\otimes_{kT}C_*) \otimes_k 
	H_j(\Omega(BT\pcom);k) \quad\implies\quad H_{i+j}(C_*) \] 
where the action of $\Gamma/T$ on $H_*(\Omega(BT\pcom);k)$ is that induced 
by conjugation on $T$. \end{Lem}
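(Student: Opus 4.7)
The plan is to realize this spectral sequence as one of the two associated with a double complex built from an $\Omega$-resolution. By hypothesis $T$ is $p$-perfect, so by Theorem \ref{Omega_C0}(b) applied to $\calc=\calb(T)$ with $\pi=1$, there exists an $\Omega$-resolution $P_*$ of $k$ with respect to the $\Omega$-system $(kT\mod,k\mod)$ of Example \ref{ex1}. By definition of $\Omega$-resolution, $P_*$ is a complex of projective $kT$-modules with $k\otimes_{kT}P_*$ acyclic outside degree zero (where it is $k$), and $H_*(P_*)$ is a $kT$-module with trivial $T$-action. By Theorem \ref{Omega_C}(c), $H_*(P_*)\cong H_*(\Omega(BT\pcom);k)$.

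Form the double complex $P_*\otimes_k C_*$ with diagonal $T$-action; each entry is $kT$-projective, since the diagonal tensor of a projective $kT$-module with any $kT$-module is projective over a group algebra. Its $T$-coinvariants (equivalently $P_*\otimes_{kT}C_*$ after viewing $P_*$ as a complex of right $kT$-modules via the antipode) form a first-quadrant double complex, and I will compute the homology of its total complex using both standard spectral sequences.

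Filtering by $P$-degree and taking $C$-homology first: since each $P_i$ is $kT$-flat, $H_j(P_i\otimes_{kT}C_*)\cong P_i\otimes_{kT}H_j(C_*)$, and the hypothesis that $T$ acts trivially on $H_j(C_*)$ simplifies this to $(k\otimes_{kT}P_i)\otimes_k H_j(C_*)$. Then $E^2_{ij}=H_i(k\otimes_{kT}P_*)\otimes_k H_j(C_*)$, which vanishes for $i>0$ and equals $H_j(C_*)$ for $i=0$ by the $\Omega$-resolution property, so this spectral sequence collapses and the total homology is $H_*(C_*)$. Filtering the other way (by $C$-degree, then $P$-homology first): since each $C_j$ is $kT$-flat by hypothesis, $H_i(P_*\otimes_{kT}C_j)\cong H_i(P_*)\otimes_{kT}C_j$, which by triviality of the $T$-action on $H_i(P_*)$ equals $H_i(\Omega(BT\pcom);k)\otimes_k(k\otimes_{kT}C_j)$. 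Taking $C$-homology and relabeling indices gives $E^2_{ij}=H_i(k\otimes_{kT}C_*)\otimes_k H_j(\Omega(BT\pcom);k)$, abutting to $H_{i+j}(C_*)$, as required.

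The main obstacle will be verifying $k[\Gamma/T]$-equivariance with the stated action of $\Gamma/T$ on $H_j(\Omega(BT\pcom);k)$. For this I would equip $P_*$ with a $\Gamma$-action via the homotopy uniqueness of $\Omega$-resolutions (Proposition \ref{p2:functoriality}): conjugation by $g\in\Gamma$ carries the $kT$-complex $P_*$ to another $\Omega$-resolution (over $kT$ with the conjugated action), which is chain homotopy equivalent to $P_*$ by uniqueness. This yields a $\Gamma$-action up to homotopy on $P_*$, which descends to an honest $\Gamma/T$-action on $H_*(P_*)$ because inner automorphisms of $T$ induce homotopically trivial self-maps of $BT\pcom$. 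The diagonal $\Gamma$-action on $P_*\otimes_k C_*$ then descends to the required $\Gamma/T$-action on both spectral sequences, realizing on $E^2$ the tensor product of the natural actions on the two factors.
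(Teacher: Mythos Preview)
Your argument is essentially the paper's: form the double complex $P_*\otimes_{kT}C_*$ with $P_*$ an $\Omega$-resolution of $k$ over $kT$, and compare the two spectral sequences. The paper, however, does not take an abstract $\Omega$-resolution; it uses the explicit topological one $P_*=C_*(L^pT;k)$, where $L^pT$ is the pullback of the path fibration over $BT\pcom$ along $ET\to BT\to BT\pcom$. This space carries a \emph{strict} right $\Gamma$-action by conjugation (on $ET$, $BT$, and hence on the pullback), so $C_*(L^pT;k)$ is a complex of $k[T\rtimes\Gamma]$-modules, free over $kT$. The double complex then has an honest $\Gamma$-action, and the $k[\Gamma/T]$-structure on both spectral sequences is immediate.

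Your handling of equivariance is the one soft spot. Proposition~\ref{p2:functoriality} gives, for each $g\in\Gamma$, a chain map $\phi_g\:P_*\to(c_g)_*P_*$ unique only up to homotopy, and $\phi_{gh}\simeq\phi_g\circ\phi_h$ only up to homotopy. So there is no strict ``diagonal $\Gamma$-action on $P_*\otimes_kC_*$'' as you write; what you actually get is a family of filtered chain maps $\Psi_g$ on the total complex with $\Psi_{gh}$ filtered-homotopic to $\Psi_g\Psi_h$. This still yields a genuine $\Gamma/T$-action on each $E^r$ page (filtered homotopies induce equalities from $E^1$ on, and $\Psi_t$ acts trivially on $E^1$ since $T$ is trivial on $H_*(P_*)$ and on $k\otimes_{kT}C_*$), so the conclusion survives---but the argument needs to be phrased in terms of maps of spectral sequences, not a strict action on the double complex. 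The paper's choice of $C_*(L^pT;k)$ sidesteps this entirely.
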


\begin{proof} Since $BT\pcom$ is simply connected by 
Proposition \ref{Osys-T.H}(a), the space $\Omega(BT\pcom)$ is 
connected.  Consider the following diagram of spaces 
	\[ \xymatrix@C=40pt{
	L^pT \ar[r] \ar[d] & A^pT \ar[r] \ar[d]^{\nu} & \calp(BT\pcom) \ar[d] \\
	ET \ar[r] & BT \ar[r] & BT\pcom } \]
where $BT=|\calb(T)|$ and $ET$ is its universal covering space, 
$\calp(BT\pcom)$ is the space of paths in $BT\pcom$ originating at the 
image of the (unique) vertex in $BT$, both squares are pullbacks, and thus 
the vertical maps are all fibrations with fibre $\Omega(BT\pcom)$. 
In particular, $A^pT$ is the homotopy fibre of the completion map, and 
is mod $p$ acyclic since $H_*(BT;\F_p)\cong H_*(BT\pcom;\F_p)$ 
and $BT\pcom\simeq K(\Z_p,2)^r$ is simply connected. Also, 
$L^pT\simeq\Omega(BT\pcom)$ since $ET\simeq*$, and $T$ acts freely on 
$L^pT$ with orbit space $A^pT$.

Now, $\Gamma$ acts on the right on all of these spaces via the 
conjugation action. More precisely, we identify the vertices in $ET$ with 
$T$, and let $\Gamma$ act on $ET$ by setting $x*g=g^{-1}xg$ for $x\in 
T$ and $g\in\Gamma$, in contrast to the free right action of $T$ defined by 
$x\cdot t=xt$. This induces actions of $\Gamma$ on $BT$, $BT\pcom$, and 
$\calp(BT\pcom)$, and hence on $L^pT$; and the actions of $T$ and 
$\Gamma$ on $ET$ and on $L^pT$ satisfy the relation 
$((x*g)\cdot t)*g^{-1}=x\cdot(gtg^{-1})$ for $g\in\Gamma$ and 
$t\in T$. 

In particular, $C_*(L^pT;k)$ is a complex of $k[T\sd{}\Gamma]$-modules that 
are free as $kT$-modules, and $C_*(A^pT;k)\cong C_*(L^pT;k)\otimes_{kT}k$ 
is an acyclic complex of $k\Gamma$-modules. The action of $\Gamma$ on 
$H_*(L^pT;k)$ restricts to the conjugation action of $T=\pi_1(BT)$ on the 
fibre of $\nu$ over the basepoint, and this action is trivial since $\nu$ 
is pulled back from the simply connected space $BT\pcom$. Thus the action 
of $\Gamma$ on $H_*(L^pT;k)\cong H_*(\Omega(BT\pcom);k)$ factors through 
$\Gamma/T$. 

Consider the complex $C_*(L^pT;k)\otimes_{kT} C_*$. This is a double 
complex of $k\Gamma$-modules, where $g(x\otimes y)=x*g^{-1}\otimes gy$ for 
$g\in\Gamma$. This action of $\Gamma$ is well defined on the tensor product 
over $kT$, since for $g\in\Gamma$ and $t\in T$,
	\begin{align*} 
	g(xt\otimes t^{-1}y) &= (xt)*g^{-1}\otimes gt^{-1}y 
	= (x*g^{-1})\cdot(gtg^{-1})\otimes gt^{-1}y \\
	&= x*g^{-1} \otimes (gtg^{-1})gt^{-1}y 
	= x*g^{-1}\otimes gy = g(x\otimes y) 
	\end{align*}
by the relation shown above. As usual, we consider the two spectral 
sequences of $k\Gamma$-modules induced by this double complex.

If we first take homology in the left-hand factor, we obtain 
	\[ E^1_{i,j} \cong H_j(\Omega(BT\pcom);k)\otimes_{kT} C_i
	\qquad\text{and}\qquad 
	E^2_{i,j} \cong H_j(\Omega(BT\pcom);k) \otimes_k 
	H_i(k\otimes_{kT}C_*), \]
where the first isomorphism holds since each $C_i$ is projective as a 
$kT$-module, and the second since $T$ acts trivially on $\Omega(BT\pcom)$. 
On the other hand, if we first take homology of the right-hand factor, we 
obtain
	\[\bar{E}^1_{i,j}\cong C_j(L^pT;k)\otimes_{kT} H_i(C_*) \cong 
	\bigl(C_j(L^pT;k)\otimes_{\kk T} k\bigr) \otimes H_i(C_*),\]
the first isomorphism since each $C_j(L^pT;k)$ is free as a $kT$-module, 
and the second since the action of $T$ on the homology of $C_*$ is 
trivial. Then 
	\[ \bar{E}^2_{i,j} \cong \begin{cases} 
	H_i(C_*) & \textup{if $j=0$} \\ 0 & \textup{if $j>0$}
	\end{cases} \]
since $C_*(L^pT;k)\otimes_{kT}k$ is isomorphic to $C_*(A^pT;k)$ and hence 
is acyclic.
\end{proof}

In the following lemma, when we say that a graded vector space over $\kk$ 
is ``finite dimensional'', we mean that it is finite dimensional in each 
degree and is nonzero in only finitely many degrees.

\begin{Lem} \label{fin<=>fin}
Fix a pair of groups $T\nsg \Gamma$, where $T\cong(\Z/p^\infty)^r$ for 
some $r\ge1$ and $\Gamma/T$ is finite. Let 
$O^p(\Gamma)\nsg\Gamma$ and $\pi=\Gamma/O^p(\Gamma)$  be as in 
Proposition \ref{Osys-T.H} (thus $O^p(\Gamma)\ge T$ and $\pi$ is a finite 
$p$-group), set $\pi=\Gamma/O^p(\Gamma)$, and let 
$\theta\:\calb(\Gamma)\too\calb(\pi)$ be the natural functor. Let 
$(C_*,\2_*)$ be an $\Omega$-resolution of $k\pi$ with respect to the 
$\Omega$-system $\Osys{k\Gamma\mod}{k\pi\mod}$. Then $H_*(C_*,\2_*)$ is 
finite dimensional if and only if $H_*(C_*\otimes_{kT}k)$ is finite 
dimensional. 
\end{Lem}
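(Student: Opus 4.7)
The plan is to apply Lemma~\ref{H_*(C_T)-sp.seq.} to the normal subgroup $T\nsg\Gamma$ and the complex $C_*$. First verify the three hypotheses. The group $T\cong(\Z/p^\infty)^r$ is $p$-perfect since it is $p$-divisible, so $H_1(T;k)=0$. Each $C_i$ is projective as a $kT$-module because $k\Gamma$ is a free $kT$-module of finite rank $|\Gamma/T|$, so every projective $k\Gamma$-module restricts to a projective $kT$-module. Finally, $T$ acts trivially on $H_*(C_*,\2_*)$ because condition~\omres{\ref{sqres-3}} forces each $H_n(C_*)$ to lie in $\theta^*(k\pi\mod)$, so the $k\Gamma$-action factors through $\pi=\Gamma/O^p(\Gamma)$, and $T\subseteq O^p(\Gamma)$.

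The resulting spectral sequence is
\[ E^2_{ij} = A_i\otimes_k B_j \Longrightarrow H_{i+j}(C_*), \]
with $A_*=H_*(C_*\otimes_{kT}k)$ and $B_*=H_*(\Omega(BT\pcom);k)$. The crucial observation is that $B_*$ is \emph{finite-dimensional}: since $T$ is a discrete $p$-torus of rank $r$, $BT\pcom\simeq K(\Z_p,2)^r$, whence $\Omega(BT\pcom)\simeq((S^1)\pcom)^r$, and $B_*\cong\Lambda_k(y_1,\dots,y_r)$ has total dimension $2^r$, concentrated in degrees $0\le j\le r$. In particular only finitely many pages carry nonzero differentials, with the spectral sequence collapsing at $E^{r+2}$.

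Direction $(\Leftarrow)$ is then immediate: if $A_*$ is finite-dimensional, then $E^2=A_*\otimes_k B_*$ has finite total dimension, and hence $H_*(C_*)=E^\infty$ does as well.

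For direction $(\Rightarrow)$, assume $H_*(C_*)$ is finite-dimensional, supported in total degrees $n\le M$. The plan is to show first, by strong induction on $i$, that each $A_i$ is finite-dimensional. Since $B_0=k$, the bottom row satisfies $E^2_{i,0}=A_i$, and receives no incoming differentials, so $E^\infty_{i,0}\subseteq A_i$. The outgoing differentials $d_s\colon E^s_{i,0}\to E^s_{i-s,s-1}$ for $2\le s\le r+1$ have targets that are subquotients of $A_{i-s}\otimes B_{s-1}$, finite-dimensional by the inductive hypothesis, so $E^\infty_{i,0}$ has finite codimension in $A_i$; combined with $\dim_k E^\infty_{i,0}\le\dim_k H_i(C_*)<\infty$ this gives $\dim_k A_i<\infty$. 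The main obstacle is then the second step, namely upgrading finite-dimensionality in each degree to boundedness of the support of $A_*$. This requires exploiting the vanishing $E^\infty_{i,j}=0$ for $i+j>M$ simultaneously across all rows $0\le j\le r$, together with the finite-page collapse at $E^{r+2}$, to run an iterative dimension count forcing $A_i=0$ for $i$ past a bound depending on $M$ and $r$.
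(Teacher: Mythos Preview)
Your verification of the hypotheses of Lemma~\ref{H_*(C_T)-sp.seq.} is fine, and the direction $(\Leftarrow)$ is correct and is exactly what the paper does. Your inductive argument showing that each individual $A_i$ is finite-dimensional is also sound.

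The gap is in your ``second step'': the spectral sequence alone cannot force $A_i=0$ for large $i$. Here is a schematic counterexample in the simplest case $r=1$, where $B_*=k\oplus k\cdot y$ with $|y|=1$ and only $d_2$ is possibly nonzero. Take $A_i=k$ for every $i\ge0$ and let $d_2\colon E^2_{i,0}=A_i\to E^2_{i-2,1}=A_{i-2}$ be an isomorphism for all $i\ge2$. Then $E^\infty_{i,0}=\ker d_2=0$ for $i\ge2$ and $E^\infty_{i,1}=\Coker d_2=0$ for all $i\ge0$, so $E^\infty$ is concentrated in total degrees $0$ and $1$; yet $A_*$ is unbounded. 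Nothing in your ``iterative dimension count'' rules this pattern out: once past degree $M$, the vanishing of $E^\infty$ only says the differentials are isomorphisms, and an infinite chain of isomorphisms between one-dimensional spaces is perfectly consistent. The same phenomenon persists for $r>1$ with suitable exterior-algebra differentials. So the purely algebraic route is blocked.

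The paper's proof of $(\Rightarrow)$ is entirely different and genuinely topological. By Proposition~\ref{p2:functoriality} one may replace $C_*$ by the explicit model $C_*(L^p\Gamma;\F_p)$ of Proposition~\ref{C*Enu}, where $L^p\Gamma$ is the pullback of $\calp(B\Gamma\pcom)$ over $E\Gamma$. Then $C_*\otimes_{kT}k\cong C_*(L^p\Gamma/T;\F_p)$, and one identifies $(L^p\Gamma/T)\pcom$ with the homotopy fibre of $BT\pcom\to B\Gamma\pcom$. The hypothesis that $H_*(C_*)$ is finite-dimensional says $\Omega(B\Gamma\pcom)$ is a $p$-compact group; the paper then invokes Dwyer--Wilkerson \cite[Theorem~7.3]{DW}: using \cite[Theorem~6.3]{BLO3} to check the ``trivial kernel'' condition, $BT\pcom\to B\Gamma\pcom$ is a monomorphism of $p$-compact groups, and by definition its homotopy fibre has finite mod~$p$ homology. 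This is the missing ingredient---a structural theorem about $p$-compact groups, not a spectral-sequence bookkeeping argument.
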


\begin{proof} Since $H_*(\Omega(BT\pcom);\kk)$ is finite dimensional, Lemma 
\ref{H_*(C_T)-sp.seq.} implies that if $H_*(C_*\otimes_{kT}\kk)$ is finite 
dimensional, then so is $H_*(C_*;\kk)$. Thus it remains to prove the 
converse. Since $H_*(-;\kk)\cong H_*(-;\F_p)\otimes_{\F_p}\kk$, it 
suffices to show this when $\kk=\F_p$. 

Consider the following diagram:
	\[ \xymatrix@C=40pt{
	L^p\Gamma \ar[r] \ar[d] & (L^p\Gamma)/T \ar[r] \ar[d] & A^p\Gamma 
	\ar[r] \ar[d] & \calp(B\Gamma\pcom) \ar[d] \\ 
	E\Gamma \ar[r] & (E\Gamma)/T \ar[r] & B\Gamma \ar[r] & B\Gamma\pcom 
	} \]
where the squares are pullback squares. Since $O^p(\Gamma)$ is 
$p$-perfect by Proposition \ref{Osys-T.H}(a) (and since $\pi$ is a 
finite $p$-group), Lemma \ref{|X|pcom->Bpi}(c) implies that 
$B\Gamma\pcom$ is an $\F_p$-plus construction for 
$(B\Gamma,O^p(\Gamma))$. Also, $L^p\Gamma$ 
has a free action of $\Gamma$ induced by that on $E\Gamma$. Set 
$\calc=\calb(\Gamma)$ and let $\theta\:\calc\too\calb(\pi)$ be the 
natural functor, so that $E\calc(\Bobj[\Gamma])=E\Gamma$. Then by 
Proposition \ref{C*Enu}, $C_*(L^p\Gamma;\F_p)$ is an 
$\Omega$-resolution of $\F_p\pi$ with respect to the given 
$\Omega$-system. Hence by Proposition \ref{p2:functoriality}, we may 
assume $C_*=C_*(L^p\Gamma;\F_p)$, and so $C_*\otimes_{\F_pT}\F_p\cong 
C_*(L^p\Gamma/T;\F_p)$. From the above pullback diagram, we see that 
$L^p\Gamma/T$ is the homotopy fibre of the map 
$BT\simeq{}E\Gamma/T\Right2{}B\Gamma\pcom$. Since 
$\pi_1(B\Gamma\pcom)\cong\pi$ is a finite $p$-group by 
Proposition \ref{Osys-T.H}(a), it acts nilpotently on 
$H_*(L^p\Gamma/T;\F_p)$, and hence by the  mod-$R$ fibre lemma of 
Bousfield and Kan \cite[Lemma II.5.1]{BK}, $(L^p\Gamma/T)\pcom$ is the 
homotopy fibre of the map $BT\pcom\too B\Gamma\pcom$ induced by the 
inclusion. Since $BT\pcom$ and $B\Gamma\pcom$ are $p$-complete 
by Proposition \ref{Osys-T.H}(a), the space $(L^p\Gamma/T)\pcom$ is 
also $p$-complete by the mod-$R$ fibre lemma again.

Now assume that $H_*(C_*;\F_p)$ is finite dimensional, and hence that 
$\Omega(B\Gamma\pcom)$ is a $p$-compact group. By Proposition 
\ref{Osys-T.H}(b), there is a $p$-local compact group $\SFL$ with $T\le 
S\le\Gamma$ and $|\call|\pcom\simeq B\Gamma\pcom$. So for each $1\ne t\in 
T$, \cite[Theorem 6.3]{BLO3} implies that the map $B\gen{t}\too 
B\Gamma\pcom$ induced by the inclusion is not null homotopic. In the 
terminology of \cite[\S\,7]{DW}, this means that the map $BT\pcom\too 
B\Gamma\pcom$, regarded as a map of $p$-compact groups, has trivial kernel. 
So by \cite[Theorem 7.3]{DW}, $BT\pcom\too B\Gamma\pcom$ is a monomorphism 
of $p$-compact groups, and by definition \cite[3.2]{DW}, the 
$\F_p$-homology of its homotopy fibre $(L^p\Gamma/T)\pcom$ is finite 
dimensional. 
\end{proof}

We now give some consequences of Lemmas \ref{H_*(C_T)-sp.seq.} and 
\ref{fin<=>fin}. The first example can also be carried out using the 
Serre spectral sequence for the path-loop fibration of $B\Gamma\pcom$, 
but the argument given here is slightly easier, and it illustrates 
nicely how the action of $\Gamma/T$ on the spectral sequence of Lemma 
\ref{H_*(C_T)-sp.seq.} can be exploited. Note that the space 
$\Omega(B\Gamma\pcom)$ considered in the lemma is not a $p$-compact 
group, since it has unbounded homology.

\begin{Ex} \label{ex:T2:2}
Set $\Gamma=T\sd{}C_2$, where $p$ is odd, $T\cong(\Z/p^\infty)^2$, and 
$\Gamma/T\cong C_2$ acts by inverting elements of $T$. Then
	\[ H_i(\Omega(B\Gamma\pcom);\F_p) \cong \begin{cases} 
	\F_p & \textup{if $i=0$}\\
	\F_p^3 & \textup{if $i=3$}\\
	\F_p^4 & \textup{if $i>3$ and $i\in3\Z$}\\
	0 & \textup{otherwise.}
	\end{cases} \]
\end{Ex}

\begin{proof} To simplify notation, we do this over an arbitary field $\kk$ 
of characteristic $p$. Fix an $\Omega$-resolution $(C_*,\2_*)$ of $\kk$ 
with respect to $\Gamma$, and let $E$ be the spectral sequence of 
Lemma \ref{H_*(C_T)-sp.seq.}. Recall that this is a spectral sequence of 
$\kk[\Gamma/T]$-modules: the action of $\Gamma/T$ plays a central role in 
the argument here. 

Since $H_i(C_*\otimes_{\kk\Gamma}\kk)=0$ for $i>0$ by \omres2, 
$\Gamma/T\cong C_2$ acts via $-\Id$ on $E^2_{i,0}\cong H_i(C_*\otimes_{\kk 
T}\kk)$ for each $i>0$. Also, since $\Gamma$ acts trivially on $H_i(C_*)$ 
for all $i$ by \omres3, $\Gamma/T$ acts trivially on $E^\infty_{i,j}$ for 
all $i,j$. 

We claim that $E^2_{i,j}$ takes the following form:
	\[ \underline{\left| 
	\vcenter{\xymatrix@C=15pt@R=8pt{ 
	\kk_+ & 0 & \kk^2_- & 0 & 0 & \kk^2_- & 0 & 0 & \kk^2_- & \dots \\
	\kk^2_- & 0 & \kk^4_+ \ar@{-->}[ull] & 0 & 0 & \kk^4_+ & 0 & 0 & \kk^4_+ & \dots \\
	\kk_+ & 0 & \kk^2_- \ar[ull]_(0.4){\cong} & 0 & 0 & \kk^2_- 
	\ar[uulll]_(0.6){\cong} & 0 & 0 & \kk^2_- \ar[uulll]_(0.6){\cong} & \dots 
	}}\right.} \] 
where the subscripts ($\pm$) describe the action of $\Gamma/T\cong C_2$, 
and where the pattern continues with $E^2_{3k+2,0}\cong\kk^2_-$ for $i\ge0$ 
and $E^2_{i,0}=0$ when $0<i\not\equiv2$ (mod $3$). To see this, note first 
that $E^2_{1,0}\cong E^\infty_{1,0}=0$ since $\Gamma/T$ acts by 
inverting elements in $E^2_{1,0}$ and fixing those in 
$E^\infty_{1,0}$, and that the differential sends $E^2_{2,0}$ 
isomorphically to $E^2_{0,1}$ since the action on $E^\infty$ is trivial. 
Hence $E^2_{i,j}$ is as described for $i\le2$. Also, $E^2_{3,0}=0$ and 
$E^2_{4,0}=0$ since there are no terms which their 
differentials could hit upon which $\Gamma/T$ acts by 
inverting elements, and $E^2_{5,0}\cong\kk^2_-$ must be sent 
isomorphically to $E^2_{2,2}$. 

Upon continuing in this way, we see inductively that 
$E^2_{i,0}\cong\kk^2_-$ for $i=3j+2$ (all $j\ge0$) and is zero in other 
positive degrees, and the differentials are as shown with one possible 
exception. We claim that the differential from $E^2_{2,1}$ to $E^2_{0,2}$ is 
surjective: this holds since 
$H_i(B\Gamma\pcom;k)\cong(H_i(BT\pcom;k))^{\Gamma/T}=0$ for $i\le3$ implies 
that $H_i(\Omega(B\Gamma\pcom);k)=0$ for $i\le2$. Thus $E^\infty_{i,j}$ is zero in 
positive total degrees except $E^\infty_{21}\cong\kk^3$ and 
$E^\infty_{3i+2,1}\cong\kk^4$ (all $i\ge1$). 
\end{proof}

The next proposition shows that in the situation of 
Lemma \ref{fin<=>fin}, at least, whenever the homology of the loop 
space is bounded, we get an $\Omega$-resolution of finite length. 

\begin{Prop} \label{finite-res}
Let $T\nsg \Gamma$ be such that $T\cong(\Z/p^\infty)^r$ for some $r\ge1$ 
and $\Gamma/T$ is finite. If in addition, $\Omega(B\Gamma\pcom)$ is a 
$p$-compact group, then there is an $\Omega$-resolution of $k$ with respect 
to $\Gamma$ of finite length. More precisely, if $N\ge1$ is maximal such 
that $H_N(\Omega(B\Gamma\pcom);k)\ne0$, then there is an 
$\Omega$-resolution of $k$ with respect to $\Gamma$ of length $N+1$. 
\end{Prop}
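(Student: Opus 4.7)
The plan is to produce an infinite $\Omega$-resolution from the general machinery of the paper and then truncate it at the right degree. By Proposition \ref{Osys-T.H}, the data $T\nsg\Gamma$ yields a $p$-local compact group $\SFL$ with $|\call|\pcom\simeq B\Gamma\pcom$. Set $\pi=\Gamma/O^p(\Gamma)\cong\pi_1(B\Gamma\pcom)$, a finite $p$-group, and let $\theta\:\calb(\Gamma)\to\calb(\pi)$ be the natural projection. Since $O^p(\Gamma)=\Ker(\theta)$ is $p$-perfect, Theorem \ref{Omega_C}(c) (with existence ensured by Proposition \ref{C*Enu} applied to $\calc=\calb(\Gamma)$) provides an $\Omega$-resolution $\bfR_\infty=(C_*,\2_*)$ of $\theta^*(k\pi)$ with respect to $(\calb(\Gamma),\theta)$, satisfying $\theta_*(H_i(\bfR_\infty))\cong H_i(\Omega(B\Gamma\pcom);k)$.

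Next, the hypothesis that $\Omega(B\Gamma\pcom)$ is a $p$-compact group forces $H_i(\Omega(B\Gamma\pcom);k)=0$ for all $i>N$. By condition \omres{\ref{sqres-3}} in Definition \ref{d:Omega-resolution}, each $H_i(\bfR_\infty)$ is isomorphic to $\theta^*(N_i)$ for some $k\pi$-module $N_i$; then \OP2 yields $N_i\cong\theta_*\theta^*(N_i)\cong\theta_*(H_i(\bfR_\infty))$, which vanishes for $i>N$. Hence $H_i(\bfR_\infty)=0$ for all $i>N$, so $\bfR_\infty$ is in particular acyclic above degree $N$.

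The decisive step is then Lemma \ref{l:part.res.}, the truncation lemma asserting that the truncation of any $\Omega_m$-resolution at length $n<m$ is an $\Omega_n$-resolution. Applying it with $m=\infty$ and $n=N+1$ gives that the truncated complex
\[ 0\to C_{N+1}\Right3{\2_{N+1}}C_N\to\cdots\to C_0\Right3{\gee}\theta^*(k\pi)\to0 \]
is an $\Omega_{N+1}$-resolution of $\theta^*(k\pi)$, which is by definition an $\Omega$-resolution of $k\pi$ with respect to $\Gamma$ of length $N+1$. No serious obstacle is expected: the argument is essentially a direct composition of results already established, and the choice $n=N+1$ is the natural minimal length that still retains the top nonvanishing homology while permitting a truncation consistent with \omres[N+1]{\ref{sqres-4n}}.
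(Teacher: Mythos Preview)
There is a genuine gap in the last step. An $\Omega_{N+1}$-resolution in the sense of Definition~\ref{d:part.res.} is \emph{not} the same thing as an $\Omega$-resolution of length $N+1$. The latter means a genuine $\Omega_\infty$-resolution with $P_i=0$ for $i>N+1$, so it must satisfy \omres{2} and \omres{3} at degree $N+1$ after padding by zeros. Concretely, padding your truncation by zeros gives a complex whose degree-$(N+1)$ homology is $\Ker(\2_{N+1})$; since $H_{N+1}(\bfR_\infty)=0$ this equals $\Im(\2_{N+2})$, a submodule of the free $k\Gamma$-module $C_{N+1}$, and there is no reason whatsoever for $O^p(\Gamma)$ to act trivially on it, so \omres{3} fails. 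Likewise, exactness of $\theta_*(\bfR_\infty)$ only gives $\Ker(\theta_*(\2_{N+1}))=\Im(\theta_*(\2_{N+2}))$, and in the simplicial model $C_{N+2}\ne0$, so $\theta_*(\2_{N+1})$ is not injective and \omres{2} fails for the padded complex. Lemma~\ref{l:part.res.} produces only the weaker condition \omres[N+1]{4}, which is not enough.

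The paper's proof is not a truncation argument at all. It first reduces to the case $\Gamma=O^p(\Gamma)$ (so $\pi=1$), and then builds a finite complex from scratch: using the explicit length-$(r+1)$ complex $\bfD$ of Proposition~\ref{torus-res} (an $\Omega$-resolution over $T$ consisting of projective $k\Gamma$-modules), it maps shifted copies $H_{m_j}(k\otimes_{kT}\bfC)\otimes\Sigma^{m_j}\bfD$ into successive mapping cones $\bfC\subset\bfC^{(1)}\subset\cdots\subset\bfC^{(\ell)}$ so as to kill $H_*(k\otimes_{kT}-)$ degree by degree (finiteness of this homology comes from Lemma~\ref{fin<=>fin}). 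The final cone $\bfC^{(\ell)}$ is exact, and $\bfR=\Sigma^{-1}(\bfC^{(\ell)}/\bfC)$ is the desired $\Omega$-resolution, of length $m+r+1=N+1$. The essential extra input your argument is missing is the finite-length $T$-resolution $\bfD$; without something of that kind there is no mechanism to replace the infinite projective tail of $\bfR_\infty$ by a single projective in degree $N+1$.
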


\begin{proof} Since $\Omega(B\Gamma\pcom)$ is a $p$-compact group, 
$O^p(\Gamma/T)$ has order prime to $p$ by Proposition 
\ref{Osys-T.H}(c). 
Let $O^p(\Gamma)\nsg\Gamma$ and $\pi=\Gamma/O^p(\Gamma)$ be as 
in Proposition \ref{Osys-T.H}. Since $\Omega(B(O^p(\Gamma))\pcom)$ is 
homotopy equivalent to a connected component of $\Omega(B\Gamma\pcom)$ 
(as shown in the proof of Proposition \ref{Osys-T.H}(c)), 
$\Omega(BO^p(\Gamma)\pcom)$ is also a $p$-compact group. If $\bfC\too 
k\too0$ is an $\Omega$-resolution of $k$ with respect to $O^p(\Gamma)$, 
then $\Ind_{O^p(\Gamma)}^\Gamma(\bfC)\too k\pi\too0$ is an 
$\Omega$-resolution of $k\pi$ with respect to $\Gamma$. So it suffices 
to prove the proposition when $\Gamma=O^p(\Gamma)$; i.e., when 
$\Gamma/T$ has order prime to $p$ and $\pi=1$.

Recall that $r$ is the rank of $T$. By Proposition \ref{torus-res}, 
there is an $\Omega$-resolution $\bfD\too k\too0$ of $k$ with respect to $T$ of length $r+1$ which 
is also a chain complex of projective $k\Gamma$-modules.

\smallskip

\noindent\textbf{Step 1: } By Proposition \ref{exists-res}, there is an 
$\Omega$-resolution $\bfC\too k\too0$ of $k$ with respect to $\Gamma$ 
(possibly of infinite length). We will use this as a template for 
constructing an $\Omega$-resolution of finite length $N+1$.

By Lemma \ref{fin<=>fin}, the homology of $k\otimes_{kT}\bfC$ is finitely 
generated. Let 
	\[ 0=m_1 < m_2 < \cdots < m_\ell=m \]
be the degrees in which $H_*(k\otimes_{kT}\bfC)$ is nonzero. Thus $\ell$ is 
the number of distinct degrees in which this homology is nonzero. By the 
spectral sequence $\{E^r_{*,*}\}$ of Lemma \ref{H_*(C_T)-sp.seq.}, 
$H_{m+r}(\bfC)\cong E^2_{m,r}\ne0$, and so $N=m+r$ by Corollary 
\ref{Omega_G}.

\smallskip

\noindent\textbf{Step 2: } By Proposition \ref{p2:functoriality}, and 
since $\bfC$ satisfies condition ($\Omega$-3) (Definition 
\ref{d:Omega-resolution}) as a complex of $kT$-modules, there is a 
$kT$-linear chain map 
	\[ \psi^{(0)}_*\: \bfD = H_{0}(k\otimes_{kT}\bfC)\otimes_k 
	\Sigma^{0}\bfD \Right5{} \bfC \]
that induces a $k\Gamma$-linear isomorphism 
$H_0(\bfD)\xto{~\cong~}H_0(\bfC)$. By averaging, i.e., by replacing 
$\psi^{(0)}_*$ by the map 
$x\mapsto\frac{1}{|\Gamma/T|}\sum_{gT\in\Gamma/T}g(\psi^{(0)}_*(g^{-1}x))$ for $x\in\bfD$, we 
can arrange that $\psi^{(0)}_*$ is $k\Gamma$-linear without changing 
$H_0(\psi^{(0)}_*)$. Let $\bfC^{(1)}$ be 
the mapping cone of $\psi^{(0)}_*$ \cite[\S\,1.5]{Weibel}; again a chain 
complex of projective $k\Gamma$-modules. Since the $p$-perfect group $T$ 
acts trivially on the homology of $\bfD$ and of $\bfC$, it also acts 
trivially on $H_i(\bfC^{(1)})$ for each $i$ (Lemma \ref{l:p-perf}). Also, 
the homology of $k\otimes_{kT}\bfC^{(1)}$ is isomorphic to that of 
$k\otimes_{kT}\bfC$ (as $k$-vector spaces), except in degree $0=m_1$.

\smallskip

\noindent\textbf{Step 3: } Let $t$ be the minimum of all $i$ such that 
$H_i(\bfC^{(1)})\ne0$. If $t=\infty$ (i.e., if $\bfC^{(1)}$ 
is exact), then the sequence splits, so $k\otimes_{kT}\bfC^{(1)}$ is also 
exact, and $\ell=1$. 

Assume that $\bfC^{(1)}$ is not exact; i.e., that $t<\infty$. 
Then the exact sequence $C^{(1)}_{t}\too C^{(1)}_{t-1}\too \cdots\too 
C^{(1)}_0\too0$ of projective $k\Gamma$-modules splits. By this splitting, and 
since $(k\otimes_{kT}-)$ is right exact and $T$ acts trivially on 
$H_t(\bfC^{(1)})$, we have $H_t(k\otimes_{kT}\bfC^{(1)})\cong 
H_t(\bfC^{(1)})\ne0$, while $H_i(k\otimes_{kT}\bfC^{(1)})=0$ for all $i<t$. 
Thus $t=m_2$ and $\ell\ge2$. By Proposition \ref{p2:functoriality} again 
(and averaging), there is a $k\Gamma$-linear chain map 
	\[ \psi^{(1)}_*\: H_{m_2}(k\otimes_{kT}\bfC)\otimes_k \Sigma^{m_2}\bfD 
	\Right5{} \bfC^{(1)} \]
that induces an isomorphism in $H_{m_2}(-)$. In other words, we shift $\bfD$ by 
degree $m_2$, tensor each term by the $k$-module 
	\[ H_{m_2}(k\otimes_{kT}\bfC)\cong 
	H_{m_2}(k\otimes_{kT}\bfC^{(1)})\cong H_{m_2}(\bfC^{(1)}), \] 
and then map the resulting complex into $\bfC^{(1)}$. 

Let $\bfC^{(2)}$ be the mapping cone of $\psi^{(1)}_*$. By the 
arguments used in Step 2, $T$ acts trivially on $H_*(\bfC^{(2)})$, 
and $H_i(k\otimes_{kT}\bfC^{(2)})\cong H_i(k\otimes_{kT}\bfC)$ for all 
$i>m_2$ while $H_i(k\otimes_{kT}\bfC^{(2)})=0$ for $i\le m_2$. 

\smallskip

\noindent\textbf{Step 4: } We now repeat this procedure to obtain an 
increasing sequence 
	\[ \bfC \le \bfC^{(1)} \le \bfC^{(2)} \le \cdots \le \bfC^{(\ell)} \]
of chain complexes of projective $k\Gamma$-modules, where for each $1\le 
r\le\ell$, $T$ acts trivially on $H_*(\bfC^{(r)})$, and 
$H_i(k\otimes_{kT}\bfC^{(r)})\cong H_i(k\otimes_{kT}\bfC)$ for all $i>m_r$ 
while $H_i(k\otimes_{kT}\bfC^{(r)})=0$ for $i\le m_r$. Also, by the argument 
at the start of Step 3, $H_i(\bfC^{(r)})=0$ for $i<m_{r+1}$, while 
$H_{m_{r+1}}(\bfC^{(r)})\cong H_{m_{r+1}}(k\otimes_{kT}\bfC^{(r)})\cong 
H_{m_{r+1}}(k\otimes_{kT}\bfC)\ne0$. 

In particular, $\bfC^{(\ell)}$ is an exact sequence of 
projective $k\Gamma$-modules. Set 
	\[ \bfR = \Sigma^{-1}
	\bigl(\bfC^{(\ell)}\big/\bfC\bigr). \]
Then $\bfC^{(\ell)}$ is the mapping cone of a 
$k\Gamma$-linear chain map $\bfR\too\bfC$, and $\Gamma$ 
acts trivially on $H_*(\bfR)\cong H_*(\bfC)$. Also, 
$k\otimes_{k\Gamma}\bfR$ is acyclic since $k\otimes_{k\Gamma}\bfC$ is (and 
since the sequence $k\otimes_{k\Gamma}C^{(\ell)}_*$ is exact), and so 
$\bfR$ is an $\Omega$-resolution of $k$ with respect to $\Gamma$ of length 
$m+r+1=N+1$.
\end{proof}

Note that the converse of Proposition \ref{finite-res} also holds: 
$\Omega(B\Gamma\pcom)$ is a $p$-compact group if there is an 
$\Omega$-resolution of finite length. As usual, this can be reduced to the 
case where $\Gamma/T$ is $p$-perfect; i.e., where $B\Gamma\pcom$ is simply 
connected. If there is an $\Omega$-resolution of $k$ with respect to 
$\Gamma$ of finite length, then $H_i(\Omega(B\Gamma\pcom);\F_p)=0$ for $i$ 
large enough by Corollary \ref{Omega_G}. Also, $H_i(B\Gamma\pcom;\F_p)\cong 
H_i(B\Gamma;\F_p)$ is finite for each $i$ by the Serre spectral sequence 
for the fibration sequence $BT\too B\Gamma\too B(\Gamma/T)$ (and since 
$H_*(BT;\F_p)\cong H_*(B(S^1)^r;\F_p)$ and $H_*(B(\Gamma/T);\F_p)$ are 
finite in each degree), and hence $H_i(\Omega(B\Gamma\pcom);\F_p)$ is 
finite for each $i$ by \cite[Proposition 7]{Serre2} and since 
$B\Gamma\pcom$ is simply connected. So $\Omega(B\Gamma\pcom)$ is a 
$p$-compact group.

\begin{Rmk} \label{filtered-res}
By a closer inspection, one can say more about the $\Omega$-resolution 
constructed in the proof of Proposition \ref{finite-res}. By construction, 
$\bfR$ has a filtration whose successive quotients are the 
suspended complexes $H_{m_r}(k\otimes_{kT}\bfR)\otimes_k\Sigma^{m_r}\bfD$ 
for $1\le r\le\ell$, where $\bfD$ is the complex constructed in Proposition 
\ref{torus-res} and $0=m_1<m_2<\cdots<m_\ell=m$ are the degrees in which 
$H_*(k\otimes_{kT}\bfR)$ is nonzero. 
\end{Rmk}

\appendix

\section{$\RR$-perfect groups and $\RR$-plus constructions}

Recall that for a group $G$, we write $G\ab=G/[G,G]\cong H_1(G;\Z)$ for 
short. For a commutative ring $\RR$, we say that $G$ is $\RR$-perfect if 
$H_1(G;\RR)=0$; equivalently, if $G\ab\otimes_{\Z}\RR=0$. When $p$ is a prime, 
$G$ is $p$-perfect if it is $\F_p$-perfect.

\begin{Lem} \label{l:p-perf}
Fix a commutative ring $\RR$ and an $\RR$-perfect group $G$. Let 
$M_0\subseteq M$ be $\RR G$-modules such that $G$ acts trivially on 
$M_0$ and on $M/M_0$. Then $G$ also acts trivially on $M$.
\end{Lem}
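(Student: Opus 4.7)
The plan is to fix an arbitrary $m \in M$ and study the function $d_m \colon G \to M_0$ defined by $d_m(g) = gm - m$. First I would check this lands in $M_0$: since $G$ acts trivially on $M/M_0$, the image $gm - m$ vanishes in $M/M_0$, so $d_m(g) \in M_0$.

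Next I would verify that $d_m$ is a group homomorphism (not merely a crossed homomorphism). In general one has $d_m(gh) = g\cdot d_m(h) + d_m(g)$, but since $d_m(h) \in M_0$ and $G$ acts trivially on $M_0$ by hypothesis, the $g$-action drops out and the formula becomes $d_m(gh) = d_m(h) + d_m(g)$. Thus $d_m \in \Hom_{\Grp}(G, M_0)$, which factors through $G\ab$.

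The key step is then to use $\RR$-perfectness. Since $M_0$ is an $\RR G$-module with trivial $G$-action, its underlying abelian group is an $\RR$-module, so the induced map $G\ab \to M_0$ extends uniquely to an $\RR$-module homomorphism $G\ab \otimes_{\Z} \RR \to M_0$. By the universal coefficient theorem, $G\ab \otimes_{\Z}\RR \cong H_1(G;\RR) = 0$ since $G$ is $\RR$-perfect. Hence $d_m \equiv 0$, i.e., $gm = m$ for all $g \in G$ and all $m \in M$. There is no real obstacle here; the only subtle point is recognizing that the cocycle condition collapses to a homomorphism condition precisely because $G$ acts trivially on the target $M_0$, which is exactly what makes the $\RR$-perfect hypothesis applicable.
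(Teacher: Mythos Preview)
Your proof is correct and follows essentially the same approach as the paper. The paper packages your elementwise argument by observing that the action map $G\to\Aut_\RR^0(M)\cong\Hom_\RR(M/M_0,M_0)$ lands in an $\RR$-module and hence factors through $G\ab\otimes_\Z\RR=0$; your map $g\mapsto d_m(g)$ is just the evaluation of this same homomorphism at a fixed $m$, so the two arguments differ only in presentation.
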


\begin{proof} Let $\Aut_\RR^0(M)$ be the group of all $\RR$-linear 
automorphisms of $M$ that induce the identity on $M_0$ and on $M/M_0$. Then 
$\Aut_\RR^0(M)\cong\Hom_\RR(M/M_0,M_0)$ is abelian and has the structure of 
an $\RR$-module, and $G$ acts on $M$ via a homomorphism 
$G\too\Aut_\RR^0(M)$. Each homomorphism from $G$ to an $\RR$-module factors 
through $G\ab\otimes_{\Z}\RR=0$ and hence is trivial, so $G$ acts trivially 
on $M$.
\end{proof}

We now turn our attention to plus constructions (see Definition 
\ref{d:plus}). The following lemma will be needed when checking the 
condition in the definition about homology with twisted coefficients.

\begin{Lem} \label{cover->twisted}
Fix a commutative ring $\RR$ and a group $\pi$. Let $f\:X\too Y$ be a map 
between connected spaces, and let $\eta\:\pi_1(Y)\too\pi$ be a homomorphism 
such that $\eta$ and $\eta\circ\pi_1(f)$ are both surjective. Let $\til{X}$ 
and $\til{Y}$ be the covering spaces of $X$ and $Y$ with fundamental groups 
$\Ker(\eta\circ\pi_1(f))$ and $\Ker(\eta)$, respectively, and assume that a 
covering map $\til{f}\:\til{X}\too\til{Y}$ is an $\RR$-homology 
equivalence. Then $H_*(f;M)$ is an isomorphism for each $\RR\pi$-module 
$M$. 
\end{Lem}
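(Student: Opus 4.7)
The plan is to identify both homology groups with the homology of complexes built from the intermediate covers $\widetilde X$ and $\widetilde Y$, and then to upgrade the $\RR$-homology equivalence $\widetilde f$ to a chain homotopy equivalence of complexes of free $\RR\pi$-modules.

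First, I would observe that since $M$ is an $\RR\pi$-module, its $\pi_1(Y)$- and $\pi_1(X)$-module structures (obtained via $\eta$ and via $\eta\circ\pi_1(f)$, respectively) both factor through $\pi$. Consequently $\Ker(\eta)$ acts trivially on $M$, and likewise for $\Ker(\eta\circ\pi_1(f))$. By the two surjectivity hypotheses, the deck transformation groups of $\widetilde Y\to Y$ and $\widetilde X\to X$ are canonically identified with $\pi$ via $\eta$ and $\eta\circ\pi_1(f)$; after choosing basepoints compatibly, the lift $\widetilde f$ is $\pi$-equivariant, because the map on deck groups induced by $\pi_1(f)$ reduces under these identifications to the identity on $\pi$.

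Next, I would invoke the standard identification of twisted homology with a coinvariants construction on the intermediate cover:
\[ H_*(Y;M)\cong H_*\bigl(C_*(\widetilde Y;\RR)\otimes_{\RR\pi}M\bigr)
\qquad\textup{and}\qquad
H_*(X;f^*M)\cong H_*\bigl(C_*(\widetilde X;\RR)\otimes_{\RR\pi}M\bigr). \]
These hold because $C_*(\widetilde Y;\RR)$ and $C_*(\widetilde X;\RR)$ are complexes of free $\RR\pi$-modules (the deck action is free on singular simplices), and because taking $\Ker(\eta)$-coinvariants of the universal-cover chain complex yields $C_*(\widetilde Y;\RR)$ as a complex of $\RR\pi$-modules, and similarly on the $X$ side. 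Under these identifications, $H_*(f;M)$ is induced by $\widetilde f_*\otimes_{\RR\pi}\Id_M$.

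Then I would argue that $\widetilde f_*\colon C_*(\widetilde X;\RR)\to C_*(\widetilde Y;\RR)$ is actually a chain homotopy equivalence of bounded-below complexes of free $\RR\pi$-modules: its mapping cone is a non-negatively graded complex of free $\RR\pi$-modules whose underlying $\RR$-complex is acyclic by the hypothesis that $\widetilde f$ is an $\RR$-homology equivalence, and any non-negatively graded acyclic complex of projective modules (over any ring) is null-homotopic. Tensoring a chain homotopy equivalence with $M$ over $\RR\pi$ still yields a chain homotopy equivalence, proving $H_*(f;M)$ is an isomorphism. The main point to justify with care is the coinvariants identification in the second paragraph; once that is in hand, everything else is the standard lifting argument for bounded-below projective complexes.
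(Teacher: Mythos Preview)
Your proof is correct and follows essentially the same route as the paper: form the mapping cone of $\widetilde f_*$ as a bounded-below acyclic complex of free $\RR\pi$-modules, deduce that it is contractible, and hence that tensoring with $M$ over $\RR\pi$ preserves exactness. Your write-up is simply more explicit than the paper's (you spell out the $\pi$-equivariance of $\widetilde f$ and the identification of twisted homology with $C_*(\widetilde{\,\cdot\,};\RR)\otimes_{\RR\pi}M$), which is fine.
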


\begin{proof} Let $\5C_*$ be the mapping cone of the chain map 
$C_*(\til{f})\:C_*(\til{X};\RR)\too C_*(\til{Y};\RR)$ (see the remark just 
before Proposition \ref{finite-res}). Since $C_*(X;M)\cong 
C_*(\til{X};R)\otimes_{\RR\pi}M$ and similarly for $Y$ (see \cite[Theorem 
VI.3.4]{Whitehead}), we get that $\5C_*\otimes_{\RR\pi}M$ is the mapping 
cone of $C_*(f;M)$. Since $\5C_*$ is an exact sequence of free 
$\RR\pi$-modules and is bounded below, $\5C_*\otimes_{\RR\pi}M$ is also 
exact, and hence $H_*(f;M)$ is an isomorphism. 
\end{proof}

As was defined in the introduction (see also Section \ref{s:loops}), 
a group $G$ is strongly $\RR$-perfect if it is $\RR$-perfect 
($H_1(G;\Z)\otimes\RR=0$) and $\Tor(H_1(G;\Z),\RR)=0$. Thus if $\RR$ 
is flat as a $\Z$-module (in particular, if $\RR=\Z$), then $G$ is strongly 
$\RR$-perfect if it is $\RR$-perfect. By the universal coefficient 
theorem, all $\RR$-superperfect groups are strongly $\RR$-perfect, where a 
group $G$ is $\RR$-superperfect if $H_2(G;\RR)\cong H_1(G;\RR)=0$. 

The next lemma gives necessary and sufficient conditions for a group to be 
$\RR$-perfect or strongly $\RR$-perfect. 

\begin{Lem} \label{p:axb=0}
Let $R$ be a commutative ring, and let $A$ be an abelian group. 
\begin{enuma} 

\item If $\chr(R)=n\ne0$, then $A$ is $R$-perfect if and only if $A$ is 
$n$-divisible (i.e., $A=nA$), and $A$ is strongly $R$-perfect if and only 
if $A$ is uniquely $n$-divisible.

\item If $\chr(R)=0$, then $A$ is strongly $R$-perfect if and only if 
each element of $A$ is annihilated by an integer that is invertible in $R$. 

\item If $\chr(R)=0$ and $A$ is $R$-perfect, then each element of $A$ is 
annihilated by an integer that is invertible in $R/\tors(R)$, where 
$\tors(R)\subseteq R$ is the ideal of torsion elements in $R$. 

\end{enuma}
\end{Lem}

\begin{proof} Since $\Tor$ sends monomorphisms to monomorphisms and 
$\Tor(\Z/p,\Z/p)\ne0$ for each prime $p$, we have
	\beqq \Tor(A,R)=0 ~\implies~ \textup{there is no prime $p$ 
	for which $R$ and $A$ both have $p$-torsion.} 
	\label{e:Tor(A,R)=0} \eeqq

\smallskip

\noindent\textbf{(a) } Assume $\chr(R)=n\ne0$. Then $n\cdot(A\otimes R)=0$ 
and $n\cdot\Tor(A,R)=0$. If $A$ is $n$-divisible, then multiplication by 
$n$ induces a surjection from $A\otimes R$ onto itself, and hence $A\otimes 
R=0$. If $A$ is uniquely $n$-divisible, then multiplication by 
$n$ induces an isomorphism from $\Tor(A,R)$ to itself, and hence 
$\Tor(A,R)=0$. 

\iffalse
Conversely, assume $A\otimes R=0$, and fix $x\in A$. Let $R_0\le R$ be a 
finitely generated subgroup such that \new{$1\in R_0$ and} $x\otimes1=0$ in 
$A\otimes R_0$, and choose an isomorphism 
	\[ \psi\: R_0 \Right4{\cong} \Z/n_1 \times \cdots \times 
	\Z/n_t, \]
where $n_i\mid n$ for each $i$. Set $\psi(1)=(\4r_1,\dots,\4r_t)$, where 
\new{$\4r_i\in\Z/n_i$} is the class of $r_i\in\Z$. Fix a prime $p\mid n$, and 
write $n=p^km$ where $p\nmid m$. Since $1$ has order $n$, there is some $i$ 
such that $p\nmid r_i$ and $p^k\mid n_i$. Then $x\otimes\4r_i=0$ in 
$A\otimes(\Z/n_i)\cong A/n_iA$, so $r_ix\in n_iA\le p^kA$, and $x\in p^kA$ 
since $p\nmid r_i$. Since this holds for all $p\mid n$, we have $x\in nA$.
\fi

%%\mynote{ALT:} 

Conversely, assume $A\otimes R=0$, and fix $x\in A$. Let 
$R_0\le R$ be a finitely generated subgroup such that $1\in R_0$ and 
$x\otimes1=0$ in $A\otimes R_0$, and let $\gen1\le R_0$ be the 
subgroup generated additively by $1$. Then $\gen1\cong\Z/n$ and $R_0$ has 
exponent $n$, so $\gen1$ is a direct factor of $R_0$ as an additive group 
(see, e.g., \cite[Proposition 3, p. 382]{MB}). Let $\psi\:R_0\too\Z/n$ be 
such that $\psi(1)=1$. Then $\Id_A\otimes\psi$ sends $0=x\otimes1\in 
A\otimes R_0$ to $0=x\otimes1\in A\otimes\Z/n\cong A/nA$, and so $x\in 
nA$.

If $A$ is strongly $R$-perfect, then it is $R$-perfect, and hence 
$n$-divisible. By \eqref{e:Tor(A,R)=0}, $A$ is $p$-torsion free for all 
primes $p\mid n$, and hence is uniquely $p$-divisible.

\smallskip

\noindent\textbf{(c) } Assume $\chr(R)=0$ and $A$ is $R$-perfect. Fix $x\in 
A$. Since $x\otimes1=0$ in $A\otimes R$ and hence in 
$A\otimes(R/\tors(R))$, there is a finitely generated additive subgroup 
$R_0\le R/\tors(R)$ containing $1$ such that $x\otimes1=0$ in $A\otimes 
R_0$. Then $R_0$ is a free abelian group, so we can choose a basis 
$\{b_1,\dots,b_k\}$ for $R_0$ and set $1=\sum_{i=1}^kn_ib_i$ ($n_i\in\Z$). 
Thus $n_ix=0$ in $A$ for each $i$ since $x\otimes1=0$, and if we set 
$n=\gcd\{n_i\}$, then $nx=0$ and $1\in nR_0$. So $x$ is $n$-torsion for 
some $n$ invertible in $R/\tors(R)$. 

\iffalse
Then $R_0$ is free as an abelian group, so $A\otimes R_0$ is a direct 
sum of copies of $A$, and $x\otimes1=0$ implies that for some $n>0$, $nx=0$ 
and $1\in nR_0$. Thus $x$ is $n$-torsion for some $n$ invertible in 
$R/\tors(R)$. 
\fi

\smallskip

\noindent\textbf{(b) } Assume $\chr(R)=0$. If $nA=0$ for some integer 
$n>0$, then $n\cdot(A\otimes R)=0$ and $n\cdot\Tor(A,R)=0$. If, in 
addition, $\frac1n\in R$, then both of these groups are $n$-torsion free, 
and thus $A\otimes R=0=\Tor(A,R)$. More generally, if each element of $A$ 
is annihilated by some integer invertible in $R$, then $A$ is the colimit 
(or union) of its $n_i$-torsion subgroups for some increasing sequence 
$n_1\,|\,n_2\,|\,n_3\,|\,\cdots$ of integers invertible in $R$, and $A$ is 
strongly $R$-perfect since $(-\otimes R)$ and $\Tor(-,R)$ commute with such 
colimits. 

Assume conversely that $A$ is strongly $R$-perfect. By (c), each element of 
$A$ is $n$-torsion for some $n$ invertible in $R/\tors(R)$. By 
\eqref{e:Tor(A,R)=0} and since $\Tor(A,R)=0$, if $A$ has $n$-torsion, then 
$R$ has $m$-torsion only for $m$ prime to $n$. Let $r\in R$ be such that 
$nr=1+t$ for $t\in\tors(R)$. Then $mt=0$ for some $m$ prime to $n$, and 
$n\cdot(mr)=m$. Thus $m\cdot1\in nR$, $n\cdot1\in nR$, and so $1\in nR$ since 
$(m,n)=1$. Hence $n$ is invertible in $R$. 
\end{proof}

When $X$ is a connected CW complex and $H\nsg\pi_1(X)$, the usual plus 
construction for $X$ with respect to $H$ (the case $\RR=\Z$) exists if and 
only if $H$ is perfect. In the more general situation with which we are 
working, the conditions are slightly more complicated.

\begin{Prop} \label{plus}
Let $\RR$ be a commutative ring, let $X$ be a connected CW complex, 
and let $H\nsg\pi_1(X)$ be a normal subgroup. Then there is an $R$-plus 
construction for $(X,H)$ if and only if either
\begin{itemize} 
\item $\chr(R)\ne0$ and $H$ is $R$-perfect, or 
\item $\chr(R)=0$ and $H$ is strongly $R$-perfect.
\end{itemize} 
\end{Prop}

\begin{proof} Set $\pi=\pi_1(X)/H$. Let $\til{X}$ be the covering space of 
$X$ with fundamental group $H$: a space with a free $\pi$-action. To 
shorten notation, we write $H_*(Y)=H_*(Y;\Z)$ when $Y$ is a space or a 
group.

\smallskip

\noindent\boldd{($\,\Longleftarrow$\,) } This is essentially Quillen's 
construction. Assume $H$ is $R$-perfect, and is strongly $R$-perfect if 
$\chr(R)=0$. Attach 2-cells to $\til{X}$ in free $\pi$-orbits to obtain a 
free, simply connected $\pi$-space $\til{X}^+_0\supseteq\til{X}$. 

The homology sequence for the pair $(\til{X}^+_0,\til{X})$ with integer 
coefficients takes the form 
	\[ \Right3{} H_2(\til{X}^+_0) \Right3{\phi} H_2(\til{X}^+_0,\til{X}) 
	\Right3{} H_1(\til{X}) \Right2{} 0, \]
where $H_2(\til{X}^+_0)\cong\pi_2(\til{X}^+_0)$ by the Hurewicz theorem. 
Let $B=\{b_i\}_{i\in I}$ be a basis for $H_2(\til{X}^+_0,\til{X})$ as a 
free $\Z\pi$-module. 
If $\chr(\RR)=0$, then by Lemma \ref{p:axb=0}(b) and since 
$H_1(\til{X})\cong H_1(H)$ is strongly $R$-perfect, we can replace each 
$b_i\in B$ by $r_ib_i\in\Im(\phi)$ for some $r_i\in\Z$ invertible in $\RR$, 
the set $\{r_ib_i\}_{i\in I}$ forms a basis of 
$H_2(\til{X}^+_0,\til{X})\otimes \RR$ as an $\RR\pi$-module, and thus a 
basis for $H_2(\til{X}^+_0,\til{X};R)$ can be lifted back to 
$\pi_2(\til{X}^+_0)$. 
If $\chr(R)=n>0$, then by Lemma \ref{p:axb=0}(a) and since $H_1(\til{X})$ 
is $R$-perfect, each $b_i\in B$ has the form $b_i=c_i+nd_i$ for 
$c_i\in\Im(\phi)$ and $d_i\in H_2(\til{X}^+_0;\til{X})$, the sets 
$\{c_i\}_{i\in I}$ and $B$ induce the same basis of 
$H_2(\til{X}^+_0,\til{X};R)$, and so we can again lift this back to 
$\pi_2(\til{X}^+_0)$. 

Thus in either case, free $\pi$-orbits of 3-cells can be attached to 
$\til{X}^+_0$ to obtain a free $\pi$-space $\til{X}^+_{\RR}$ with 
$H_*(\til{X}^+_{\RR},\til{X};\RR)=0$. Set $X^+_{\RR}=\til{X}^+_{\RR}/\pi$ 
and let $\kappa\:X\too X^+_{\RR}$ be the inclusion; then $\pi_1(\kappa)$ is 
surjective with kernel $H$. Also, by Lemma \ref{cover->twisted} and since 
$\til\kappa\:\til{X}\too\til{X}^+_{\RR}$ is an $\RR$-homology equivalence, 
$\kappa$ induces an isomorphism in homology with coefficients in any 
$\RR\pi$-module. So $X^+_{\RR}$ is an $\RR$-plus construction for $(X,H)$. 

\smallskip

\noindent\boldd{($\implies$) } Assume $X_R^+\supseteq X$ is an $R$-plus 
construction for $(X,H)$. Let $\til{X}^+_R$ be the universal cover of 
$X^+_R$, and regard $\til{X}$ as a subspace of $\til{X}^+_R$. Thus 
$\til{X}_R^+$ is simply connected, $H_*(\til{X}_R^+,\til{X};R)=0$, and 
$H_i(\til{X}_R^+,\til{X})=0$ for $i=0,1$. Also, $H_2(\til{X}_R^+,\til{X})$ 
surjects onto $H_1(\til{X})\cong H_1(H)$. 

Consider the chain complex 
	\[ \Right3{\2_4} C_3(\til{X}_R^+,\til{X}) \Right3{\2_3} 
	C_2(\til{X}_R^+,\til{X}) \Right2{\2_2} 
	C_1(\til{X}_R^+,\til{X}) \Right2{\2_1} 
	C_0(\til{X}_R^+,\til{X}) \too 0 \]
of free $\Z\pi$-modules. 
Set $F_0=\Ker(\2_2)$, set $F_i=C_{i+2}(\til{X}_R^+,\til{X})$ for 
$i\ge1$, and set $\rho_i=\2_{i+2}$ for $i\ge0$. Thus $F_i$ is a free 
abelian group for each $i$ (since subgroups of free abelian groups 
are free), and $H_2(\til{X}_R^+,\til{X})\cong F_0/\Im(\rho_1)$. So we 
get another chain complex
	\[ \Right3{\rho_3} F_2 \Right3{\rho_2} F_1 \Right3{\rho_1} F_0 
	\Right3{\gee} H_1(\til{X}) \Right2{} 0, \]
where $\gee$ is surjective and is induced by the boundary map from 
$H_2(\til{X}_R^+,\til{X})$ to $H_1(\til{X})$. 
Note that the sequence $(F_*\otimes R,\rho_*\otimes\Id_R)$ is exact since 
$H_*(\til{X}_R^+,\til{X};R)=0$.

Set $M=F_0/\Im(\rho_1)$. Thus $M$ surjects onto $H_1(\til{X})\cong 
H_1(H)$, and $M\otimes R=0$ since $F_1\otimes R$ surjects onto 
$F_0\otimes R$. In particular, $H_1(H)\otimes R=0$ since $M$ surjects onto 
$H_1(H)$, so $H$ is $R$-perfect, and we are done if $\chr(R)\ne0$.

Now assume $\chr(R)=0$. Consider the diagram 
	\[ \xymatrix@C=30pt@R=30pt{
	& F_2\otimes R \ar[r] \ar[d]^{0} & F_1\otimes R \ar[r] 
	\ar@{->>}[d]^{\beta} & F_0\otimes R \ar[r] \ar@{=}[d] & 0 \\
	0 \ar[r] & \Tor(M,R) \ar[r] & \Im(\rho_1)\otimes R \ar[r] & 
	F_0\otimes R \ar[r] & 0 \rlap{\,,}
	} \]
where the top row is exact since $H_*(\til{X}_R^+,\til{X};R)=0$. The bottom 
row is exact since $M=F_0/\Im(\rho_1)$ and $F_0$ is a free abelian group. 
The right hand square commutes by naturality, and the left hand square 
commutes since the composite $F_2\xto{~\rho_2~} 
F_1\xto{~\rho_1~}\Im(\rho_1)$ is zero. Also, $\beta$ is onto since 
$F_1\too\Im(\rho_1)$ is onto. An easy diagram chase (or the snake lemma) 
now shows that $\Tor(M,R)=0$. So $M$ is strongly $R$-perfect. Since $M$ 
surjects onto $H_1(\til{X})\cong H_1(H)$, $H$ is also strongly $R$-perfect by 
the characterization in Lemma \ref{p:axb=0}(b). 
\end{proof}

\iffalse
\begin{Rmk} 
When $\chr(R)=0$, $R$-plus constructions for $(X,H)$ do \emph{not} always 
exist when $H$ is $R$-perfect. As one example, let $p$ be any prime, and set 
$R=\Z[\frac1p]\times\Z/p$ and $H=\Z/p^\infty$. Then $\chr(R)=0$, and $H$ is 
$R$-perfect since $\Z/p^\infty\otimes R\cong (\Z/p^\infty\otimes\Z[\tfrac1p]) 
\oplus (\Z/p^\infty\otimes\Z/p)=0$. But $H$ has $p$-torsion and $p$ is not 
invertible in $R$, so $H$ is not strongly $R$-perfect by Lemma 
\ref{p:axb=0}(b).
\end{Rmk}
\fi

\begin{Ex} \label{ex:prodring}
Set $R=\Q\times\Z/p$ for some prime $p$. Then $p$ is the only prime not 
invertible in $R$, so $(X,H)$ admits an $R$-plus construction ($H$ is 
strongly $R$-perfect) if and only if $H/[H,H]$ is torsion prime to $p$. The 
group $\Z/p^\infty$ is $R$-perfect, since $\Z/p^\infty\otimes R\cong 
(\Z/p^\infty\otimes\Q)\oplus(\Z/p^\infty\otimes\Z/p)=0$, but it is not strongly 
$R$-perfect. 

As another example, consider $R=\prod_p\Z/p$, with the product taken over 
all primes $p$. Then $\chr(R)=0$, no prime is invertible in $R$, and every 
prime is invertible in $R/\tors(R)$. For each prime $p$, $\Z/p^\infty$ is 
$R$-perfect but not strongly $R$-perfect.
\end{Ex}

The next lemma shows that product rings $R$ as in Example 
\ref{ex:prodring} are essentially the only ones with $\chr(R)=0$ that admit 
$R$-perfect groups that are not strongly $R$-perfect. By Lemma 
\ref{p:axb=0}(b,c), this is possible only if there is a prime $p$ that is 
invertible in $R/\tors(R)$ but not in $R$.

\iffalse
\new{By Lemma \ref{p:axb=0}(b,c), if $R$ is a commutative ring of 
characteristic $0$, then there are $R$-perfect groups that are not strongly 
$R$-perfect only if there is a prime $p$ that is invertible in $R/\tors(R)$ 
but not in $R$. The next lemma shows that product rings similar to those in 
Example \ref{ex:prodring} are essentially the only examples of such rings.}
\fi

\begin{Lem} \label{R-splits}
Fix a prime $p$ and a commutative ring $R$, and let $\tors(R)\subseteq R$ 
be the ideal of torsion elements. Assume $\chr(R)=0$, $p$ is not invertible 
in $R$, and $p$ is invertible in $R/\tors(R)$. Then $R\cong R_1\times R_2$ 
where $R_1$ and $R_2$ are (nonzero) rings, $p$ is invertible in $R_1$, 
and $\chr(R_2)=p^k$ for some $k$.
\end{Lem}

\iffalse
\begin{proof} Since $p$ is invertible in $R/\tors(R)$, there are $u\in R$ 
and $n>0$ such that $n(1-pu)=0$. Let $k\ge0$ be the largest power of $p$ 
dividing $n$, so that $n=p^km$ for some $m$ prime to $p$. Then there are 
$a,b\in\Z$ such that $pa+mb=1$, and $p^k=p^k(bm+pa)=p^k(bmpu+pa)$. 
Set $v=bmu+a$. Then $p^k=p^k(pv)$, and so 
	\[ p^k = p^k(pv) = p^k(pv)^2 = \dots = p^k(pv)^k 
	\quad\implies\quad (pv)^k=p^kv^k=p^kv^k(pv)^k=(pv)^{2k}. \]
Set $e=(pv)^k$, so that $e^2=e$ and $p^ke=p^k$. Thus $R=eR\times(1-e)R$, 
where $e\ne0$ since $\chr(R)\ne0$, and $e\ne1$ since $\frac1p\notin R$. Also, 
$p$ is invertible in $eR$ since $e=e^2=p(p^{k-1}v^ke)$ is the identity in 
$eR$, and $\chr((1-e)R)\mid p^k$ since $p^k(1-e)=0$. 
\end{proof}
\fi

\begin{proof} Since $p$ is invertible in $R/\tors(R)$, there is $n>0$ such 
that $n\cdot1\in npR$. Let $k\ge0$ be the largest power of $p$ 
dividing $n$. Thus $n\cdot1\in p^{k+1}R$ and $p^{k+1}\cdot1\in p^{k+1}R$, 
so $p^k\cdot1\in p^{k+1}R$ since $p^k=\gcd(n,p^{k+1})$. Let $r\in R$ be 
such that $p^k=p^{k+1}r$. Then 
	\[ p^k = p^k(pr) = p^k(pr)^2 = \dots = p^k(pr)^k 
	\quad\implies\quad (pr)^k=p^kr^k=p^kr^k(pr)^k=(pr)^{2k}. \]
Set $e=(pr)^k$, so that $e^2=e$ and $p^ke=p^k$. Thus $R=eR\times(1-e)R$, 
where $p$ is invertible in $eR$ since $e=e^2=p(p^{k-1}r^ke)$ is the 
identity in $eR$, and $\chr((1-e)R)\mid p^k$ since $p^k(1-e)=0$. Since 
$\frac1p\notin R$ and $\chr(R)=0$, both factors are nonzero.
\end{proof}

As one application of Lemma \ref{R-splits}, one can show that for a 
commutative ring $R$ with $\chr(R)=0$ and torsion ideal $\tors(R)$, an 
abelian group $A$ is $R$-perfect if and only if 
\begin{itemize} 
\item $A$ is a torsion group, 
\item $A$ has $p$-torsion only for primes $p$ invertible in $R/\tors(R)$, and 
\item $A$ is $p$-divisible for all primes $p$ not invertible in $R$.
\end{itemize}
This was the one case missing in Lemma \ref{p:axb=0}, when characterizing 
$R$-perfect and strongly $R$-perfect groups. 

Under certain conditions, completion or fibrewise completion as defined by 
Bousfield and Kan gives another, more functorial way to construct plus 
constructions.

\begin{Lem} \label{|X|pcom->Bpi}
Let $\pi$ be a group, and let $\theta\:X\too B\pi$ be a map of spaces where 
$X$ is connected and $\pi_1(\theta)$ is onto. Set $H=\Ker(\pi_1(\theta))$, 
and let $\til{X}$ be the covering space of $X$ with covering group $\pi$ 
and fundamental group $H$. 

\begin{enuma} 

\item Assume that $\RR$ is a subring of $\Q$ or $\RR=\F_p$ for some prime 
$p$, and also that $H$ is $\RR$-perfect. Let $\5\theta\:X^{\wedge}\too 
B\pi$ be the fibrewise $\RR$-completion of $X$ over $B\pi$. Then 
$\kappa\:X\too X^\wedge$ is an $\RR$-plus construction for $(X,H)$.

\item Assume, for some $\RR\subseteq\Q$, that $H$ is 
$\RR$-perfect, and that $\pi$ is $\RR$-nilpotent and has nilpotent action 
on $H_i(\til{X};\RR)$ for each $i$. Then the $\RR$-completion map 
$\kappa\:X\too X\Rcom$ is an $\RR$-plus construction for $(X,H)$.

\item Assume, for some prime $p$, that $\pi$ is a finite $p$-group and 
$H$ is $p$-perfect. Then the $p$-completion map $\kappa\:X\too X\pcom$ 
is a $\kk$-plus construction for $(X,H)$ for each field $\kk$ of 
characteristic $p$.

\end{enuma}
\end{Lem}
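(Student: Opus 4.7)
My approach in all three parts is to verify the two conditions of Definition \ref{d:plus} for the stated completion map $\kappa$: that $\pi_1(\kappa)$ is surjective with kernel exactly $H$, and that $\kappa$ induces an isomorphism on $H_*(-;N)$ for every $\RR\pi$-module $N$. By Lemma \ref{cover->twisted}, applied with $f=\kappa$ and $\eta$ the natural surjection from $\pi_1$ of the completion onto $\pi$, the second condition is equivalent to showing that the lift of $\kappa$ to appropriate covering spaces (the one with fundamental group $H$ on the source) is an $\RR$-homology equivalence. So in each case the problem reduces to (i) identifying $\pi_1$ of the completion via a fibration, and (ii) checking a single untwisted $\RR$-homology equivalence on covers.

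For part (c), I would apply the Bousfield--Kan mod-$R$ fibre lemma \cite[II.5.2]{BK} to the fibration $\til X \to X \to B\pi$. Since $\pi$ is a finite $p$-group, it acts nilpotently on every $\F_p$-module, in particular on $H_*(\til X;\F_p)$, and $B\pi$ is already $p$-complete \cite[VI.5.4]{BK}, so the fibre lemma yields a fibration $\til X\pcom \to X\pcom \to B\pi$. The hypothesis that $H$ is $p$-perfect makes $\til X$ a $p$-good space whose $p$-completion is simply connected, and the long exact sequence of the fibration then identifies $\pi_1(X\pcom)$ with $\pi$ and its kernel with $H$. The induced map $\til X \to \til X\pcom$ is an $\F_p$-homology equivalence by $p$-goodness, hence a $\kk$-homology equivalence for every field $\kk$ of characteristic $p$, and Lemma \ref{cover->twisted} finishes the argument. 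Part (b) then follows by exactly the same template with $\pcom$ replaced by $\Rcom$: the hypotheses on $\pi$ and its action on $H_*(\til X;\RR)$ are tailored to the mod-$R$ fibre lemma for $\RR\subseteq\Q$, $B\pi\Rcom\simeq B\pi$ since $\pi$ is $\RR$-nilpotent, and $\RR$-perfectness of $H$ makes $\til X\Rcom$ simply connected.

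For part (a), the fibrewise $\RR$-completion $\5\theta\:X^\wedge \to B\pi$ is constructed so that the fibration $\til X\Rcom \to X^\wedge \to B\pi$ holds by design, bypassing any nilpotency hypothesis on $\pi$ or on its action on the fibre; the rest of the argument proceeds exactly as in (b) and (c). The main obstacle I expect is the verification that $\til X\Rcom$ really is simply connected under the $\RR$-perfectness hypothesis on $H$, i.e., that the appropriate completion (pro-$p$ for $\RR = \F_p$, $\RR$-nilpotent for $\RR\subseteq\Q$) of the discrete group $H$ is trivial whenever $H_1(H;\RR)=0$. For $\RR=\F_p$ this is standard: an $\F_p$-perfect group has no nontrivial finite $p$-group quotient, so its pro-$p$ completion is trivial. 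For $\RR\subseteq\Q$ it follows inductively along the lower central series of $H$, since each successive quotient is a quotient of a tensor power of $H^{ab}$ and hence is killed by $\otimes_{\Z}\RR$ once $H^{ab}\otimes_{\Z}\RR=0$.
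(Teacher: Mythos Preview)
Your approach is essentially the same as the paper's: both reduce to showing that the fibre (equivalently $\til X$) has simply connected $\RR$-completion and is $\RR$-good, then invoke Lemma~\ref{cover->twisted}. The only organizational difference is that the paper proves (a) first and then derives (b) and (c) from it by using the mod-$R$ fibre lemma to identify ordinary $\RR$-completion with fibrewise completion, whereas you argue (c) and (b) directly and treat (a) as the base case; the content is the same. One cleaner shortcut the paper takes: for the ``main obstacle'' you flag (simply connectedness of $\til X\Rcom$), rather than arguing via the lower central series of $H$, it simply cites \cite[Lemma~I.6.1]{BK}, which gives $\pi_1(F\Rcom)=0$ directly from $H_1(F;\RR)=0$, and then uses \cite[V.3.4 or VI.5.3]{BK} together with \cite[I.5.2]{BK} to deduce that $F$ is $\RR$-good.
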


\begin{proof} \noindent\textbf{(a) } Since $H$ is $\RR$-perfect, 
$H_1(\til{X};\RR)\cong H_1(H;\RR)=0$ by definition, and hence 
$\til{X}\Rcom$ is simply connected by \cite[Lemma I.6.1]{BK} (applied with 
$k=1$). Since $\til{X}$ is the homotopy fibre of $\theta\:X\too B\pi$, its 
$R$-completion $\til{X}\Rcom$ is the homotopy fibre of $\5\theta$ by 
\cite[Corollary I.8.3]{BK}. Thus $\5\theta$ induces an isomorphism 
$\pi_1(X^{\wedge})\cong\pi$, and $\til{X}\Rcom$ is the universal cover of 
$X^\wedge$. 

Since $\til{X}\Rcom$ is simply connected, it is $\RR$-good by Proposition 
V.3.4 or VI.5.3 in \cite{BK}, and hence $\til{X}$ is $\RR$-good by 
\cite[Proposition I.5.2]{BK}. So $\kappa_0\:\til{X}\too \til{X}\Rcom$ is an 
$\RR$-homology equivalence. By Lemma \ref{cover->twisted}, 
$\kappa\:X\too X^\wedge$ induces an isomorphism in homology with 
coefficients in arbitrary $\RR\pi$-modules, and hence $\kappa$ is an $\RR$-plus 
construction for $(X,H)$. 

\smallskip

\noindent\textbf{(b) } If $\RR\subseteq\Q$ and $\pi$ is $\RR$-nilpotent, 
then $B\pi$ is $\RR$-complete by \cite[Proposition V.2.2]{BK}. If, in 
addition, $H$ is $\RR$-perfect and the action of $\pi$ on 
$H_i(\til{X};\RR)$ (equivalently, on $H_i(\til{X};\RR)$) is nilpotent for each 
$i$, then fibrewise completion over $B\pi$ is the same as $\RR$-completion by 
the mod-$R$ fibration lemma \cite[II.5.1]{BK}, and the result follows from 
(a).

\smallskip

\noindent\textbf{(c) } If $\pi$ is a finite $p$-group, then $B\pi$ is 
$p$-complete by \cite[VI.3.4 and VI.5.4]{BK}. Hence fibrewise completion 
over $B\pi$ is the same as $p$-completion by the mod-$R$ fibration lemma 
\cite[II.5.1 and II.5.2.iv]{BK}, and $\kappa\:X\too X\pcom$ is an 
$\F_p$-plus construction by (a) (applied with $\RR=\F_p$) when $H$ is 
$p$-perfect. If $\kk$ is an arbitrary field of characteristic $p$, then 
$\kappa$ is also a $\kk$-plus construction since $H_*(-;\kk)\cong 
H_*(-;\F_p)\otimes_{\F_p}\kk$. 
\end{proof}

\bigskip

\end{document}